\newtheorem{theorem}{Theorem}[section]
\newtheorem*{theorem*}{Theorem}
\newtheorem{corollary}[theorem]{Corollary}
\newtheorem*{corollary*}{Corollary}
\newtheorem{lemma}[theorem]{Lemma}
\newtheorem{proposition}[theorem]{Proposition}
\newtheorem*{thmdef*}{Theorem/Definition}
\newtheorem{theoremx}{Theorem}
\newtheorem{corollaryx}[theoremx]{Corollary}
\theoremstyle{definition}
\newtheorem{remark}[theorem]{Remark}
\newtheorem{setup}[theorem]{Setup}
\newtheorem{example}[theorem]{Example}
\newtheorem{definition}[theorem]{Definition}
\newtheorem*{definition*}{Definition}
\newtheorem{question}[theorem]{Question} 
\newcommand{\Z}{\mathbb{Z}}
\newcommand{\cC}{\mathcal{C}}
\newcommand{\cJ}{\mathcal{J}}
\newcommand{\cG}{\mathcal{G}}
\newcommand{\cA}{\mathcal{A}}
\newcommand{\cB}{\mathcal{B}}
\newcommand{\lp}{\left(}
\newcommand{\rp}{\right)}
\newcommand{\sq}{\subseteq}
\newcommand{\ds}{\dots}
\newcommand{\cds}{\cdots}
\newcommand{\arrow}{arrow}
\newcommand{\F}{\mathbb{F}}
\newcommand{\Q}{\mathbb{Q}}
\newcommand{\N}{\mathbb{N}}
\newcommand{\fm}{\mathfrak{m}}
\newcommand{\fa}{\mathfrak{a}}
\newcommand{\fb}{\mathfrak{b}}
\newcommand{\fn}{\mathfrak{n}}
\newcommand{\fc}{\mathfrak{c}}
\renewcommand{\t}[1]{\widetilde{#1}}
\newcommand{\ul}[1]{\underline{#1}}
\newcommand{\aae}{\alpha_{<e}}
\newcommand{\aaa}{\alpha_{<a}}
\newcommand{\aaae}{\alpha_{<ea}}
\newcommand{\cts}{\mathrm{Cont}}
\DeclareMathOperator{\Hom}{Hom}
\DeclareMathOperator{\End}{End}
\DeclareMathOperator{\Spec}{Spec}
\DeclareMathOperator{\Frac}{Frac}
\DeclareMathOperator{\id}{id}
\DeclareMathOperator{\Ann}{Ann}
\DeclareMathOperator{\Dim}{Dim}
\DeclareMathOperator{\e}{e}
\DeclareMathOperator{\ord}{ord}
\DeclareMathOperator{\Fun}{Fun}
\DeclareMathOperator{\BSR}{BSR}
\DeclareMathOperator{\Max}{Max}
\DeclareMathOperator{\mult}{mult}
\newcommand{\FF}{\mathbb{F}}
\newcommand{\KK}{\mathbb{K}}
\newcommand{\CC}{\mathbb{C}}
\newcommand{\QQ}{\mathbb{Q}}
\newcommand{\RR}{\mathbb{R}}
\newcommand{\ZZ}{\mathbb{Z}}
\newcommand{\NN}{\mathbb{Z}_{\geq 0}}
\newcommand{\Zp}{\widehat{\mathbb{Z}}_{(p)}}
\newcommand{\simto}{\xrightarrow{\sim}}
\newcommand{\Mustata}{Musta\c{t}\u{a}}
\newcommand{\bfs}{\boldsymbol{f^s}}
\newcommand{\bfa}{\boldsymbol{f^{\alpha}}}
\newcommand{\ut}{\underline{t}}
\DeclareMathOperator{\lct}{lct}
\DeclareMathOperator{\fpt}{fpt}
\newcommand{\bone}{\underline{1}}
\newcommand{\up}[1]{\left\lceil #1 \right\rceil}
\newcommand{\tr}[2]{\left \langle {#1} \right \rangle_{#2}} 
\newcommand{\base}{\operatorname{base}}
\newcommand{\digit}[2]{ {#1}^{(#2)}}
\setlist[enumerate]{itemsep=2pt, topsep=2pt, itemindent=10pt, label=(\roman*)}
\numberwithin{equation}{section}
\renewcommand{\epsilon}{\varepsilon}
\begin{document}
	
\title[Bernstein-Sato for singular rings in char. $p$]{Bernstein-Sato theory for singular rings in positive characteristic}

\author{Jack Jeffries}
\address{University of Nebraska-Lincoln, Lincoln, NE~68588, USA}
\email{jack.jeffries@unl.edu}

\author{Luis N\'u\~nez-Betancourt}
\address{Centro de Investigaci\'on en Matem\'aticas, Guanajuato, Gto., M\'exico}
\email{luisnub@cimat.mx}

\author{Eamon Quinlan-Gallego}
\address{Department of Mathematics, University of Utah, Salt Lake City, UT 84112, USA}
\email{quinlan@math.utah.edu}
\thanks{The first author was partially supported by NSF CAREER Award DMS-2044833. The second author was partially supported by CONACYT Grant 284598 and C\'atedras Marcos Moshinsky. The third author was partially supported by NSF DMS grants 1801697, 1840190 and 1840234.}

\subjclass[2010]{Primary: 14F10, 13N10, 13A35; Secondary:  14B05.}
\keywords{$D$-module, Bernstein--Sato polynomial, Methods in prime characteristic.}

	\maketitle

\begin{abstract} The Bernstein-Sato polynomial is an important invariant of an element or an ideal in a polynomial ring or power series ring of characteristic zero, with interesting connections to various algebraic and topological aspects of the singularities of the vanishing locus.
Work of Musta\c t\u a, later extended by Bitoun and the third author, provides an analogous Bernstein-Sato theory for regular rings of positive characteristic. 

In this paper, we extend this theory to singular ambient rings in positive characteristic. We establish finiteness and rationality results for Bernstein-Sato roots for large classes of singular rings, and relate these roots to other classes of numerical invariants defined via the Frobenius map. We also obtain a number of new results and simplified arguments in the regular case.
\end{abstract}
	
	\setcounter{tocdepth}{1}

	\tableofcontents

\newpage
	
%%%%%%%%%%%%%%%%%%%%%%%%%%%%%%%%%%%%%%%%%%%%%%%%%%%%%%%
\section{Introduction} \label{scn-Intro}
%%%%%%%%%%%%%%%%%%%%%%%%%%%%%%%%%%%%%%%%%%%%%%%%%%%%%%%
\subsection{Background}

Let $R := \CC[x_1, \ds, x_n]$ be a polynomial ring over $\CC$, $f \in R$ be a nonzero polynomial and $D_R$ be the ring of $\CC$-linear differential operators on $R$; that is, $D_R$ is the Weyl algebra over $\CC$. Bernstein \cite{BernsteinPoly} and Sato \cite{SatoPoly}, independently and in different contexts, showed that there is a nonzero polynomial $b_f(s)$ and an element $\xi(s) \in D_R[s]$ satisfying the functional equation
\begin{equation}
\label{eq-functional-eqn}
b_f(s) f^s = \xi(s) \cdot f^{s+1}.
\end{equation}
The monic polynomial $b_f(s)$ of least degree satisfying the equation above  for some operator $\xi(s) \in D_R[s]$ is called the Bernstein-Sato polynomial of $f$. This invariant measures the singularities of the zero-locus of $f$ in very subtle ways.  For example, work of Koll\'ar \cite{Kollar97} and Ein, Lazarsfeld, Smith, and Varolin \cite{ELSV04} gives that the log-canonical threshold $\lct(f)$ of $f$ is the smallest root of $b_f(-s)$, and that every jumping number in the interval $[0,1)$ is a root of $b_f(-s)$. Furthermore, Kashiwara \cite{KashiwaraVfil} and Malgrange \cite{MalgrangeVfil}  proved that the eigenvalues of the monodromy action on the cohomology of the Milnor fiber of $f$ are given by $\exp(2 \pi i \alpha)$ where $\alpha$ ranges through all the roots of the Bernstein-Sato polynomial of $f$.
Kashiwara \cite{KashiwaraRationality}  showed  that the roots of $b_f(s)$ are rational and negative which, combined with the previous result, shows that the monodromy action is quasi-unipotent (see also \cite{BernsteinRationalMalgrange}). 

An alternative characterization of $b_f(s)$ due to Malgrange exhibits $b_f(s)$ as the minimal polynomial for the action of an operator $s$ on a certain $D$-module $N_f$. 
Budur, \Mustata, and Saito constructed an analogue of $s$ and $N_f$ for the case of an arbitrary ideal $\fa \sq R$.
Namely, even though $N_\fa$ is usually not finitely generated, 
there exists  a minimal polynomial of $s$ on $N_\fa$;
the Bernstein-Sato polynomial of $\fa$ is defined as
this polynomial  \cite{BMS2006a}.
One has that   $N_\fa$ splits as a direct sum
$$N_\fa = \bigoplus_{\lambda \in \CC} (N_\fa)_\lambda,$$
where $(N_\fa)_\lambda$ is the $\lambda$-generalized eigenspace. 

One can recover the minimal polynomial $b_\fa(s)$ from this decomposition, since the roots are given by
\begin{equation} \label{eqn-roots-of-BSpoly}
\{\text{ Roots of } b_\fa(s) \ \} = \{\lambda \in \CC: (N_\fa)_\lambda \neq 0 \}
\end{equation}
and the multiplicity of a root $\lambda$ is given by
\begin{equation} \label{eqn-mult-of-BSpoly}
\mult(\lambda, b_\fa(s)) = \min \{k \geq 0 : (s - \lambda)^k (N_\fa)_\lambda = 0 \}.
\end{equation}

An extension of this rich theory has been proposed recently for the case where $R$ is a possibly singular $\CC$-algebra. Whenever $R$ is a direct summand of a polynomial ring over~$\CC$, \`Alvarez-Montaner, Huneke, and the second author \cite{AMHNB} showed that one can find $b_f(s)$ and $\xi(s)$ as in Equation~(\ref{eq-functional-eqn}) and thus define a Bernstein-Sato polynomial for elements $f$ of $R$. We remark that to carry out this construction one must take $D_R$ to be the ring of $\CC$-linear differential operators of $R$ in the sense of Grothendieck. This line of research has continued with explorations into connections with $V$-filtrations, multiplier ideals, and an extension of these constructions for the case of ideals \cite{AMHJNBTW19}. 

Some aspects of the theory have also been developed in positive characteristic. This began with the work of \Mustata, who began this exploration in the case where $R : = \KK[x_1, \ds, x_n]$ is a polynomial ring over a perfect field $\KK$ of characteristic $p > 0$ (or, more generally, a regular $F$-finite ring) and $\fa = (f)$ is a principal ideal. \Mustata's notion was later refined by Bitoun \cite{Bitoun2018} and extended to the case of arbitrary ideals by the third author \cite{QG19}.

The main goal in this paper is to explore the theory of Bernstein-Sato roots in positive characteristic after dropping the regularity assumption on $R$. One can therefore think of this paper as providing a characteristic $p$ counterpart to some of the work on differential operators in singular rings \cite{AMHNB,AMHJNBTW19}.
In order to explain our results, we need to elaborate on this notion of Bernstein-Sato invariants in positive characteristic.

Suppose $R := \KK[x_1, \ds, x_n]$ is a polynomial ring over a perfect field $\KK$ of characteristic $p > 0$ and let $\fa \sq R$ be an ideal. One starts, through mimicking the construction of Budur, \Mustata, and Saito, by defining a $D_R$-module $N_\fa$ associated to $\fa$. In the characteristic $p$ setting the action of the operator $s$ on $N_\fa$ naturally extends to an action of the algebra $C(\Zp, \F_p)$ of continuous functions from the $p$-adics $\Zp$ to $\F_p$. Note that we are explaining this construction using the algebra $C(\Zp, \F_p)$ of continuous functions from the $p$-adics $\Zp$ to $\F_p$, in the style of Bitoun \cite{Bitoun2018}, as opposed to the operators $s_{p^0}, s_{p^1}, \ds $ in the style of \Mustata \ and the third author. See Subsection \ref{SubSec-spi-and-CZpFp} for the equivalence between these two points of view.

Given a $p$-adic integer $\alpha \in \Zp$, we let $\fm_\alpha$ be the maximal ideal of $C(\Zp, \F_p)$ that consists of functions that vanish on $\alpha$, and we let $(N_\fa)_{\alpha} := \Ann_{\fm_\alpha} N_\fa$. A careful analysis of the module $N_\fa$ allows one to show that there is a decomposition
$$N_\fa = \bigoplus_{\alpha \in \Zp} (N_\fa)_{\alpha}$$
for which only finitely many $(N_\fa)_{\alpha}$ are nonzero \cite{Bitoun2018} \cite[Proposition  6.1]{QG19}. A posteriori, we conclude that the $(N_\fa)_{\alpha}$ can also be viewed as quotients of $N_\fa$, namely $(N_\fa)_{\alpha} \cong N_\fa / \fm_\alpha N_\fa$. 

In analogy with the situation over $\CC$, we want to obtain some invariant of $\fa$ from this decomposition. A Bernstein-Sato root of $\fa$ is thus defined to be a $p$-adic integer $\alpha \in \Zp$ such that $(N_\fa)_{(\alpha)}$ is nonzero (cf. (\ref{eqn-roots-of-BSpoly})); we think of these as characteristic $p$ analogues of the roots of the Bernstein-Sato polynomial. In this setting, however, we have $\fm_\alpha = \fm_\alpha^2$ for all $\alpha \in \Zp$ and it is therefore not clear how to associate a multiplicity to each Bernstein-Sato root (cf. (\ref{eqn-mult-of-BSpoly})), and thus there is no notion of Bernstein-Sato polynomial. 

The Bernstein-Sato roots of a nonzero ideal in a regular $F$-finite ring are known to be rational and negative \cite[Corollary 2.4.3]{Bitoun2018} \cite[Theorem  6.7]{QG19}, which gives a characteristic $p$ analogue of Kashiwara's theorem, and they are also known to be intimately linked to the $F$-jumping numbers of $\fa$ \cite[Theorem  2.4.1]{Bitoun2018} \cite[Theorem  6.11]{QG19}.

\subsection{Summary of results}

In this paper, we pose and work with an elementary definition of Bernstein-Sato root  in positive characterstic. Namely, we define a Bernstein-Sato root of an ideal $\fa$ to be a $p$-adic integer that occurs as the $p$-adic limit of a sequence of the form $(\nu_e)$ such that $\fa^{\nu_e}$ is not contained in $\sum_{\phi\in \Hom_{R^{p^e}}(R,R)} \phi(\fa^{\nu_e+1})$ (see Definitions~\ref{def-DiffJump} and~\ref{DefBSrootsPadic}). This elementary notion naturally extends the notion of Bernstein-Sato root in positive characteristic for regular rings described in the previous subsection. Namely, even after dropping the regularity assumption on $R$, we can still build the module $N_\fa$, equip it with a $C(\Zp, \F_p)$-action, and consider the modules $(N_\fa)_\alpha = N_\fa / \fm_\alpha N_\fa$ for all $\alpha \in \Zp$. We have that $\alpha \in \Zp$ is a Bernstein-Sato root of~$\fa$ if and only if the module $(N_\fa)_\alpha$ is nonzero (Theorem \ref{thm-BSRoot-equivalence}). In contrast with the regular case, when $R$ is singular an ideal $\fa \sq R$ may have infinitely many Bernstein-Sato roots (see Example~\ref{example-cone-ell-curve}) and the quotients $N_\fa \to (N_\fa)_\alpha$ a priori need not split.

We begin by isolating the necessary assumptions on $\fa$ so that these pathologies do not occur, and we encapsulate these in the notion of Bernstein-Sato admissible ideals (see Definition~\ref{def-BSadm}). It is implicit in the work of the third author that ideals of regular rings are always Bernstein-Sato admissible \cite[Theorem ~6.1]{QG19}. We show that these ideals also abound in other classes of rings.

\begin{theoremx} [{Theorem  \ref{ThmFFRTAdmissible}, Theorem  \ref{thm:directsummand}}]
	Let $R$ be a noetherian $F$-finite ring, and assume one of the following holds:
	\begin{enumerate}[(a)]
		\item The ring $R$ is graded with finite $F$-representation type.
		\item The ring $R$ is a direct summand of a regular ring.
	\end{enumerate}
	Then every ideal $\fa \sq R$ is Bernstein-Sato admissible.
\end{theoremx}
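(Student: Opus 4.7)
The plan is to verify, under each hypothesis, both defining properties of admissibility: finiteness of the set of Bernstein-Sato roots of $\fa$, and the internal direct-sum decomposition $N_\fa = \bigoplus_\alpha (N_\fa)_\alpha$.

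For (a), the strategy is to exploit the FFRT decomposition $F^e_* R \cong \bigoplus_{i=1}^t M_i^{\oplus n_{i,e}}$ into finitely many indecomposables $M_1, \ldots, M_t$ (independent of $e$) to give a uniform description of $\Hom_{R^{p^e}}(R,R) = \Hom_R(F^e_* R, R)$. Under this identification, the Bernstein-Sato condition $\fa^{\nu_e} \not\sq \sum_\phi \phi(\fa^{\nu_e+1})$ reduces to a condition involving only the finite list $\Hom_R(M_i, R)$. In the graded setting the $M_i$ can be chosen graded, and one can analyze each via its finite graded generating data; by tracking degrees across the Frobenius correspondence I expect to show that the sequence of jumps $\{\nu_e\}$ settles into a finite collection of residue classes modulo arbitrarily large powers of $p$, producing only finitely many $p$-adic accumulation points. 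Having established finiteness of the roots, the decomposition of $N_\fa$ will come from exhibiting, for each Bernstein-Sato root $\alpha$, the characteristic function of a sufficiently small compact open neighborhood of $\alpha$ as an element of $C(\Zp, \F_p)$ whose action on $N_\fa$ is the projector onto $(N_\fa)_\alpha$.

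For (b), fix a splitting $\iota\colon R \hookrightarrow S$ with retraction $\pi\colon S \to R$, where $S$ is regular $F$-finite. The plan is to compare the Frobenius Hom-sets: composition with $\iota$ and $\pi$ relates $\Hom_{R^{p^e}}(R,R)$ with $\Hom_{S^{p^e}}(S,S)$, since any $R^{p^e}$-linear map $R \to R$ extends to $S \to S$ via $\iota \circ \phi \circ \pi$, and any $S^{p^e}$-linear map restricts to one on $R$ via $\pi \circ \psi \circ \iota$. From this I would deduce that $\fa^\nu \sq \sum_\phi \phi(\fa^{\nu+1})$ in $R$ if and only if $(\fa S)^\nu$ is contained in the analogous sum in $S$, at least modulo contraction through the retraction. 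Consequently the Bernstein-Sato roots of $\fa$ in $R$ form a subset of those of $\fa S$ in $S$, and the already-established admissibility for ideals of regular $F$-finite rings gives finiteness. The retraction $R \to S \to R$ should induce a $C(\Zp, \F_p)$-equivariant retraction $N_\fa \to N_{\fa S} \to N_\fa$, and the direct-sum decomposition of $N_{\fa S}$ descends to one of $N_\fa$.

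The main obstacle will be the decomposition statement. The elementary definition of Bernstein-Sato root gives only the \emph{support} of the $C(\Zp, \F_p)$-module $N_\fa$, not an intrinsic decomposition: one must produce enough orthogonal idempotents in $C(\Zp, \F_p)$ (namely indicator functions of disjoint compact opens covering the support) that act on $N_\fa$ as projectors summing to the identity. This in turn requires the roots to have bounded $p$-adic denominators, so that they are $p$-adically separated by a uniform $p^k$, which is where the hypotheses on $R$ must do decisive work. In case (b) this is inherited directly from the regular case via the splitting; in case (a) it needs a separate argument that FFRT plus the graded hypothesis forces bounded denominators, presumably via a refinement of the degree-tracking argument used for finiteness.
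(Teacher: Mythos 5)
Your proposal misidentifies what ``Bernstein--Sato admissible'' means, and so aims at the wrong target. Definition~\ref{def-BSadm} says that an ideal $\fa$ generated by $r$ elements is Bernstein--Sato admissible when there is a single constant $C$ with $\#\big(\cB^\bullet_\fa(p^e)\cap[0,rp^e)\big)\le C$ for every $e$; it is a uniform bound on the number of differential jumps of level $e$ in the window $[0,rp^e)$. Finiteness of the set of Bernstein--Sato roots and the splitting $N_\fa=\bigoplus_\alpha (N_\fa)_\alpha$ are not ``defining properties'' of admissibility --- they are consequences of it (Theorem~\ref{thm-finiteBSRoots} and Corollary~\ref{cor-BSadm-Na-splits}, via Proposition~\ref{prop-CzpModules-split}). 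Nothing in your plan establishes the converse implication, and the statement you are asked to prove carries no $F$-split hypothesis, so you cannot appeal to the partial converse of Theorem~\ref{ThmDicretness} either. Even a fully rigorous execution of your plan would therefore prove something strictly weaker than the theorem.

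With the target corrected, the actual arguments are concrete and differ from your sketches. For (a), the mechanism is not ``tracking residue classes of the jump sequence'' but a degree bound on the ideals $D^{(e)}_R\cdot\fa^m$. From the graded FFRT decomposition $F^e_* R\cong\bigoplus_{i,j} M_i(\theta^{(e)}_{i,j})$ with all shifts $\theta^{(e)}_{i,j}\le 0$, one shows (Lemma~\ref{LemmaGenDeg}) that $I_e(\fa^m,M_i)=\Hom_R(F^e_*R,M_i)\cdot F^e_*\fa^m$ is generated in degree at most $C_i+Nm/p^e\le C_i+Nr$ for $m\le rp^e$, with $C_i$ independent of $e$; hence $\{I_e(\fa^m,M_i):0\le m\le rp^e\}$ has at most $\dim_\KK[M_i]_{\le Nr+C_i}$ elements, and the number of distinct ideals $D^{(e)}_R\cdot\fa^m$ with $m\le rp^e$ (hence the number of jumps) is bounded uniformly in $e$. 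For (b), the paper proves only the one-way containment $\cB^\bullet_\fa(p^e)\sq\cB^\bullet_{\fa S}(p^e)$: if $f\in\fa^n$ and $f=\sum\xi_i(g_i)$ with $\xi_i\in D^{(e)}_S$, $g_i\in\fa^{n+1}$, applying the $R$-linear splitting $\beta$ gives $f=\sum(\beta\circ\xi_i)(g_i)$ with $(\beta\circ\xi_i)|_R\in D^{(e)}_R$. Your claimed ``if and only if'' between the $R$- and $S$-sides is false for general split inclusions (it requires level-differential extensibility, Theorem~\ref{thm:directsummand}(v)), and your extension construction $\iota\circ\phi\circ\pi$ does not obviously land in $D^{(e)}_S$, since $\pi$ is only $R$-linear rather than $S$-linear. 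Fortunately only the restriction direction and the containment of jump sets are needed to transfer the uniform bound from $\fa S$ to $\fa$, so this half of your idea survives once the target is repaired.
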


As mentioned, whenever an ideal $\fa$ is Bernstein-Sato admissible we show that the module $N_\fa$ behaves as in the regular case. 
		
\begin{theoremx}
	Let $R$ be a noetherian $F$-finite ring and $\fa \sq R$ be a Bernstein-Sato admissible ideal. Then:
	\begin{enumerate}[(i)]
		\item \textup{(Theorem  \ref{thm-finiteBSRoots})} The ideal $\fa$ has only finitely many Bernstein-Sato roots. 
		\item \textup{(Corollary \ref{cor-BSadm-Na-splits})} The module $N_\fa$ splits as a direct sum $N_\fa = \bigoplus_{\alpha \in \Zp} (N_\fa)_\alpha$.
	\end{enumerate}
	If in addition $R$ is $F$-split, then
	\begin{enumerate}[(i)]
	\setcounter{enumi}{2}
	
	\item \textup{( Theorems \ref{thm:ratl} \&  \ref{thm:rational}}) All of the Bernstein-Sato roots of $\fa$ are rational and lie in the interval\,\footnote{\label{footnote-1}In fact, the lower bound can be improved by using the analytic spread.}  $[-r, 0]$, where $r$ is the number of generators of $\fa$.
		\end{enumerate}
\end{theoremx}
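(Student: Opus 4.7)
The three parts build on one another; the overall plan is to extract structural information about the $C(\Zp, \F_p)$-action on $N_\fa$ from the admissibility hypothesis, and then to exploit Frobenius compatibility in the $F$-split case.

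For (i), the plan is to unpack the admissibility hypothesis into a uniform bound, independent of $e$, on the number of distinct residues modulo $p^e$ that can appear among sequences $(\nu_f)_{f\geq 0}$ producing Bernstein-Sato roots. Since every root is the $p$-adic limit of such a sequence and $\Zp$ is compact, such a uniform bound forces the root set to be finite. For (ii), once (i) yields a finite set $\{\alpha_1, \ldots, \alpha_k\}$ of roots, I would exploit the total disconnectedness of $\Zp$: choose pairwise disjoint clopen neighborhoods $U_i \ni \alpha_i$ and let $V = \Zp \setminus \bigcup_i U_i$, giving orthogonal idempotents $\chi_{U_1}, \ldots, \chi_{U_k}, \chi_V \in C(\Zp, \F_p)$ summing to $1$. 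Admissibility should ensure $\chi_V N_\fa = 0$ (since $V$ contains no root) and, after shrinking the $U_i$, identify $\chi_{U_i} N_\fa$ with $(N_\fa)_{(\alpha_i)}$, producing the desired decomposition.

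For the interval bound in (iii), let $g_1, \ldots, g_r$ generate $\fa$. A pigeonhole argument on monomial exponents gives $\fa^{r(p^e - 1) + 1} \subseteq (g_1^{p^e}, \ldots, g_r^{p^e})$, and the $F$-splitting supplies $\phi_0 \in \Hom_{R^{p^e}}(R, R)$ with $\phi_0(h^{p^e}) = h$ for every $h \in R$. Combining these, for $\nu_e \geq r(p^e-1)$ one obtains $\fa^{\nu_e} \subseteq \sum_{\phi} \phi(\fa^{\nu_e + 1})$, contradicting the non-containment condition that witnesses $\nu_e$ as a differential jumping number; taking $p$-adic limits yields $\alpha \geq -r$. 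The upper bound $\alpha \leq 0$ follows from a parallel argument: if a defining sequence $(\nu_e)$ for a putative positive-rational root stays bounded, the $F$-splitting again forces containment for all sufficiently large $e$, ruling out such roots. (The footnote suggests sharpening $r$ to the analytic spread by refining the pigeonhole step through reductions of $\fa$.)

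For rationality, I would invoke a Frobenius-compatibility between the $C(\Zp, \F_p)$-action on $N_\fa$ and the multiplication-by-$p$ map on $\Zp$. Concretely, the Frobenius endomorphism of $R$ should induce a semilinear map on $N_\fa$ intertwining scalar multiplication by $f \in C(\Zp, \F_p)$ with multiplication by $f \circ [p]$, where $[p]$ denotes multiplication by $p$ on $\Zp$. Since the support of $N_\fa$ is finite by (i), it is invariant under $\alpha \mapsto p\alpha$, so each root has finite orbit, forcing its $p$-adic expansion to be eventually periodic and hence $\alpha$ to be rational. The main obstacle will be establishing this Frobenius-compatibility in the singular setting: in the regular case one leverages Frobenius-flatness of $R$ to pass the operation through the construction of $N_\fa$, whereas here the $F$-splitting must take its place, and verifying that the resulting semilinear operation preserves the $C(\Zp, \F_p)$-structure precisely as required is the substantive step.
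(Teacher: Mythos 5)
Parts (i) and (ii) are sound. Your argument for (i) matches the paper's: if $\#\bigl(\cB^\bullet_\fa(p^e)\cap[0,rp^e)\bigr)\leq C$ uniformly, then by Proposition~\ref{eqroot} one cannot have $C{+}1$ roots that are distinguished modulo a fixed $p^N$ without producing $C{+}1$ distinct jumps in $[0,rp^e)$. For (ii), your clopen-idempotent decomposition of $\Zp$ is a reasonable variant of Proposition~\ref{prop-CzpModules-split}; note, though, that the assertion ``admissibility should ensure $\chi_V N_\fa=0$'' is not a consequence of admissibility alone but of the nontrivial structural fact that a $\cts(\Zp,\F_p)$-module all of whose fibers $M_\alpha$ vanish must itself vanish (Lemma~\ref{lemma-MalphaZero-implies-Mzero}, which uses compactness of $\Zp$ via a finite subcover). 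That is the substantive step you would have to supply. Your rationality sketch via $\times p$-dynamics on the support modulo $\ZZ$ is also aligned with the paper's Lemma~\ref{lemma-BSroot-dynamics} and Theorem~\ref{thm:ratl}, and your identification of where the $F$-split hypothesis enters (as a substitute for Frobenius flatness) is accurate.

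However, your argument for the interval bound $[-r,0]$ does not work, for two reasons. First, it is not true that a differential jump $\nu_e$ of level $e$ must satisfy $\nu_e<r(p^e-1)$: jumps propagate by addition of $p^e$ once above $r(p^e-1)$ (Proposition~\ref{prop:subtract-pe}), so for a hypersurface $(x)\subseteq\F_p[x]$ the jumps are $\{p^e-1,\,2p^e-1,\,3p^e-1,\dots\}$ and lie well beyond $r(p^e-1)$. What Proposition~\ref{eqroot} actually gives is that one may \emph{choose} the defining sequence in $[0,rp^e)$; it does not bar larger jumps. Second, and more fundamentally, ``taking $p$-adic limits yields $\alpha\geq -r$'' conflates the $p$-adic limit with the Euclidean one. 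The bound $0\leq\nu_e<rp^e$ constrains the Euclidean ratio $\nu_e/p^e\in[0,r]$, which is exactly what bounds the associated \emph{differential threshold}, but the $p$-adic limit of the same sequence can be any $p$-adic integer: for any $\alpha\in\ZZ_{(p)}$ one has $\aae/p^e$ tending to $\lceil\alpha\rceil-\alpha\in[0,1)$. Similarly, your argument that a positive rational root would give a bounded defining sequence is false: for non-integer $\alpha>0$ in $\ZZ_{(p)}$ the truncations $\aae$ grow like $p^e$ (Lemma~\ref{lem:expn}), so boundedness fails. The paper instead establishes rationality \emph{first} (using the $\times p$-dynamics and finiteness from (i)) and then shows in Lemma~\ref{lem:seq} that any rational root outside $[-r,0]$ forces infinitely many roots through that same dynamics, contradicting finiteness; the interval bound is a consequence of finiteness plus rationality plus the dynamics, not of a direct size estimate on differential jumps.
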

We remark that, for nonprincipal ideals, the lower bound is new even in the case where $R$ is regular. Combining this with a result of the third author on the behavior of Bernstein-Sato roots of monomial ideals under mod-$p$ reduction \cite[Theorem 3.1]{QG19b}, we are able to give the following characteristic zero result.

\begin{corollaryx}[Corollary \ref{cor-monomial-char0}]
	Let $R = \CC[x_1, \ds, x_n]$ be a polynomial ring over $\CC$ and $\fa \sq R$ be a monomial ideal generated by $r$ elements. If $\lambda$ is a root of the Bernstein-Sato polynomial of $\fa$ then\textsuperscript{\textup{\ref{footnote-1}}} $-r \leq \lambda$.
\end{corollaryx}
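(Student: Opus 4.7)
The plan is to reduce the corollary to positive characteristic and invoke the lower bound from part~(iii) of the preceding theorem. By Kashiwara's rationality theorem, $\lambda$ is rational and negative, so I would write $\lambda = -a/b$ in lowest terms with $a, b \in \Z_{>0}$. Since $\fa$ is a monomial ideal with generators $m_1, \ds, m_r$, for every prime $p$ the same monomials generate a monomial ideal $\fa_p \sq \F_p[x_1, \ds, x_n]$, still on at most $r$ generators.

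The second step is to invoke \cite[Theorem 3.1]{QG19b}, the mod-$p$ comparison for monomial ideals. It will give that for every prime $p$ coprime to $b$, the rational number $\lambda$, viewed as a $p$-adic integer via the canonical embedding $\Z_{(p)} \hookrightarrow \Zp$, is a Bernstein-Sato root of $\fa_p$ in the positive-characteristic sense. Now $\F_p[x_1, \ds, x_n]$ is regular and $F$-finite, hence $F$-split, so part~(iii) of the preceding theorem applies to $\fa_p$: every one of its Bernstein-Sato roots lies in $[-r, 0]$. Since $\lambda$ is one such root, this yields $-r \leq \lambda$, as required.

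I expect the only real obstacle to be the clean invocation of \cite[Theorem 3.1]{QG19b} with the correct hypothesis on $p$, and in particular verifying that the element of $\Zp$ produced by the comparison really equals $\lambda$ (not merely something congruent to $\lambda$ modulo a power of~$p$). Once this identification is in place, the rest is a direct appeal to the $F$-split case of the previous theorem, with the choice of $p$ (any prime coprime to $b$) ensuring that the rational interval $[-r, 0]$ transfers to the desired inequality in $\Q$.
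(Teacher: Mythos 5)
Your proposal is correct and follows essentially the same route as the paper: reduce mod $p$ for a sufficiently large prime, invoke \cite[Theorem 3.1]{QG19b} to transfer the root $\lambda$ to a Bernstein-Sato root of the reduced monomial ideal, and then apply the $F$-split case of the positive-characteristic bound (Theorem~\ref{thm:rational}, which applies since $\F_p[x_1,\ds,x_n]$ is regular and hence every ideal is Bernstein-Sato admissible). The one place where the paper's proof does more is in sharpening $r$ to the analytic spread $\ell(\fa R_\fm)$ by passing to the local ring via Proposition~\ref{prop-BSR-homog-and-completion} and citing Corollary~\ref{cor:ell} together with compatibility of the fibre cone with mod-$p$ reduction; you correctly skip that for the $-r\leq\lambda$ version. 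On your worry about the hypothesis on $p$: the paper just takes $p$ sufficiently large (which in particular is coprime to the denominator of $\lambda$), and that is what the cited mod-$p$ comparison theorem requires.
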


To study the action of differential operators on ideals in the ring, we introduce a family of numerical invariants called differential thresholds. The collection of differential thresholds of an ideal contains several of its invariants defined via Frobenius, including all of its jumping numbers, $F$-thresholds, and Cartier thresholds (see Subsection \ref{SubSecDT}). 
This unified approach allow us to obtain properties that were not known in certain cases.
		We show that if $\fa\subseteq R$ is a Bernstein-Sato admissible ideal, then the set of differential thresholds for $\fa$ is a  discrete set of rational numbers (see Theorems \ref{ThmDicretness} and \ref{ThmDTrational}).				
As a consequence, we obtain that the $F$-thresholds of rings with graded  finite $F$-representation type are rational numbers (see Corollary \ref{CorFFRTRational}). This extends previous results obtained for certain ideals in Stanley-Reisner rings \cite[Theorems A \& B]{BC}. We also exhibit a close relation between differential thresholds and Bernstein-Sato roots.

\begin{theoremx}[Theorem  \ref{thm:threshsareroots}]
	 Let $R$ be $F$-split. Let $\fa$ be an ideal with $r$ generators. There is an equality of cosets in $\ZZ_{(p)}/\ZZ$:
	\[ \{ \alpha + \ZZ \ | \ \alpha\in  \ZZ_{(p)}\,\text{Bernstein-Sato root of }\fa\} = \{ -\lambda + \ZZ \ | \ \lambda\in\ZZ_{(p)}\,\text{differential threshold of }\fa \}.\]
\end{theoremx}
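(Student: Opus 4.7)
The plan is to identify both Bernstein-Sato roots and the negatives of differential thresholds as limits of the same type of witness sequence $(\nu_e)$ --- sequences of non-negative integers satisfying
\[ \fa^{\nu_e} \;\not\subseteq\; I_e(\fa^{\nu_e+1}), \qquad I_e(J) := \sum_{\phi \in \Hom_{R^{p^e}}(R,R)} \phi(J) \]
--- but measured in two different topologies: Bernstein-Sato roots arise as $p$-adic limits $\alpha = \lim_e \nu_e$ in $\Zp$, while differential thresholds arise as Archimedean limits $\lambda = \lim_e \nu_e/p^e$ in $\R$. The coset identification $\alpha + \ZZ = -\lambda + \ZZ$ then reflects the fact that shifting a witness by integer multiples of $p^e$ preserves the $p$-adic limit while shifting the Archimedean limit by an integer.

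The first main step is to establish a dictionary between the ``Cartier-operator'' non-containment $\fa^{\nu} \not\subseteq I_e(\fa^{\nu+1})$ appearing in the definition of a Bernstein-Sato root and the ``differential-operator'' non-containment witnessing a differential threshold from Subsection~\ref{SubSecDT}. In the $F$-finite setting one has $D_R = \bigcup_e \End_{R^{p^e}}(R)$, and under the $F$-split hypothesis any $R^{p^e}$-linear endomorphism of $R$ can be converted into an element of $\Hom_{R^{p^e}}(R,R)$ by composing with the splitting $R \to R^{1/p^e}$; conversely, any such $\Hom$-element yields an $R^{p^e}$-linear endomorphism. Under this dictionary the two non-containment conditions translate into each other, so that Bernstein-Sato roots and differential thresholds are detected by precisely the same witness sequences.

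The second main step is a topological extraction: given a witness sequence $(\nu_e)$ for a Bernstein-Sato root $\alpha$, the bound $\nu_e/p^e \leq r$ (guaranteed by $\fa$ having $r$ generators, as already used for the interval $[-r,0]$ in the previous theorem) allows extraction of a subsequence along which $\nu_e/p^e$ converges to some $\lambda \in \R$; this $\lambda$ is a differential threshold, and a direct comparison of the $p$-adic and Archimedean limits of the same sequence forces $\alpha + \lambda \in \ZZ$. The reverse direction is analogous: a witness sequence $(\mu_e)$ with $\mu_e/p^e \to \lambda$ can be modified by adding integer multiples $k_e p^e$ to produce a $p$-adically convergent sequence (while remaining a witness), yielding a Bernstein-Sato root $\alpha$ satisfying $\alpha \equiv -\lambda \pmod{\ZZ}$.

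The main obstacle lies in Step 1, in precisely matching the two non-containment formulations at the level of individual $(\nu_e, e)$-pairs. While the identification of $\KK$-linear differential operators with $R^{p^e}$-linear endomorphisms is standard in the $F$-finite setting, we must verify that it interacts correctly with the filtration $\{\fa^n\}_n$ and with the $F$-splitting data, and that the ``witnessing level'' $e$ transports cleanly between the two sides. The $F$-split hypothesis is indispensable here: without it one cannot freely convert between $\End_{R^{p^e}}(R)$ and $\Hom_{R^{p^e}}(R,R)$, and the entire bridge between Bernstein-Sato roots and differential thresholds collapses.
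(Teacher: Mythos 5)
Your high-level strategy---BS roots and differential thresholds are limits of the same witness sequences $(\nu_e)$ in two different topologies, and the coset relation reflects that shifting a witness by a multiple of $p^e$ changes the Euclidean image by an integer---is the right intuition, and it is what the paper's proof ultimately executes. But two things go wrong in the execution.

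Your Step 1 is a non-issue born of notational confusion. In the paper, $D^{(e)}_R = \End_{R^{p^e}}(R)$, which is \emph{literally the same set} as $\Hom_{R^{p^e}}(R,R)$; there is nothing to convert. You may have been thinking of the Cartier operators $\cC^e_R = \Hom_R(F^e_*R, R)$, which are $p^{-e}$-linear maps and genuinely different from $D^{(e)}_R$. But both differential jumps (hence BS roots) and differential thresholds are defined via the \emph{same} set $\cB^\bullet_\fa(p^e)$, i.e.\ via $D^{(e)}_R\cdot\fa^n\neq D^{(e)}_R\cdot\fa^{n+1}$; no dictionary is needed because the two notions already literally share their witness condition.

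Your Step 2 is where the real gap is: the assertion that for a $p$-adically convergent witness sequence $(\nu_e)$ with $\nu_e/p^e\to\lambda$, ``a direct comparison of the $p$-adic and Archimedean limits forces $\alpha+\lambda\in\ZZ$'' is simply false. Even after invoking Proposition~\ref{eqroot} to take $\nu_e = \aae + s_ep^e$ with bounded $s_e$, the Euclidean limit of $\aae/p^e$ depends on which residue class of $e$ you take modulo the digit period of $\alpha$, and for the wrong residue class it lands \emph{outside} $-\alpha+\ZZ$. Concretely, take $\alpha = -1/(p+1)$, which has $p$-adic digit sequence $p-1,0,p-1,0,\ldots$ Then along even $e$ one finds $\aae/p^e\to 1/(p+1)=-\alpha$, but along odd $e$ one finds $\aae/p^e\to p/(p+1)$, and $\alpha+p/(p+1)=(p-1)/(p+1)\notin\ZZ$ for every prime $p$. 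So an arbitrary convergent subsequence does not produce the coset match you want; you must take $e$ ranging over multiples of a period of $\alpha$ (i.e.\ $e$ with $(p^e-1)\alpha\in\ZZ$), and Lemma~\ref{lem:expn}---which computes $\aaae$ explicitly for $a\gg 0$ in all cases ($\alpha\notin\ZZ_{<0}$ vs.\ $\alpha\in\ZZ_{<0}$)---is precisely the device the paper uses to make that controlled choice and compute the resulting Euclidean limit. The same problem infects your converse direction: adding multiples $k_ep^e$ to witnesses $\mu_e$ with $\mu_e/p^e\to\lambda$ does not by itself produce a $p$-adically convergent sequence unless you control $p^{ae}\lambda$ along the period of $\lambda$; the paper uses Proposition~\ref{prop:equivsthresh}(b) to place the witnesses inside $[p^e\lambda - r, p^e\lambda]$ and then the periodic structure of $p^e\lambda$ for $\lambda\in\ZZ_{(p)}$ to obtain convergence. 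Missing this period-alignment step is not a small omission: it is the nontrivial analytic content of the theorem.
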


We are able to define a module $R_f \bfa$ for any $p$-adic number that is the positive characteristic analogue of the modules $R_f \bfa$ with $\alpha\in\QQ$ in characteristic zero \cite{UliSurvey}. If $R$ is regular and  $\alpha\in \widehat{\ZZ}_{(p)}\cap \QQ_{<0}$, then $R_f \bfa=M_{-\alpha}$ for regular rings, where $M_{-\alpha}$ is the $F$-module introduced in earlier work of Blickle, \Mustata, and Smith to study jumping numbers of principal ideals \cite{BMSm-hyp} (see also \cite{NBP}). 
In Proposition~\ref{prop:BSrootsandalphajumps}, we show that in contrast to the situation in characteristic zero \cite{Saito}, $\alpha$ is a Bernstein-Sato root if and only if $\bfa \notin D_R\cdot f \bfa$.  We also provide a characterization of the simplicity of $R_f \bfa$ in terms of Bernstein-Sato roots and differential thresholds. 

\begin{theoremx}[Theorem  \ref{ThmSimpleRfaBSRDT}]
	Suppose that $R$ is a strongly $F$-regular domain. Let $f \in R$ be  a Bernstein-Sato admissible nonzerodivisor and $\alpha \in \ZZ_{(p)}\cap[-1,0)$. Then the following are equivalent:
	\begin{enumerate}[(a)]
		\item The module $R_f \bfa$ is not simple over $D_R$. 
		\item We have that $\alpha$ is a Bernstein-Sato root of $f$. 
		\item We have that $- \alpha$ is a differential threshold of $f$. 
	\end{enumerate}
\end{theoremx}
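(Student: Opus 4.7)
The plan is to reduce (b)$\Leftrightarrow$(c) to the coset correspondence of Theorem~\ref{thm:threshsareroots}, and then to handle the main equivalence (a)$\Leftrightarrow$(b) using Proposition~\ref{prop:BSrootsandalphajumps} together with the structure of $R_f\bfa$ under strong $F$-regularity.

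For (b)$\Leftrightarrow$(c), I would apply Theorem~\ref{thm:threshsareroots} to the principal ideal $(f)$ to obtain the equality of cosets
\[
\{\beta + \ZZ : \beta \text{ is a BS root of } f\} = \{-\lambda + \ZZ : \lambda \text{ is a differential threshold of } f\}
\]
in $\ZZ_{(p)}/\ZZ$. For $\alpha \in [-1,0)$ we have $-\alpha \in (0,1]$, and each coset contains at most one element of the respective interval. Using that Bernstein-Sato roots of a principal ideal lie in $[-1,0]$ (Theorem~B(iii) with $r=1$, applicable since strongly $F$-regular rings are $F$-split) and that differential thresholds in this setting lie in $[0,1]$, the coset equivalence promotes to the pointwise statement that $\alpha$ is a Bernstein-Sato root iff $-\alpha$ is a differential threshold.

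For (b)$\Rightarrow$(a), if $\alpha$ is a Bernstein-Sato root then Proposition~\ref{prop:BSrootsandalphajumps} gives $\bfa \notin D_R \cdot f\bfa$. Hence $D_R \cdot f\bfa$ is a proper $D_R$-submodule of $R_f\bfa$; it is nonzero because $f$ is a nonzerodivisor, so $f\bfa \neq 0$. Therefore $R_f\bfa$ is not $D_R$-simple.

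The main obstacle is the converse (a)$\Rightarrow$(b): assuming $\bfa \in D_R \cdot f\bfa$, one must show that $R_f\bfa$ is $D_R$-simple. Write $\bfa = \delta(f\bfa)$ for some $\delta \in D_R$. The strategy is first to iterate this relation (with care for the noncommutativity of $R$ and $D_R$) to see that $\bfa \in D_R \cdot f^k \bfa$ for every $k \geq 1$, which, combined with the $R_f$-cyclic structure on the distinguished generator, yields $D_R \cdot \bfa = R_f\bfa$. Second, given an arbitrary nonzero $v \in R_f\bfa$, express $v = g f^{-k} \bfa$ and act by $f^k$ to place $g\bfa$ into $D_R \cdot v$; then invoke strong $F$-regularity, which forces $R_f$ to be a simple $D_R$-module, together with Bernstein-Sato admissibility of $f$ (which ensures a well-behaved eigendecomposition of $N_f$), to descend simplicity from the untwisted module $R_f$ to the twist $R_f\bfa$ and conclude $\bfa \in D_R \cdot v$. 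The most delicate step is this final descent, where the specific relation $\bfa \in D_R \cdot f\bfa$ must be used to convert the $D_R$-simplicity of $R_f$ into the $D_R$-simplicity of the twisted module $R_f\bfa$.
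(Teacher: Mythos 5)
Your treatment of (b)$\Leftrightarrow$(c) is fine — it is essentially a re-derivation of Corollary~\ref{cor:principalrootsthresholds} from Theorem~\ref{thm:threshsareroots} together with Proposition~\ref{PropSkoda}, which is what the paper's single citation amounts to. Your argument for (b)$\Rightarrow$(a) is also correct: $D_R\cdot f\bfa$ is nonzero (since $f$ is a nonzerodivisor) and, by Proposition~\ref{prop:BSrootsandalphajumps}, strictly smaller than $D_R\cdot\bfa\subseteq R_f\bfa$, so it witnesses non-simplicity.

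The gap is in (a)$\Rightarrow$(b), and it turns on a false assertion: strong $F$-regularity does \emph{not} make $R_f$ a simple $D_R$-module. What Smith's theorem gives you is that $R$ is $D_R$-simple and that $R_f$ is $D_{R_f}$-simple; but $R_f$ always contains $R$ as a proper nonzero $D_R$-submodule (and indeed the whole point of Theorem~\ref{thm:descthresh} and related results is to analyze the nontrivial chain $\cdots\subsetneq D_R\cdot 1\subsetneq D_R\cdot f^{-1}\subsetneq\cdots$ inside $R_f$). Because of this, the descent you propose in your final step — from ``$R_f$ simple'' to ``$R_f\bfa$ simple'' — cannot go through as stated, and the sketch of how $D_R$-simplicity of $R$ converts to $D_R$-simplicity of $R_f\bfa$ remains unfinished. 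The paper instead establishes two intermediate facts that hold for all $\alpha\in(\ZZ_{(p)})_{<0}$ (independent of whether $\alpha$ is a root): Corollary~\ref{cor-BSadm-bfa-generates}, that $R_f\bfa=D_R\cdot\bfa$ (using only BS admissibility, so your ``iterate $\bfa=\delta(f\bfa)$'' step is both unneeded and more fragile than this); and Lemma~\ref{LemmaIH}, that $D_R\cdot f\bfa$ is the unique simple nonzero $D_R$-submodule — proved by showing $R_f\bfa$ is $D_{R_f}$-simple (Lemma~\ref{LemmaSimplicityNalpha}, via conjugation of a $D_R$-operator $\delta$ with $\delta(g)=1$ by a suitable power of $f$) and then combining with Corollary~\ref{cor-BSadm-chainconstant}. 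With those two facts, simplicity reduces exactly to $D_R\cdot\bfa=D_R\cdot f\bfa$, and Proposition~\ref{prop:BSrootsandalphajumps} finishes. Your proposal would need to replace the false ``$R_f$ is $D_R$-simple'' step with something equivalent to Lemmas~\ref{LemmaSimplicityNalpha} and~\ref{LemmaIH}, which is where the real work lies; it is nontrivial precisely because the $D_R$-action on $R_f\bfa$ is the twisted action $\delta\cdot g\bfa=f^{-\aae}(\delta\cdot f^{\aae}g)\bfa$, so operators killing $g$ in $R$ do not directly kill $g\bfa$ in $R_f\bfa$.
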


Moreover, the finite generation or finite length $R_f \bfa$ as a $D_R$-module  provide information about the distribution of the Bernstein-Sato roots and differential thresholds (see Theorems~\ref{ThmDiscretnessBelow} \& \ref{thm:descthresh}). 

Since in the regular case $M_\alpha$ is an $F$-finite $F$-module, it has finite length as a $D_R$-module \cite{BMSm-hyp} (see also \cite{LyuFMod}). In Theorem \ref{ThmRfaBAlg}, we show that $R_f\bfa$
is a holonomic $D_R$-module for every Bernstein algebra, and so it has finite length as a $D_R$-module.
This is a recently defined class of singular algebras whose $D_R$-modules satisfy the Bernstein inequality
\cite{BernsteinAlgebras}.  We stress that our results regarding $R_f \bfa$ do not use the theory of $F$-modules, which is not available for singular rings.

We point out that we prove some results that are new even in the case where $R$ is regular (e.g.,~Corollary \ref{cor:ell}). In addition, we provide new proofs of previously known theorems (e.g.,~Theorem \ref{thm:rational}).

Axel St\"{a}bler has pointed out to us that Bernstein-Sato polynomials for Cartier modules \cite{Sta1,Sta2} can also be used to give a notion of Bernstein-Sato polynomial for certain singular algebras. Namely, if $R=S/I$ is strongly $F$-regular and $\QQ$-Gorenstein, and $S$ is regular, one may consider $R$ as a Cartier module over the ring $S$, and apply the theory of \textit{ibid.}~to obtain Bernstein-Sato polynomials.
 In contrast, our approach uses the operators on the singular ring itself and is developed for rings that are not necessary strongly $F$-regular. 
In particular, in our approach  an ideal in a strongly $F$-regular ring may have  more roots in the interval $[-1,0]$ than jumping numbers (see Example~\ref{ExToric}).

\subsection{Notation}  We fix a prime number $p$, and $\Zp$ denotes the ring of $p$-adic integers. Unless otherwise stated, all rings have characteristic $p$ and are $F$-finite, meaning that the Frobenius endomorphism is module-finite.  

Given an ideal $\fa$ in a ring $R$, we set $\fa^0 = R$ by convention (even when $\fa = (0)$).

We use multi-index notation: given a tuple of integers $\ul a = (a_1, \ds, a_n) \in \Z^n$ and a tuple of elements $\ul g = (g_1, \ds, g_n) \in S^n$ in a commutative ring $S$, we denote $\ul g^{\ul a} := g_1^{a_1} \cds g_n^{a_n}$. The symbol $\bone$  denotes the tuple $\bone := (1, 1, \ds, 1)$. Recall we have a multi-index binomial theorem: given a commutative ring $S$, tuples $\ul x, \ul y  \in S^n$ and a multi-exponent $\ul a \in (\Z_{\geq 0})^n$ we have
$$(\ul x + \ul y)^{\ul a} = \sum_{0 \leq b_i < a_i} {\ul a \choose \ul b} \ul x^{\ul b} \ul y^{\ul a - \ul b},$$
where ${\ul a \choose \ul b} = \prod_{i = 1}^n {a_i \choose b_i}.$

\subsection*{Acknowledgments} We would like to thank Josep \`Alvarez Montaner, W\'agner Badilla C\'espedes, and Axel St\"abler for comments on an earlier draft of this paper.  We thank the anonymous referee for helpful comments.

	%%%%%%%%%%%%%%%%%%%%%%%%%%%%%%%%%%%%%%%%%%%%%%%%%%%%%%%
	\section{Preliminaries} \
	%%%%%%%%%%%%%%%%%%%%%%%%%%%%%%%%%%%%%%%%%%%%%%%%%%%%%%%
	
	\subsection{Base $p$ and $p$-adic expansions} \label{subscn-padic-basep} 
	
	Fix a prime number $p$.  Let $\Zp$ denote the completion with respect to $(p)$ of $\ZZ_{(p)}$, i.e., the ring of $p$-adic integers.
	Given $\alpha\in \Zp$, there exists a unique sequence of integers $(\alpha_e)_{e\in \NN}$ such that:
	\begin{enumerate}
		\item $0 \leq \alpha_e \leq p-1$, and
		\item $\alpha = \sum_{e \geq 0} p^e \alpha_e$ as a series in $\Zp$.
	\end{enumerate}
	We call $\alpha_e$ the $e$-th $p$-adic digit of $\alpha$. We reserve the notation $\alpha_e$ for this notion. 	We define the $e$-th $p$-adic truncation of $\alpha$ to be the unique integer $n$ with $0\leq n < p^e$ such that $\alpha \equiv n \mod p^e$; equivalently, \[\aae := \alpha_0 + p \alpha_1 + p^2 \alpha_2 + \cdots + p^{e-1} \alpha_{e-1}.  \]

Recall that a $p$-adic integer $\alpha$ is rational if and only if $\alpha\in \ZZ_{(p)}$; this is equivalent to $\alpha$ admitting an eventually periodic sequence of $p$-adic digits. 

A $p$-adic number $\alpha$ has a purely periodic sequence of $p$-adic digits if and only if $\alpha \in \ZZ_{(p)} \cap [-1,0]$. In particular, the sequence of $p$-adic digits of $\alpha$ is periodic of period $e$ if and only if $(1-p^e)\alpha$ is an integer between $0$ and $p^e-1$, and in this case we have $(1-p^e)\alpha =\aae$; our convention is that the period is not necessarily minimal. In particular, if $\alpha \in \ZZ_{(p)} \cap [-1,0]$ with $(1-p^e)\alpha\in \NN$, then $(1-p^{ae})\alpha = \aaae$ for all $a$. Similarly, the sequence of $p$-adic digits of $\alpha$ is eventually periodic of period $e$ if and only if $(1-p^e)\alpha\in \ZZ$. 

We can also extract the $p$-adic truncations of an arbitrary element $\alpha\in \Z_{(p)}$. For our purposes, it suffices to determine for any such $\alpha$ an infinite sequence of $p$-adic truncations. 

\begin{lemma}\label{lem:expn}
	Let $\alpha \in \ZZ_{(p)}$, and let $e\in \ZZ_{>0}$ such that $(p^e-1)\alpha\in \ZZ$. Then, for all $a\gg0$, we have
	\[ \aaae =\begin{cases} (1-p^{ae}) (\alpha- \lceil \alpha \rceil) + \lceil \alpha \rceil &\text{if }\alpha\notin\ZZ_{<0} \\
	p^{ae} + \alpha &\text{if }\alpha\in\ZZ_{<0}.
	\end{cases}\]
\end{lemma}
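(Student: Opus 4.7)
The plan is to verify the claimed equalities directly from the defining characterization of $\aaae$: it is the unique integer in $[0, p^{ae})$ that is congruent to $\alpha$ modulo $p^{ae}$ in $\Zp$. In both cases I will exhibit a candidate integer, check the congruence (which is immediate) and then argue that the candidate lies in $[0, p^{ae})$ once $a$ is large.

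First I would dispose of the easier case $\alpha \in \ZZ_{<0}$. The candidate value $p^{ae} + \alpha$ is visibly an integer congruent to $\alpha$ modulo $p^{ae}$, and the condition $0 \leq p^{ae} + \alpha < p^{ae}$ is equivalent to $-p^{ae} \leq \alpha < 0$, which holds for all $a$ large. The hypothesis $(p^e-1)\alpha \in \ZZ$ is not used here, since $\alpha$ is already an integer.

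For the main case $\alpha \notin \ZZ_{<0}$, the plan is to set $\beta := \alpha - \lceil \alpha \rceil$ and rewrite the proposed formula as $\alpha - p^{ae}\beta$. Either $\alpha \in \ZZ_{\geq 0}$ and $\beta = 0$, or $\alpha$ is a non-integer and $\beta \in (-1, 0)$; in either case $\beta \in (-1, 0]$. The congruence $\alpha - p^{ae}\beta \equiv \alpha \pmod{p^{ae}}$ in $\Zp$ is automatic, so the task reduces to (i) integrality and (ii) the bound $0 \leq \alpha - p^{ae}\beta < p^{ae}$ for $a$ large. Integrality will follow from $(p^e-1)\beta = (p^e-1)\alpha - (p^e-1)\lceil\alpha\rceil \in \ZZ$ together with the telescoping identity $p^{ae} - 1 = (p^e-1)(1 + p^e + \cdots + p^{(a-1)e})$, which yields $(p^{ae}-1)\beta \in \ZZ$ and hence $\alpha - p^{ae}\beta = \lceil\alpha\rceil + (1-p^{ae})\beta \in \ZZ$. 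For the bound, since $-\beta \in [0, 1)$ one has $\alpha - p^{ae}\beta \in [\alpha,\, \alpha + p^{ae})$; the upper inequality $\alpha < p^{ae}(1+\beta)$ holds for $a$ large because $1 + \beta > 0$, and the lower inequality is automatic when $\alpha \geq 0$, while in the remaining subcase $\alpha$ is a non-integer with $\beta < 0$, so $p^{ae}(-\beta)$ eventually dominates $-\alpha$.

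I do not expect any serious obstacle; the argument is essentially an unwinding of definitions and is short enough that its value is mainly organizational. The one conceptual point worth flagging is why the case distinction is unavoidable: for $\alpha \in \ZZ_{<0}$ the ``fractional part'' $\alpha - \lceil \alpha \rceil$ equals $0$, which fails to match the eventual periodic tail $-1 = \sum_{i \geq 0}(p-1)p^i$ of the $p$-adic expansion of $\alpha$. This mismatch is exactly why that case must be handled separately via the formula $p^{ae} + \alpha$.
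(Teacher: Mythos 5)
Your proposal is correct and takes essentially the same approach as the paper: exhibit the candidate, check congruence modulo $p^{ae}$ in $\Zp$, and verify it lands in $[0,p^{ae})$ for $a$ large. The paper disposes of $\alpha\in\ZZ$ as ``clear'' and only works through the non-integer case; you instead split along $\alpha\in\ZZ_{<0}$ versus not (matching the statement) and also spell out the integrality of the candidate via $(p^{ae}-1)\beta\in\ZZ$, a detail the paper leaves implicit — a minor improvement in explicitness, not a different argument.
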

\begin{proof} 
	The claim is clear when $\alpha \in \ZZ$, so take $\alpha \notin \Z$. We first observe that $(1-p^{ae}) (\alpha- \lceil \alpha \rceil) + \lceil \alpha \rceil \equiv \alpha - \lceil \alpha \rceil + \lceil \alpha \rceil \equiv \alpha$ modulo $p^{ae}$. 
		
	Since $\alpha- \lceil \alpha \rceil > -1$, for $a\gg0$ we have $-(\alpha - \lceil \alpha \rceil) + \frac{\lceil \alpha \rceil}{p^{ae}-1} \leq 1$, so $(1-p^{ae})(\alpha- \lceil \alpha \rceil)  + \lceil \alpha \rceil \leq p^{ae}-1$.
	
	We have $\alpha- \lceil \alpha \rceil < 0$, so for $a\gg0$, we have $-\lceil \alpha \rceil \leq (1-p^{ae})(\alpha - \lceil \alpha \rceil)$ and thus  $(1-p^{ae})(\alpha- \lceil \alpha \rceil)  + \lceil \alpha \rceil \geq 0$. 
		\end{proof}

	Given $\lambda \in (0,1]$, there exists a unique sequence of integers $(\digit{\lambda}{e})_{e \geq 1}$
	satisfying the following conditions.
	\begin{enumerate}
		\item $0 \leq \digit{\lambda}{e} \leq p-1$, 
		\item $\lambda = \sum_{e \geq 1} \frac{\digit{\lambda}{e}}{p^e}$, and 
		\item The sequence $(\digit{\lambda}{e})_{e\geq 1}$ is not eventually zero.
	\end{enumerate}
	We call $\digit{\lambda}{e}$ the $e$-th digit of $\lambda$ base $p$, and we call the expression $\lambda = \sum_{e \geq 1} \frac{\digit{\lambda}{e}}{p^e}$ the non-terminating base $p$ expansion of $\lambda$.  
	By convention, we set $\digit{\lambda}{0} = 0$. 
	We adopt notation analogous to standard decimal notation, writing 
	\[ \lambda = . \ \digit{\lambda}{1} : \digit{\lambda}{2} : \cdots : \digit{\lambda}{e}: \cdots \ \ \ (\base p),\] 
	where colons distinguish between consecutive digits.

	For $e \geq 1$, the $e$-th truncation of $\lambda$ in base $p$ is defined as $\tr{\lambda}{e}  :=  \frac{\digit{\lambda}{1}}{p} + \cdots + \frac{\digit{\lambda}{e}}{p^e}$. Note that $p^e \tr{\lambda}{e}$ is the unique integer $n$ with the property that $\lambda \in (n/p^e, (n+1)/p^e]$, and thus $\tr{\lambda}{e} = \frac{\up{p^e \lambda } - 1}{p^e}$; in particular, for all $e \geq 1$ we have $\tr{\lambda}{e}<\lambda$.
	We define $\tr{\lambda}{\infty} := \lambda$, and make the convention $\tr{\lambda}{0} = 0$. 
	
	A number $\lambda\in (0,1]$ has a purely periodic sequence of base $p$ digits if and only if $\lambda \in \ZZ_{(p)} \cap (0,1]$. In particular, the sequence of base $p$ digits of $\lambda$ is periodic of period $e$ if and only if $(p^e-1)\lambda$ is an integer between $0$ and $p^e-1$, and in this case we have $(p^e-1)\lambda =p^e \tr{\lambda}{e}$; our convention is that period is not necessarily minimal. In particular, if $\lambda \in \ZZ_{(p)} \cap (0,1]$ with $(p^e-1)\lambda\in \NN$, then $(p^{ae}-1)\lambda = \tr{\lambda}{ae}$ for all $a$.

%%%%%%%%%%%%%%%%%%%%%%%%%%%%%%%%%%%%%%%%%%%%%%5	
	\subsection{Methods in prime characteristic} \label{Subsec-PrimeChar} \ 
%%%%%%%%%%%%%%%%%%%%%%%%%%%%%%%%%%%%%%%%%%%%%%5		

\begin{definition}\label{DefBasicCharP}
Suppose that $R$ is a ring of prime characteristic $p$.
\begin{enumerate}[(i)]
\item Given an integer $e \geq 0$, we let $F^e_* R$ be the abelian group $R$ endowed with the $R$-module structure coming from restriction of scalars via the $e$-th iterated Frobenius $F^e: R \to R$. Given an element $f \in R$, we sometimes write it as $F^e_* f$ to emphasize that we view it as an element of $F^e_* R$. With this notation, the $R$-module structure of $F^e_* R$ is given by $g F^e_* f = F^e_* (g^{p^e} f)$ for all $f, g \in R$. 
\item If $R$ is a $\NN$-graded ring, $F^e_* R$  is a  $\frac{1}{p^e}\NN$-graded module over $R$, where $\deg( F^e_*r)=\frac{1}{p^e} \deg(r).$
\item We say that $R$ is $F$-finite if $ F^e_* R$ is a finitely generated $R$-module for some $e\geq 1$ (equivalently, for every $e\geq 1$).
\end{enumerate}
\end{definition}

A perfect field is $F$-finite. If $R$ is $F$-finite then the polynomial ring $R[x]$, the power series ring $R [[x]]$, all quotients of $R$ and all localizations of $R$ are also $F$-finite. This means that most rings that arise when doing algebraic geometry over a perfect field are $F$-finite. 

\begin{definition}
Suppose that $R$ is a ring of prime characteristic $p$. 
	\begin{enumerate}[(i)]
\item We say that $R$ is $F$-split if the Frobenius map splits or, equivalently, if the $R$-module 
$ F_*R$ has a nonzero free summand.
\item We say that $R$ is $F$-pure if the Frobenius map is pure. Specifically,  the map
$M\to M\otimes_R  F_*R$ is injective for every $R$-module $M$.
\item Assume that $R$ is a domain. We say that $R$ is strongly $F$-regular if for every nonzero $r\in R$
there exists $e\in \Z_{\geq 0}$ such that the $R$-module homomorphism $\varphi:R\to F^e_* R$
given by $1\mapsto F^e_* r$ splits.
\end{enumerate}
\end{definition}

\begin{remark}\label{RemFpureFsplit} 
Suppose that $R$ is a ring of prime characteristic $p$. 
	\begin{enumerate}[(i)]
\item  If $R$ is an $F$-finite ring  \cite[Corollary $5.3$]{HochsterRoberts}  or a complete local ring \cite[Lemma~1.2]{Fedder}, $R$ is $F$-pure if and only $R$ is $F$-split.
\item In the definitions of $F$-finite, $F$-pure, and $F$-split, the conditions on $F_* R$ can be replaced by 
$F^e_* R$ for some $e\geq 1$, or by $F^e_* R$ for every $e\geq 1$.
\end{enumerate}
\end{remark}

\begin{definition} \label{FrobeniusCartier}
Let $R$ be an  $F$-finite ring and $e \geq 0$ be an integer.
 An additive map $\phi:R\to R$ is a $p^{-e}$-linear map if
$\phi(r^{p^e} f)=r\phi(f)$ for all $r, f \in R$. We denote by $\cC^e_R$ the set of all
$p^{-e}$-linear maps. Then, we have  $\cC^e_R =
\Hom_R(F^e_* R,R)$. Given an ideal $\fa \sq R$ we denote by $\cC^e_R \cdot \fa$ the ideal $\cC^e_R \cdot \fa = (\phi(f) \ | \ \phi \in \cC^e_R, f \in \fa)$. 
\end{definition}

Test ideals were introduced by Hochster and Huneke, and they are  a fundamental tool in the theory of tight closure   \cite{HH90,HoHu2,HoHu2}.
Hara and Yoshida \cite{HY03} extended the notion of test ideals, $\tau_R(\fa^\lambda)$,  to pairs $(R,\fa^\lambda)$, where $ \fa \subseteq R$ is an
ideal and $\lambda \in \mathbb{R}$.
One can approach the theory of test ideals using  Cartier operators   \cite{BMSm2008,BMSm-hyp,TestQGor,BB-CartierMod,Bli13}.
We  now give the definition in terms of Cartier operators for strongly $F$-regular rings  \cite{TTFFRT}.
 
\begin{definition}
Let  $R$ be  a  $F$-finite  strongly $F$-regular ring. Let $\fa\subseteq R$
be an ideal, and $\lambda\in \RR_{>0}.$ The test
ideal of  the pair $(\fa,\lambda)$ is
defined by
$$
\tau_R(\fa^{\lambda})=\bigcup_{e\in\NN}\cC^e_R \cdot \fa^{\lceil p^e \lambda \rceil}.
$$
\end{definition}

The notion of test ideal discussed here is sometimes called the big test ideal.

We note that the chain of ideals $\{ \cC^e_R \cdot \fa^{\lceil p^e \lambda
\rceil}\}$   is increasing, and so,
$\tau_R(\fa^{\lambda})=\cC^e_R \cdot \fa^{\lceil p^e \lambda \rceil}$ for
$e\gg 0$, because $R$ is noetherian.

We now recall well-known properties of test ideals. 
We refer to the work done specifically for strongly $F$-regular rings \cite{TTFFRT}.
For a more  general approach,  we refer to Blickle's work on this subject \cite{Bli13}.

\begin{proposition}[{\cite[Lemma 4.5]{TTFFRT}}]\label{PropBasics}
Let $R$ be an strongly $F$-regular $F$-finite ring, $\fa,\fb \subseteq R$ ideals, and $\lambda,\lambda'\in\RR_{>0}.$
Then,
\begin{enumerate}
\item If $\fa\subseteq \fb,$ then $\tau(\fa^\lambda)\subseteq \tau_R(\fb^{\lambda})$.
\item If $\lambda<\lambda',$ then $\tau(\fa^{\lambda'})\subseteq \tau(\fa^{\lambda})$.
\item There exists $\epsilon>0$ such that $\tau_R(\fa^{\lambda})= \tau_R(\fa^{\lambda'})$
if $\lambda'\in [\lambda,\lambda+\epsilon)$.
\end{enumerate}
\end{proposition}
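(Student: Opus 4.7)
For items (i) and (ii), the plan is to read them off from the definition $\tau_R(\fa^\lambda) = \bigcup_e \cC^e_R\cdot\fa^{\lceil p^e\lambda\rceil}$ directly: $\fa\sq\fb$ gives $\fa^n\sq\fb^n$ term-by-term and hence $\cC^e_R\cdot\fa^n\sq \cC^e_R\cdot\fb^n$; and $\lambda<\lambda'$ yields $\lceil p^e\lambda\rceil\leq \lceil p^e\lambda'\rceil$, hence $\fa^{\lceil p^e\lambda'\rceil}\sq\fa^{\lceil p^e\lambda\rceil}$, which passes to $\cC^e_R\cdot(-)$. In both cases taking the union over $e$ gives the stated containment.

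The content is in (iii). By (ii) the only thing to prove is that some $\epsilon>0$ achieves $\tau_R(\fa^\lambda)\sq \tau_R(\fa^{\lambda'})$ for $\lambda'\in[\lambda,\lambda+\epsilon)$; the case $\fa=0$ is trivial, so assume $\fa\neq 0$. By noetherianity I would first fix $e_0$ with $\tau_R(\fa^\lambda) = \cC^{e_0}_R\cdot\fa^{\lceil p^{e_0}\lambda\rceil}$. If $p^{e_0}\lambda\notin\ZZ$, the choice $\epsilon := \lceil p^{e_0}\lambda\rceil/p^{e_0}-\lambda>0$ works immediately: for $\lambda'\in[\lambda,\lambda+\epsilon)$ the ceilings $\lceil p^{e_0}\lambda\rceil$ and $\lceil p^{e_0}\lambda'\rceil$ coincide, and so the $e=e_0$ term in the union for $\tau_R(\fa^{\lambda'})$ already contains, hence equals, $\tau_R(\fa^\lambda)$.

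The main obstacle is the case $\lambda\in\ZZ[1/p]$: then $p^e\lambda$ is an integer for all $e\geq e_0$ and the preceding $\epsilon$ collapses to $0$. To handle this I will use strong $F$-regularity to boost $\tau_R(\fa^\lambda)$ one step up the chain. Pick any nonzero $g_0\in\fa$; strong $F$-regularity produces $e_1\geq 0$ and $\psi_0\in\cC^{e_1}_R$ with $\psi_0(g_0)=1$. For any $\phi\in\cC^{e_0}_R$ and $f\in\fa^{\lceil p^{e_0}\lambda\rceil}$, the composition $\phi\circ\psi_0\in\cC^{E}_R$ (with $E := e_0+e_1$) satisfies
\[
(\phi\circ\psi_0)(f^{p^{e_1}}g_0) \;=\; \phi\bigl(f\cdot\psi_0(g_0)\bigr) \;=\; \phi(f),
\]
while $f^{p^{e_1}}g_0\in \fa^{p^{e_1}\lceil p^{e_0}\lambda\rceil+1}\sq \fa^{\lceil p^E\lambda\rceil+1}$, the last containment using that $p^{e_1}\lceil p^{e_0}\lambda\rceil$ is an integer $\geq p^E\lambda$. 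This exhibits $\tau_R(\fa^\lambda)\sq \cC^E_R\cdot\fa^{\lceil p^E\lambda\rceil+1}$. Setting $\epsilon := 1/p^E$ yields $\lceil p^E\lambda'\rceil\leq \lceil p^E\lambda\rceil+1$ for all $\lambda'\in[\lambda,\lambda+\epsilon)$, and so the chain $\tau_R(\fa^\lambda)\sq \cC^E_R\cdot\fa^{\lceil p^E\lambda\rceil+1}\sq \cC^E_R\cdot\fa^{\lceil p^E\lambda'\rceil}\sq \tau_R(\fa^{\lambda'})$ closes up the argument. In fact the same boost works regardless of whether $p^{e_0}\lambda$ is an integer, so (iii) admits a uniform treatment if one prefers to avoid case analysis.
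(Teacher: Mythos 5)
The paper itself offers no proof of this proposition: it is attributed to \cite[Lemma 4.5]{TTFFRT} and used as a black box, so there is no in-paper argument to compare against. What you wrote is a self-contained proof, which is worth assessing on its own terms.

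Your proof is correct and uses exactly the ingredients available in the paper. Parts (i) and (ii) are immediate from the definition and the monotonicity ${\fa^m\subseteq\fa^n}$ for $m\geq n$, as you say. For (iii), you correctly observe (and the paper records) that by noetherianity $\tau_R(\fa^\lambda)=\cC^{e_0}_R\cdot\fa^{\lceil p^{e_0}\lambda\rceil}$ for a fixed $e_0$, and the only danger is that $p^e\lambda$ is an integer for all $e\geq e_0$, in which case the naive $\epsilon$ degenerates. Your fix is the right one: strong $F$-regularity produces, for a chosen nonzero ${g_0\in\fa}$, a $\psi_0\in\cC^{e_1}_R$ with $\psi_0(g_0)=1$, and precomposing any $\phi\in\cC^{e_0}_R$ with $\psi_0$ and feeding in $f^{p^{e_1}}g_0$ pushes the exponent strictly past $\lceil p^E\lambda\rceil$ while landing back on $\phi(f)$. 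Every step is routine to verify: $\phi\circ\psi_0\in\cC^{e_0+e_1}_R$; $\psi_0(f^{p^{e_1}}g_0)=f$; $p^{e_1}\lceil p^{e_0}\lambda\rceil$ is an integer $\geq p^E\lambda$, hence $\geq\lceil p^E\lambda\rceil$; and with $\epsilon=1/p^E$ one has $\lceil p^E\lambda'\rceil\leq\lceil p^E\lambda\rceil+1$ whenever $\lambda'\in[\lambda,\lambda+\epsilon)$. Your remark that this boost handles both cases at once is also right, since $p^{e_1}\lceil p^{e_0}\lambda\rceil\geq\lceil p^E\lambda\rceil$ needs no integrality hypothesis on $p^{e_0}\lambda$; so the case split is purely expository. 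The only hypothesis you lean on implicitly is that $R$ is a domain (built into the paper's definition of strongly $F$-regular), which ensures a nonzero $g_0\in\fa$ can be used in the splitting; you flag $\fa=0$ separately, which is the right hygiene.
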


Every ideal $\fa\subseteq R$ is  associated to a family of test ideals $\tau(\fa^{\lambda})$
parameterized by real numbers $\lambda \in \mathbb{R}_{>0}$ which forms a decreasing nested chain of ideals as $\lambda$ increases.

\begin{definition}
Let $R$ be an $F$-finite strongly $F$-regular ring and let $\fa\subseteq R$ be an ideal. A real number $\lambda\geq 0$ is an $F$-jumping number
of $\fa$ if
$$
\tau_R(\fa^{\lambda})\neq \tau_R(\fa^{\lambda-\epsilon})
$$
for every $\epsilon>0.$
\end{definition}

%%%%%%%%%%%%%%%%%%%%%%%%%%%%%%%%%%%%%%%%%%%%%%5	
	\subsection{Basics of differential operators} \label{Subsec-BAsicDiffOps} \ 
%%%%%%%%%%%%%%%%%%%%%%%%%%%%%%%%%%%%%%%%%%%%%%5

In this section we briefly recall the basic notions  on the theory of rings
of differential operators  introduced  by Grothendieck \cite[\S 16.8]{EGAIV}.

Let $R$ be an $\KK$-algebra, where $\KK$ is a field.
 The ring of $\KK$-linear  differential
operators of $R$ is the subring $D_{R|\KK}\subseteq \Hom_{\ZZ}(R,R)$
whose elements are characterized inductively as follows.  The differential
operators of order zero are $D^0_{R|\KK}= \Hom_R(R,R)$.
A linear map
$\delta\in \Hom_\KK(R,R)$ is an operator of order less than or equal
to $\ell$ if $\delta r-r\delta$ is an operator of order less
than or equal to $\ell-1.$ We write $D^{\ell}_{R|\KK}$ for the collection of differential operators of order at most $\ell$. We define $D_{R|\KK}=\bigcup_{\ell\in\NN}D^{\ell}_{R|\KK}$, which is a ring with composition as the multiplication.

%Let $\mu: R\otimes_\KK R\to R$ be the multiplication map and $\Delta_{R|\KK}$ its kernel.
%The module of principal parts of $R$ is defined as 
%$$\mathcal{P}^n_{R | \KK}=\frac{R\otimes_\KK R }{\Delta^{n+1}_{R|\KK}}.$$
%The differential operators of order $n$ are represented by the module of principal parts. Specifically, there is a natural $R$-module isomorphism  $D^n_{R|\KK} \cong \Hom_R(\mathcal{P}^{n}_{R|\KK},R)$. 
%We recall that $\End_\KK(R, R)$ has an structure of  $R \otimes_\KK R$-module, where $(r_1\otimes r_2 \cdot \delta)  (f)=r_1\delta (r_2 f)$. Then, $D^n_{R|\KK}=\Ann_{\End_\KK(R, R)}     \Delta^{n+1}_{R|\KK}.$

\begin{example}
Let $R$ be either the polynomial ring $\KK[x_1,\dots,x_n] $ or the formal power
series ring $\KK[[x_1,\dots,x_n]]$ with coefficients in a ring $\KK$. The
ring of $\KK$-linear differential operators is:
$$D_{R|\KK}= R \left \langle \hskip 2mm \frac{1}{t!}\frac{d^t}{dx_i^t} \hskip 2mm | \hskip 2mm i=1,\dots,n; \hskip 2mm t\in \NN \right\rangle ,$$
that is, the free $R$-module generated by the differential operators $\frac{1}{t!}\frac{d^t}{dx_i^t}$.
We recall that $\frac{1}{t!}\frac{d^t}{dx_i^t}$ acts on the monomials of $R$ by
$$
\frac{1}{t!}\frac{d^t}{dx_i^t} \cdot x_1^{\alpha_1}\cdots x_n^{\alpha_n}=
\begin{cases}
\binom{\alpha_i}{t} x_1^{\alpha_1}\cdots x^{\alpha_i-t}_i\cdots  x_n^{\alpha_n} & \alpha_i< t\\
0 &  \alpha_i\geq  t.
\end{cases}
$$
Furthermore, if $\KK$ is a field of characteristic zero, we have
 $$D_{R|\KK}= R \left \langle  \frac{d}{dx_1},\dots, \frac{d}{dx_n} \right\rangle.$$
\end{example}

\begin{example}
Let $S= \KK[x_1,\dots,x_d]$ be a polynomial ring over a field $\KK$ of
characteristic zero and, given an ideal $I\subseteq S$, set $R=S/I$.
Then,  the ring of $\KK$-linear
differential operators of $R$ is characterized in terms of the differential operators in $S$  \cite[Theorem 5.13]{MCR}. Specifically,  $$D_{R|\KK}=
\frac{\{ \delta \in D_{S|\KK} \hskip 2mm | \hskip 2mm \delta (I)
\subseteq I \}}{ID_{S|\KK}}$$
\end{example}

Let $R$ be an $F$-finite ring (not necessarily regular). We denote by $D_R$ the ring of $\F_p$-linear differential operators on $R$. In this context, we have that
$$D_R = \bigcup_{e = 0}^\infty D^{(e)}_R$$
where $D^{(e)}_R = \End_{R^{p^e}}(R)$ and, if $\KK$ is a perfect field contained in $R$, then the ring $D_{R| { \KK}}$ of {$\KK$}-linear differential operators on $R$ agrees with $D_R$ \cite{Yek,SVdB97}. Given an integer $e \geq 0$, we call $D^{(e)}_R$ the ring of differential operators of level $e$. We note that for any $F$-finite ring, the formation of $D^{(e)}_R$ commutes with localization. Additionally, if $R$ is $F$-finite and local, then the formation of $D^{(e)}_R$ commutes with completion. Both of these facts follow from description of $D_R$ in terms of the level filtration above and the behavior of Hom under flat base change.

We will also use a result of Smith that states that, whenever $R$ is an $F$-split domain, $R$ is simple as a $D_R$-module if and only if $R$ is strongly $F$-regular \cite{Smi95}.

%%%%%%%%%%%%%%%%%%%%%%%%%%%%%%%%%%%%%%%%%%%%%%5	
	\subsection{Differential operators and $V$-filtrations} \label{Subsec-DiffOps_VFilt} \ 
%%%%%%%%%%%%%%%%%%%%%%%%%%%%%%%%%%%%%%%%%%%%%%5

	We introduce  a few facts about the relationship between $D_R$ and $D_{R[{\ul{t}}]}$, where $R[\ul{t}] = R[t_1, \ds, t_r]$ is a polynomial ring over $R$. However, these facts are only used in Section~\ref{scn-BSroots-via-V}, where we show that the definition of Bernstein-Sato roots we give in Section~\ref{Sec-BSR} agrees with the definition that one arrives to by considering the $D$-module constructions of Bitoun, \Mustata,  and the third author in the regular case \cite{Bitoun2018, Mustata2009,QG19}. For this reason, we encourage the reader to skip the remaining of this section until they want to read Section~\ref{scn-BSroots-via-V}.
	
	If $\xi \in D_R$ is a differential operator on $R$, then $\xi$ acts on $R[\ul{t}]$ by the formula $\xi \cdot (g \ul{t}^{\ul{k}}) = (\xi \cdot g) \ul{t}^{\ul{k}}$ for $g \in R$ and $\ul{k} \in (\Z_{\geq 0})^r$, and one checks that this exhibits $\xi$ as a differential operator on $R[\ul{t}]$. Similarly, if $\delta \in D_{\F_p[\ul{t}]}$ is a differential operator on $\F_p[\ul{t}]$ then we can think of $\delta$ as a differential operator on $R[\ul{t}]$ via the action $\delta \cdot g \ul{t}^{\ul{k}} = g (\delta \cdot \ul{t}^{\ul{k}})$. These observations yield a ring homomorphism $D_R \otimes_{\F_p} D_{\F_p[t]} \to D_{R[\ul{t}]}$; we observe that it respects the level filtration and it therefore induces maps $D^{(e)}_R \otimes_{\F_p} D^{(e)}_{\F_p[\ul{t}]} \to D^{(e)}_{R[\ul{t}]}$. We want to show that these maps are isomorphisms.

\begin{lemma} \label{lemma-End-and-tensor}
	Let $S$ be a commutative ring, $G$ be a finite free $S$-module, and $M$ be an arbitrary $S$-module. Then the natural map $\End_S(M) \otimes_S \End_S(G) \to \End_S(M \otimes_S G)$ that sends $[\phi \otimes \psi \mapsto [u \otimes v \mapsto \phi(u) \otimes \psi(v)]]$ is an isomorphism. 
\end{lemma}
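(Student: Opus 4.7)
The plan is to reduce to the case where $G$ is a standard free module by choosing a basis, and then verify that after natural identifications the map becomes the identity map of a matrix algebra.

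First I would fix an $S$-basis $e_1, \ldots, e_n$ of $G$, giving $G \cong S^n$. Under this identification, $\End_S(G) \cong M_n(S)$ and the matrix units $E_{ij}$ form an $S$-basis of $\End_S(G)$. In parallel, $M \otimes_S G \cong M^n$ via $u \otimes e_j \mapsto (0, \ldots, u, \ldots, 0)$ with $u$ in slot $j$.

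Next I would unwind both sides as modules over $\End_S(M)$:
\begin{itemize}
\item Since $\End_S(G) \cong S^{n^2}$ is finite free, distributivity of tensor product over direct sums gives
$\End_S(M) \otimes_S \End_S(G) \cong \bigoplus_{i,j} \End_S(M) \cdot (1 \otimes E_{ij}) \cong M_n(\End_S(M))$,
where a pure tensor $\phi \otimes E_{ij}$ is sent to the matrix with $\phi$ in position $(i,j)$ and zeros elsewhere.
\item The standard identification $\End_S(M^n) \cong M_n(\End_S(M))$ sends a matrix $(\phi_{ij})$ to the endomorphism $(u_1, \ldots, u_n) \mapsto \bigl(\sum_j \phi_{ij}(u_j)\bigr)_i$.
\end{itemize}

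Then I would trace the natural map through these identifications. A pure tensor $\phi \otimes E_{ij}$ goes to the endomorphism of $M \otimes_S G \cong M^n$ given by $u \otimes e_k \mapsto \phi(u) \otimes E_{ij}(e_k) = \delta_{jk}\, \phi(u) \otimes e_i$, which as a map $M^n \to M^n$ sends $(u_1, \ldots, u_n)$ to the tuple with $\phi(u_j)$ in slot $i$ and zero elsewhere. Under the identification $\End_S(M^n) \cong M_n(\End_S(M))$, this is precisely the matrix with $\phi$ in position $(i,j)$ and zero elsewhere, agreeing with the element produced on the left-hand side. Hence, after the identifications on both sides, the map in question is the identity on $M_n(\End_S(M))$, which is manifestly bijective.

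There is no serious obstacle here: the entire content is multilinear bookkeeping, made possible by the finite freeness of $G$ (which is exactly what allows one to distribute $\otimes$ and $\End$ through finite direct sums). The only thing worth any care is verifying that the distinct ways of identifying $\End_S(M) \otimes_S \End_S(G)$ and $\End_S(M \otimes_S G)$ with $M_n(\End_S(M))$ are compatible with the natural map described in the statement, which we have done above on a generating set of pure tensors.
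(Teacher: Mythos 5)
Your proof is correct, but it takes a genuinely different route from the paper. The paper's proof is coordinate-free: it establishes a chain of natural isomorphisms
\[
\End_S(M) \otimes_S \End_S(G) \;\simto\; \Hom_S(G, G \otimes_S \End_S(M)) \;\simto\; \Hom_S(G, \Hom_S(M, M \otimes_S G)) \;\simto\; \End_S(M \otimes_S G),
\]
where the first two arrows use the finite freeness of $G$ to move $G$ and $\Hom(G,-)$ in and out of tensor products, and the last is tensor-hom adjunction; one then checks the composite agrees with the stated map. Your proof instead fixes a basis of $G$, rewrites both sides as the matrix algebra $M_n(\End_S(M))$, and verifies the map is the identity under those identifications. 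Both are valid; the paper's approach makes naturality in $M$ and $G$ manifest and pinpoints precisely where finite freeness enters (in the hom-tensor interchanges), while yours is more elementary and makes the element-level behavior completely transparent, at the cost of introducing a basis and an extra (routine) check that the final answer is independent of that choice.
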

\begin{proof}
	We have the following natural isomorphisms
	\begin{align*}
	\Hom_S(M, M) \otimes_S \Hom_S(G, G) & \simto \Hom_S(G, G \otimes_S \Hom_S(M, M))  \\
	& \simto \Hom_S(G, \Hom_S(M, M \otimes_S G)) \\
	& \simto \Hom_S(M \otimes_S G, M \otimes_S G),
	\end{align*}
	where the last isomorphism comes from the tensor-hom adjunction. We then check that the composition of these isomorphisms is the morphism given in the statement.
\end{proof}

\begin{lemma} \label{lemma-D_R-addvar}
	Let $R$ be an $F$-finite ring. The morphisms $D^{(e)}_R \otimes_{\F_p} D^{(e)}_{\F_p[\ul{t}]} \to D^{(e)}_{R[\ul{t}]}$ and \\ $D_R \otimes_{\F_p} D_{\F_p[\ul{t}]} \to D_{R[\ul{t}]}$ previously defined are isomorphisms. 
\end{lemma}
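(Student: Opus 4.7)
My plan is to reduce everything to the level $e$ statement and then pass to the filtered union.

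\textbf{Plan for the level-$e$ isomorphism.} Set $S := R^{p^e}[\ul{t}^{p^e}]=R^{p^e}\otimes_{\F_p}\F_p[\ul{t}^{p^e}]$, which is the $p^e$-th power subring of $R[\ul{t}]$, so that by definition $D^{(e)}_{R[\ul{t}]} = \End_S(R[\ul{t}])$. First I would record the two natural descriptions of $R[\ul{t}]$ as an $S$-module that underlie the argument: the factorization
$$R[\ul{t}] \;\cong\; \bigl(R\otimes_{R^{p^e}} S\bigr)\otimes_S \bigl(S\otimes_{\F_p[\ul{t}^{p^e}]}\F_p[\ul{t}]\bigr),$$
in which the second factor is a finite free $S$-module of rank $p^{er}$ with basis $\{\ul{t}^{\ul{a}}: 0\le a_i<p^e\}$. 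This setup is perfectly tuned to apply Lemma~\ref{lemma-End-and-tensor} with $M = R\otimes_{R^{p^e}} S$ and $G = S\otimes_{\F_p[\ul{t}^{p^e}]}\F_p[\ul{t}]$, yielding a canonical isomorphism
$$\End_S\bigl(R\otimes_{R^{p^e}} S\bigr)\otimes_S \End_S\bigl(S\otimes_{\F_p[\ul{t}^{p^e}]}\F_p[\ul{t}]\bigr) \;\simto\; D^{(e)}_{R[\ul{t}]}.$$

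\textbf{Identifying the two endomorphism rings.} Next I would simplify each factor by flat base change for $\End$: if $A\to B$ is flat and $M$ is a finitely presented $A$-module, then the natural map $B\otimes_A \End_A(M)\simto \End_B(B\otimes_A M)$ is an isomorphism. Applied to the flat extensions $R^{p^e}\hookrightarrow S$ and $\F_p[\ul{t}^{p^e}]\hookrightarrow S$, together with the observations that $R$ is finitely presented over $R^{p^e}$ (from $F$-finiteness plus noetherianity) and that $\F_p[\ul{t}]$ is even finite free over $\F_p[\ul{t}^{p^e}]$, this yields
$$\End_S\bigl(R\otimes_{R^{p^e}} S\bigr) \;\cong\; S\otimes_{R^{p^e}} D^{(e)}_R, \qquad \End_S\bigl(S\otimes_{\F_p[\ul{t}^{p^e}]}\F_p[\ul{t}]\bigr) \;\cong\; S\otimes_{\F_p[\ul{t}^{p^e}]} D^{(e)}_{\F_p[\ul{t}]}.$$
Tensoring these over $S$, a bookkeeping of base rings using $S = R^{p^e}\otimes_{\F_p}\F_p[\ul{t}^{p^e}]$ collapses the compound tensor product to $D^{(e)}_R\otimes_{\F_p} D^{(e)}_{\F_p[\ul{t}]}$, giving the claimed isomorphism. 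I would then trace through the construction to verify that the abstract isomorphism produced in this way coincides with the explicit map of the statement, and that composition on the tensor side (which is componentwise) matches composition of operators on $R[\ul{t}]$, so that this is a ring isomorphism and not merely an additive one.

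\textbf{From level $e$ to the full rings.} Because the level-$e$ isomorphisms are natural and the ring $D$ is the union of its levels, the diagrams
$$D^{(e)}_R\otimes_{\F_p}D^{(e)}_{\F_p[\ul{t}]} \;\hookrightarrow\; D^{(e+1)}_R\otimes_{\F_p}D^{(e+1)}_{\F_p[\ul{t}]}$$
are compatible with the corresponding inclusions of $D^{(e)}_{R[\ul{t}]}\hookrightarrow D^{(e+1)}_{R[\ul{t}]}$. Taking the filtered colimit on each side, and using that tensor products over $\F_p$ commute with filtered colimits so that $D_R\otimes_{\F_p}D_{\F_p[\ul{t}]}=\varinjlim_e (D^{(e)}_R\otimes_{\F_p}D^{(e)}_{\F_p[\ul{t}]})$, yields the global isomorphism $D_R\otimes_{\F_p} D_{\F_p[\ul{t}]}\simto D_{R[\ul{t}]}$.

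\textbf{Expected main obstacle.} The conceptual core of the argument is the applicability of Lemma~\ref{lemma-End-and-tensor}, which is essentially formal. The only substantive technical point is the base-change identity for $\End_S(R\otimes_{R^{p^e}} S)$: this requires $R$ to be finitely presented as an $R^{p^e}$-module. I would need to explicitly invoke the standing noetherian hypothesis on $R$ (which forces $R^{p^e}$ noetherian via the Frobenius isomorphism $R^{p^e}\cong R$, and then $F$-finiteness upgrades finite generation to finite presentation). Beyond that, the only real care needed is in keeping straight the left versus right $S$-module structures on the two tensor factors so that the final identification respects ring composition.
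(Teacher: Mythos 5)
Your proof is correct and follows essentially the same route as the paper's: you set $S = R[\ul t]^{p^e}$, base-change both endomorphism rings up to $S$ by flatness, and then merge via Lemma~\ref{lemma-End-and-tensor} with the same $M$ and $G$, passing to the filtered union at the end. The one point you flag — needing $R$ finitely presented over $R^{p^e}$, which requires invoking noetherianity on top of $F$-finiteness — is a genuine hypothesis the paper uses silently, and it is good that you noticed it.
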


\begin{proof}
	Fix an $e \geq 0$. We then have
	\begin{align*}
		\End_{R^{p^e}} (R) \otimes_{\F_p} \End_{\F_p[\ul t]^{p^e}} (\F_p [ \ul t]) & \cong \End_{R^{p^e}} (R) \otimes_{R^{p^e}} R^{p^e} \otimes_{\F_p} \F_p[\ul t]^{p^e} \otimes_{\F_p[\ul t]^{p^e}} \End_{\F_p[\ul t]^{p^e}} (\F_p[\ul t]).
	\end{align*}
	Now note that there is an algebra isomorphism $R^{p^e} \otimes_{\F_p} \F_p[\ul t]^{p^e} \cong R[\ul t]^{p^e}$, and recall that Hom commutes with flat base change whenever the source module is finitely presented. We thus we have 
	\begin{align*}
		\End_{R^{p^e}} (R) & \otimes_{R^{p^e}} R^{p^e} \otimes_{\F_p} \F_p[\ul t]^{p^e} \otimes_{\F_p[\ul t]^{p^e}} \End_{\F_p[\ul t]^{p^e}} (\F_p[\ul t]) \\
			& \cong \big( \End_{R^{p^e}} (R) \otimes_{R^{p^e}} R[\ul t]^{p^e} \big) \otimes_{R[\ul t]^{p^e}} \big( R[\ul t]^{p^e} \otimes_{\F_p[\ul t]^{p^e}} \End_{\F_p[\ul t]^{p^e}} (\F_p[\ul t]) \big) \\
			& \cong \End_{R[\ul t]^{p^e}} (R \otimes_{R^{p^e}} R[\ul t]^{p^e}) \otimes_{R[\ul t]^{p^e}} \End_{R[\ul t]^{p^e}} (R[\ul t]^{p^e} \otimes_{\F_p[\ul t]^{p^e}} \F_p[\ul t]) \\
			& \cong \End_{R[\ul t]^{p^e}} (R \otimes_{R^{p^e}} R[\ul t]^{p^e} \otimes_{\F_p[\ul t]^{p^e}} \F_p[\ul t]) \tag*{(Lemma \ref{lemma-End-and-tensor})} \\
			& \cong \End_{R[\ul t]^{p^e}} (R \otimes_{R^{p^e}} R^{p^e} \otimes_{\F_p} \F_p[\ul t]^{p^e} \otimes_{\F_p[\ul t]^{p^e}} \F_p[\ul t]) \\
			& \cong \End_{R[\ul t]^{p^e}} (R[\ul t]),
	\end{align*}
	and one checks that this composition agrees with the morphism in the statement. The statement for $D_{R[\ul{t}]}$ follows.
\end{proof}

It follows that we identify $D^{(e)}_R$ and $D^{(e)}_{\F_p[\ul{t}]}$ with subrings of $D^{(e)}_{R[\ul{t}]}$; note that they commute with each other. 

Let $I$ denote the ideal $I = (t_1, \ds, t_r) \sq R[\ul{t}]$. For every $e \geq 0$ and $i \in \Z$ we denote
$$V^i D^{(e)}_{R[\ul{t}]} := \{\xi \in D^{(e)}_{R[\ul{t}]} : \xi \cdot I^j \sq I^{j + i} \text{ for all } j \in \Z\},$$
where we adopt the convention that $I^n = R[\ul{t}]$ for all $n \leq 0$. We define $V^i D_{R[\ul{t}]}$ similarly. 

We give $R[\ul{t}]$ the grading that places $R$ in degree zero and gives each variable $t_i$ degree one. Because $R$ is $F$-finite, so is $R[\ul{t}]$, and therefore $D^{(e)}_{R[\ul{t}]} = \End_R(F^e_* R[\ul{t}], F^e_* R[\ul{t}])$ acquires a $\Z$-grading, which also induces a $\Z$-grading on $D_{R[\ut]}$. Given $d \in \Z$ we denote by $R[\ut]_d$ (resp. $(D^{(e)}_{R[\ut]})_d$, $(D_{R[\ut]})_d$) the set of homogeneous elements of $R[\ut]$ (resp. $D^{(e)}_{R[\ut]}$, $D_{R[\ut]}$) of degree $d$. We also denote $R[\ul t]_{\geq d} = \bigoplus_{i = d}^\infty R[\ul t]_i$, $(D^{(e)}_{R[\ul t]})_{\geq d} := \bigoplus_{i = d}^\infty (D^{(e)}_{R[\ul t]})_i$ and $(D_{R[\ul t]})_{\geq d} = \bigoplus_{i = d}^\infty (D_{R[\ul t]})_{i}$. In particular, we have $I^n = R[\ul{t}]_{\geq n}$ for all $n \in \Z$. Note that the previous isomorphisms respect the gradings, and they therefore induce isomorphisms $D^{(e)}_R \otimes_{\F_p} (D^{(e)}_{\F_p[\ut]})_d \simto (D^{(e)}_{R[\ut]})_d$ and $D_R \otimes_{\F_p} (D_{\F_p[\ut]})_d \simto (D_{R[\ut]})_d$.

\begin{lemma} \label{lemma-Vfilt-degree}
	Let $e$ and $i$ be integers with $e \geq 0$. Then:
	\begin{enumerate}
		\item We have $V^i D^{(e)}_{R[\ut]} = (D^{(e)}_{R[\ut]})_{\geq i}$ and $V^i D_{R[\ut]} = (D_{R[\ut]})_{\geq i}$.
		\item If $i \geq 0$, then we also have $V^i D^{(e)}_{R[\ut]} = (D^{(e)}_{R[\ut]})_0 I^i$ and $V^i D_{R[\ut]} = (D_{R[\ut]})_0 I^i$. 
	\end{enumerate}
\end{lemma}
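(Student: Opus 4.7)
I will prove the statement at each fixed level $e$; since $D_{R[\ut]} = \bigcup_{e \geq 0} D^{(e)}_{R[\ut]}$ and this exhaustion is compatible with the $V$-filtration, the grading, and the product $(-)_0 \cdot I^i$, the statement for $D_{R[\ut]}$ then follows by taking unions.

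For part (1), the inclusion $(D^{(e)}_{R[\ut]})_{\geq i} \subseteq V^i D^{(e)}_{R[\ut]}$ is a direct verification: a homogeneous $\xi$ of degree $d \geq i$ sends $R[\ut]_j$ into $R[\ut]_{j+d}$, and an elementary case analysis on the signs of $j$, $j+d$, and $j+i$ shows $\xi(I^j) \subseteq I^{j+i}$. For the reverse inclusion, since $R[\ut]$ is finitely generated over $R[\ut]^{p^e}$, every element of $D^{(e)}_{R[\ut]} = \End_{R[\ut]^{p^e}}(R[\ut])$ is a finite sum of homogeneous components. Given $\xi = \sum_d \xi_d \in V^i D^{(e)}_{R[\ut]}$, I will show $\xi_d = 0$ for each $d < i$ by verifying $\xi_d$ vanishes on $R[\ut]_j$ for every $j \geq 0$. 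When $j \geq -i$, the constraint $\xi(R[\ut]_j) \subseteq R[\ut]_{\geq j+i}$ combined with the grading forces the degree-$(j+d)$ component $\xi_d(R[\ut]_j)$ to vanish, as $j + d < j + i$; when $0 \leq j < -i$ (only possible if $i < 0$), one has $j + d < j + i \leq 0$, so $R[\ut]_{j+d} = 0$ and $\xi_d(R[\ut]_j) = 0$ automatically.

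For part (2), the inclusion $(D^{(e)}_{R[\ut]})_0 \cdot I^i \subseteq V^i D^{(e)}_{R[\ut]}$ is immediate: for $\eta \in (D^{(e)}_{R[\ut]})_0$ and $f \in I^i$, one computes $(\eta f)(I^j) = \eta(f I^j) \subseteq \eta(I^{i+j}) \subseteq I^{i+j}$. Combined with (1), the reverse inclusion reduces to showing $(D^{(e)}_{R[\ut]})_d \subseteq (D^{(e)}_{R[\ut]})_0 \cdot R[\ut]_d$ for every $d \geq 0$. Using the tensor decomposition $D^{(e)}_{R[\ut]} \cong D^{(e)}_R \otimes_{\F_p} D^{(e)}_{\F_p[\ut]}$ from Lemma \ref{lemma-D_R-addvar} and the identity $R[\ut]_d \cong R \otimes_{\F_p} \F_p[\ut]_d$, this reduces to the analogous statement for $R = \F_p$, namely $(D^{(e)}_{\F_p[\ut]})_d = (D^{(e)}_{\F_p[\ut]})_0 \cdot \F_p[\ut]_d$. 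The latter follows from the well-known fact that $D^{(e)}_{\F_p[\ut]}$ is a free right $\F_p[\ut]$-module on the divided-power operators $\{\partial^{[\ul b]} : 0 \leq b_\ell < p^e\}$, each of degree $-|\ul b|$: extracting the degree-$d$ piece of $D^{(e)}_{\F_p[\ut]} = \bigoplus_{\ul b} \partial^{[\ul b]} \F_p[\ut]$ yields $(D^{(e)}_{\F_p[\ut]})_d = \bigoplus_{\ul b} \partial^{[\ul b]} \F_p[\ut]_{d+|\ul b|}$, which for $d \geq 0$ coincides with $\bigl(\bigoplus_{\ul b} \partial^{[\ul b]} \F_p[\ut]_{|\ul b|}\bigr) \cdot \F_p[\ut]_d = (D^{(e)}_{\F_p[\ut]})_0 \cdot \F_p[\ut]_d$.

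The main obstacle is the clean handling of sign case distinctions in part (1) when $i < 0$, together with verifying the multiplicative identity in part (2) via the right-module basis of divided-power operators.
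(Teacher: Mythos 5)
Your proof is correct. Part (1) is essentially the same argument as the paper's; your observation that every element of $D^{(e)}_{R[\ut]}$ has only finitely many homogeneous components (since $R[\ut]$ is module-finite over $R[\ut]^{p^e}$) is a nice point the paper leaves implicit, and the sign case analysis, while slightly more elaborate than needed given the convention $I^n = R[\ut]_{\geq n}$ for all $n\in\ZZ$, is sound.

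Part (2) takes a genuinely different route from the paper after the common reduction to $R = \F_p$ via Lemma \ref{lemma-D_R-addvar}. The paper works with the single distinguished operator $\sigma^{(e)}$ of degree $-r(p^e-1)$ and the $\F_p$-spanning set $\{\ut^{\ul b}\sigma^{(e)}\ut^{\ul a}\}$, showing each such operator of degree $i\geq 0$ factors as an operator of degree zero times a monomial $\ut^{\ul c}$ with $|\ul c|=i$. You instead invoke the standard right-$\F_p[\ut]$-module basis of divided-power operators $\partial^{[\ul b]}$, $0\leq b_\ell < p^e$, read off $(D^{(e)}_{\F_p[\ut]})_d = \bigoplus_{\ul b}\partial^{[\ul b]}\F_p[\ut]_{d+|\ul b|}$, and use that $\F_p[\ut]_{m}\cdot\F_p[\ut]_d = \F_p[\ut]_{m+d}$ for $d\geq 0$. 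Your argument is shorter and more structural, but relies on the (correct, well-known) free-module structure of $D^{(e)}$ over the polynomial ring in positive characteristic, whereas the paper's construction via $\sigma^{(e)}$ is entirely self-contained and explicit at the level of matrix units in $\End_{R[\ut]^{p^e}}(R[\ut])$. Both buy the same conclusion; neither has a gap.
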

\begin{proof}
	It is enough to prove the claims for $D^{(e)}_{R[\ul{t}]}$, and we begin with (i). The inclusion $V^i D^{(e)}_{R[\ut]} \supseteq (D^{(e)}_{R[\ut]})_{\geq i}$ follows from the fact that $I^j = R[\ul{t}]_{\geq j}$ for all $j \in \Z$. For the other inclusion, suppose that $\xi \in V^i D^{(e)}_{R[\ut]}$, and therefore $\xi \cdot R[\ul{t}]_j \sq R[\ul{t}]_{\geq j + i}$ for all $j \in \Z$. Let $\xi = \sum_{k \in \Z} \xi_k$ where $\xi_k$ is the homogeneous component of degree $k$ for $\xi$, and observe that if $g \in R[\ul{t}]$ is homogeneous of degree $d$, then $\xi_k \cdot g$ is the degree $k + d$ homogeneous component of $\xi \cdot g \in R[\ul{t}]_{\geq i + d}$. We conclude that $\xi_k \cdot g = 0$ whenever $k < i$, which proves the statement.
	
	We now claim that $(D^{(e)}_{R[\ut]})_{i} = (D^{(e)}_{R[\ut]})_0 R[t]_{i}$ for all $i \geq 0$, which together with part (i) gives part (ii). Since $(D^{(e)}_{R[\ut]})_i = D^{(e)}_R \otimes_{\F_p} (D^{(e)}_{\F_p[\ut]})_i$ and $(D^{(e)}_{R[\ut]})_0 = D^{(e)}_R \otimes_{\F_p} (D^{(e)}_{\F_p[\ut]})_0$, we reduce to the case $R = \F_p$. Let $\sigma^{(e)} \in D^{(e)}_{\F_p[\ut]}$ denote the unique operator of level $e$ such that 
	$$\sigma^{(e)} \cdot \ut^{\ul{a}} = \begin{cases} 1 \text{ for } \ul{a} = (p^e-1, \ds, p^e-1) \\ 0 \text{ otherwise} \end{cases}$$
	for all $\ul{a} \in \{0, \ds, p^e-1\}^r$; observe $\sigma^{(e)}$ is homogeneous of degree $-r(p^e-1)$. Then $D^{(e)}_{\F_p[\ut]}$ is spanned over $\F_p$ by the operators of the form $\ut^{\ul{b}} \sigma^{(e)} \ut^{\ul a}$, where $\ul b$ ranges through $(\Z_{\geq 0})^r$ and $\ul a$ ranges through $\{0, \ds, p^e-1\}^r$, and therefore $D^{(e)}_{\F_p[\ut]}$ is spanned by those for which $|\ul b| - r(p^e - 1) + |\ul a| = i$ or, equivalently, $|\ul a | - i = r(p^e - 1) - |\ul b|$. 
	If $\ut^{\ul b} \sigma^{(e)} \ut^{\ul a} \in (D_{R[\ut]})_i$, then there is a multi-exponent $\ul c \in N_0^r$ with $|\ul c| = i$ such that $a_j \geq c_j$ for every $i$,  because $r (p^e - 1) - |\ul b| \geq 0$. We can thus write $\ut^{\ul b} \sigma^{(e)} \ut^{\ul a} = \ut^{\ul b} \sigma^{(e)} \ut^{\ul a - c} \ut^{\ul c}$, which proves the claim.  
\end{proof}

Given an integer $i \geq 0$, we denote by $s_{p^i}$ the unique $R$-linear operator on $R[\ut]$ with the property that
$$s_{p^i} \cdot \ut^{\ul{a}} = (- |\ul{a}| - r)_i  \ \ut^{\ul{a}}$$
for all $\ul{a} \in (\Z_{\geq 0})^r$, where $(-)_i$ denotes $i$-th $p$-adic digit. In the following lemma we aggregate some properties of these operators.
\begin{lemma} \label{lemma-spi-properties}
	We have:
	\begin{enumerate}[(i)]
		\item For all integers $i \geq 0$ and $e > i$, 
		$$s_{p^i} \cdot \ut^{(p^e - 1)\bone - \ul{a}} = |\ul{a}|_i \ut^{(p^e - 1) \bone - \ul a}.$$
		\item For all integers $i \geq 0$, $s_{p^i}$ is in $(D^{(i + 1)}_{R[\ut]})_0$.
		
		\item The operators $s_{p^i}$ commute with each other. 
		
		\item For all integers $i \geq 0$ we have $(s_{p^i})^p = s_{p^i}$ or, equivalently, $\prod_{j = 0}^{p-1} (s_{p^i} - j) = 0$.
		
		\item If $M$ is an $\F_p$-vector space equipped with an action of the operators $s_{p^0}, s_{p^1}, \ds, s_{p^{e-1}}$, then $M$ splits as a sum of multi-eigenspaces for the action of these operators; namely, $M = \bigoplus_{\alpha \in \F_p^e} M_\alpha$ where for all $\alpha = (\alpha_0, \ds, \alpha_{e-1}) \in \F_p^e$ we define $M_\alpha := \{u \in M : s_{p^i} \cdot u = \alpha_i u \ \ \forall i = 0, \ds, e-1\}$.
	\end{enumerate}
\end{lemma}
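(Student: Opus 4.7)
The plan is to prove the five parts essentially independently, with (i)--(iv) all reducing to simple congruence/Frobenius observations about the eigenvalues $(-|\ul a|-r)_i \in \{0,1,\ldots,p-1\}$ that $s_{p^i}$ assigns to the monomial basis $\{\ut^{\ul a}\}_{\ul a \in (\Z_{\geq 0})^r}$, and (v) following from the standard Chinese Remainder decomposition of $\F_p[x]/(x^p - x)$ iterated.

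For (i), I would observe the integer identity
\[
-\bigl|(p^e-1)\bone - \ul a\bigr| - r \;=\; -r(p^e-1) + |\ul a| - r \;=\; |\ul a| - rp^e,
\]
so $-|(p^e-1)\bone - \ul a| - r \equiv |\ul a| \pmod{p^e}$. Since the first $e$ $p$-adic digits of a $p$-adic integer depend only on its residue mod $p^e$, and $i < e$, the $i$-th $p$-adic digit of $-|(p^e - 1)\bone - \ul a| - r$ equals $|\ul a|_i$, proving (i).

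For (ii), preservation of degree is immediate because $s_{p^i}$ acts by a scalar on each monomial $\ut^{\ul a}$, which is homogeneous. To see it lies in $D^{(i+1)}_{R[\ut]}$, by construction $s_{p^i}$ is already $R$-linear, so it suffices to check $\ut^{p^{i+1}}$-linearity, i.e.\ that for every $\ul a, \ul b \in (\Z_{\geq 0})^r$,
\[
(-|\ul a + p^{i+1} \ul b| - r)_i \;=\; (-|\ul a| - r)_i.
\]
But $-|\ul a + p^{i+1}\ul b| - r \equiv -|\ul a| - r \pmod{p^{i+1}}$, so their $p$-adic digits agree in positions $0, 1, \ldots, i$. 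Claim (iii) is immediate: each $s_{p^i}$ is diagonal in the common monomial basis $\{\ut^{\ul a}\}$, so they pairwise commute. Claim (iv) follows from Fermat's little theorem: the eigenvalues $(-|\ul a|-r)_i$ all lie in $\F_p = \{0, 1, \ldots, p-1\}$, so they satisfy $x^p = x$, and hence $(s_{p^i})^p = s_{p^i}$ on every monomial.

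For (v), I would use (iii) and (iv) to see that the commutative $\F_p$-subalgebra of $\End_{\F_p}(M)$ generated by $s_{p^0}, \ldots, s_{p^{e-1}}$ is a quotient of
\[
A \;:=\; \F_p[x_0, \ldots, x_{e-1}] \big/ (x_0^p - x_0, \ldots, x_{e-1}^p - x_{e-1}).
\]
Since each polynomial $x_i^p - x_i = \prod_{j=0}^{p-1}(x_i - j)$ splits into distinct linear factors over $\F_p$, iterated Chinese Remainder yields an $\F_p$-algebra isomorphism $A \;\cong\; \prod_{\alpha \in \F_p^e} \F_p$, where the $\alpha$-th projection sends $x_i \mapsto \alpha_i$. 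The orthogonal idempotents of this product pull back to orthogonal idempotents $e_\alpha \in A$ with $\sum_\alpha e_\alpha = 1$, and one checks $s_{p^i} e_\alpha = \alpha_i e_\alpha$. The decomposition $M = \bigoplus_{\alpha} e_\alpha M$ is therefore the desired joint eigenspace decomposition, since $e_\alpha M = M_\alpha$.

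The only step that requires genuine care is (i), where one must track the relation between the sign convention in the definition $s_{p^i} \cdot \ut^{\ul a} = (-|\ul a| - r)_i \ut^{\ul a}$ and the complementary monomials $\ut^{(p^e-1)\bone - \ul a}$; everything else is essentially mechanical once the $p$-adic digit interpretation of the eigenvalues is in hand.
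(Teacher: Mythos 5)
Your proposal is correct and follows essentially the same route as the paper: (i) via the congruence $-|(p^e-1)\bone-\ul a|-r\equiv |\ul a|\pmod{p^e}$ combined with $i<e$; (ii) via $R$-linearity, degree zero, and commutation with $t_j^{p^{i+1}}$; (iii)--(iv) from diagonality on the monomial basis with $\F_p$-eigenvalues; and (v) as a formal consequence of (iii) and (iv). You merely spell out (v) in more detail than the paper, which simply states that it follows from (iii) and (iv).
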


\begin{proof}
	For (i) we simply observe that $(\, -|(p^e - 1)\bone - \ul{a}| - r\,)_i = (- rp^e + |\ul{a}|)_i = |\ul{a}|_i$ (recall that whenever $\alpha \equiv \beta \mod p^i \Z_p$ we have $\alpha_i = \beta_i$). For part (ii), we note that $s_{p^i}$ has degree zero, that it is $R$-linear and that it commutes with multiplication by $t_j^{p^{i + 1}}$ for all $j = 1, \ds, r$. Parts (iii) and (iv) follow because each $s_{p^i}$ is $R$-linear and acts on monomials $\ut^{\ul{a}}$ by an $\F_p$-scalar. Part (v) follows from (iii) and (iv). 
\end{proof}

\begin{remark}
	It is possible to give a formula for the operators $s_{p^i}$ in terms of partial derivatives:
	$$s_{p^i} = - \sum_{|\ul{a}| = p^i} \partial_{t_1}^{[a_1]} t_1^{a_1} \cds \partial_{t_r}^{[a_r]} t_r^{a_r},$$
	where the $\partial^{[a]}$ notation stands for divided power differential operators \cite[Proposition~3.3]{QG19}. We remark that the transpose of these operators already appeared in work of Ma and Zhang \cite{MaZhang14} as higher-order Euler operators. 
\end{remark}

\subsection{The ring of continuous functions from $\Zp$ to $\FF_p$} \label{SubSec-CZpFp} \ 

\begin{definition}
	Given a set $X$ and an integer $e \geq 0$ we denote by $\cts^e(\Zp, X)$ the collection of all functions $\phi: \Zp \to X$ such that $\phi(\alpha) = \phi(\beta)$ whenever $\alpha \equiv \beta \mod p^e$. We denote $\cts(\Zp, X) = \bigcup_{e = 0}^\infty \cts^e(\Zp, X)$, and we call $\cts(\Zp, X)$ the set of continuous functions from $\Zp$ to $X$.
\end{definition}

Note that these are indeed the continuous functions when $X$ is endowed with the discrete topology, which is the only case we consider. When $A$ is a ring, the sets $\cts^e(\Zp, A)$ and $\cts(\Zp, A)$ acquire $A$-algebra structures by pointwise addition and multiplication. 

A function in $\cts^e(\Zp, X)$ is uniquely determined by its values on $\{0, 1, \ds, p^e-1\}$. Consequently, a function in $\cts(\Zp, X)$ is uniquely determined by its values in $\Z_{\geq 0}$ and, given an $\F_p$-vector space $V$, we have canonical isomorphisms
$$\begin{tikzcd} [row sep = tiny]
V \otimes_{\F_p} \cts^e(\Zp, \F_p) \arrow[r, "\sim"] & \cts^e(\Zp, V), \\
V \otimes_{\F_p} \cts(\Zp, \F_p) \arrow[r, "\sim"] & \cts(\Zp, V).
\end{tikzcd}$$

Fix an integer $e \geq 0$, and let us consider the algebra $\cts^e(\Zp, \F_p)$.

\begin{remark}
	The more geometrically-minded reader might like to think of the results in this section via the following remark, which was pointed out to the third author by Bhatt (see \cite[Remark III.2]{QGt} for a more detailed discussion). There is a homeomorphism $\Spec(\cts(\Zp, \F_p)) \cong \Zp$ which becomes an isomorphism of ringed spaces when we equip $\Zp$ with the sheaf of rings $\underline{\F_p}$ associated to $\F_p$. In particular, all local rings of $\cts(\Zp, \F_p)$ are fields, and therefore all $\cts(\Zp, \F_p)$-modules are flat. 
\end{remark}

To every $p$-adic integer $\alpha \in \Zp$ we associate the maximal ideal
$$\fm^{(e)}_\alpha : = \{\varphi \in \cts^e(\Zp, \F_p) \ | \ \varphi(\alpha) = 0 \};$$
note that $\fm^{(e)}_\alpha \equiv \fm^{(e)}_\beta$ whenever $\alpha \equiv \beta \mod p^e$. We have algebra isomorphisms
$$\cts^e(\Zp, \F_p) \cong \Fun(\{0, 1, \ds, p^e - 1\}, \F_p) \cong \F_p \ e_1 \times \cds\ \times \F_p \ e_{p^e -1}$$
where the $e_i$ are orthogonal idempotents. In particular, every $\cts^e(\Zp, \F_p)$-module $M$ splits as $M = \bigoplus_{a = 0}^{p^e - 1} M_{(a)}$, where $M_{(a)} = \Ann_{\fm^{(e)}_a} (M)$.

\begin{remark} \label{rmk-Ce-mod-mAlpha-flat}
	Note that for every $a \in \{0, 1, \ds, p^e-1\}$ the quotient $M / \fm^{(e)}_a$ is naturally identified with the submodule $M_{(a)}$. It follows that if $N \sq M$ is a submodule then the natural map $N / \fm^{(e)}_a \to M / \fm^{(e)}_a$ is injective, which shows that $\cts^e(\Zp, \F_p) / \fm^{(e)}_\alpha$ is a flat $\cts^e(\Zp, \F_p)$-module for every $\alpha \in \Zp$. 
\end{remark}

We now give a presentation of the algebra $\cts^e(\Z_p, \F_p)$. 

Recall that, given a $p$-adic integer $\alpha \in \Zp$, we denote $\alpha_i$ the $i$-th digit in the $p$-adic expansion of $\alpha$ (see Subsection \ref{subscn-padic-basep}). Given a integer $e \geq 0$ we denote by $\sigma_{p^e}: \Zp \to \F_p$ the function $\sigma_{p^e} (\alpha) = \alpha_e$; note that $\sigma_{p^e}$ is in $\cts^{e+1}(\Zp, \F_p)$. The function $\sigma_{p^e}$ can be expressed using binomial coefficients: $\sigma_{p^e}(\alpha) = {\alpha \choose p^e}$. To see this, use Lucas' theorem to observe that whenever $\alpha =n\in \Z_{\geq 0}$, we have $\binom{n}{p^e} \equiv \alpha_e$ modulo $p$, and that therefore $\sigma_{p^e}$ is the unique continuous extension to $\Zp$ of the map $n \mapsto \binom{n}{p^e}$ (considered as a map from $\Z$ to $\F_p$).

\begin{lemma}
	The ring $\cts^e(\Zp, \F_p)$ is generated by the operators $\sigma_{p^0}, \sigma_{p^1}, \ds, \sigma_{p^{e-1}}$ as an $\F_p$-algebra. Moreover, the assignment $x_i \mapsto \sigma_{p^i}$ induces an $\F_p$-algebra isomorphism
	$$\frac{\F_p[x_0, x_1, \ds, x_{e-1}]}{\big( x_i^p - x_i \ | \ i = 0, \ds, e-1 \big)} \cong \cts^e(\Zp, \F_p).$$
\end{lemma}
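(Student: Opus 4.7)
The plan is to construct the algebra homomorphism explicitly, then prove it is bijective by combining a surjectivity argument using explicit idempotents with a dimension count.

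First I would verify that the assignment $x_i \mapsto \sigma_{p^i}$ extends to a well-defined $\F_p$-algebra homomorphism
$\Phi: \F_p[x_0, \ldots, x_{e-1}]/(x_i^p - x_i) \to \cts^e(\Zp, \F_p).$
This is clear: the polynomial ring map $x_i \mapsto \sigma_{p^i}$ descends modulo the relations $x_i^p - x_i$ because each $\sigma_{p^i}$ takes values in $\F_p$, and every $c \in \F_p$ satisfies $c^p = c$, so $\sigma_{p^i}^p = \sigma_{p^i}$ as functions. The image lands in $\cts^e(\Zp, \F_p)$ because $\sigma_{p^i} \in \cts^{i+1}(\Zp, \F_p) \subseteq \cts^e(\Zp, \F_p)$ for $i < e$.

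Next I would compute the $\F_p$-dimensions of both sides. The quotient $\F_p[x_0, \ldots, x_{e-1}]/(x_i^p - x_i)$ has basis given by monomials $x_0^{a_0} \cdots x_{e-1}^{a_{e-1}}$ with $0 \leq a_i \leq p-1$, hence dimension $p^e$. On the other hand, a function in $\cts^e(\Zp, \F_p)$ is uniquely determined by its values on $\{0, 1, \ldots, p^e - 1\}$, so by the isomorphism already noted in the paper, $\cts^e(\Zp, \F_p) \cong \F_p^{p^e}$. Thus it suffices to show that $\Phi$ is surjective.

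For surjectivity, I would exhibit each characteristic function $\chi_a$ of a point $a \in \{0, 1, \ldots, p^e - 1\}$ in the image. Write $a = a_0 + a_1 p + \cdots + a_{e-1}p^{e-1}$ in base $p$. For $c \in \F_p$, the polynomial $1 - (c - a_i)^{p-1}$ equals $1$ when $c = a_i$ and $0$ otherwise (since nonzero elements of $\F_p$ satisfy $y^{p-1} = 1$). Therefore, for any $\alpha \in \Zp$, the function $1 - (\sigma_{p^i}(\alpha) - a_i)^{p-1}$ detects whether $\alpha_i = a_i$, and so
\[ \chi_a \;=\; \prod_{i=0}^{e-1} \Bigl(1 - (\sigma_{p^i} - a_i)^{p-1}\Bigr) \]
lies in the image of $\Phi$. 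Since the $\chi_a$'s span $\cts^e(\Zp, \F_p)$, surjectivity follows; combined with the dimension count, $\Phi$ is an isomorphism, which in particular shows that $\sigma_{p^0}, \ldots, \sigma_{p^{e-1}}$ generate $\cts^e(\Zp, \F_p)$ as an $\F_p$-algebra.

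No real obstacle arises: the only conceptual step is recognizing that the explicit polynomial $1 - (x - a_i)^{p-1}$ serves as the ``indicator polynomial'' of $a_i \in \F_p$, and the rest is bookkeeping.
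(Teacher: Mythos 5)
Your proof is correct and follows essentially the same route as the paper: both arguments establish surjectivity of the natural map from the truncated polynomial algebra and then conclude by comparing cardinalities (both sides have $p^e$ elements over $\FF_p$). The only difference is cosmetic: the paper cites the general fact that every $\FF_p$-valued function on the finite set $\FF_p^e$ is a polynomial in the coordinate functions, whereas you make this explicit by writing down the indicator polynomials $\chi_a = \prod_{i=0}^{e-1}\bigl(1 - (\sigma_{p^i} - a_i)^{p-1}\bigr)$, which is a concrete instance of the same principle.
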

\begin{proof}
	Every function in $\cts^e(\Zp, \F_p)$ is determined uniquely by its values on ${\{0, 1, \ds, p^e-1\}}$ and, conversely, any $\F_p$-valued function on $\{0, 1, \ds, p^e-1\}$ extends uniquely to an element of $\cts^e(\Zp, \F_p)$. 
	
	By identifying every element of $\{0 , \ds, p^e-1\}$ with its base-$p$ expansion we obtain a bijection ${\{0, \ds, p^e-1\} \cong \F_p^e}$. We therefore have $\F_p$-algebra isomorphisms
	$$\cts^e(\Zp, \F_p) \cong \Fun(\{0, \ds, p^e-1\}, \F_p) \cong \Fun(\F_p^e, \F_p),$$
	and one checks that, under these identifications, the functions $\sigma_{p^e}$ are sent to the coordinate functions on~$\F_p^e$. Since $\F_p^e$ is a finite set every $\F_p$-valued function on it is a polynomial on the coordinate functions. We conclude that $\cts^e(\Zp, \F_p)$ is a quotient of $\F_p[x_0, \ds, x_{e-1}] / {(x_i^p - x_i)}$	and, since both of these algebras have the same number of elements, the result follows.
\end{proof}

We conclude that $\cts(\Zp, \F_p)$ is generated by the operators $\sigma_{p^i}$ $(i \in \Z_{ \geq 0})$, and that we have an algebra isomorphism
$$\frac{\F_p[x_0, x_1, \ds]}{\big( x_i^p - x_i \ | \ i \in \Z_{\geq 0} \big)} \cong \cts(\Zp, \F_p).$$
In particular, given an $F$-finite ring $R$ and an element $f \in R$, we can identify $\cts(\Zp, R_f) = R_f [\sigma_{p^0}, \sigma_{p^1}, \ds, ]$ and $\cts(\Zp, D_R) = D_R [\sigma_{p^0}, \sigma_{p^1}, \ds]$; these are positive characteristic analogues of the objects $R_f[s]$ and $D_R[s]$ of classical Bernstein-Sato theory, while the algebra $\cts(\Zp, \F_p)$ plays the role of the algebra $\CC[s]$ for the operator $s = - \sum_{i = 1}^r \partial_{t_i} t_i$. 

Let us now turn our attention to the algebra $\cts(\Zp, \F_p)$ and its modules. Once again we associate to every $\alpha \in \Zp$ a maximal ideal
$$\fm_\alpha := \{\varphi \in \cts(\Zp, \F_p) \  |\  \varphi(\alpha) = 0 \}.$$

Every maximal ideal of $\cts(\Zp,\FF_p)$ is of the form $\fm_\alpha$ for some $\alpha \in \Zp$ \cite{Bitoun2018}. 

Given a $\cts(\Zp, \F_p)$-module $M$ and a $p$-adic integer $\alpha \in \Zp$, we denote by $M_\alpha$ the quotient
$$M_\alpha : =  M / \fm_\alpha M.$$
If $N \sq M$ is a submodule, $N_\alpha$ is naturally a submodule of $M_\alpha$ by the following result.

\begin{lemma} \label{lemma-CmAlpha-flat}
	The module $\cts(\Zp, \F_p) / \fm_\alpha \cts(\Zp, \F_p)$ is flat over $\cts(\Zp, \F_p)$. 
\end{lemma}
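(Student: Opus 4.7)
The plan is to prove the stronger statement that $\cts(\Zp, \F_p)$ is a von Neumann regular ring, from which flatness of any quotient (indeed of any module) is immediate.

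First I would establish the identity $\varphi^p = \varphi$ for every $\varphi \in \cts(\Zp, \F_p)$. This holds pointwise, since $a^p = a$ for every $a \in \F_p$ and hence $(\varphi^p)(\alpha) = \varphi(\alpha)^p = \varphi(\alpha)$ for all $\alpha \in \Zp$. Setting $\psi := \varphi^{p-2}$ (with the convention $\psi = 1$ when $p = 2$), which visibly lies in $\cts(\Zp, \F_p)$ as a polynomial in $\varphi$, I would then note that
\[
\varphi \cdot \psi \cdot \varphi \;=\; \varphi^{p} \;=\; \varphi,
\]
so $\cts(\Zp, \F_p)$ is von Neumann regular.

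To finish, I would invoke the classical fact that every module over a von Neumann regular ring is flat. Applying this to the cyclic quotient $\cts(\Zp, \F_p)/\fm_\alpha$ gives the lemma (noting that $\fm_\alpha \cts(\Zp,\F_p) = \fm_\alpha$ since $\fm_\alpha$ is already an ideal).

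There is no substantial obstacle. If one prefers to avoid invoking the general theorem on von Neumann regular rings and instead argue in the spirit of the preceding material, one could write $\cts(\Zp, \F_p)/\fm_\alpha \cong \varinjlim_e \cts^e(\Zp, \F_p)/\fm^{(e)}_\alpha$ using $\fm_\alpha = \bigcup_e \fm^{(e)}_\alpha$; each factor is a direct summand of $\cts^e(\Zp, \F_p)$ by Remark \ref{rmk-Ce-mod-mAlpha-flat}, each $\cts^e(\Zp, \F_p)$ is a finite product of copies of $\F_p$ (so semisimple, hence all modules flat), and a standard filtered-colimit argument then yields flatness over $\cts(\Zp, \F_p) = \varinjlim_e \cts^e(\Zp, \F_p)$.
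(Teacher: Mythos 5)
Your primary argument is correct and establishes something strictly stronger than the lemma: since $\varphi^p = \varphi$ holds pointwise in $\cts(\Zp,\F_p)$, every element $\varphi$ satisfies $\varphi\cdot\varphi^{p-2}\cdot\varphi = \varphi$ (with $\varphi^0=1$ when $p=2$), so the ring is von Neumann regular and hence \emph{every} module over it is flat. This is a genuinely different route from the paper's. The paper instead works locally: it writes $\cts(\Zp,\F_p)/\fm_\alpha$ as the filtered colimit $\varinjlim_e \cts^e(\Zp,\F_p)/\fm^{(e)}_\alpha$, checks via a universal-property argument that tensoring with this colimit over $\cts(\Zp,\F_p)$ is the colimit of tensoring with $\cts^e(\Zp,\F_p)/\fm^{(e)}_\alpha$ over $\cts^e(\Zp,\F_p)$, and then uses the finite-level flatness of Remark~\ref{rmk-Ce-mod-mAlpha-flat} together with exactness of filtered colimits. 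Your secondary sketch is essentially a reformulation of that proof. The von Neumann regularity observation is cleaner and more conceptual --- it sidesteps the verification that the tensor product commutes with the colimit of base rings --- and it makes transparent that $\Spec$ of this ring has all local rings fields, echoing the geometric remark in the paper (the remark that $\Spec(\cts(\Zp,\F_p))\cong\Zp$ with all local rings fields, hence all modules flat); the paper's proof has the virtue of staying close to the $\cts^e$ machinery it has already built. Both are correct.
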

\begin{proof}
	For simplicity of notation, let us denote the algebra $\cts(\Zp, \F_p)$ (resp. $\cts^e(\Z_p, \F_p)$) by $C$ (resp. $C^e$). Note that $C/ \fm_\alpha = \lim\limits_{\to e} C^e/\fm^{(e)}_\alpha$, and that if $N$ is a $C$-module then there is a natural map
	$$\lim\limits_{\to e} (C^e / \fm^{(e)}_\alpha \otimes_{C^e} N) \longrightarrow (C / \fm_\alpha) \otimes_C N,$$
	which we claim is an isomorphism. Indeed, giving an $C$-multilinear map $C/\fm_\alpha \times N \to W$ is equivalent to giving a compatible collection of $C^e$-multilinear maps $C^e / \fm^{(e)}_\alpha \times N \to W$, which shows that both objects have the same universal property.
	
	We know that $C^e / \fm^{(e)}_\alpha$ is flat over $C^e$ (cf.~Remark \ref{rmk-Ce-mod-mAlpha-flat}); since taking limits is an exact operation the result follows.	
\end{proof}

\begin{lemma} \label{lemma-MalphaZero-implies-Mzero}
	Let $M$ be a $\cts(\Zp, \F_p)$-module. If $M_\alpha = 0$ for all $p$-adic integers $\alpha \in \Zp$, then $M = 0$.
\end{lemma}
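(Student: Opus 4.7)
The plan is to prove the contrapositive: assuming $M \neq 0$, exhibit some $\alpha \in \Zp$ for which $M_\alpha \neq 0$. The strategy is to reduce first to a cyclic submodule, where the statement follows immediately from the classification of maximal ideals of $\cts(\Zp, \F_p)$, and then promote the result back to $M$ using flatness.

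First, I would pick any nonzero $m \in M$ and consider the cyclic submodule $N := \cts(\Zp, \F_p) \cdot m \subseteq M$. Setting $I = \Ann_{\cts(\Zp, \F_p)}(m)$, there is an isomorphism $N \cong \cts(\Zp, \F_p)/I$. Because $m \neq 0$, the ideal $I$ is proper, so by Zorn's lemma it is contained in some maximal ideal of $\cts(\Zp, \F_p)$. Combining this with the fact, noted in the paragraph just before the lemma, that every maximal ideal of $\cts(\Zp, \F_p)$ is of the form $\fm_\alpha$ for some $\alpha \in \Zp$, I obtain an $\alpha \in \Zp$ with $I \subseteq \fm_\alpha$.

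For this choice of $\alpha$, the quotient
\[ N_\alpha \;=\; \cts(\Zp, \F_p)/(\fm_\alpha + I) \;=\; \cts(\Zp, \F_p)/\fm_\alpha \;\cong\; \F_p \]
is nonzero (using that $\fm_\alpha$ is maximal and $I \subseteq \fm_\alpha$).

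Finally, I would promote $N_\alpha \neq 0$ to $M_\alpha \neq 0$ by invoking Lemma \ref{lemma-CmAlpha-flat}, which asserts that $\cts(\Zp, \F_p)/\fm_\alpha$ is flat over $\cts(\Zp, \F_p)$. Flatness preserves the injection $N \hookrightarrow M$ under the functor $(-)_\alpha = -\otimes_{\cts(\Zp, \F_p)} \cts(\Zp, \F_p)/\fm_\alpha$, yielding an injection $N_\alpha \hookrightarrow M_\alpha$, so $M_\alpha \neq 0$, contradicting the hypothesis. None of the steps presents a real obstacle: all the heavy lifting (the description of the maximal spectrum of $\cts(\Zp, \F_p)$ and the flatness of its residue fields) has already been assembled in the preceding material.
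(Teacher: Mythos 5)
Your proof is correct, but it takes a genuinely different route from the paper's. You reduce to a cyclic submodule $\cts(\Zp,\F_p)/I$, invoke the classification of maximal ideals of $\cts(\Zp,\F_p)$ (the fact quoted just above Lemma~\ref{lemma-CmAlpha-flat}, which the paper attributes to~\cite{Bitoun2018}) to find $\fm_\alpha \supseteq I$, and then pull the nonvanishing back to $M$ via the flatness of Lemma~\ref{lemma-CmAlpha-flat}. The paper instead argues directly on an arbitrary element $u \in M$: it observes that for each $\alpha$ the hypothesis $\fm_\alpha M = M$ yields some level $e_\alpha$ with $(1-\chi^{(e_\alpha)}_\alpha)u = u$, extracts a finite subcover of $\Zp$ by the cylinders $\alpha + p^{e_\alpha}\Zp$ using compactness, and then notes that the corresponding product $\prod_i (1-\chi^{(e_{\alpha_{(i)}})}_{\alpha_{(i)}})$ is the zero function, forcing $u = 0$. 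Your version is shorter and more conceptual, packaging the topology of $\Zp$ into the ring-theoretic facts about $\cts(\Zp,\F_p)$; the cost is that you lean on the externally cited classification of maximal ideals and on Zorn's lemma, whereas the paper's argument is self-contained within the surrounding section, deriving what it needs directly from the compactness of $\Zp$ and not using Lemma~\ref{lemma-CmAlpha-flat} or the maximal-ideal description at all. Both are valid; it is worth noticing that the compactness covering step in the paper's proof is essentially the engine behind the maximal-ideal classification you cite, so the two proofs are cousins even though yours delegates that work and the paper's carries it out inline.
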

\begin{proof}
	Fix an integer $e \geq 0$. Given $a \in \{0, \ds, p^e-1\}$ denote by $\chi^{(e)}_a \in \cts^e(\Zp, \F_p)$ the function such that $\chi^{(e)}_a(\beta) = 1$ whenever $\beta \equiv a \mod p^e$ and such that $\chi^{(e)}_a(\beta) = 0$ otherwise. 
We observe that a function $\varphi \in \cts(\Zp, \F_p)$ belongs to $\fm_\alpha$ if and only if $\chi^{(e)}_\alpha \varphi = 0$ for a sufficiently large $e$; indeed, it suffices to take $e$ large enough so that $\varphi \in \cts^e(\Zp, \F_p)$.  We conclude that, given an element $u \in M$ and a $p$-adic integer $\alpha \in \Zp$ there exists some large $e_\alpha$ such that $\chi^{(e_\alpha)}_\alpha u = 0$ or, equivalently, $(1 - \chi^{(e_\alpha)}_\alpha) u = u$. 
	
	The union $\bigcup_{\alpha \in \Zp} (\alpha + p^{e_\alpha} \Zp)$ forms an open cover of $\Zp$ which, by the compactness of $\Zp$, admits a finite subcover $\Zp = \bigcup_{i = 1}^n (\alpha_{(i)} + p^{e_{\alpha_{(i)}}} \Zp)$. We conclude that
	\[u = (1- \chi^{(e_{\alpha_{(1)}})}_{\alpha_{(1)}}) \cds (1- \chi^{(e_{\alpha_{(n)}})}_{\alpha_{(n)}}) u  = 0. \qedhere\]
\end{proof}

\begin{proposition} \label{prop-CzpModules-split}
	Let $M$ be a $\cts(\Zp, \F_p)$-module. Suppose that there are only finitely many $\alpha \in \Zp$ such that $M_\alpha \neq 0$, say $\alpha_{(1)}, \ds, \alpha_{(n)}$. Then the natural map $M \xrightarrow{\sim} \bigoplus_{i = 1}^n  M_{\alpha_{(i)}}$ is an isomorphism that identifies $M_{\alpha_{(i)}}$ with $\Ann_M(\fm_\alpha)$. 
\end{proposition}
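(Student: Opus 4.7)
The plan is to exploit the idempotent structure of $\cts(\Zp,\F_p)$. First, I would choose an integer $e$ large enough that the cosets $\alpha_{(i)} + p^e\Zp$ are pairwise disjoint, and let $\chi_i := \chi^{(e)}_{\alpha_{(i)}}$ denote their characteristic functions; together with $\chi_0 := 1 - \sum_{i=1}^n \chi_i$ these form a complete system of orthogonal idempotents summing to $1$ in $\cts^e(\Zp,\F_p)$, and one has $\chi_j \in \fm_{\alpha_{(i)}}$ whenever $j \neq i$ by construction. To see that $\chi_0 M = 0$, I would verify that $(\chi_0 M)_\beta = 0$ for every $\beta \in \Zp$ and then invoke Lemma~\ref{lemma-MalphaZero-implies-Mzero}: by the flatness statement in Lemma~\ref{lemma-CmAlpha-flat}, $(\chi_0 M)_\beta$ injects into $M_\beta$, and the image vanishes either because $M_\beta = 0$ (when $\beta \notin \{\alpha_{(1)},\ldots,\alpha_{(n)}\}$) or because $\chi_0 \in \fm_\beta$ (when $\beta = \alpha_{(i)}$). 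This immediately yields $M = \chi_1 M \oplus \cdots \oplus \chi_n M$, with directness of the sum obtained by multiplying a vanishing linear combination by the idempotent $\chi_j$.

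The main content is then to identify $\chi_i M$ with $\Ann_M(\fm_{\alpha_{(i)}})$. One inclusion is immediate: if $\fm_{\alpha_{(i)}} u = 0$ then $(1 - \chi_i) u = 0$, so $u = \chi_i u \in \chi_i M$. For the converse, given $u = \chi_i v$ and an arbitrary $\varphi \in \fm_{\alpha_{(i)}}$, I would refine $e$ to a larger level $e'$ chosen so that $\varphi \in \cts^{e'}(\Zp,\F_p)$ and the points $\alpha_{(j)}$ lie in distinct residue classes modulo $p^{e'}$. Decomposing $\chi_i = \sum_{\beta \in B} \chi^{(e')}_\beta$, where $B \subseteq \{0,\ldots,p^{e'}-1\}$ is the set of lifts of $\alpha_{(i)} \bmod p^e$, I would show that for every $\beta \in B$ other than the unique lift $a_i$ of $\alpha_{(i)} \bmod p^{e'}$ the submodule $\chi^{(e')}_\beta M$ vanishes: the coset $\beta + p^{e'}\Zp$ contains none of the $\alpha_{(j)}$, so the same flatness-plus-support argument used for $\chi_0 M$ forces $(\chi^{(e')}_\beta M)_\gamma = 0$ for all $\gamma$, and Lemma~\ref{lemma-MalphaZero-implies-Mzero} concludes. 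Hence $u = \chi^{(e')}_{a_i} v$, and then $\varphi u = \varphi(\alpha_{(i)})\, \chi^{(e')}_{a_i} v = 0$. The main obstacle is precisely this point: the initial idempotent $\chi_i$ only records the value of $\varphi$ at $\alpha_{(i)}$ after one has refined to the level where $\varphi$ becomes locally constant near $\alpha_{(i)}$, and the support hypothesis on $M$ is what allows the extraneous pieces of the refined decomposition to be discarded.

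To conclude, I would show that the composition $\chi_i M \hookrightarrow M \twoheadrightarrow M_{\alpha_{(i)}} = M/\fm_{\alpha_{(i)}} M$ is an isomorphism. Surjectivity is straightforward: any $u \in M$ decomposes as $u_1 + \cdots + u_n$ with $u_j \in \chi_j M$, and for $j \neq i$ we have $u_j = \chi_j u_j \in \fm_{\alpha_{(i)}} M$ since $\chi_j \in \fm_{\alpha_{(i)}}$, so $\overline{u} = \overline{u_i}$. For injectivity, if $u \in \chi_i M \cap \fm_{\alpha_{(i)}} M$, write $u = \sum_k \varphi_k v_k$ with $\varphi_k \in \fm_{\alpha_{(i)}}$ and apply $\chi_i$ to get $u = \chi_i u = \sum_k \varphi_k(\chi_i v_k) = 0$, using that $\fm_{\alpha_{(i)}} \cdot \chi_i M = 0$ by the previous paragraph. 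Combined with the decomposition $M = \bigoplus_i \chi_i M$, this exhibits the natural map $M \xrightarrow{\sim} \bigoplus_{i=1}^n M_{\alpha_{(i)}}$ as an isomorphism and identifies each $M_{\alpha_{(i)}}$ with $\Ann_M(\fm_{\alpha_{(i)}})$.
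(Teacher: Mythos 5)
Your proof is correct and relies on the same two ingredients as the paper's (flatness of $\cts(\Zp,\F_p)/\fm_\beta$, Lemma~\ref{lemma-CmAlpha-flat}, and stalk detection, Lemma~\ref{lemma-MalphaZero-implies-Mzero}), but it takes a more constructive route. The paper argues abstractly: it forms the four-term exact sequence $0 \to K \to M \to \bigoplus_i M_{\alpha_{(i)}} \to Q \to 0$, applies the exact functor $(-)_\beta$ to see that $K_\beta = Q_\beta = 0$ for each $\beta$, and concludes $K = Q = 0$ in a single stroke. You instead manufacture the decomposition by hand: choose a level $e$ separating the $\alpha_{(i)}$, form the orthogonal idempotents $\chi_1,\ldots,\chi_n$ and the complement $\chi_0$, and run the flatness-plus-stalk argument twice — once to kill $\chi_0 M$, and once more after refining to level $e'$ to show $\fm_{\alpha_{(i)}}$ actually annihilates $\chi_i M$ — before matching $\chi_i M$ with $M_{\alpha_{(i)}}$. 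The trade-off is real: your version is longer, and the refinement step is the genuine subtlety you correctly identify (the level-$e$ idempotent only sees $\varphi$'s value at $\alpha_{(i)}$ once the extraneous cosets at the finer level are shown to be dead), but it buys you a direct, explicit proof of the identification $M_{\alpha_{(i)}} \cong \Ann_M(\fm_{\alpha_{(i)}})$, which the paper's proof asserts but does not spell out. Both arguments are sound; yours is more hands-on, the paper's is more economical.
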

\begin{proof}
	Let $K$ (resp. $Q$) be the kernel (resp. cokernel) of the map $M \to \bigoplus_{i = 1}^n M_{\alpha_{(i)}}$. We thus have an exact sequence
	$$0 \to K \to M \to \bigoplus_{i = 1}^n M_{\alpha_{(i)}} \to Q \to 0.$$
	We claim that we have $K_\beta = Q_\beta = 0$  for all $\beta \in \Zp$. 
	
	Indeed, if $\beta \neq \alpha_{(i)}$ for any $i$, then applying the functor $(-)_\beta$ to the exact sequence above yields
	$$0 \to K_\beta \to 0 \to 0 \to Q_\beta \to 0,$$
	by Lemma \ref{lemma-CmAlpha-flat}.
	If $\beta = \alpha_{(i)}$, then we get
	$$0 \to K_\beta \to M_\beta \xrightarrow{\id} M_\beta \to Q_\beta \to 0.$$
	From Lemma \ref{lemma-MalphaZero-implies-Mzero}, we conclude that $K = Q = 0$. 
\end{proof}

Finally, we illustrate how the algebras $\cts^e(\Zp, \F_p)$ and $\cts(\Zp, \F_p)$ arise naturally in the context of differential operators. 

We consider the map
$$\Delta: \cts(\Zp, D_R) \longrightarrow (D_{R[\ut]})_0$$
that sends $\xi \in \cts(\Zp, D_R)$ to the unique operator $\t{\xi}$ on $R[\ut]$ such that 
$${\t{\xi} \cdot f \ut^{\ul{a}}} = {(\xi(-r-|\ul a|) \cdot f) \ut^{\ul{a}}}$$
 for every $f \in R$ and all $\ul{a}\in \NN^{r}$. Note that, for all $e \geq 0$, whenever $\xi \in \cts^e(\Zp, D^{(e)}_R)$ we get that $\Delta(\xi) \in (D^{(e)}_{R[\ut]})_0$. We therefore get an induced map $\Delta^e: \cts^e(\Zp, D^{(e)}_R) \to (D^{(e)}_{R[\ut]})_0$.

\begin{lemma} \label{lemma-ctsfn-diffop-isom}
	The morphism $\Delta:\cts(\Zp, D_R) \longrightarrow (D_{R[\ut]})_0$ previously constructed is injective. Moreover, when $r=1$, $\Delta$ is an isomorphism.
\end{lemma}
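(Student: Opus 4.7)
The plan is to factor $\Delta$ through the level filtration and then reduce everything to a statement about continuous functions and degree-zero differential operators on the auxiliary polynomial ring $\F_p[\ut]$. First, since $D_R = \bigcup_{e \geq 0} D^{(e)}_R$ and $D_{R[\ut]} = \bigcup_{e} D^{(e)}_{R[\ut]}$, and taking the degree-zero graded piece commutes with the union, we have $(D_{R[\ut]})_0 = \bigcup_{e} (D^{(e)}_{R[\ut]})_0$. Similarly $\cts(\Zp, D_R) = \bigcup_e \cts^e(\Zp, D^{(e)}_R)$, so $\Delta$ is the directed union of the maps $\Delta^e$, and it suffices to analyze each $\Delta^e$ separately.

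Next, using Lemma~\ref{lemma-D_R-addvar} together with the tensor identification $\cts^e(\Zp, D^{(e)}_R) \cong D^{(e)}_R \otimes_{\F_p} \cts^e(\Zp, \F_p)$, one checks that $\Delta^e$ is the map $\id_{D^{(e)}_R} \otimes \,\delta^e$, where $\delta^e : \cts^e(\Zp, \F_p) \to (D^{(e)}_{\F_p[\ut]})_0$ sends $\varphi$ to the operator acting on $\ut^{\ul a}$ by the scalar $\varphi(-r - |\ul a|)$. Since everything is being tensored over the field $\F_p$, it suffices to prove that $\delta^e$ is injective for all $r$, and an isomorphism when $r = 1$.

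For injectivity of $\delta^e$, suppose $\delta^e(\varphi) = 0$, so $\varphi(-r - |\ul a|) = 0$ for every $\ul a \in \NN^r$. Since $|\ul a|$ ranges over all of $\NN$ as $\ul a$ varies, and $\varphi$ depends only on the class modulo $p^e$, the function $\varphi$ must vanish on every residue class mod $p^e$, hence $\varphi = 0$. For surjectivity when $r = 1$, I would argue by a dimension count. Clearly $\dim_{\F_p} \cts^e(\Zp, \F_p) = p^e$. On the other hand, $D^{(e)}_{\F_p[t]} = \End_{\F_p[t]^{p^e}}(\F_p[t])$, and $\F_p[t]$ is free over $\F_p[t]^{p^e}$ with basis $1, t, \ldots, t^{p^e-1}$, whose degrees are pairwise distinct and lie in $[0, p^e-1]$. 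A degree-zero endomorphism therefore sends $t^i$ to a homogeneous element of degree $i$, and since none of the other basis elements appears in degrees below $p^e$, this forces $\xi \cdot t^i = c_i t^i$ for some $c_i \in \F_p$. Thus $(D^{(e)}_{\F_p[t]})_0$ also has $\F_p$-dimension $p^e$, and an injection between finite-dimensional spaces of equal dimension is a bijection.

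The main subtlety, and what makes the $r = 1$ hypothesis essential for surjectivity, is that for $r \geq 2$ the space $(D^{(e)}_{\F_p[\ut]})_0$ strictly contains the image of $\delta^e$: operators such as $t_i \partial_{t_j}$ with $i \neq j$ preserve the total degree $|\ul a|$ but do not act diagonally on monomials, so they cannot come from any $\varphi \in \cts^e(\Zp, \F_p)$. For $r = 1$ this pathology is absent because distinct monomials have distinct degrees and any degree-zero operator is forced to be diagonal. The rest of the argument is essentially bookkeeping to reduce to this clean $\F_p$-linear statement.
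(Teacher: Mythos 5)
Your proof is correct and follows the same overall strategy as the paper: reduce to $R = \F_p$ via the tensor decompositions $\cts^e(\Zp, D^{(e)}_R) \cong D^{(e)}_R \otimes_{\F_p} \cts^e(\Zp, \F_p)$ and $(D^{(e)}_{R[\ut]})_0 \cong D^{(e)}_R \otimes_{\F_p} (D^{(e)}_{\F_p[\ut]})_0$ from Lemma~\ref{lemma-D_R-addvar}, then handle injectivity by noting that $-r-|\ul a|$ hits every residue class mod $p^e$. Where you diverge is in the $r=1$ surjectivity: the paper constructs an explicit two-sided inverse, by observing that a degree-zero operator $\delta \in (D^{(e)}_{\F_p[t]})_0$ gives a function $a \mapsto \phi_\delta(a)$ on $\Z_{\geq 0}$ via $\delta \cdot t^a = \phi_\delta(a) t^a$, that this function descends to $\Z/p^e\Z$, and that this descent inverts $\Delta$. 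You instead count $\F_p$-dimensions: both $\cts^e(\Zp,\F_p)$ and $(D^{(e)}_{\F_p[t]})_0$ are $p^e$-dimensional, so an injection must be a bijection. The dimension count is slicker, but it rests on precisely the same observation the paper spells out — that a degree-zero $\F_p[t^{p^e}]$-endomorphism of $\F_p[t]$ must act by a scalar on each basis monomial $t^i$, $0 \leq i < p^e$ — so the content is identical; the paper's version is constructive while yours is not. Your remark about why surjectivity fails for $r \geq 2$ (operators like $t_i \partial_{t_j}$ preserving total degree but not acting diagonally) is correct and a nice supplement to the argument.
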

\begin{proof}
	We have that $\cts(\Zp, D_R) = D_R \otimes_{\FF_p} \cts(\Zp, \F_p)$ and that $(D_{R[\ut]})_0 = D_R \otimes_{\F_p} (D_{\F_p[\ut]})_0$ (see Lemma \ref{lemma-D_R-addvar}), and the morphism respects these decompositions. It therefore suffices to prove the claims in the case where $R = \F_p$. 
	
	In this case, note that an operator $\delta \in \cts(\Zp, D_{\FF_p})$ is sent to the unique operator $\t{\delta}$ on $\F_p[\ut]$ for which $\t{\delta} \cdot \ut^{\ul{a}} = \delta(-r-|a|)  \ut^{\ul{a}}$. If $\t{\delta}=0$, then $\delta=0$.
	
	Now, let $r=1$. If $\delta$ is a differential operator of degree zero on $\F_p[t]$ then there exists a unique function $\phi_\delta: \Z_{\geq 0} \to \F_p$ such that $\delta \cdot t^a = \phi_\delta(a) t^a$ for all $a \in \Z_{\geq 0}$. If $\delta\in (D^{(e)}_{\FF_p[t]})_0$, then $\delta \cdot t^{a+b p^e} = (\delta \cdot t^a) t^{bp^e}$ for all $b\in\NN$, so $\phi_{\delta}(a)=\phi_{\delta}(a + bp^e)$; therefore, $\phi_{\delta}$ extends to an element of $\cts^e(\Zp, \F_p)$. The assignment $\delta \mapsto \xi_\delta$, where $\xi_\delta(a) = \phi_\delta(1 - a)$ provides a two-sided inverse to the morphism given. 
\end{proof}

\begin{remark}
	It may seem unnatural to take $\xi(-r-|\ul a|)$ in the definition of $\Delta$, as opposed to $\xi(|\ul a|)$. This is a natural consequence of the convention of working with the operator $s_1 = - \sum_{i = 1}^r \partial_{t_i} t_{i}$ in characteristic zero, as opposed to the operator $\sum_{i = 1}^r t_i \partial_{t_i}$. Note that we have $\Delta( \sigma_{p^e}) = s_{p^e}$ for every $e \geq 0$. 
\end{remark}

%%%%%%%%%%%%%%%%%%%%%%%%%%%%%%%%%%%%%%%%%%%%%%%%%%%%%%%%%%%%%
\section{Differential jumps}
%%%%%%%%%%%%%%%%%%%%%%%%%%%%%%%%%%%%%%%%%%%%%%%%%%%%%%%%%%%%%

\begin{definition} \label{def-DiffJump}
	Let $R$ be an $F$-finite ring and $\fa\sq R$ be an ideal. We say that an integer $n \geq 0$ is \emph{differential jump of level $e$} of $\fa$ if the inclusion $D^{(e)}_R \cdot \fa^n \supseteq D^{(e)}_R \cdot \fa^{n+1}$ is proper. We write $\cB^\bullet_\fa(p^e)$ for the collection of all differential jumps of level $e$. 
\end{definition}

We note that $n\in \cB^\bullet_\fa(p^e)$ if and only if $\fa^n \not\sq D^{(e)}_R \cdot \fa^{n+1}$.

\begin{remark}\label{rem:reduce-to-local} \ 
	\begin{enumerate}
		\item	If $W \sq R$ is a multiplicative subset and $\fa \sq R$ is an ideal, then we have \[{D^{(e)}_{W^{-1} R} \cdot \fa W^{-1} R = (D^{(e)}_R \cdot \fa) W^{-1} R};\] cf., \cite[Proposition~2.17]{BJNB19}. If $g_1, \ds, g_k \in R$ are such that $(g_1, \ds, g_k) = R$ and $\Max(R)$ denotes the collection of all maximal ideals of $R$, then we have
		$$\cB^\bullet_\fa(p^e) = \bigcup_{i = 1}^k \cB^\bullet_{\fa R_{g_i}}(p^e) = \bigcup_{ \fm \in \Max(R)} \cB^\bullet_{\fa R_\fm} (p^e).$$
		
		\item	If $(R,\fm)$ is local and $F$-finite and $\fa \sq R$ is an ideal then we have $\widehat{R} \otimes_R (D_R \cdot \fa) \cong D_{\widehat{R}} \cdot (\fa \widehat{R})$ \cite[Proposition  2.24]{BJNB19}. We conclude that ${D^{(e)}_R \cdot \fa^n}/{D^{(e)}_R \cdot \fa^{n+1}}\neq 0$ if and only if 
		\[ 0\neq \widehat{R} \otimes_R \frac{D^{(e)}_R \cdot \fa^n}{D^{(e)}_R \cdot \fa^{n+1}} \cong \frac{\widehat{R} \otimes_R (D^{(e)}_R \cdot \fa^n)}{\widehat{R} \otimes_R (D^{(e)}_R \cdot \fa^{n+1})} \cong \frac{ D^{(e)}_{\widehat{R}} \cdot (\fa\widehat{R})^n}{D^{(e)}_{\widehat{R}} \cdot (\fa\widehat{R})^{n+1}},\]
		and therefore $\cB^\bullet_\fa(p^e) = \cB^\bullet_{\fa \widehat R}(p^e).$
		
		\item	If $R$ is graded with homogeneous maximal ideal $\fm$, and $\fa$ is homogeneous, then ${D^{(e)}_R \cdot \fa^n}/{D^{(e)}_R \cdot \fa^{n+1}}$ is a graded module. Therefore, ${D^{(e)}_R \cdot \fa^n}/{D^{(e)}_R \cdot \fa^{n+1}}\neq 0$ if and only if 
		\[0\neq \left(\frac{D^{(e)}_R \cdot \fa^n}{D^{(e)}_R \cdot \fa^{n+1}}\right)_{\fm}\cong\frac{D^{(e)}_{R_{\fm}} \cdot (\fa{R_{\fm}})^n}{D^{(e)}_{R_{\fm}} \cdot (\fa{R_{\fm}})^{n+1}},\]
		and therefore $\cB^\bullet_\fa(p^e) = \cB^\bullet_{\fa R_{\fm}}(p^e)$. 
	\end{enumerate}
\end{remark}

To compute differential jumps, we can also reduce to the case of an infinite residue field by the following lemma.

\begin{lemma}\label{lem:redu-inf-resfield}
	Let $(R,\fm,K)$ be an $F$-finite local ring and consider the extension $(S, \fn, L)$ given by 
	$$(S, \fn, L) := (R[x]_{\fm R[x]}, \fm R[x]_{\fm R[x]}, K(x)).$$
	Then:
	\begin{enumerate}[(i)]
		\item The extension $(S, \fn, L)$ is faithfully flat, $F$-finite, and local. 
		\item For every ideal $\fa \sq R$ we have $\ell(\fa)=\ell(\fa S)$, where $\ell$ denotes analytic spread.
		\item If $R$ is $F$-split, then so is $S$.
		\item For every integer $e \geq 0$ and every ideal $\fa \sq R$ we have $\cB^\bullet_{\fa}(p^e) = \cB^\bullet_{\fa S}(p^e)$.
	\end{enumerate}
\end{lemma}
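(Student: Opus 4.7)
Here is a plan for proving the four parts of the lemma.

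\textbf{Part (i).} Flatness and locality are routine: $\fm R[x]$ is prime in $R[x]$ because $R[x]/\fm R[x] \cong K[x]$ is a domain, and since its contraction to $R$ is $\fm$, the induced map $R \to S$ is faithfully flat and local. For $F$-finiteness, I would observe that $R$ being $F$-finite implies $R[x]$ is $F$-finite: if $F_*R = \sum_{i=1}^m R \cdot F_*\alpha_i$, then $F_*R[x] = \sum_{i,\,0 \leq j < p} R[x] \cdot F_*(\alpha_i x^j)$, since every element of $F_*R[x]$ is $F_*(\sum_k a_k x^k)$ and each $x^k$ can be written as $x^{pq} \cdot x^r$ with $0 \leq r < p$. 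Then $F$-finiteness passes to the localization $S$.

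\textbf{Part (ii).} I would use the fiber-cone description $\ell(\fa) = \dim \cF(\fa)$, where $\cF(\fa) = \bigoplus_n \fa^n/\fm\fa^n$. Computing directly,
\[ \cF(\fa S) = \bigoplus_n (\fa S)^n / \fn (\fa S)^n = \bigoplus_n (\fa^n/\fm\fa^n) \otimes_K L \cong \cF(\fa) \otimes_K L, \]
and since $\cF(\fa)$ is a finitely generated graded $K$-algebra, extending scalars to the field extension $L = K(x)$ preserves Krull dimension.

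\textbf{Part (iii).} Given a splitting $\phi: F_*R \to R$ of the Frobenius on $R$, I would construct an explicit extension $\tilde\phi : F_* R[x] \to R[x]$ by the formula
\[ \tilde\phi\Bigl(F_*\sum_i a_i x^i\Bigr) = \sum_{p \mid i} \phi(F_* a_i)\, x^{i/p}. \]
A direct check shows $\tilde\phi$ is $R[x]$-linear (using that $\phi$ is $R$-linear) and satisfies $\tilde\phi(F_*1) = 1$. Localizing $\tilde\phi$ at $\fm R[x]$ produces a splitting of $F_*S \to S$, using the standard identification $W^{-1}(F_*R[x]) \cong F_*(W^{-1}R[x])$.

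\textbf{Part (iv).} The key step is to show $D^{(e)}_{R[x]} \cdot (\fa R[x])^n = (D^{(e)}_R \cdot \fa^n) \cdot R[x]$. The inclusion $\supseteq$ is immediate since multiplication by elements of $R[x]$ is a level-zero operator. For $\subseteq$, invoke Lemma~\ref{lemma-D_R-addvar} to factor any operator in $D^{(e)}_{R[x]}$ as $\xi \otimes \delta$ with $\xi \in D^{(e)}_R$ and $\delta \in D^{(e)}_{\F_p[x]}$; applied to $\sum_k a_k x^k$ with $a_k \in \fa^n$, this produces $\sum_k \xi(a_k)\delta(x^k) \in (D^{(e)}_R \cdot \fa^n)R[x]$. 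Localizing at $\fm R[x]$ and using that $D^{(e)}$ commutes with localization (Remark~\ref{rem:reduce-to-local}(i)) gives
\[ D^{(e)}_S \cdot (\fa S)^n = (D^{(e)}_R \cdot \fa^n) \otimes_R S. \]
Therefore
\[ \frac{D^{(e)}_S \cdot (\fa S)^n}{D^{(e)}_S \cdot (\fa S)^{n+1}} \cong S \otimes_R \frac{D^{(e)}_R \cdot \fa^n}{D^{(e)}_R \cdot \fa^{n+1}}, \]
and by the faithful flatness established in (i), one side vanishes if and only if the other does. The main obstacle is the identity $D^{(e)}_{R[x]} \cdot (\fa R[x])^n = (D^{(e)}_R \cdot \fa^n) R[x]$, which is the one place where the decomposition of $D^{(e)}_{R[x]}$ from Lemma~\ref{lemma-D_R-addvar} is genuinely needed.
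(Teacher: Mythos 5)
Your proposal is correct and, on the substantive point (iv), takes essentially the same route as the paper: use the decomposition $D^{(e)}_{R[x]} \cong D^{(e)}_R \otimes_{\F_p} D^{(e)}_{\F_p[x]}$ from Lemma~\ref{lemma-D_R-addvar} to get $D^{(e)}_{R[x]} \cdot (\fa R[x])^n = (D^{(e)}_R \cdot \fa^n) R[x]$, localize via Remark~\ref{rem:reduce-to-local}, and finish by faithful flatness. For (i)--(iii) the paper simply cites Swanson--Huneke, whereas you supply explicit proofs: the $F$-finiteness generators, the fiber-cone computation $\cF(\fa S) \cong \cF(\fa)\otimes_K L$ with invariance of dimension under field extension, and the explicit extended splitting $\tilde\phi\bigl(F_*\sum_i a_i x^i\bigr) = \sum_{p\mid i}\phi(F_* a_i)\,x^{i/p}$. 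All of these check out (in particular your $\tilde\phi$ is $R[x]$-linear for the Frobenius-twisted module structure and sends $F_*1\mapsto 1$); they are simply worked-out versions of what the paper attributes to a standard reference, so the content is the same but your write-up is more self-contained.
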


\begin{proof}
	The statements in (i), (ii), (iii) are standard \cite[\S 8.5]{SwHu}. For (iv) observe that, by Lemma~\ref{lemma-D_R-addvar}, we have $D^{(e)}_{R[x]} \cdot (\fb R[x]) = (D^{(e)}_{R} \cdot \fb) R[x]$ for any ideal $\fb \sq R$. Then, by Remark~\ref{rem:reduce-to-local}, we have $D^{(e)}_S \cdot (\fb S) = (D^{(e)}_S \cdot \fb) S$. The claim on differential jumps then follows by faithful flatness.
\end{proof}

Differential jumps can also be characterized in terms of $D^{(e)}_R$-ideals.

\begin{definition}
	Let $R$ be an $F$-finite ring. An ideal $\fa$ of $R$ is a {$D^{(e)}_R$-ideal} if it is a $D^{(e)}_R$-submodule of $R$, equivalently,  $D^{(e)}_R \cdot \fa \sq \fa$.
\end{definition}

We record two natural families of $D^{(e)}_R$-ideals.

\begin{lemma}\label{lem:d-ideals}
	Let $\fa \sq R$ be an ideal.
	\begin{enumerate}
		\item The $e$-th Frobenius power $\fa^{[p^e]}$ of $\fa$ is a $D^{(e)}_R$-ideal.
		\item The Cartier preimage $I_e(\fa):=\{f\in R \ | \ \cC^e_R \cdot f \sq \fa\}$ of $\fa$ is a $D^{(e)}_R$-ideal.
	\end{enumerate}
\end{lemma}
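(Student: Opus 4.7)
The two parts both follow from the defining descriptions of $D^{(e)}_R$ and $\cC^e_R$; the proofs are short and essentially formal, so the plan is to make the underlying natural actions explicit rather than develop new machinery.

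For part (i), the approach is to use the identification $D^{(e)}_R = \End_{R^{p^e}}(R)$ from Subsection~\ref{Subsec-BAsicDiffOps}: every $\xi \in D^{(e)}_R$ is $R^{p^e}$-linear when viewed as an additive map $R \to R$. Since $\fa^{[p^e]}$ is, by definition, the ideal generated by $\{a^{p^e} : a \in \fa\}$, a typical element has the form $f = \sum_i r_i a_i^{p^e}$ with $r_i \in R$ and $a_i \in \fa$. Because $a_i^{p^e} \in R^{p^e}$, applying $\xi$ and pulling these scalars out yields $\xi(f) = \sum_i a_i^{p^e} \xi(r_i) \in \fa^{[p^e]}$. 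Thus $D^{(e)}_R \cdot \fa^{[p^e]} \sq \fa^{[p^e]}$, which is exactly what we need. There is no real obstacle here; the only point to flag is matching up the conventions for the $R$-module structure on $F^e_* R$ so that $R^{p^e}$-linearity of $\xi$ is unambiguously equivalent to $\xi \in \End_R(F^e_* R)$.

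For part (ii), the key structural remark is that $\cC^e_R = \Hom_R(F^e_* R, R)$ is closed under precomposition with any $\xi \in D^{(e)}_R$. Concretely, for any $p^{-e}$-linear map $\phi : R \to R$ and any $\xi \in D^{(e)}_R$, the composite $\phi \circ \xi$ satisfies
$$(\phi \circ \xi)(r^{p^e} f) = \phi\bigl(r^{p^e} \xi(f)\bigr) = r \, \phi(\xi(f)) = r \, (\phi \circ \xi)(f),$$
using $R^{p^e}$-linearity of $\xi$ in the first equality and $p^{-e}$-linearity of $\phi$ in the second. Hence $\phi \circ \xi \in \cC^e_R$. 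Now given $f \in I_e(\fa)$ and $\xi \in D^{(e)}_R$, for every $\phi \in \cC^e_R$ we have $\phi(\xi(f)) = (\phi \circ \xi)(f) \in \fa$ by the defining property of $I_e(\fa)$, so $\xi(f) \in I_e(\fa)$. This gives $D^{(e)}_R \cdot I_e(\fa) \sq I_e(\fa)$, as desired.

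In short, the main step in both cases is a one-line observation about how operators of level $e$ interact with the relevant $R^{p^e}$-linear or $p^{-e}$-linear structure, so I do not anticipate any real obstacle. The proof as a whole is essentially a two-part bookkeeping check, with the non-trivial content already packaged into the identifications $D^{(e)}_R = \End_{R^{p^e}}(R)$ and $\cC^e_R = \Hom_R(F^e_* R, R)$ recalled in the preliminaries.
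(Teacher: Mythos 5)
Your proof is correct and follows essentially the same approach as the paper: for part (i), pull $p^e$-th powers through an $R^{p^e}$-linear operator; for part (ii), use that $\cC^e_R$ is closed under precomposition with $D^{(e)}_R$. The only difference is that you spell out the one-line verification that $\phi\circ\xi\in\cC^e_R$, which the paper states without proof.
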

\begin{proof} Let  $\delta\in D^{(e)}_R$.

	For (i), let $\fa=(f_1,\dots,f_r)$ and $\sum_i g_i f_i^{p^e}\in \fa^{[p^e]}$. Then $\delta(\sum_i g_i f_i^{p^e})= \sum_i f_i^{p^e} \delta(g_i) \in \fa^{[p^e]}$.
	
	For (ii), given $f\in I_e(\fa)$, and $\psi\in \cC^e_R$, we have $\psi\cdot (\delta \cdot f)=(\psi \circ \delta) \cdot f \in \fa$ since $\psi \circ \delta \in \cC^e_R$. Thus, $\delta \cdot f \in I_e(\fa)$.
\end{proof}

\begin{remark}\label{rem:existjumps} If $\fa=R$ is the unit ideal, then $\cB^\bullet_\fa(p^e)=\varnothing$ for all $e$. Conversely, if $\fa\subsetneqq R$ is a proper ideal, we can take $k$ such that $\fa^k \subseteq \fa^{[p^e]}$, and then $D^{(e)}_R \cdot \fa^{k} \subseteq D^{(e)}_R \cdot \fa^{[p^e]} = \fa^{[p^e]}\neq R = D^{(e)} \cdot \fa^0$ for every $e$, so $\cB^\bullet_\fa(p^e)\neq \varnothing$ for every $e$.
\end{remark}

\begin{lemma} \label{lemma-testSFR3}
	Let $\fa, \fb \sq R$ be two ideals and $e \geq 0$ be an integer. If $D^{(e)}_R \cdot \fa = D^{(e)}_R \cdot \fb$, then $\cC^e_R \cdot \fa = \cC^e_R \cdot \fb$. 
\end{lemma}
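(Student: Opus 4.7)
The plan is to show that the Cartier image $\cC^e_R \cdot \fa$ depends only on the $D^{(e)}_R$-submodule of $R$ generated by $\fa$; that is, we always have $\cC^e_R \cdot \fa = \cC^e_R \cdot (D^{(e)}_R \cdot \fa)$. Once this is established, the lemma is immediate: applying it with $\fb$ in place of $\fa$ and using the hypothesis $D^{(e)}_R \cdot \fa = D^{(e)}_R \cdot \fb$ yields
\[
\cC^e_R \cdot \fa \;=\; \cC^e_R \cdot (D^{(e)}_R \cdot \fa) \;=\; \cC^e_R \cdot (D^{(e)}_R \cdot \fb) \;=\; \cC^e_R \cdot \fb.
\]

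The key observation, which I would record as the first step, is a composition rule: whenever $\psi \in \cC^e_R = \Hom_R(F^e_* R, R)$ and $\delta \in D^{(e)}_R = \End_{R^{p^e}}(R)$, the composite $\psi \circ \delta \colon R \to R$ again lies in $\cC^e_R$. Indeed, for every $r, f \in R$ one has
\[
(\psi \circ \delta)(r^{p^e} f) \;=\; \psi\bigl(\delta(r^{p^e} f)\bigr) \;=\; \psi\bigl(r^{p^e}\,\delta(f)\bigr) \;=\; r\,\psi(\delta(f)) \;=\; r\,(\psi \circ \delta)(f),
\]
using that $\delta$ is $R^{p^e}$-linear and that $\psi$ is $p^{-e}$-linear.

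With this composition rule in hand, I verify the two inclusions. The containment $\cC^e_R \cdot \fa \sq \cC^e_R \cdot (D^{(e)}_R \cdot \fa)$ is trivial since $\fa \sq D^{(e)}_R \cdot \fa$ (take $\delta = \id$). For the reverse inclusion, a typical generator of $\cC^e_R \cdot (D^{(e)}_R \cdot \fa)$ has the form $\psi(g)$ where $\psi \in \cC^e_R$ and $g = \sum_i \delta_i(a_i)$ with $\delta_i \in D^{(e)}_R$ and $a_i \in \fa$. By additivity and the composition rule,
\[
\psi(g) \;=\; \sum_i \psi(\delta_i(a_i)) \;=\; \sum_i (\psi \circ \delta_i)(a_i) \;\in\; \cC^e_R \cdot \fa,
\]
which finishes the argument. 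There is no real obstacle here beyond recognizing the composition rule $\cC^e_R \circ D^{(e)}_R \sq \cC^e_R$, which falls straight out of the definition of level-$e$ differential operators as $R^{p^e}$-linear endomorphisms.
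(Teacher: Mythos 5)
Your proof is correct and matches the paper's argument: the paper's one-line proof is precisely the observation that $\cC^e_R \circ D^{(e)}_R = \cC^e_R$, from which $\cC^e_R \cdot \fa = \cC^e_R \cdot (D^{(e)}_R \cdot \fa)$ follows, and you have simply unpacked that composition rule and the resulting equality in more detail.
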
	
\begin{proof}
	Note that $\cC^e_R \circ D^{(e)}_R = \cC^e_R$. We therefore get $\cC^e_R \cdot \fa = \cC^e_R \cdot (D^{(e)}_R \cdot \fa) = \cC^e_R \cdot (D^{(e)}_R \cdot \fb) = \cC^e_R \cdot \fb$.
\end{proof}

\begin{definition}
	Let $R$ be an $F$-finite ring and fix an ideal $\fa \sq R$. Let $\fb \sq R$ be a proper $D^{(e)}_R$-ideal such that $\fa \sq \sqrt{\fb}$. We define 
	$$\cB^\fb_\fa(p^e) := \max \{ n \geq 0 : D^{(e)}_R \cdot \fa^n \not\sq \fb \}.$$
\end{definition}

Recall that, by convention, we have $\fa^0 = R$ and, since $\fb$ is proper by assumption, the set $\{n \geq 0 : D^{(e)}_R \cdot \fa^n \not \sq \fb \}$ is never empty. Moreover, since $\fa \sq \sqrt \fb$, we may pick some $m > 0$ such that $\fa^m \sq \fb$. If we pick $k > 0$ so that $\fa^k \sq \fa^{\fm[p^e]}$, then $D^{(e)}_R \cdot \fa^k \sq D^{(e)}_R \cdot \fa^{m [p^e]} \sq \fa^{\fm [p^e]} \sq \fb$. We conclude that the maximum above does indeed exist. 

\begin{remark} 
	If $R$ is regular, then\footnote{This is well-known to experts; it follows, for example, by using Frobenius descent \cite{AMBL}.} every $D^{(e)}_R$-ideal $\fb$ can be realized as $\fb = \fc^{[p^e]}$ for some ideal $\fc$. In this case, we have $\cB^{\fb}_\fa(p^e) = \nu^{\fc}_\fa(p^e)$, where $\nu^\fc_\fa(p^e)$ are the $\nu$-invariants introduced by  \Mustata, Takagi, and Watanabe \cite{MTW}. \
\end{remark}

\begin{lemma}\label{lem:thresh}
	If $\fa \sq R$ is an  ideal, then
	\[ \cB^\bullet_\fa(p^e) = \{ \cB^\fb_\fa(p^e) \ | \ \fb \ \text{is a $D^{(e)}_R$-ideal}, \ \fa \sq \sqrt{\fb} \neq (1) \}.\]
\end{lemma}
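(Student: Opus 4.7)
The plan is to prove the set equality by two inclusions, each obtained through an explicit construction. The argument is quite direct: the key observation is that the descending chain $\{D^{(e)}_R \cdot \fa^n\}_n$ strictly drops at $n$ precisely when one can find some proper $D^{(e)}_R$-ideal sandwiched between $D^{(e)}_R \cdot \fa^n$ and $D^{(e)}_R \cdot \fa^{n+1}$ (as a negative test), and the canonical choice is $\fb = D^{(e)}_R \cdot \fa^{n+1}$ itself.

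For the inclusion $\subseteq$, suppose $n \in \cB^\bullet_\fa(p^e)$. I set $\fb := D^{(e)}_R \cdot \fa^{n+1}$, which is a $D^{(e)}_R$-ideal since $D^{(e)}_R$ is a ring. The hypothesis $\fa^n \not\subseteq D^{(e)}_R \cdot \fa^{n+1} = \fb$ forces $\fb \neq R$, so $\sqrt\fb \neq (1)$, and since $\fa^{n+1} \subseteq \fb$ we have $\fa \subseteq \sqrt \fb$. To check $n = \cB^\fb_\fa(p^e)$, note that $\fa^n \not\subseteq \fb$ gives $D^{(e)}_R \cdot \fa^n \not\subseteq \fb$, while for any $k \geq n+1$ we have $D^{(e)}_R \cdot \fa^k \subseteq D^{(e)}_R \cdot \fa^{n+1} = \fb$, so $n$ is indeed the maximum.

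For the reverse inclusion $\supseteq$, suppose $\fb$ is a $D^{(e)}_R$-ideal with $\fa \subseteq \sqrt \fb \neq (1)$, and let $n := \cB^\fb_\fa(p^e)$. By the definition of $\cB^\fb_\fa(p^e)$ as a maximum, we have $D^{(e)}_R \cdot \fa^n \not\subseteq \fb$ while $D^{(e)}_R \cdot \fa^{n+1} \subseteq \fb$. If the inclusion $D^{(e)}_R \cdot \fa^{n+1} \subseteq D^{(e)}_R \cdot \fa^n$ were an equality, then both would lie inside $\fb$, contradicting the former condition. Hence the chain strictly drops at level $n$, so $n \in \cB^\bullet_\fa(p^e)$.

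There is no real obstacle here; the only small subtlety is verifying that $\fb = D^{(e)}_R \cdot \fa^{n+1}$ really is a proper ideal in the forward direction, but this follows immediately from the defining condition of a differential jump (and was already implicit in the remark that $\cB^\bullet_\fa(p^e) = \{n : \fa^n \not\subseteq D^{(e)}_R \cdot \fa^{n+1}\}$).
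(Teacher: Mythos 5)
Your proof is correct and follows exactly the same approach as the paper's: both directions use the canonical choice $\fb = D^{(e)}_R\cdot\fa^{n+1}$ in the forward inclusion and the defining maximality of $\cB^\fb_\fa(p^e)$ in the reverse.
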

\begin{proof}
	If $\fb$ is a $D^{(e)}_R$-ideal with $\fa \sq \sqrt{\fb}$, and $n=\cB^\fb_\fa(p^e)$, then $D^{(e)}_R\cdot \fa^n \not\sq\fb$ and $D^{(e)}_R\cdot \fa^{n+1} \sq\fb$, so $D^{(e)}_R\cdot \fa^n \neq D^{(e)}_R\cdot \fa^{n+1}$. Conversely, if $D^{(e)}_R\cdot \fa^n \neq D^{(e)}_R\cdot \fa^{n+1}$, then $\fb=D^{(e)}_R \cdot \fa^{n+1}$ is a $D^{(e)}_R$-ideal, and $n=\cB^\fb_\fa(p^e)$.
\end{proof}

\begin{lemma}\label{lem:De-nested}
	Let $\fa \sq R$ be an ideal and $n, e, a \geq 0$ be integers. If $D^{(e)}_R \cdot \fa^n = D^{(e)}_R \cdot \fa^{n+1}$, then $D^{(e+a)}_R \cdot \fa^n = D^{(e+a)}_R \cdot \fa^{n+1}$. Thus, $\cB^\bullet_\fa(p^e) \supseteq \cB^\bullet_\fa(p^{e+a})$.
\end{lemma}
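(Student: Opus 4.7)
The plan is straightforward, relying on the monotonicity of the level filtration on $D_R$. The key observation is that $D^{(e)}_R \subseteq D^{(e+a)}_R$, since $R^{p^{e+a}} \subseteq R^{p^e}$ implies that any $R^{p^e}$-linear endomorphism of $R$ is automatically $R^{p^{e+a}}$-linear. In particular, $D^{(e+a)}_R \cdot D^{(e)}_R \subseteq D^{(e+a)}_R$ as a consequence of $D^{(e+a)}_R$ being a ring containing $D^{(e)}_R$.

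For the main statement, the inclusion $D^{(e+a)}_R \cdot \fa^n \supseteq D^{(e+a)}_R \cdot \fa^{n+1}$ is immediate from $\fa^n \supseteq \fa^{n+1}$. For the reverse inclusion, I would use the given hypothesis together with the chain
\[
\fa^n \subseteq D^{(e)}_R \cdot \fa^n = D^{(e)}_R \cdot \fa^{n+1} \subseteq D^{(e+a)}_R \cdot \fa^{n+1}.
\]
Applying $D^{(e+a)}_R$ to both ends and using that $D^{(e+a)}_R$ is closed under composition gives
\[
D^{(e+a)}_R \cdot \fa^n \subseteq D^{(e+a)}_R \cdot \big(D^{(e+a)}_R \cdot \fa^{n+1}\big) = D^{(e+a)}_R \cdot \fa^{n+1},
\]
which yields the desired equality.

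The containment $\cB^\bullet_\fa(p^e) \supseteq \cB^\bullet_\fa(p^{e+a})$ then follows by contraposition of the previous implication from the definition of $\cB^\bullet_\fa(p^e)$: if $n \notin \cB^\bullet_\fa(p^e)$, meaning $D^{(e)}_R \cdot \fa^n = D^{(e)}_R \cdot \fa^{n+1}$, then by what we just proved $D^{(e+a)}_R \cdot \fa^n = D^{(e+a)}_R \cdot \fa^{n+1}$, so $n \notin \cB^\bullet_\fa(p^{e+a})$.

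There is no real obstacle here; the content is entirely the inclusion $D^{(e)}_R \subseteq D^{(e+a)}_R$ combined with the idempotency $D^{(e+a)}_R \cdot D^{(e+a)}_R = D^{(e+a)}_R$. No additional hypotheses on $R$ or $\fa$ are used.
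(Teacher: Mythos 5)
Your proof is correct and takes essentially the same approach as the paper: both rest on the inclusion $D^{(e)}_R \subseteq D^{(e+a)}_R$ and the fact that $D^{(e+a)}_R$ absorbs composition. The paper reduces to the case $a=1$ and writes the argument as $D^{(e+a)}_R \cdot \fa^n = D^{(e+a)}_R \cdot (D^{(e)}_R\cdot \fa^n) = D^{(e+a)}_R \cdot (D^{(e)}_R\cdot \fa^{n+1}) = D^{(e+a)}_R \cdot \fa^{n+1}$, which is just a repackaging of your chain of containments.
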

\begin{proof}
	If $D^{(e)}_R \cdot \fa^n = D^{(e)}_R \cdot \fa^{n+1}$, then \[D^{(e+a)}_R \cdot \fa^n = D^{(e+a)}_R \cdot (D^{(e)}_R \cdot \fa^n) = D^{(e+a)}_R \cdot (D^{(e)}_R \cdot \fa^{n+1}) = D^{(e+a)}_R \cdot \fa^{n+1}.\qedhere\]
\end{proof}

\begin{lemma}\label{lem:frob-powers} Let $\fa \sq R$ be an ideal generated by $r$ elements.	If $n\geq m p^e + (r-1)(p^e-1)$, then $\fa^n=\fa^{n-mp^e} (\fa^{[p^e]})^m$.
\end{lemma}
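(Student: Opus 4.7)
The plan is to verify the two inclusions separately. The containment $\fa^{n-mp^e}(\fa^{[p^e]})^m \sq \fa^n$ is automatic, since $\fa^{[p^e]} \sq \fa^{p^e}$ gives $\fa^{n-mp^e}(\fa^{[p^e]})^m \sq \fa^{n-mp^e} \cdot \fa^{mp^e} = \fa^n$. The content lies in the reverse inclusion, which I will reduce to a combinatorial counting statement.

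Fix generators $\fa = (f_1, \ds, f_r)$. Then $\fa^n$ is spanned as an $R$-module by monomials $\ul{f}^{\ul{a}}$ with $\ul{a} \in \NN^r$ and $|\ul{a}| = n$, so it suffices to exhibit, for each such $\ul{a}$, a decomposition $\ul{a} = \ul{b} + p^e \ul{k}$ with $\ul{b}, \ul{k} \in \NN^r$, $|\ul{b}| = n - mp^e$, and $|\ul{k}| = m$. Any such decomposition immediately gives
\[
\ul{f}^{\ul{a}} = \ul{f}^{\ul{b}} \cdot (\ul{f}^{\ul{k}})^{p^e} \in \fa^{n - mp^e} (\fa^{[p^e]})^m.
\]
Writing $a_i = q_i p^e + s_i$ with $0 \leq s_i < p^e$, such $\ul{k}$ exists if and only if one can choose $0 \leq k_i \leq q_i$ summing to $m$, which happens exactly when $\sum_i q_i \geq m$.

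To establish $\sum_i q_i \geq m$, I will use a pigeonhole estimate. If instead $\sum_i q_i \leq m - 1$, then using $\sum_i s_i \leq r(p^e - 1)$ yields
\[
n = \sum_i q_i p^e + \sum_i s_i \leq (m-1) p^e + r(p^e - 1) = mp^e + (r-1)(p^e - 1) - 1,
\]
contradicting the hypothesis on $n$. Thus $\sum_i q_i \geq m$, any valid choice of $\ul{k}$ works, and the reverse inclusion follows. The argument is a clean counting exercise with no real obstacle; the numerical bound in the hypothesis is tight, as any monomial $\ul{f}^{\ul{a}}$ with $a_i \leq p^e - 1$ for all $i$ witnesses that the $m = 1$ case fails when $n = r(p^e - 1)$.
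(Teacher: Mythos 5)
Your proof is correct, and the core idea — pigeonhole on the exponent vectors of the monomial generators of $\fa^n$ — is the same as the paper's. The only cosmetic difference is that the paper first reduces to the case $m=1$ and iterates, while you handle general $m$ directly by bounding $\sum_i q_i \geq m$; both come to the same thing.
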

\begin{proof}
	The statement reduces to the case $m=1$. The containment of the right-hand side in the left is clear. For the other, by the pigeonhole principle, any monomial of degree greater than $r(p^e-1)$ in the generators of $\fa$ must be a multiple of a $p^e$th power of a generator, from which the claim is clear.
\end{proof}

\begin{proposition}\label{prop:subtract-pe}
	Let $R$ be an $F$-finite ring and $\fa\sq R$ be an ideal. If $\fa$ is generated by $r$ elements, $n\geq r(p^e-1) + 1$, and $D^{(e)}_R \cdot \fa^{n-p^e} = D^{(e)}_R \cdot \fa^{n-p^e+1}$, then $D^{(e)}_R \cdot \fa^n = D^{(e)}_R \cdot \fa^{n+1}$. Hence, if $n\in \cB^\bullet_\fa(p^e)$, then $n-p^e\in \cB^\bullet_\fa(p^e)$. 
	
	If $\fa$ is principal and generated by a nonzerodivisor, $n\geq p^e$, and $D^{(e)}_R \cdot \fa^n = D^{(e)}_R \cdot \fa^{n+1}$, then $D^{(e)}_R \cdot \fa^{n-p^e} = D^{(e)}_R \cdot \fa^{n-p^e+1}$.
\end{proposition}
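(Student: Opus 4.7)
The plan is to reduce both implications to the identity
\[ D^{(e)}_R \cdot (\fb \cdot \fa^{[p^e]}) = \fa^{[p^e]} \cdot (D^{(e)}_R \cdot \fb) \]
valid for any ideal $\fb \sq R$, combined with Lemma~\ref{lem:frob-powers} which shows that once $n$ is large enough, the $n$-th power of $\fa$ factors through the Frobenius power $\fa^{[p^e]}$.

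First I would establish the displayed identity. The containment $\supseteq$ is the easier one: if $g \in \fa^{[p^e]}$ write $g = \sum_i c_i f_i^{p^e}$ for generators $f_1,\dots,f_r$ of $\fa$, and observe that for $b \in \fb$ and $\delta \in D^{(e)}_R$ the $R^{p^e}$-linearity of $\delta$ gives $g\,\delta(b) = \sum_i (c_i \delta)(f_i^{p^e} b)$, which lies in $D^{(e)}_R \cdot (\fb \fa^{[p^e]})$ because $c_i \delta \in D^{(e)}_R$ and $f_i^{p^e} b \in \fa^{[p^e]} \fb$. For $\sq$, a typical generator of $\fb \fa^{[p^e]}$ has the form $b c f_i^{p^e}$ with $b \in \fb$ and $c \in R$, and applying $\delta \in D^{(e)}_R$ yields $\delta(b c f_i^{p^e}) = f_i^{p^e}\,\delta(bc) \in \fa^{[p^e]}\,(D^{(e)}_R \cdot \fb)$, since $bc \in \fb$.

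For the first statement, the hypothesis $n \geq r(p^e-1)+1$ rewrites as $n \geq p^e + (r-1)(p^e-1)$, and similarly for $n+1$, so Lemma~\ref{lem:frob-powers} gives
\[ \fa^n = \fa^{n-p^e} \cdot \fa^{[p^e]}, \qquad \fa^{n+1} = \fa^{n-p^e+1} \cdot \fa^{[p^e]}. \]
Applying the displayed identity with $\fb = \fa^{n-p^e}$ and $\fb = \fa^{n-p^e+1}$ respectively yields
\[ D^{(e)}_R \cdot \fa^n = \fa^{[p^e]} \cdot (D^{(e)}_R \cdot \fa^{n-p^e}), \qquad D^{(e)}_R \cdot \fa^{n+1} = \fa^{[p^e]} \cdot (D^{(e)}_R \cdot \fa^{n-p^e+1}), \]
and equality of the right-hand sides follows from the hypothesis $D^{(e)}_R \cdot \fa^{n-p^e} = D^{(e)}_R \cdot \fa^{n-p^e+1}$. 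The ``Hence'' clause is the contrapositive.

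For the principal case with $\fa = (f)$ and $f$ a nonzerodivisor, the factorizations $\fa^n = (f^{p^e})\cdot \fa^{n-p^e}$ and $\fa^{n+1} = (f^{p^e})\cdot \fa^{n-p^e+1}$ hold as soon as $n \geq p^e$, without invoking Lemma~\ref{lem:frob-powers}. By the same $R^{p^e}$-linearity argument as above, the assumption $D^{(e)}_R \cdot \fa^n = D^{(e)}_R \cdot \fa^{n+1}$ becomes
\[ f^{p^e} \cdot (D^{(e)}_R \cdot \fa^{n-p^e}) = f^{p^e} \cdot (D^{(e)}_R \cdot \fa^{n-p^e+1}). \]
The main (and only nontrivial) step left is to cancel $f^{p^e}$: because $f$ is a nonzerodivisor so is $f^{p^e}$, and given any $u$ in the left-hand ideal one can find $v$ in $D^{(e)}_R \cdot \fa^{n-p^e+1}$ with $f^{p^e}(u-v) = 0$, forcing $u=v$. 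Symmetrically for the reverse containment. This is the only place where the nonzerodivisor hypothesis is essential, and it is where I expect the minor care to be needed; everything else is formal manipulation built on Lemma~\ref{lem:frob-powers} and the $R^{p^e}$-linearity of $D^{(e)}_R$.
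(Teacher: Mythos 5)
Your proof is correct and follows essentially the same route as the paper; you have simply spelled out explicitly the identity $D^{(e)}_R \cdot (\fb \fa^{[p^e]}) = \fa^{[p^e]} (D^{(e)}_R \cdot \fb)$ that the paper's argument invokes implicitly after applying Lemma~\ref{lem:frob-powers}. The cancellation-by-nonzerodivisor step in the principal case is also handled the same way.
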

\begin{proof}
	By Lemma~\ref{lem:frob-powers}, we have that $D^{(e)} \cdot \fa^n = D^{(e)} \cdot (\fa^{[p^e]}\fa^{n-p^e}) = \fa^{[p^e]} (D^{(e)} \cdot \fa^{n-p^e})$ and $D^{(e)} \cdot \fa^{n+1}= \fa^{[p^e]} (D^{(e)} \cdot \fa^{n-p^e+1})$ likewise. Then, if $D^{(e)} \cdot \fa^{n-p^e}=D^{(e)} \cdot \fa^{n-p^e+1}$, we must have $D^{(e)} \cdot \fa^n =D^{(e)} \cdot \fa^{n+1}$.
	
	If $f$ is a nonzerodivisor, $\fa=(f)$, and $n\geq p^e$, we have $f^{p^e} D^{(e)}_R \cdot \fa^{n-p^e} = D^{(e)}_R \cdot \fa^{n}$. Then, $D^{(e)}_R \cdot \fa^n = D^{(e)}_R \cdot \fa^{n+1}$ implies $D^{(e)}_R \cdot \fa^{n-p^e} = D^{(e)}_R \cdot \fa^{n-p^e+1}$.
\end{proof}

\begin{lemma}\label{lem:chain-jumps}
	Let $\fa \sq R$ be an ideal and fix integers $n<m$. Then $D^{(e)}_R \cdot \fa^n = D^{(e)}_R \cdot \fa^m$ if and only if $D^{(e)}_R \cdot \fa^j = D^{(e)}_R \cdot \fa^{j+1}$ for all $n\leq j \leq m-1$. Equivalently, $D^{(e)}_R \cdot \fa^n = D^{(e)}_R \cdot \fa^m$ if and only if $[n,m) \cap \cB_\fa^\bullet(p^e) = \varnothing$.
\end{lemma}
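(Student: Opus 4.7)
The lemma is an essentially formal consequence of the monotonicity of the chain $\{D^{(e)}_R\cdot\fa^j\}_{j\geq 0}$ together with the definition of differential jump. The plan is therefore very short.

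First I would record the key monotonicity: since $\fa^{j+1}\sq \fa^j$, and $D^{(e)}_R\cdot(-)$ is monotone on submodules of $R$, we have $D^{(e)}_R\cdot\fa^{j+1}\sq D^{(e)}_R\cdot\fa^j$ for every $j$. Iterating from $j=n$ up to $j=m-1$ yields the descending chain
$$ D^{(e)}_R\cdot\fa^n \supseteq D^{(e)}_R\cdot\fa^{n+1} \supseteq \cds \supseteq D^{(e)}_R\cdot\fa^m. $$

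For the forward (``only if'') direction, the hypothesis $D^{(e)}_R\cdot\fa^n = D^{(e)}_R\cdot\fa^m$ forces every containment in this chain to be an equality, which is exactly the conclusion $D^{(e)}_R\cdot\fa^j = D^{(e)}_R\cdot\fa^{j+1}$ for all $n\leq j\leq m-1$. The reverse direction is immediate: a chain whose consecutive terms all coincide is constant, so the endpoints agree.

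Finally, the equivalent reformulation is just the definition: by Definition \ref{def-DiffJump}, an integer $j\geq 0$ lies in $\cB^\bullet_\fa(p^e)$ exactly when $D^{(e)}_R\cdot\fa^j\supsetneq D^{(e)}_R\cdot\fa^{j+1}$, so the statement that $D^{(e)}_R\cdot\fa^j = D^{(e)}_R\cdot\fa^{j+1}$ for every $n\leq j\leq m-1$ is the same as saying $[n,m)\cap \cB^\bullet_\fa(p^e)=\varnothing$. There is no real obstacle here; the entire argument is the observation that a decreasing chain with equal endpoints is constant, combined with unwinding definitions.
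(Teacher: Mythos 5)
Your proof is correct and takes exactly the same approach as the paper's, which simply exhibits the descending chain $D^{(e)}_R\cdot\fa^n \supseteq D^{(e)}_R\cdot\fa^{n+1} \supseteq \cdots \supseteq D^{(e)}_R\cdot\fa^m$ and lets the reader draw the same conclusions you spell out. You have merely written out the routine verification the paper leaves implicit.
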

\begin{proof}
The statement follows from the chain of ideals
	\[ D^{(e)}_R \cdot \fa^n \supseteq  D^{(e)}_R \cdot \fa^{n+1} \supseteq \cdots \supseteq  D^{(e)}_R \cdot \fa^{m}. \qedhere \]
\end{proof}

\begin{lemma}\label{LemmaEqualityFrobPowers}
	Suppose $R$ is $F$-split. Let $\fb \sq \fa \sq R$ be ideals and $e, a \geq 0$ be integers. Then $D^{(e)}_R \cdot \mathfrak{a} = D^{(e)}_R \cdot \mathfrak{b}$ if and only if $D^{(e+a)}_R \cdot \mathfrak{a}^{[p^{a}]} = D^{(e+a)}_R \cdot  \mathfrak{b}^{[p^{a}]}$.
\end{lemma}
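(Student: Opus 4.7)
The plan is to prove both implications by converting between operators of level $e$ and level $e+a$ using the $F$-split hypothesis, which provides two crucial things: first, reducedness of $R$ (since the splitting $\pi$ is a one-sided inverse to Frobenius, $F$ is injective and hence $R$ has no nilpotents); and second, a $p^{-a}$-linear splitting $\pi \in \cC^a_R$ satisfying $\pi(r^{p^a}) = r$, or equivalently a decomposition $R = R^{p^a} \oplus K$ as $R^{p^a}$-modules, which is also a decomposition as $R^{p^{e+a}}$-modules since $R^{p^{e+a}} \sq R^{p^a}$. Since $\fb \sq \fa$ gives one containment in each equation automatically, both directions reduce to showing the reverse containment.

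For the forward direction, suppose $D^{(e)}_R \cdot \fa = D^{(e)}_R \cdot \fb$ and fix $f \in \fa$. Writing $f = \sum_i \xi_i(b_i)$ with $\xi_i \in D^{(e)}_R$ and $b_i \in \fb$ and using the Frobenius identity $(\sum_i x_i)^{p^a} = \sum_i x_i^{p^a}$, the task reduces to showing each $\xi_i(b_i)^{p^a}$ lies in $D^{(e+a)}_R \cdot \fb^{[p^a]}$. The key step is to associate to any $\xi \in D^{(e)}_R$ a ``Frobenius power'' $\xi^{(a)} \in D^{(e+a)}_R$ satisfying $\xi^{(a)}(r^{p^a}) = \xi(r)^{p^a}$: we define $\xi^{(a)}$ on the subring $R^{p^a}$ by $r^{p^a} \mapsto \xi(r)^{p^a}$ (well-defined thanks to reducedness) and extend by zero to $K$. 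This extension is $R^{p^{e+a}}$-linear because the decomposition $R = R^{p^a} \oplus K$ respects scalars in $R^{p^{e+a}}$, with the verification on $R^{p^a}$ boiling down to $\xi^{(a)}((s^{p^e} r)^{p^a}) = \xi(s^{p^e}r)^{p^a} = s^{p^{e+a}} \xi(r)^{p^a}$. Then $f^{p^a} = \sum_i \xi_i(b_i)^{p^a} = \sum_i \xi_i^{(a)}(b_i^{p^a}) \in D^{(e+a)}_R \cdot \fb^{[p^a]}$, and since such $f^{p^a}$ generate $\fa^{[p^a]}$ as an $R$-ideal, the containment $\fa^{[p^a]} \sq D^{(e+a)}_R \cdot \fb^{[p^a]}$ follows.

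For the backward direction, suppose $D^{(e+a)}_R \cdot \fa^{[p^a]} = D^{(e+a)}_R \cdot \fb^{[p^a]}$ and take $f \in \fa$. Since $f^{p^a} \in \fa^{[p^a]} \sq D^{(e+a)}_R \cdot \fb^{[p^a]}$, after absorbing $R$-multipliers into the operators we can write $f^{p^a} = \sum_i \eta_i(b_i^{p^a})$ with $\eta_i \in D^{(e+a)}_R$ and $b_i \in \fb$. Defining $\zeta_i(r) := \pi(\eta_i(r^{p^a}))$ and using the $R^{p^{e+a}}$-linearity of $\eta_i$ together with the $p^{-a}$-linearity of $\pi$, the computation $\zeta_i(s^{p^e} r) = \pi(s^{p^{e+a}} \eta_i(r^{p^a})) = s^{p^e} \zeta_i(r)$ shows $\zeta_i \in D^{(e)}_R$. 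Applying $\pi$ to both sides of $f^{p^a} = \sum_i \eta_i(b_i^{p^a})$ then yields $f = \pi(f^{p^a}) = \sum_i \zeta_i(b_i) \in D^{(e)}_R \cdot \fb$. The main obstacle is really the forward direction, where both reducedness and $F$-splitness are essential in constructing $\xi^{(a)}$; the backward direction instead uses $\pi$ by post-composition in a more direct way.
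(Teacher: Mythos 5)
Your proof is correct and follows essentially the same route as the paper's: both directions convert between levels $e$ and $e+a$ by conjugating with the Frobenius and the chosen splitting, with the forward-direction operator being $F^a \circ \xi \circ \pi$ and the backward one $\pi \circ \eta \circ F^a$. The only differences are cosmetic — the paper reduces to $a=1$ and writes the operator directly as the composition $F \circ \delta \circ \sigma$ (avoiding the well-definedness-via-reducedness remark), whereas you treat general $a$ and describe the same operator via the decomposition $R = R^{p^a} \oplus K$.
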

\begin{proof} It suffices to prove the lemma for ${a}=1$.
	Fix a splitting $\sigma$ of the Frobenius. Let $\mathfrak{a}=(f_1,\dots,f_t)$ and $\mathfrak{b}=(g_1,\dots,g_s)$. 
	
	For the forward implication, if $D^{(e)}_R \cdot  \mathfrak{a} = D^{(e)}_R \cdot  \mathfrak{b}$, then  $f_1,\dots,f_t\in D^{(e)}_R\cdot \mathfrak{b}$. Then, $f_i=\sum_j \delta_{ij}(g_j)$ for some $\delta_{ij}\in D^{(e)}_R$. Thus,  $F \circ \delta_{ij} \circ \sigma \in {D^{e+1}_R}$. We have \[ \sum_j F \circ \delta_{ij}  \circ \sigma (g_j^p) = \sum_j F \circ \delta_{ij} (g_j) = F \left(\sum_j \delta_{ij} (g_j) \right) = f_i^p,\]
	and so $D^{(e+1)}_R\cdot \mathfrak{a}^{[p]} = D^{(e+1)}_R\cdot \mathfrak{b}^{[p]}$.
	
	Conversely, if $D^{(e+1)}_R \cdot \mathfrak{a}^{[p]} = D^{(e+1)}_R \cdot \mathfrak{b}^{[p]}$, then  $f_1^p,\dots,f_t^p\in D^{(e+1)}_R \cdot \mathfrak{b}^{[p]}$. Thus,  $f_i^p=\sum_j \delta_{ij}(g_j^p)$ for some $\delta_{ij}\in D^{(e+1)}_R$. Then $\sigma \circ \delta_{ij} \circ F \in D^{(e)}_R$. We have \[ \sum_j \sigma \circ \delta_{ij}  \circ F (g_j) = \sum_j \sigma \circ \delta_{ij} (g_j^p) = \sigma \left(\sum_j \delta_{ij} (g_j^p) \right) = \sigma(f_i^p) = f_i,\]
	and so $D^{(e)}_R \cdot \mathfrak{a} = D^{(e)}_R\cdot \mathfrak{b}$.
\end{proof}

\begin{proposition} \label{prop:higher-jumps}
	Let $R$ be an $F$-finite and $F$-split ring, $\fa$ be an ideal {with  $r$ generators,} and $m \geq n \geq 0$ be integers. Then for all integers $e, a \geq 0$ we have:
	\begin{enumerate}
		\item If $n \in \cB^\bullet_\fa(p^e)$, then $[n p^a, n p^a + r(p^a - 1)] \cap \cB^\bullet_\fa(p^{e + a}) \neq \varnothing$.
		
		\item If $[n - r + 1, m - 1] \cap \cB^\bullet_\fa(p^e) = \varnothing$, then $[np^a - r + 1, m p^a  - 1] \cap \cB^\bullet_\fa(p^{e+a}) = \varnothing$.
	\end{enumerate}
\end{proposition}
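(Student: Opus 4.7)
The plan is to derive both parts from a sandwich argument combining three tools: Lemma \ref{lem:chain-jumps} (which turns interval-avoidance of differential jumps into equalities $D^{(e)} \cdot \fa^N = D^{(e)} \cdot \fa^M$), Lemma \ref{LemmaEqualityFrobPowers} (which uses the $F$-split hypothesis to transfer such equalities between levels $e$ and $e+a$ at the cost of passing to $[p^a]$-Frobenius powers), and two boundary identities derived from Lemma \ref{lem:frob-powers} applied at (or just past) the equality case of its hypothesis:
\[ \fa^{kp^a + r(p^a-1)} = \fa^{r(p^a-1)} (\fa^k)^{[p^a]} \qquad\text{and}\qquad \fa^{kp^a - r + 1} = \fa^{(r-1)(p^a-1)}(\fa^{k-r+1})^{[p^a]}, \]
where I freely identify $(\fa^k)^{[p^a]} = (\fa^{[p^a]})^k$. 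These identities trap $(\fa^k)^{[p^a]}$ strictly between $\fa^{kp^a}$ and $\fa^{kp^a + r(p^a-1)}$, and the interval width $r(p^a-1)$ appearing in both parts of the proposition is exactly this sandwich width.

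For part (ii), the hypothesis combined with Lemma \ref{lem:chain-jumps} gives $D^{(e)} \cdot \fa^{n-r+1} = D^{(e)} \cdot \fa^m$, which Lemma \ref{LemmaEqualityFrobPowers} upgrades to $D^{(e+a)} \cdot (\fa^{n-r+1})^{[p^a]} = D^{(e+a)} \cdot (\fa^m)^{[p^a]}$. Applying the second boundary identity with $k=n$ together with the trivial $(\fa^m)^{[p^a]} \subseteq \fa^{mp^a}$ yields the chain
\[ D^{(e+a)} \cdot \fa^{np^a - r + 1} \subseteq D^{(e+a)} \cdot (\fa^{n-r+1})^{[p^a]} = D^{(e+a)} \cdot (\fa^m)^{[p^a]} \subseteq D^{(e+a)} \cdot \fa^{mp^a}, \]
and the reverse inclusion between the two ends is immediate from $np^a - r + 1 \leq mp^a$ (using $m \geq n$). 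The ends therefore coincide and Lemma \ref{lem:chain-jumps} delivers the conclusion.

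For part (i), I would argue by contradiction. Assuming $[np^a, np^a + r(p^a-1)] \cap \cB^\bullet_\fa(p^{e+a}) = \varnothing$, Lemma \ref{lem:chain-jumps} gives $D^{(e+a)} \cdot \fa^{np^a} = D^{(e+a)} \cdot \fa^{np^a + r(p^a-1) + 1}$. The first boundary identity with $k = n$ gives $\fa^{np^a + r(p^a-1)} \subseteq (\fa^n)^{[p^a]} \subseteq \fa^{np^a}$, while the second identity with $k = n+r$ (chosen so that $kp^a - r + 1 = np^a + r(p^a-1) + 1$) gives $\fa^{np^a + r(p^a-1) + 1} \subseteq (\fa^{n+1})^{[p^a]}$. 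Combined with the trivial $(\fa^{n+1})^{[p^a]} \subseteq (\fa^n)^{[p^a]}$, these sandwich both $(\fa^n)^{[p^a]}$ and $(\fa^{n+1})^{[p^a]}$ between the two sides of the assumed equality, forcing $D^{(e+a)} \cdot (\fa^n)^{[p^a]} = D^{(e+a)} \cdot (\fa^{n+1})^{[p^a]}$; Lemma \ref{LemmaEqualityFrobPowers} then yields $D^{(e)} \cdot \fa^n = D^{(e)} \cdot \fa^{n+1}$, contradicting $n \in \cB^\bullet_\fa(p^e)$. The only delicate step in the whole argument is the arithmetic bookkeeping needed to match the endpoints of the given intervals to the precise boundary cases of Lemma \ref{lem:frob-powers}; beyond that I anticipate no conceptual obstacle.
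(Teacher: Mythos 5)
Your proof is correct and it is essentially the paper's own argument: both parts rest on sandwiching $(\fa^n)^{[p^a]}$ and $(\fa^{n+1})^{[p^a]}$ (resp.\ $(\fa^{n-r+1})^{[p^a]}$ and $(\fa^m)^{[p^a]}$) inside the chain $\fa^{np^a} \supseteq (\fa^n)^{[p^a]} \supseteq (\fa^{n+1})^{[p^a]} \supseteq \fa^{np^a + r(p^a-1)+1}$ coming from Lemma~\ref{lem:frob-powers}, then transferring between levels $e$ and $e+a$ via Lemma~\ref{LemmaEqualityFrobPowers} and reading off the conclusion with Lemma~\ref{lem:chain-jumps}. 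The only cosmetic difference is that you phrase part~(i) as a proof by contradiction where the paper argues directly (contrapositively) from $n \in \cB^\bullet_\fa(p^e)$, and you isolate the two boundary instances of Lemma~\ref{lem:frob-powers} as named identities up front rather than folding them into the displayed chain.
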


\begin{proof}
	For part (i), we consider the chain of ideals
	$$\fa^{np^a} \supseteq \fa^{n[p^a]} \supseteq \fa^{(n+1)[p^a]} \supseteq \fa^{(n+1)p^a + (r-1) (p^a - 1)} = \fa^{np^a + r(p^a - 1) + 1}.$$
	(see Lemma \ref{lem:frob-powers}). Acting with $D^{(e + a)}_R$, we obtain the chain
	$$ D^{(e + a)}_R \cdot \fa^{np^a} \supseteq  D^{(e + a)}_R \cdot \fa^{n[p^a]} \supseteq D^{(e + a)}_R \cdot  \fa^{(n+1)[p^a]} \supseteq D^{(e + a)}_R \cdot \fa^{np^a + r(p^a - 1) + 1}.$$
	By Lemma \ref{LemmaEqualityFrobPowers}, the two ideals in the middle differ, so the two outer ones must also differ. The statement then follows from Lemma \ref{lem:chain-jumps}. 
	
	Part (ii) follows similarly: we consider the chain
	$$\fa^{(n - r + 1)[p^a]} \supseteq \fa^{(n - r + 1)p^a + (r-1) (p^a - 1)} = \fa^{n p^a - r + 1} \supseteq \fa^{m p^a} \supseteq \fa^{m [p^a]},$$
	which gives
	$$ D^{(e + a)}_R \cdot \fa^{(n - r + 1)[p^a]} \supseteq D^{(e + a)}_R \cdot \fa^{n p^a - r + 1} \supseteq D^{(e + a)}_R \cdot \fa^{m p^a} \supseteq D^{(e + a)}_R \cdot \fa^{m [p^a]}.$$
	Lemma \ref{lem:chain-jumps} and Lemma \ref{LemmaEqualityFrobPowers} gives that the two outer ideals are equal, and hence the two in the middle must also be equal. Another application of Lemma \ref{lem:chain-jumps} gives the statement. 		
\end{proof}

%%%%%%%%%%%%%%%%%%%%%%%%%%%%%%%%%%%%%%%%%%%%%%%%%%%%%%%%%%%
\section{Bernstein-Sato roots} \label{Sec-BSR}
%%%%%%%%%%%%%%%%%%%%%%%%%%%%%%%%%%%%%%%%%%%%%%%%%%%%%%%%%%%%

We now begin the study of the first invariant of real interest: the Bernstein-Sato roots of an ideal. These provide a characteristic-$p$ analogue of the roots of the Bernstein-Sato polynomial, although the definition we give below provides no indication of why that would be the case; such an explanation is given later in Section~\ref{scn-BSroots-via-V}. However, using the definition below has many advantages: it does not involve any technicalities beyond those of differential jumps, it provides an easier way of computing Bernstein-Sato roots and it also is more useful for proving statements. 

\subsection{Definition and basic properties} \ 

\begin{definition}\label{DefBSrootsPadic}
	Let $R$ be $F$-finite, and $\fa$ be an ideal. We say that $\alpha\in \Zp$ is a Bernstein-Sato root of $\fa$ if there is a sequence $(\nu_e)_{e = 0}^\infty$ with $\nu_e \in \cB_\fa^\bullet(p^e)$ such that $\alpha$ is the $p$-adic limit of $\nu_e$. We denote by $\BSR(\fa)$ the set of Bernstein-Sato roots of $\fa$. 
\end{definition}

We recall that a sequence of $p$-adic numbers $(\nu_e)$ converges to a $p$-adic number $\alpha$ if and only if for every $m\in \Z_{\geq 0}$ there is some $N\in \Z_{\geq 0}$ such that $p^m \ | \ (\alpha - \nu_e)$ for all $e\geq N$.

\begin{remark} \label{rmk-diffjump-subsequence}
	Note that, given a sequence $(\nu_e)_{e= 0}^\infty \sq \Z_{\geq 0}$, the condition that $\nu_e \in \cB^\bullet_\fa(p^e)$ for every $e \geq 0$ passes to subsequences. Indeed, if $(\nu_{e_i})$ is a subsequence, then $\nu_{e_i} \in \cB^\bullet_\fa(p^{e_i}) \sq \cB^\bullet(p^i)$ for every $i \geq 0$. 
\end{remark}

\begin{proposition}\label{eqroot}
	Let $R$ be $F$-finite, $\fa$ be an ideal with $r$ generators and {$\alpha\in\Zp$} be a $p$-adic integer. The following are equivalent:
	\begin{enumerate}[(a)]
		\item\label{it:a1} $\alpha$ is a Bernstein-Sato root of $\fa$.
	\item\label{it:a2} For all $e \geq 0$ there is some $s_e \in \{0, \ds, r-1\}$ such that $\alpha_{<e} + s_e p^e \in \cB^\bullet_\fa(p^e)$.
	\item\label{it:a3} There is an infinite subset $\{e_j\} \subseteq \NN$ and differential jumps $\nu_j\in \cB^{\bullet}_{\fa}(p^{e_j})$ such that $(\nu_j)$ converges to~$\alpha$.
	\end{enumerate}
\end{proposition}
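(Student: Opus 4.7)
The plan is to prove the cycle of implications (b) $\Rightarrow$ (a) $\Rightarrow$ (c) $\Rightarrow$ (b). The first two are essentially immediate, while the last is where the ``number of generators'' bound $s_e \in \{0, \ldots, r-1\}$ enters and requires the subtraction machinery of Proposition~\ref{prop:subtract-pe}.

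For (b) $\Rightarrow$ (a), I would simply define $\nu_e := \alpha_{<e} + s_e p^e$. By hypothesis $\nu_e \in \cB^\bullet_\fa(p^e)$, and since $\nu_e - \alpha = s_e p^e - (\alpha - \alpha_{<e})$ is divisible by $p^e$ in $\Zp$, the sequence $(\nu_e)$ converges $p$-adically to $\alpha$, so $\alpha$ is a Bernstein-Sato root of $\fa$. The implication (a) $\Rightarrow$ (c) is tautological: the sequence provided by (a) is itself a subsequence of the required form (take $e_j = j$).

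The substantive step is (c) $\Rightarrow$ (b). Fix $e \geq 0$. Since $\nu_j \to \alpha$ $p$-adically and $e_j \to \infty$ (we may pass to a subsequence), I can choose $j$ with $e_j \geq e$ and $\nu_j \equiv \alpha \pmod{p^e}$, i.e.\ $\nu_j \equiv \alpha_{<e} \pmod{p^e}$. By Lemma~\ref{lem:De-nested}, $\nu_j \in \cB^\bullet_\fa(p^{e_j}) \subseteq \cB^\bullet_\fa(p^e)$. Now I iterate Proposition~\ref{prop:subtract-pe}: as long as the current value $n$ satisfies $n \geq r(p^e - 1)+1$, I may replace $n$ by $n - p^e$ while staying inside $\cB^\bullet_\fa(p^e)$. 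This process terminates in finitely many steps, producing an $n \in \cB^\bullet_\fa(p^e)$ with
\[ n \leq r(p^e - 1) = rp^e - r, \qquad n \equiv \alpha_{<e} \pmod{p^e}, \qquad n \geq 0. \]
Writing $n = \alpha_{<e} + s_e p^e$, nonnegativity forces $s_e \geq 0$, and the inequality $\alpha_{<e} + s_e p^e \leq rp^e - r$ (together with $\alpha_{<e} \geq 0$) forces $s_e \leq r - 1$. This yields (b).

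The only real obstacle is the final arithmetic check verifying that the stopping value $n$ lies in the window $[\alpha_{<e}, \alpha_{<e} + (r-1)p^e]$; this uses both the lower threshold $r(p^e-1)+1$ in Proposition~\ref{prop:subtract-pe} (which governs when we must stop subtracting) and the fact that $\alpha_{<e} < p^e$. Everything else is either tautology or a one-line application of Lemma~\ref{lem:De-nested}.
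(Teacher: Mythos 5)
Your proof is correct and uses exactly the same machinery as the paper's: Lemma~\ref{lem:De-nested} to descend to level $e$, and iterated Proposition~\ref{prop:subtract-pe} to bring the jump into the window $[0, rp^e)$. You traverse the implication cycle as (b)~$\Rightarrow$~(a)~$\Rightarrow$~(c)~$\Rightarrow$~(b) rather than the paper's (a)~$\Rightarrow$~(b)~$\Rightarrow$~(c)~$\Rightarrow$~(a), but the substantive step (passing from a $p$-adically convergent subsequence of jumps to a jump of the form $\alpha_{<e}+s_ep^e$) is identical.
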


\begin{proof}
	For \ref{it:a1} implies \ref{it:a2}, let $\alpha$ be a Bernstein-Sato root of $\fa$, and $\nu_e\in \cB_{\fa}^\bullet(p^e)$ such that $\alpha=\lim \nu_e$. For every $a$ there is some $e_a$ such that $p^a | (\alpha-\nu_{j})$ for all $j\geq e_a$; without loss of generality, we can take $e_a\geq a$. Consider the sequence $\eta_a = \nu_{e_a}$. By Lemma~\ref{lem:De-nested}, $\eta_a\in \cB_{\fa}^\bullet(p^a)$, and by construction, $p^a | (\alpha-\nu_{a})$ for all $a$. Then, by Proposition~\ref{prop:subtract-pe}, we may subtract a multiple of $p^a$ from $\eta_a$ to obtain another sequence $\mu_a$ in which $0\leq \mu_a < r p^a$, and $p^a | (\alpha-\mu_{a})$ for all $a$. It follows that $\mu_a = \aaa + s_a p^a \in \cB_{\fa}^\bullet(p^a)$ with $s_a\in\{0,\dots,r-1\}$ as required.
	
	The implication \ref{it:a2} implies \ref{it:a3} is clear.

 For \ref{it:a3} implies \ref{it:a1}, it suffices to see that given a $p$-adically convergent sequence of the form $\nu_{e_a} \in \cB_{\fa}^\bullet(p^{e_a})$ for $e_a$ an infinite increasing sequence of integers that we can extend this to a sequence $\nu_{e}\in \cB_{\fa}^\bullet(p^{e})$ for all $e\in \NN$. This follows from Lemma~\ref{lem:De-nested}.
\end{proof}

 \begin{remark}
  It follows  from the definition and from Remark~\ref{rem:existjumps} that if $\fa=R$, then $\BSR(\fa)=\varnothing$. On the other hand, if $\fa\subsetneqq R$ is a proper ideal, then by Remark~\ref{rem:existjumps} there is a differential jump of level $e$ for every $e$, and by compactness of $\Zp$ condition~\ref{it:a3} of Proposition~\ref{eqroot} holds for some $\alpha$, so $\BSR(\fa)\neq \varnothing$.\end{remark}

In Section~\ref{scn-BSroots-via-V} we show that whether a $p$-adic integer is a Bernstein-Sato root or not is given in terms of the nonvanishing of a certain module, whose construction is compatible with localization. From this it follows that Bernstein-Sato roots are local invariants; however, we give a proof of this fact here that does not require the material of Section~\ref{scn-BSroots-via-V}.

To begin, let $\fa \sq R$ be an ideal and fix an integer $r \geq 0$ such that $\fa$ can be generated by $r$ elements. Given a positive integer $e \geq 0$ and a $p$-adic integer $\alpha$ we  denote by $\cJ_\fa(p^e, \alpha)$ the set
$$\cJ_\fa(p^e, \alpha) : = \{\alpha_{< e} + s p^e \ | \ s = 0, 1, \ds, r-1\} \cap \cB^{\bullet}_\fa(p^e).$$
\begin{lemma} \label{lemma-BSR-are-local-1}
The $p$-adic integer $\alpha$ is not a Bernstein-Sato root of $\fa$ if and only if there is some $e$ large enough so that $\cJ_\fa(p^e, \alpha) = \varnothing$, in which case $\cJ_\fa(p^a, \alpha) = \varnothing$ for all $a \geq e$.
\end{lemma}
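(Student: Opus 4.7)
The plan is to observe that the main ``iff'' assertion is essentially a reformulation of Proposition~\ref{eqroot}, and then to reduce the substantive ``in which case'' clause to the combination of the nestedness lemma (Lemma~\ref{lem:De-nested}) and the subtraction proposition (Proposition~\ref{prop:subtract-pe}).

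First, for the biconditional itself: by Proposition~\ref{eqroot} (equivalence of (a) and (b)), $\alpha$ is a Bernstein-Sato root of $\fa$ exactly when $\cJ_\fa(p^e,\alpha) \neq \varnothing$ for every $e \geq 0$. Negating, $\alpha$ is \emph{not} a Bernstein-Sato root precisely when $\cJ_\fa(p^e,\alpha) = \varnothing$ for some $e$. This handles the first assertion with no extra work.

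For the ``in which case'' clause, I will prove the contrapositive: if $\cJ_\fa(p^a,\alpha)\neq \varnothing$ for some $a \geq e$, then $\cJ_\fa(p^e,\alpha)\neq \varnothing$. Suppose $\alpha_{<a} + s p^a \in \cB^\bullet_\fa(p^a)$ for some $s \in \{0,\ldots,r-1\}$. By Lemma~\ref{lem:De-nested}, this integer also lies in $\cB^\bullet_\fa(p^e)$. Note that $\alpha_{<a} \equiv \alpha_{<e} \pmod{p^e}$ and $sp^a \equiv 0 \pmod{p^e}$, so the integer in question is congruent to $\alpha_{<e}$ modulo $p^e$. I now iterate Proposition~\ref{prop:subtract-pe}: as long as the current value is $\geq r(p^e-1)+1$, subtracting $p^e$ keeps it in $\cB^\bullet_\fa(p^e)$. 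This terminates with some $n \in \cB^\bullet_\fa(p^e)$ satisfying $0 \leq n \leq r(p^e-1)$ and $n \equiv \alpha_{<e} \pmod{p^e}$. Writing $n = \alpha_{<e} + q p^e$ with $q \geq 0$, the upper bound $n \leq r(p^e-1) < r p^e$ forces $q \in \{0, 1, \ldots, r-1\}$. Hence $n \in \cJ_\fa(p^e,\alpha)$, contradicting $\cJ_\fa(p^e,\alpha) = \varnothing$.

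The only delicate point is the arithmetic bookkeeping in the iteration: one needs to verify that the subtraction terminates in the window $[0, r(p^e-1)]$ rather than overshooting, and that the resulting representative of the residue class of $\alpha_{<e}$ modulo $p^e$ is of the required form $\alpha_{<e} + q p^e$ with $q < r$. Both are immediate from the termination threshold $r(p^e-1)+1$ built into Proposition~\ref{prop:subtract-pe}, so there is no real obstacle here beyond careful inequalities.
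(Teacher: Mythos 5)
Your proof is correct and follows essentially the same path as the paper: the equivalence \ref{it:a1}$\iff$\ref{it:a2} of Proposition~\ref{eqroot} for the biconditional, and then Lemma~\ref{lem:De-nested} plus iterated Proposition~\ref{prop:subtract-pe} for the persistence of emptiness. The paper phrases the second step as an induction from $e$ to $e+1$ by contradiction, whereas you prove the contrapositive directly for arbitrary $a \geq e$; this is only a cosmetic difference, and your arithmetic bookkeeping (the window $[0, r(p^e-1)]$ and the conclusion $q < r$) is sound.
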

\begin{proof}
	The first statement follows from the equivalence of \ref{it:a1} and \ref{it:a2} in Proposition \ref{eqroot}. For the second statement, it is enough to show that whenever $\cJ_\fa(p^e, \alpha) = \varnothing$ then $\cJ_\fa(p^{e+1}, \alpha) = \varnothing$, which we prove by contradiction. 
{Suppose that $\fa$ is generated by $r$ elements.}	
	If we had some $n \in \cJ_\fa(p^{e+1}, \alpha)$, then we have $n \in \cB^\bullet_\fa(p^{e+1})$, and thus $n \in \cB^\bullet_\fa(p^e)$ (see Lemma \ref{lem:De-nested}). From Proposition \ref{prop:subtract-pe} we conclude that there is some integer $k \geq 0$ such that $n - k p^e \in \cB^\bullet_\fa(p^e)$ and $0 \leq n - k p^e < r p^e$. Since $n \equiv \alpha \mod p^{e+1}$, we have $n - kp^e \equiv \alpha \mod p^e$, and therefore $n - kp^e = \alpha_{< e} + sp^e$ for some $s \in \{0, 1, \ds, r-1\}$. We conclude that $n - kp^e \in \cJ_\fa(p^e, \alpha)$, giving the desired contradiction. 
\end{proof}

\begin{lemma} \label{lemma-BSR-are-local-2}
	Suppose $g_1, \ds, g_k \in R$ are such that $(g_1, \ds, g_k) = (1)$. For a fixed $p$-adic integer $\alpha \in \Zp$ and integer $e \geq 0$ the following are equivalent:
	\begin{enumerate}[(a)]
		\item We have $\cJ_\fa(p^e,\alpha) = \varnothing$.
		\item We have $\cJ_{\fa R_{g_i}} (p^e,\alpha) = \varnothing$ for all $i = 1, \ds, k$.
		\item We have $\cJ_{\fa R_\fm}(p^e,\alpha) = \varnothing$ for all maximal ideals $\fm \sq R$. 
	\end{enumerate}
\end{lemma}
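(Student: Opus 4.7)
The plan is to deduce the three equivalences directly from the corresponding decomposition of $\cB_\fa^\bullet(p^e)$ under localization recorded in Remark~\ref{rem:reduce-to-local}(i). The key observation is that $\cJ_\fa(p^e, \alpha)$ is, by definition, the intersection of the fixed finite set $S := \{\alpha_{<e} + s p^e \mid s = 0, 1, \ldots, r-1\}$ with $\cB_\fa^\bullet(p^e)$; since $\fa$ is generated by $r$ elements, so are $\fa R_{g_i}$ and $\fa R_\fm$, and therefore the same set $S$ works for defining $\cJ_{\fa R_{g_i}}(p^e, \alpha)$ and $\cJ_{\fa R_\fm}(p^e, \alpha)$.

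First I would invoke Remark~\ref{rem:reduce-to-local}(i), which gives
\[ \cB^\bullet_\fa(p^e) \;=\; \bigcup_{i=1}^k \cB^\bullet_{\fa R_{g_i}}(p^e) \;=\; \bigcup_{\fm \in \Max(R)} \cB^\bullet_{\fa R_\fm}(p^e). \]
Intersecting with $S$ and using that intersection distributes over arbitrary unions yields
\[ \cJ_\fa(p^e, \alpha) \;=\; \bigcup_{i=1}^k \cJ_{\fa R_{g_i}}(p^e, \alpha) \;=\; \bigcup_{\fm \in \Max(R)} \cJ_{\fa R_\fm}(p^e, \alpha). \]
Since a union of sets is empty if and only if each set in the union is empty, this gives (a) $\Longleftrightarrow$ (b) $\Longleftrightarrow$ (c) at once.

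There is no real obstacle here: the lemma is essentially a tautological reformulation of the localization behavior of $\cB^\bullet_\fa(p^e)$ established in Remark~\ref{rem:reduce-to-local}(i), combined with the trivial observation that a finite intersection of the form $S \cap (\bigcup_j T_j) = \bigcup_j (S \cap T_j)$ is empty iff each $S \cap T_j$ is empty. The only point worth flagging explicitly is that the number of generators $r$ appearing in the definition of $\cJ_\bullet(p^e, \alpha)$ is preserved (or rather remains an upper bound) under localization, so the same finite set $S$ is used throughout.
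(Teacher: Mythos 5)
Your proof is correct and follows the same route the paper takes: the paper simply cites Remark~\ref{rem:reduce-to-local}, and your argument just makes explicit the intersection-with-$S$ and distribute-over-union steps that the paper leaves implicit. The side remark that the same generator count $r$ (hence the same finite set $S$) can be used for $\fa$, $\fa R_{g_i}$, and $\fa R_\fm$ is the right thing to observe and is indeed the only point that needs checking.
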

\begin{proof}
This 	follows from Remark \ref{rem:reduce-to-local}.
\end{proof}

\begin{lemma} \label{lemma-BSR-are-local-3}
	Let $\fm \sq R$ be a maximal ideal, $\alpha \in \Zp$ be a $p$-adic integer and $e \geq 0$ be an integer. If $\cJ_{\fa R_\fm}(p^e) = \varnothing$, then there exists some $g \in R \setminus \fm$ such that $\cJ_{\fa R_g} (p^e) = \varnothing$. 
\end{lemma}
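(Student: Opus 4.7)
The plan is to unpack the definitions, use the fact that $\fa$ and each power $\fa^{n_s}$ is finitely generated, and then exploit the compatibility between $D^{(e)}_R$ and localization recorded in Remark~\ref{rem:reduce-to-local}(1). The hypothesis $\cJ_{\fa R_\fm}(p^e, \alpha) = \varnothing$ says precisely that for each of the $r$ candidate values $n_s := \alpha_{<e} + s p^e$ (with $s = 0, 1, \dots, r-1$), the integer $n_s$ fails to be a differential jump of $\fa R_\fm$ at level $e$, that is,
\[ \fa^{n_s} R_\fm \;\subseteq\; D^{(e)}_{R_\fm} \cdot \fa^{n_s+1} R_\fm \;=\; \bigl(D^{(e)}_R \cdot \fa^{n_s+1}\bigr) R_\fm, \]
where the second equality uses Remark~\ref{rem:reduce-to-local}(1).

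For each $s$, I would fix a finite generating set $h^{(s)}_1, \dots, h^{(s)}_{k_s}$ for $\fa^{n_s}$. The displayed containment means that each $h^{(s)}_i / 1$ lies in $(D^{(e)}_R \cdot \fa^{n_s+1}) R_\fm$, so there exists an element $g_{s,i} \in R \setminus \fm$ with $g_{s,i}\, h^{(s)}_i \in D^{(e)}_R \cdot \fa^{n_s+1}$. Taking the product $g_s := \prod_{i} g_{s,i}$ yields $g_s \in R \setminus \fm$ with $g_s \fa^{n_s} \subseteq D^{(e)}_R \cdot \fa^{n_s+1}$, equivalently,
\[ \fa^{n_s} R_{g_s} \;\subseteq\; \bigl(D^{(e)}_R \cdot \fa^{n_s+1}\bigr) R_{g_s} \;=\; D^{(e)}_{R_{g_s}} \cdot \fa^{n_s+1} R_{g_s}, \]
again by Remark~\ref{rem:reduce-to-local}(1). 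Hence $n_s \notin \cB^\bullet_{\fa R_{g_s}}(p^e)$.

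Finally, I would set $g := g_0 g_1 \cdots g_{r-1} \in R \setminus \fm$. Since $R_g$ is a further localization of each $R_{g_s}$, applying Remark~\ref{rem:reduce-to-local}(1) once more shows that $\fa^{n_s} R_g \subseteq D^{(e)}_{R_g} \cdot \fa^{n_s+1} R_g$ for every $s = 0, \dots, r-1$. Therefore no $n_s$ lies in $\cB^\bullet_{\fa R_g}(p^e)$, which gives $\cJ_{\fa R_g}(p^e, \alpha) = \varnothing$, as required. The argument is essentially a standard ``clear denominators" maneuver; the only point requiring care is that we do this simultaneously for all $r$ candidate values $n_s$, which is unproblematic since there are only finitely many of them.
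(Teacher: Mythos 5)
Your proof is correct and is essentially the same argument as the paper's: the paper packages your finitely-many ``clear the denominators'' steps as the statement that the support of the finitely generated module $\bigoplus_{n\in S} D^{(e)}_R\cdot\fa^n/D^{(e)}_R\cdot\fa^{n+1}$ is closed, so that $\fm$ not lying in the support gives an open neighborhood $D(g)$ disjoint from it. Your hands-on version makes the same point without invoking that abstraction.
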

\begin{proof}
	Let $S$ denote the set $S: = \{\alpha_{<e} + sp^e : s = 0, 1, \ds, r-1\}$. By Remark \ref{rem:reduce-to-local}, $\cJ_{\fa R_\fm}(p^e) = \varnothing$ precisely when $(D^{(e)}_R \cdot \fa^n ) R_\fm = (D^{(e)}_R \cdot \fa^{n+1}) R_\fm$ for all $n \in S$; that is, whenever $\fm$ is not in the support of the module $\bigoplus_{n \in S} D^{(e)}_R \cdot \fa^n / D^{(e)}_R \cdot \fa^{n+1}$. The result then follows from the fact that the support of a finitely-generated module is closed. 
\end{proof}

\begin{proposition} \label{prop-BSR-are-local}
	Let $R$ be a noetherian $F$-finite ring and $\fa \sq R$ be an ideal. Let $g_1, \ds, g_k \in R$ be such that $(g_1, \ds, g_k) = (1)$ and let $\Max(R)$ denote the set of all maximal ideals of $R$. We then have:
	$$\BSR(\fa) = \bigcup_{i = 1}^k \BSR(\fa R_{g_i}) = \bigcup_{\fm \in \Max(R)} \BSR(\fa R_\fm).$$
\end{proposition}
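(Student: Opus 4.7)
The plan is to reduce everything to the three preceding lemmas by reasoning with the contrapositive characterization: by Lemma~\ref{lemma-BSR-are-local-1}, $\alpha \notin \BSR(\fa)$ if and only if $\cJ_\fa(p^e, \alpha) = \varnothing$ for some (equivalently, all sufficiently large) $e$. This reformulation lets me trade statements about limits of differential jumps for statements about the vanishing of the finite sets $\cJ$, which localize well.

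First I would dispose of both $\supseteq$ inclusions together. If $\alpha \notin \BSR(\fa)$, then Lemma~\ref{lemma-BSR-are-local-1} gives an $e$ with $\cJ_\fa(p^e, \alpha) = \varnothing$; Lemma~\ref{lemma-BSR-are-local-2} then yields $\cJ_{\fa R_{g_i}}(p^e, \alpha) = \varnothing$ for every $i$ and $\cJ_{\fa R_\fm}(p^e, \alpha) = \varnothing$ for every maximal $\fm$, and Lemma~\ref{lemma-BSR-are-local-1} again converts these back into $\alpha \notin \BSR(\fa R_{g_i})$ and $\alpha \notin \BSR(\fa R_\fm)$ respectively.

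The $\subseteq$ inclusion for the finite open cover is straightforward: if $\alpha \notin \BSR(\fa R_{g_i})$ for every $i = 1, \ldots, k$, choose $e_i$ with $\cJ_{\fa R_{g_i}}(p^{e_i}, \alpha) = \varnothing$ and set $e = \max_i e_i$; the ``in particular'' clause of Lemma~\ref{lemma-BSR-are-local-1} then gives $\cJ_{\fa R_{g_i}}(p^{e}, \alpha) = \varnothing$ for all $i$ simultaneously, so Lemma~\ref{lemma-BSR-are-local-2} produces $\cJ_\fa(p^e, \alpha) = \varnothing$ and hence $\alpha \notin \BSR(\fa)$.

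The main obstacle is the $\subseteq$ inclusion for the maximal-ideal version, since $\Max(R)$ is typically infinite and no single level $e$ is available a priori. Here I would proceed as follows: assuming $\alpha \notin \BSR(\fa R_\fm)$ for every $\fm \in \Max(R)$, pick $e_\fm$ with $\cJ_{\fa R_\fm}(p^{e_\fm}, \alpha) = \varnothing$ and then use Lemma~\ref{lemma-BSR-are-local-3} to produce some $g_\fm \notin \fm$ with $\cJ_{\fa R_{g_\fm}}(p^{e_\fm}, \alpha) = \varnothing$. The family $\{g_\fm\}_{\fm \in \Max(R)}$ is contained in no maximal ideal, hence generates the unit ideal; since $R$ is noetherian I can extract a finite subfamily $g_{\fm_1}, \ldots, g_{\fm_t}$ still generating the unit ideal, and the previous paragraph (applied to this finite distinguished cover) gives $\alpha \notin \BSR(\fa)$. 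Combining this with the already-established containment $\BSR(\fa R_{g_i}) \subseteq \BSR(\fa)$ (used implicitly to bridge the two unions) completes the chain of equalities.
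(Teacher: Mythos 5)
Your proposal is correct and follows essentially the same chain of reasoning as the paper's proof: both work contrapositively via Lemma~\ref{lemma-BSR-are-local-1}, use Lemma~\ref{lemma-BSR-are-local-2} to pass between $R$ and the localizations at a uniform level $e$, and for the maximal-ideal direction invoke Lemma~\ref{lemma-BSR-are-local-3} to produce the elements $g_\fm$ and then extract a finite subcover generating the unit ideal.
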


\begin{proof}
	Lemmas \ref{lemma-BSR-are-local-1} and \ref{lemma-BSR-are-local-2} give the first equality, and show that $\BSR(\fa) \supseteq \bigcup_{\fm} \BSR(\fa R_\fm)$. To prove that $\BSR(\fa) \sq \BSR(\fa R_\fm)$, suppose that $\alpha \in \Zp$ is such that $\alpha \notin \BSR(\fa R_\fm)$ for all maximal ideals $\fm$. By Lemma \ref{lemma-BSR-are-local-1} and Lemma \ref{lemma-BSR-are-local-3} we conclude that for every $\fm \in \Max(R)$ there is some integer $e_\fm$ and some element $g_\fm \in R \setminus \fm$ such that $\cJ_{R_{g_\fm}}(p^{e_\fm}, \alpha) = \varnothing$. The elements $(g_\fm | \fm \in \Max(R))$ generate the unit ideal and therefore there is a finite subcollection of them, say $g_1 = g_{\fm_1}, \ds, g_k = g_{\fm_k}$, that still generate the unit ideal. If  $e = \max \{ e_{\fm_1}, \ds, e_{\fm_k} \}$,  then, $ \cJ_{\fa R_{g_i}}(p^e, \alpha) = \varnothing $ for all $i = 1, \ds, k$ by Lemma \ref{lemma-BSR-are-local-1}. Therefore $\cJ_{\fa}(p^e, \alpha) = \varnothing$ by Lemma \ref{lemma-BSR-are-local-2}. We conclude that $\alpha \notin \BSR(\fa)$ by Lemma \ref{lemma-BSR-are-local-1}. 
\end{proof}

\begin{proposition} \label{prop-BSR-homog-and-completion}
Let $R$ be a noetherian $F$-finite ring and $\fa\subseteq R$ be an ideal.
\begin{enumerate}
\item If $R$ is positively graded with homogeneous maximal ideal $\fm$ and $\fa$ is homogeneous, then $\BSR(\fa)=\BSR(\fa R_{\fm})$.
\item If $R$ is local, then $\BSR(\fa)=\BSR(\fa \widehat{R})$.
\end{enumerate}
\end{proposition}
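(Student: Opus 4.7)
The plan is to reduce both parts directly to Remark \ref{rem:reduce-to-local}, which already establishes the required equality of the sets of differential jumps. Since by Definition \ref{DefBSrootsPadic} the set $\BSR(\fa)$ depends only on the collection of sets $\{\cB^\bullet_\fa(p^e)\}_{e \geq 0}$ (as $p$-adic limits of sequences $(\nu_e)$ with $\nu_e \in \cB^\bullet_\fa(p^e)$), any equality of these sets under a base change immediately transfers to an equality of Bernstein-Sato roots.

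For part (ii), I would apply Remark \ref{rem:reduce-to-local}(ii) directly: since $R$ is local and $F$-finite, for every ideal $\fa \sq R$ and every $e \geq 0$ we have $\cB^\bullet_\fa(p^e) = \cB^\bullet_{\fa \widehat R}(p^e)$. A $p$-adic integer $\alpha$ is therefore a limit of a sequence of elements in $\cB^\bullet_\fa(p^e)$ if and only if it is a limit of a sequence of elements in $\cB^\bullet_{\fa \widehat R}(p^e)$, which by Definition \ref{DefBSrootsPadic} gives $\BSR(\fa) = \BSR(\fa \widehat R)$.

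For part (i), the argument is identical but invokes Remark \ref{rem:reduce-to-local}(iii): when $R$ is positively graded with homogeneous maximal ideal $\fm$ and $\fa$ is homogeneous, the quotient $D^{(e)}_R \cdot \fa^n / D^{(e)}_R \cdot \fa^{n+1}$ is a graded $R$-module, so its nonvanishing is equivalent to the nonvanishing of its localization at $\fm$. Hence $\cB^\bullet_\fa(p^e) = \cB^\bullet_{\fa R_\fm}(p^e)$ for every $e \geq 0$, and the conclusion $\BSR(\fa) = \BSR(\fa R_\fm)$ follows from Definition \ref{DefBSrootsPadic} exactly as above.

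There is no real obstacle here; the statement is essentially a bookkeeping consequence of Remark \ref{rem:reduce-to-local} once one observes that Bernstein-Sato roots are defined purely in terms of differential jumps. The only minor point to verify is that the formation of $\BSR$ from the sets $\cB^\bullet_\fa(p^e)$ is functorial in the obvious sense, but this is immediate from the definition.
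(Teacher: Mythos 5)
Your proof is correct and matches the paper's own argument, which simply cites Remark~\ref{rem:reduce-to-local}. You have expanded on the one step the paper leaves implicit, namely that $\BSR(\fa)$ depends only on the collection $\{\cB^\bullet_\fa(p^e)\}_{e\geq 0}$, which follows directly from Definition~\ref{DefBSrootsPadic}.
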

\begin{proof}
Both facts follow from Remark~\ref{rem:reduce-to-local}.
\end{proof}

\subsection{Finiteness and rationality results}  \ 

We now introduce a finiteness condition that has important consequences for Bernstein-Sato roots.

\begin{definition} \label{def-BSadm}
	Let $R$ be an $F$-finite ring and $\fa \sq R$ be an ideal generated by $r$ elements. We say that $\fa$ is Bernstein-Sato admissible if there is a constant $C > 0$ such that
	$$\# \big( \cB^\bullet_\fa(p^e) \cap [0, rp^e) \big) \leq C$$
	for every $e\in \ZZ_{\geq 0}$. We say that $R$ is a Bernstein-Sato admissible ring if all of its ideals are Bernstein-Sato admissible. 
\end{definition}

In our definition of Bernstein-Sato root we only require that the number of differential jumps of level $e$ in the interval $[0, rp^e)$ is bounded, but the subtraction property of differential jumps (Proposition \ref{prop:subtract-pe}) gives a stronger statement.

\begin{proposition} \label{prop-BSadm-iff-linearGrowth}
	Let $R$ be an $F$-finite ring and $\fa \sq R$ be an ideal. The ideal $\fa$ is Bernstein-Sato admissible if and only if there are constants $A, B > 0$ such that for all integers $e, s \geq 0$ we have
	$$\# \bigg( \cB^\bullet_\fa(p^e) \cap [0, s) \bigg) \leq A \frac{s}{p^e} + B.$$
\end{proposition}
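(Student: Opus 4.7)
The backward direction is immediate: assuming the linear growth bound holds with constants $A,B$, set $s = rp^e$ to obtain
\[\#\bigl(\cB^\bullet_\fa(p^e) \cap [0, rp^e)\bigr) \leq Ar + B,\]
which is a constant independent of $e$, so $\fa$ is Bernstein-Sato admissible.

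For the forward direction, suppose $\#(\cB^\bullet_\fa(p^e) \cap [0,rp^e)) \leq C$ for all $e$. The key tool is Proposition~\ref{prop:subtract-pe}: if $n \in \cB^\bullet_\fa(p^e)$ and $n \geq r(p^e-1) + 1$, then $n - p^e \in \cB^\bullet_\fa(p^e)$. In particular, any $n \in \cB^\bullet_\fa(p^e)$ with $n \geq rp^e$ satisfies $n - p^e \in \cB^\bullet_\fa(p^e)$, and this subtraction keeps us nonnegative because $n - p^e \geq (r-1)p^e \geq 0$.

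Iterating, for each $n \in \cB^\bullet_\fa(p^e)$ we may define $\pi(n) := n - k(n) p^e$, where $k(n) \geq 0$ is the largest integer such that $n - k(n) p^e \in \cB^\bullet_\fa(p^e)$. By the observation above, $\pi(n) < rp^e$ (otherwise we could subtract once more), so $\pi(n) \in \cB^\bullet_\fa(p^e) \cap [0, rp^e)$, and by construction $n \equiv \pi(n) \pmod{p^e}$.

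To finish, I would partition $\cB^\bullet_\fa(p^e) \cap [0, s)$ according to the value of $\pi$:
\[\#\bigl(\cB^\bullet_\fa(p^e) \cap [0,s)\bigr) = \sum_{n_0 \in \cB^\bullet_\fa(p^e) \cap [0,rp^e)} \#\bigl\{n \in \cB^\bullet_\fa(p^e) \cap [0,s) : \pi(n) = n_0\bigr\}.\]
Each fiber is contained in the arithmetic progression $\{n_0 + kp^e : k \geq 0\} \cap [0, s)$, which has cardinality at most $\lceil s/p^e \rceil \leq s/p^e + 1$. Combining with the bound $C$ on the number of possible values of $n_0$, we obtain
\[\#\bigl(\cB^\bullet_\fa(p^e) \cap [0,s)\bigr) \leq C\bigl(s/p^e + 1\bigr),\]
so the linear growth bound holds with $A = B = C$. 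The main (minor) subtlety is verifying that the iterated subtraction never leaves the nonnegative integers, which is automatic since each subtraction from a value $\geq rp^e$ leaves a value $\geq (r-1)p^e \geq 0$.
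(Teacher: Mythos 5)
Your proof is correct and uses the same core idea as the paper: apply Proposition~\ref{prop:subtract-pe} to push every differential jump down into $[0, rp^e)$ by subtracting multiples of $p^e$, yielding the bound $C(s/p^e + 1)$. The paper organizes this block-by-block on the intervals $[(k-1)p^e, kp^e)$ while you organize it via the fibers of your $\pi$, but these are just two bookkeeping schemes for the same counting argument.
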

\begin{proof}
	Suppose that $\fa$ is generated by $r$ elements. We note that if
	 $\fa$ is Bernstein-Sato admissible, then 
	 there exists 
 $A, B > 0$ such that for all $e, s \geq 0$ we have
$\# \bigg( \cB^\bullet_\fa(p^e) \cap [0, s) \bigg) \leq A \frac{s}{p^e} + B$  by setting $s = rp^e$. To prove the converse statement, let $C > 0$ be a constant as in Definition \ref{def-BSadm}. We observe that for all integers $k \geq 1$ we have $\# \big(\cB^\bullet_\fa(p^e) \cap [(k-1)p^e, kp^e) \big) \leq C$: this follows when $1 \leq k \leq r$, and for $k > r$ it follows from Proposition \ref{prop:subtract-pe}. We conclude:
	\begin{align*}
	\# \bigg( \cB^\bullet_\fa(p^e) \cap [0, s) \bigg) &  \leq   \# \bigg( \bigcup_{k = 1}^{\lceil s/p^e \rceil} \cB^\bullet_\fa(p^e) \cap [(k-1)p^e, kp^e) \bigg) \\
		& \leq \left\lceil \frac{s}{p^e} \right\rceil C \\
		& \leq \left(\frac{s}{p^e} + 1 \right) C = C \frac{s}{p^e} + C. \qedhere
	\end{align*}
\end{proof}

\begin{corollary}
	Whether $\fa$ is Bernstein-Sato admissible or not does not depend on the choice of $r$.
\end{corollary}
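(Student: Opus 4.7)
The plan is to derive this directly from Proposition~\ref{prop-BSadm-iff-linearGrowth}, which already provides a characterization of Bernstein-Sato admissibility that makes no reference to any particular number of generators. The corollary is essentially a formal consequence of that equivalence.

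First I would fix two possible generator counts $r$ and $r'$ for $\fa$ (obtained, e.g., by padding a minimal generating set with zeros or repetitions) and show that admissibility with respect to $r$ implies admissibility with respect to $r'$. Assuming admissibility with the constant $C$ and parameter $r$, Proposition~\ref{prop-BSadm-iff-linearGrowth} furnishes constants $A, B > 0$ such that
\[ \#\bigl(\cB^{\bullet}_{\fa}(p^e) \cap [0,s)\bigr) \;\leq\; A\,\frac{s}{p^e} + B \]
for all $e, s \geq 0$. Specializing this inequality to $s = r'p^e$ yields
\[ \#\bigl(\cB^{\bullet}_{\fa}(p^e) \cap [0, r' p^e)\bigr) \;\leq\; A r' + B, \]
so admissibility holds with the new constant $C' := Ar' + B$ and parameter $r'$. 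Swapping the roles of $r$ and $r'$ gives the reverse implication.

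There is no real obstacle here; the substantive content has already been absorbed into Proposition~\ref{prop-BSadm-iff-linearGrowth}, whose proof uses the subtraction property of differential jumps (Proposition~\ref{prop:subtract-pe}) to convert a bound on a window of length $rp^e$ into a linear growth bound on every window. Once that equivalence is available, the present corollary becomes a one-line observation that the linear-growth formulation is manifestly $r$-free.
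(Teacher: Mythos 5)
Your proof is correct and matches the paper's intent exactly: the corollary is placed immediately after Proposition~\ref{prop-BSadm-iff-linearGrowth} precisely so that it follows from the $r$-free linear-growth characterization, which is what you invoke. Specializing $s = r'p^e$ to pass back to the windowed bound is the right and only step needed.
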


\begin{theorem} \label{thm-finiteBSRoots}
	Let $R$ be an $F$-finite ring and $\fa \sq R$ a Bernstein-Sato admissible ideal. Then $\fa$ has finitely many Bernstein-Sato roots.
\end{theorem}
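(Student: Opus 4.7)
The plan is to show that the number of Bernstein-Sato roots is bounded by the admissibility constant $C$ from Definition~\ref{def-BSadm}, via a direct counting argument using the truncation characterization of Proposition~\ref{eqroot}.

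First I would invoke the equivalence (a)$\Leftrightarrow$(b) of Proposition~\ref{eqroot}: an $\alpha \in \Zp$ is a Bernstein-Sato root of $\fa$ if and only if for every $e \geq 0$ there exists some $s_e \in \{0, 1, \ldots, r-1\}$ with $\alpha_{<e} + s_e p^e \in \cB^\bullet_\fa(p^e)$. Since $\alpha_{<e} + s_e p^e$ lies in $[0, rp^e)$, this produces, for every BSR $\alpha$ and every level $e$, a distinguished element of $\cB^\bullet_\fa(p^e) \cap [0, rp^e)$ whose residue modulo $p^e$ equals $\alpha_{<e}$.

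Next I would introduce, for each $e \geq 0$, the set
\[ S_e := \bigl\{ \alpha_{<e} \ \big| \ \alpha \in \BSR(\fa) \bigr\} \subseteq \{0, 1, \ldots, p^e - 1\}.\]
The observation above constructs a surjection from $S_e$ onto a subset of $\cB^\bullet_\fa(p^e) \cap [0, rp^e)$ (each residue $a \in S_e$ is witnessed by at least one differential jump in $[0, rp^e)$ reducing to $a$ mod $p^e$, and different residues require different jumps). By Bernstein-Sato admissibility, $|S_e| \leq |\cB^\bullet_\fa(p^e) \cap [0, rp^e)| \leq C$ for all $e$.

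Finally I would conclude by contradiction: if $\BSR(\fa)$ contained $C+1$ distinct $p$-adic integers $\alpha^{(1)}, \ldots, \alpha^{(C+1)}$, then by choosing $e$ large enough that the truncations $\alpha^{(i)}_{<e}$ are pairwise distinct (possible since the $\alpha^{(i)}$ are distinct elements of $\Zp$), we would obtain $|S_e| \geq C+1$, contradicting the bound above. Hence $|\BSR(\fa)| \leq C$, which in particular is finite. There is no serious obstacle here once the truncation reformulation is in place; the argument is essentially a pigeonhole applied uniformly over all levels $e$.
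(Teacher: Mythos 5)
Your proposal is correct and is essentially the same argument as the paper's: both apply Proposition~\ref{eqroot}(b) to convert each Bernstein-Sato root into a differential jump of level $e$ in $[0, rp^e)$, observe that distinct roots give distinct jumps once $e$ is large enough to separate their truncations, and conclude by pigeonhole against the admissibility bound $C$. The only cosmetic difference is phrasing (you package it as a map from the set of truncations $S_e$, and you should call that map an injection rather than a ``surjection onto a subset''), but the substance matches the paper's proof.
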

\begin{proof} 
	Pick an integer $C >0$ such that $\#\big( \cB^\bullet_\fa(p^e) \cap [0, rp^e) \big) \leq C$ for all $e \geq 0$. We claim that there are at most $C$ Bernstein-Sato roots, and we prove it by contradiction. Suppose that $\{\alpha_1, \ds, \alpha_{C+1}\}$ are distinct Bernstein-Sato roots of $\fa$, and choose $N$ large enough so that $\alpha_i \not \equiv \alpha_j \mod p^N$ for all $i \neq j$. By Proposition \ref{eqroot} we know there is some $e$ large enough and $\nu_1, \ds \nu_{C+1} \in \cB^\bullet_\fa(p^e)$, with $0 \leq \nu_i < r p^e$, such that $\nu_i \equiv \alpha_i \mod p^N$, and therefore $\nu_1, \ds, \nu_{C+1}$ are distinct differential jumps. This gives the desired contradiction.
\end{proof}

\begin{lemma} \label{lemma-BSroot-dynamics}
	Let $R$ be $F$-split, $\fa \sq R$ be an ideal generated by $r$ elements, $\alpha \in \Zp$ be a Bernstein-Sato root of $\fa$ and $a \geq 0$ be an integer. There exists some $i \in \{0, 1, \ds, r(p^a - 1)\}$ such that $p^a \alpha + i$ is a Bernstein-Sato root of $\fa$. 
\end{lemma}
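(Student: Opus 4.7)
The plan is to combine the ``jumps multiply up by $p^a$'' statement from Proposition \ref{prop:higher-jumps}(i) with a pigeonhole/subsequence argument, using the characterization of Bernstein-Sato roots as limits of sequences of differential jumps from Proposition \ref{eqroot}.

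First, since $\alpha$ is a Bernstein-Sato root, by Proposition \ref{eqroot} there is a sequence $(\nu_e)_{e \geq 0}$ with $\nu_e \in \cB^\bullet_\fa(p^e)$ converging $p$-adically to $\alpha$. Because $R$ is $F$-split, Proposition \ref{prop:higher-jumps}(i) applies: for each $e$ we can choose some
\[
\mu_e \in [\nu_e p^a, \nu_e p^a + r(p^a - 1)] \cap \cB^\bullet_\fa(p^{e+a}) \neq \varnothing.
\]
Writing $\mu_e = p^a \nu_e + i_e$ we have $i_e \in \{0, 1, \ldots, r(p^a - 1)\}$.

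Second, since the $i_e$ take values in a finite set, by pigeonhole there exists $i \in \{0, 1, \ldots, r(p^a - 1)\}$ and an infinite subsequence $(e_k)$ along which $i_{e_k} = i$ is constant. Along this subsequence,
\[
\mu_{e_k} = p^a \nu_{e_k} + i \longrightarrow p^a \alpha + i
\]
in $\Zp$, because multiplication by $p^a$ and translation by $i$ are $p$-adically continuous and $\nu_{e_k} \to \alpha$.

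Finally, since $\mu_{e_k} \in \cB^\bullet_\fa(p^{e_k + a})$ and the indices $e_k + a$ form an infinite subset of $\NN$, Proposition \ref{eqroot}, condition \ref{it:a3}, yields that $p^a \alpha + i$ is a Bernstein-Sato root of $\fa$. There is no serious obstacle here; the only point requiring care is keeping track of the different indexings, namely that $\mu_e$ is a jump at level $e+a$ rather than at level $e$, but condition \ref{it:a3} is stated precisely to accommodate such reindexing, so no further extension of the sequence is needed.
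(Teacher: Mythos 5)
Your proof is correct and is essentially identical to the paper's own argument: both extract a sequence of jumps converging to $\alpha$, apply Proposition~\ref{prop:higher-jumps}(i) to lift jumps to level $e+a$, use pigeonhole on the finite set $\{0,\dots,r(p^a-1)\}$ to fix the shift $i$ along a subsequence, and then invoke Proposition~\ref{eqroot}\ref{it:a3} to conclude.
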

\begin{proof}
	Pick a sequence $(\nu_e)$ such that $\nu_e \in \cB^\bullet_\fa(p^e)$ whose $p$-adic limit is $\alpha$. By Proposition~\ref{prop:higher-jumps}, for every $e$ there is some $i_e \in \{0, 1, \ds, r(p^a-1)\}$ such that $p^a \nu_e + i_e \in \cB^\bullet_\fa(p^{e+a})$. Since $\{0, 1, \ds, r(p^a - 1)\}$ is a finite set, there is some $i \in \{0, 1, \ds, r(p^a - 1)\}$ and an increasing sequence $(e_j)$ such that $p^a \nu_{e_j} + i \in \cB^\bullet_\fa(p^{e_j+a})$. The $p$-adic limit of $p^a \nu_{e_j} + i$ is $p^a \alpha + i$, and the result follows from Proposition \ref{eqroot}. 
\end{proof}

We recall that an ideal $J$ is a reduction of an ideal $I$ with reduction number $n$ if $JI^n=I^{n+1}$ and $JI^{n-1} \neq JI^n$.

\begin{lemma} \label{lemma-reduction}
	Let $R$ be an $F$-finite local ring, $\fa \sq R$ be an ideal and $\fb \sq \fa$ be a reduction of $\fa$ with reduction number $k$. Then:
	\begin{enumerate}[(i)]
		\item For all integers $e \geq 0$ we have $\cB^\bullet_\fa(p^e) \sq \bigcup_{i = 0}^k \cB^\bullet_\fb (p^e) + i$ and $\cB^\bullet_\fb(p^e) \sq \bigcup_{i = 0}^k \cB^\bullet_\fa(p^e) - i$. 
		\item The ideal $\fa$ is Bernstein-Sato admissible if and only if $\fb$ is Bernstein-Sato admissible.
		\item We have $\BSR(\fa) \sq \bigcup_{i = 0}^k \BSR(\fb) + i$ and $\BSR(\fb) \sq \bigcup_{i = 0}^k \BSR(\fa) - i$. 
	\end{enumerate}
\end{lemma}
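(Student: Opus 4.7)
The plan is to derive everything from a single sandwich of ideals obtained from the reduction property, and then unpack (i), (ii), (iii) from it in turn. Iterating the defining relation $\fa^{n+1} = \fb \fa^n$ (valid for $n \geq k$) gives $\fa^{n+k} = \fb^n \fa^k \sq \fb^n$ for $n \geq 0$; combined with $\fb^{n+k} \sq \fa^{n+k}$ and the functor $D^{(e)}_R \cdot (-)$, this produces for every $n \geq 0$ the chain
\[ D^{(e)}_R \cdot \fb^{n+k} \,\sq\, D^{(e)}_R \cdot \fa^{n+k} \,\sq\, D^{(e)}_R \cdot \fb^n, \]
with the conventions $\fa^0 = \fb^0 = R$ handling the boundary case $n=0$. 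Everything downstream follows from this sandwich.

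For part (i), I would argue both inclusions by contrapositive via Lemma~\ref{lem:chain-jumps}. For the first, suppose $m - i \notin \cB^\bullet_\fb(p^e)$ for all $i \in \{0, \ldots, k\}$; then Lemma~\ref{lem:chain-jumps} collapses $D^{(e)}_R \cdot \fb^{\max(m-k,0)} = \cdots = D^{(e)}_R \cdot \fb^{m+1}$ (reducing to $R$ when $m<k$). Sandwiching $D^{(e)}_R \cdot \fa^m$ and $D^{(e)}_R \cdot \fa^{m+1}$ between consecutive (equal) terms of this collapsed $\fb$-chain forces both to coincide with the common value, so $m \notin \cB^\bullet_\fa(p^e)$. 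For the second inclusion, suppose $D^{(e)}_R \cdot \fa^m = \cdots = D^{(e)}_R \cdot \fa^{m+k+1}$; the sandwich then reads
\[ D^{(e)}_R \cdot \fb^m \,\supseteq\, D^{(e)}_R \cdot \fa^{m+k} \,=\, D^{(e)}_R \cdot \fa^m \,\supseteq\, D^{(e)}_R \cdot \fb^m, \]
forcing $D^{(e)}_R \cdot \fb^m = D^{(e)}_R \cdot \fa^m$; the same argument shifted by one gives $D^{(e)}_R \cdot \fb^{m+1} = D^{(e)}_R \cdot \fa^{m+1}$, whence $D^{(e)}_R \cdot \fb^m = D^{(e)}_R \cdot \fb^{m+1}$, i.e.~$m \notin \cB^\bullet_\fb(p^e)$.

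Parts (ii) and (iii) will then follow quickly from (i). For (ii), the inclusions in (i) bound $|\cB^\bullet_\fa(p^e) \cap [0, s)|$ by the size of a union of $k+1$ translates of $\cB^\bullet_\fb(p^e) \cap [0, s+k)$ (and symmetrically), so the linear-growth criterion of Proposition~\ref{prop-BSadm-iff-linearGrowth} transfers between $\fa$ and $\fb$. For (iii), given $\alpha \in \BSR(\fa)$ with a witnessing sequence $\nu_e \in \cB^\bullet_\fa(p^e) \to \alpha$, the first inclusion of (i) produces $i_e \in \{0, \ldots, k\}$ with $\nu_e - i_e \in \cB^\bullet_\fb(p^e)$; by pigeonhole some $i$ occurs for infinitely many $e$, and along the corresponding subsequence $\nu_e - i \to \alpha - i$, so Proposition~\ref{eqroot} gives $\alpha - i \in \BSR(\fb)$. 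The reverse inclusion is symmetric using the second half of (i). The only mildly subtle point throughout is handling indices where $m-k$ or $m-i$ would be negative in the contrapositive of (i); this is resolved by the convention $\fa^0 = \fb^0 = R$, which makes the relevant boundary sandwich trivially true.
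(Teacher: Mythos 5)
Your proof is correct and follows essentially the same route as the paper: the reduction property gives the two-sided sandwich of $\fa$-powers between $\fb$-powers, which after applying $D^{(e)}_R\cdot(-)$ yields part (i) via Lemma~\ref{lem:chain-jumps}, and parts (ii) and (iii) are then deduced exactly as in the paper (linear-growth criterion and a pigeonhole-subsequence argument, respectively). The paper presents (i) via two four-term chains rather than your single two-sided sandwich, but this is a cosmetic difference, and your explicit note about the $n<k$ boundary case being absorbed by the convention $\fa^0=\fb^0=R$ is a welcome clarification.
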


\begin{proof}
	Part (i) follows by considering the following chains of ideals:
	$$D^{(e)}_R \cdot \fb^{n-k} \supseteq D^{(e)}_R \cdot  \fa^n \supseteq D^{(e)}_R \cdot \fa^{n+1} \supseteq D^{(e)}_R \cdot \fb^{n+1},$$
	$$D^{(e)}_R \cdot \fa^n \supseteq D^{(e)}_R \cdot  \fb^n \supseteq D^{(e)}_R \cdot  \fb^{n+1} \supseteq D^{(e)}_R \cdot \fa^{n + k + 1},$$
	and applying Lemma \ref{lem:chain-jumps}. 
	
	Let us now prove part (ii); we use the alternative characterization of Bernstein-Sato admissibility given in Proposition \ref{prop-BSadm-iff-linearGrowth}. Suppose that $\fa$ is Bernstein-Sato admissible and pick constants $A_\fa$, $B_\fa$ such that $\# \big( \cB^\bullet_\fa(p^e) \cap [0,s) \big) \leq A_\fa (s/p^e) + B_\fa$ for all $e, s \geq 0$. By applying part (i) we conclude that
	\begin{align*}
	\# \bigg( \cB^\bullet_\fb(p^e) \cap [0,s) \bigg) & \leq \# \bigg( \bigcup_{i = 0}^k \cB^\bullet_\fa(p^e) \cap [0, s + k) - i \bigg) \\
	& \leq k \left(A_\fa \frac{s + k}{p^e} + B_\fa \right) \\
	& \leq k A_\fa \frac{s}{p^e} + k^2 A_\fa + B_\fa.
	\end{align*}
	For the other direction, suppose $\fb$ is Bernstein-Sato admissible and choose constants $A_\fb$, $B_\fb$ similarly. Then
	\begin{align*}
	\# \bigg( \cB^\bullet_\fa(p^e) \cap [0,s) \bigg) & \leq \# \bigg( \bigcup_{i = 0}^k \cB^\bullet_\fb(p^e) \cap [0, s) + i \bigg) \\
		& \leq k \left( A_\fb \frac{s}{p^e} + B_\fb \right) \\
		& = kA_\fb \frac{s}{p^e} + k B_\fb.
	\end{align*}
	
	We now tackle part (iii). Suppose that $\alpha \in \BSR(\fa)$, and choose a sequence $(\nu_e)$ with $\nu_e \in \cB^\bullet_\fa(p^e)$ such that $\alpha$ is the $p$-adic limit of $\nu_e$. By part (i), for every $e \geq 0$ there is some $i_e \in \{0, 1, \ds, k\}$ such that $\nu_e - i_e \in \cB^\bullet_\fb(p^e)$. We conclude there is some $i \in \{0, 1, \ds, k\}$ and a subsequence $(\nu_{e_j})$ such that $\nu_{e_j} - i \in \cB^\bullet_\fb(p^{e_j})$. The $p$-adic limit of this subsequence is $\alpha - i$ which, by Remark \ref{rmk-diffjump-subsequence}, is a Bernstein-Sato root of $\fb$. Therefore, $\alpha \in \BSR(\fb) + i$. The other statement follows similarly.
\end{proof}

\begin{theorem}\label{thm:ratl}
	Let $R$ be an $F$-finite $F$-split ring. Let $\fa$ be a Bernstein-Sato admissible ideal. Then every Bernstein-Sato root of $\fa$ is rational.
\end{theorem}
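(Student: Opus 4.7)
The plan is to combine the finiteness of $\BSR(\fa)$ (Theorem~\ref{thm-finiteBSRoots}) with the dynamical ``self-replication'' property of Lemma~\ref{lemma-BSroot-dynamics} to force each Bernstein-Sato root $\alpha$ to satisfy an integer relation of the form $(p^n - p^m)\alpha \in \ZZ$ for some $n > m$. Since $\alpha \in \Zp$ and $p^n - p^m \neq 0$, this will imply $\alpha \in \Q \cap \Zp = \ZZ_{(p)}$.

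Fix $\alpha \in \BSR(\fa)$ and let $r$ be the number of generators of $\fa$. I would iteratively invoke Lemma~\ref{lemma-BSroot-dynamics} with $a = 1$—which applies because $R$ is $F$-split—to construct a sequence $\alpha^{(0)} := \alpha, \alpha^{(1)}, \alpha^{(2)}, \ldots$ of Bernstein-Sato roots of $\fa$, together with integers $i_{j+1} \in \{0, 1, \ldots, r(p-1)\}$, satisfying the recurrence $\alpha^{(j+1)} = p \alpha^{(j)} + i_{j+1}$. Unwinding the recursion yields the closed-form expression
$$\alpha^{(j)} = p^j \alpha + \sum_{k=1}^{j} p^{j-k} i_k.$$

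Since $\fa$ is Bernstein-Sato admissible, Theorem~\ref{thm-finiteBSRoots} guarantees that $\BSR(\fa)$ is a finite set, so the pigeonhole principle supplies indices $0 \leq m < n$ with $\alpha^{(m)} = \alpha^{(n)}$. Subtracting the two corresponding expressions gives
$$(p^n - p^m)\alpha = \sum_{k=1}^{m} p^{m-k} i_k \;-\; \sum_{k=1}^{n} p^{n-k} i_k,$$
whose right-hand side is manifestly an integer. Since $p^n - p^m \neq 0$, this exhibits $\alpha$ as a rational number, as required.

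The only subtle point is the iterative construction itself: one must recognize that Lemma~\ref{lemma-BSroot-dynamics} is exactly what allows the ``orbit'' of $\alpha$ under repeated application of $\beta \mapsto p\beta + (\text{something in a bounded range})$ to remain inside $\BSR(\fa)$. Once that is observed, the rest is a standard pigeonhole-plus-linear-algebra argument, and no finer control over the sequence $(i_j)$ is needed beyond the uniform bound $i_j \leq r(p-1)$, which is precisely what ensures the right-hand side of the displayed integer relation stays in $\ZZ$.
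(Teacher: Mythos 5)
Your proof is correct and is essentially the same argument as the paper's: both use Lemma~\ref{lemma-BSroot-dynamics} to generate a $p$-multiplicative orbit inside the finite set $\BSR(\fa)$ and then apply the pigeonhole principle to obtain a relation $(p^n - p^m)\alpha \in \ZZ$. The paper quotients to $\Zp/\ZZ$ before pigeonholing while you carry the integer shifts $i_k$ explicitly and cancel them at the end, but this is only a bookkeeping difference.
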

\begin{proof}
	Recall that $\BSR(\fa)$ denotes the set of Bernstein-Sato roots of $\fa$, and let $\t{\BSR}(\fa) \sq \Zp / \Z$ be its image under the quotient map $\Zp \to \Zp/ \Z$; in other words, $\t{\BSR}(\fa) = \{\alpha + \Z  \ | \ \alpha \in \BSR(\fa) \}.$ Note that, by Lemma~\ref{lemma-BSroot-dynamics}, $\t{\BSR}(\fa)$ is closed under multiplication by~$p$. 
	
	Let $\alpha \in \Zp$ be a Bernstein-Sato root. Since $\t{\BSR}(\fa)$ is a finite set, there exist some $n < m$ such that $p^n \alpha \equiv p^m \alpha \mod \Z$; that is, there exists some $c \in \Z$ such that $p^n \alpha = p^m \alpha + c$. It follows that $\alpha = c/(p^n (p^{m-n} - 1))$ and thus $\alpha$ is rational (note that, a posteriori, we know that $p^n$ must divide $c$).
\end{proof}

\begin{lemma}\label{lem:seq}
		Let $R$ be an $F$-finite $F$-split ring. Let $\fa$ be an $r$-generated ideal of $R$. Let $\alpha\in \ZZ_{(p)}$ be a Bernstein-Sato root of $\fa$.
		\begin{enumerate}
			\item If $\alpha >0$, then there exists an increasing sequence $\{n_j\} \subset \NN$ such that $\alpha +n_j$ is a Bernstein-Sato root of $\fa$ for each $j$.
			\item If $\alpha <-r$, then there exists an increasing sequence $\{n_j\} \subset \NN$ such that $\alpha -n_j$ is a Bernstein-Sato root of $\fa$ for each $j$.
				\end{enumerate}
\end{lemma}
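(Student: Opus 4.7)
My plan is to derive both statements uniformly from the dynamical result Lemma~\ref{lemma-BSroot-dynamics}, which says that if $\alpha$ is a Bernstein-Sato root of $\fa$, then for each $a\geq 0$ there exists some $i_a\in\{0,1,\ds,r(p^a-1)\}$ such that $p^a\alpha+i_a$ is again a Bernstein-Sato root. The key observation is that if I apply this at an exponent $a$ for which $p^a\alpha - \alpha$ lies in $\ZZ$, then the new Bernstein-Sato root $p^a\alpha+i_a$ is automatically an integer translate of $\alpha$.

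Concretely, since $\alpha\in\ZZ_{(p)}$ I will fix $e_0\geq 1$ with $(p^{e_0}-1)\alpha \in \ZZ$, so that $(p^{ae_0}-1)\alpha \in \ZZ$ for every $a\geq 1$. For each such $a$, Lemma~\ref{lemma-BSroot-dynamics} supplies an integer $i_a\in\{0,\ds,r(p^{ae_0}-1)\}$ such that $p^{ae_0}\alpha+i_a\in\BSR(\fa)$. Setting
\[ n_a \;:=\; (p^{ae_0}-1)\alpha+i_a \;\in\; \ZZ,\]
this says that $\alpha+n_a\in\BSR(\fa)$ for every $a\geq 1$. The problem is then reduced to controlling the sign and asymptotic growth of $n_a$, which is where the hypotheses on $\alpha$ in (i) and (ii) come in.

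For part (i), when $\alpha>0$, both summands of $n_a$ are nonnegative and $(p^{ae_0}-1)\alpha\to+\infty$, so $(n_a)$ is an unbounded sequence of nonnegative integers, from which I extract a strictly increasing subsequence $(n_{a_j})$; the corresponding Bernstein-Sato roots $\alpha+n_{a_j}$ give the desired family. For part (ii), when $\alpha<-r$, I use the upper bound
\[ n_a \;\leq\; (p^{ae_0}-1)\alpha + r(p^{ae_0}-1) \;=\; (p^{ae_0}-1)(\alpha+r),\]
which tends to $-\infty$ because $\alpha+r<0$. Then $m_a := -n_a$ is an unbounded sequence of positive integers, and extracting a strictly increasing subsequence $(m_{a_j})$ yields $\alpha - m_{a_j}\in\BSR(\fa)$.

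I do not foresee any serious obstacle: the argument is essentially bookkeeping around Lemma~\ref{lemma-BSroot-dynamics}. The only point that requires care is choosing $e_0$ so that the shifted root differs from $\alpha$ by an integer, which is why I restrict to exponents in the arithmetic progression $\{ae_0\}_{a\geq 1}$. The strict inequalities $\alpha>0$ and $\alpha<-r$ are used exactly to ensure that $(p^{ae_0}-1)\alpha$ in part (i), respectively $(p^{ae_0}-1)(\alpha+r)$ in part (ii), dominates the bounded-magnitude error coming from $i_a$ once $a$ is large.
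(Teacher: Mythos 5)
Your proof is correct and rests on the same key ingredient the paper uses, namely Lemma~\ref{lemma-BSroot-dynamics} applied at an exponent chosen so that $(p^{ae_0}-1)\alpha\in\ZZ$, making the resulting root an integer translate of $\alpha$. The only difference in execution is that the paper applies the lemma once to get a single positive (resp.\ negative) shift and then iterates from the new root, whereas you avoid the iteration by varying $a$ in a single application and extracting a subsequence from the unbounded family $n_a=(p^{ae_0}-1)\alpha+i_a$; this is a mild streamlining but not a fundamentally different argument.
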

\begin{proof}
	For the first part, it suffices to show that there is some positive integer $n$ such that $\alpha+n$ is a Bernstein-Sato root. We can write $\displaystyle \alpha=\frac{a}{1-p^e}+b$ with $a,b\in \NN$ such that $0\leq a < p^e-1$ and $b>0$. By Lemma~\ref{lemma-BSroot-dynamics}, there is some $i\in \{0,\dots,r(p^e-1)\}$ such that $p^e\alpha+i$ is a root. We have $p^e\alpha+i = \alpha + (p^e-1)b -a +i$. Since $a<p^e-1<(p^e-1)b$, the claim follows.
	
	Likewise, for the second part, it suffices to show that there is some negative integer $n$ such that $\alpha+n$ is a Bernstein-Sato root. We can write $\displaystyle \alpha=\frac{a}{p^e-1}-r-b$ with $a,b\in \NN$ such that $0\leq a < p^e-1$ and $b>0$. Then $p^e \alpha+i=-(p^e-1) r +a - (p^e-1)b + \alpha +i$ is a root for some $i\in \{0,\dots,r(p^e-1)\}$. We have $-(p^e-1) r +a - (p^e-1)b + \alpha +i \leq a - (p^e-1)b + \alpha \leq \alpha$, so we are done.
\end{proof}

\begin{theorem}\label{thm:rational}
	Let $R$ be an $F$-finite $F$-split ring. Let $\fa$ be a Bernstein-Sato admissible ideal with $r$ generators. Then every Bernstein-Sato root of $R$ lies in the interval $[-r,0]$.
\end{theorem}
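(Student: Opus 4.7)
The plan is to combine rationality (Theorem \ref{thm:ratl}), the sequence-generation Lemma \ref{lem:seq}, and the finiteness of Bernstein-Sato roots for admissible ideals (Theorem \ref{thm-finiteBSRoots}) to rule out any root outside the interval $[-r, 0]$.

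First, I would let $\alpha \in \BSR(\fa)$ be arbitrary. By Theorem \ref{thm:ratl}, since $\fa$ is Bernstein-Sato admissible and $R$ is $F$-split, we have $\alpha \in \ZZ_{(p)}$, so Lemma \ref{lem:seq} applies to $\alpha$. Next, I argue by contradiction: suppose $\alpha \notin [-r, 0]$. There are two cases.

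If $\alpha > 0$, then by part (1) of Lemma \ref{lem:seq} there is an increasing sequence $\{n_j\} \subset \NN$ such that $\alpha + n_j \in \BSR(\fa)$ for every $j$. Since the $\alpha + n_j$ are pairwise distinct, this produces infinitely many Bernstein-Sato roots of $\fa$. If $\alpha < -r$, then by part (2) of Lemma \ref{lem:seq} we similarly obtain an increasing sequence $\{n_j\}\subset \NN$ with $\alpha - n_j \in \BSR(\fa)$, again producing infinitely many distinct roots. In either case we contradict Theorem \ref{thm-finiteBSRoots}, which guarantees that a Bernstein-Sato admissible ideal has only finitely many Bernstein-Sato roots. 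Therefore $\alpha \in [-r, 0]$, as claimed.

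The proof is essentially a one-line deduction once Lemma \ref{lem:seq} and Theorems \ref{thm:ratl} and \ref{thm-finiteBSRoots} are in hand; there is no substantive obstacle at this stage, since all the work has already been done in those three statements. The only thing to be careful about is that Lemma \ref{lem:seq} requires $\alpha \in \ZZ_{(p)}$, which is exactly why the appeal to Theorem \ref{thm:ratl} is made first.
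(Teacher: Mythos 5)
Your proof follows exactly the same route as the paper's: apply Theorem~\ref{thm-finiteBSRoots} for finiteness and Lemma~\ref{lem:seq} to rule out roots outside $[-r,0]$, with Theorem~\ref{thm:ratl} supplying the rationality hypothesis that Lemma~\ref{lem:seq} needs. The only difference is that you make the appeal to Theorem~\ref{thm:ratl} explicit, which the paper's one-line proof leaves implicit; that is a welcome clarification, not a departure.
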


\begin{proof}
	 Since $\fa$ is Bernstein-Sato admissible, the set of roots is finite. The bounds on the roots then follow from Lemma~\ref{lem:seq}.
\end{proof}

\begin{corollary}\label{cor:ell}
	Let $R$ be a local $F$-finite $F$-split ring. Let $\fa$ be a Bernstein-Sato admissible ideal with analytic spread $\ell$. Then every Bernstein-Sato root of $R$ lies in the interval $[-\ell,0]$.
\end{corollary}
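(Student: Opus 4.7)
The plan is to reduce to Theorem~\ref{thm:rational} by passing to a minimal reduction of $\fa$ with exactly $\ell$ generators; the upper bound $0$ on the Bernstein-Sato roots is already supplied by Theorem~\ref{thm:rational}, so only the sharper lower bound $-\ell$ requires work. The main technical nuisance (rather than a deep obstacle) is arranging for the existence of a minimal reduction generated by $\ell$ elements, which in general requires the residue field to be infinite.

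First, I would reduce to the case of infinite residue field. Namely, set $(S,\fn,L) := (R[x]_{\fm R[x]}, \fm R[x]_{\fm R[x]}, K(x))$ as in Lemma~\ref{lem:redu-inf-resfield}. By that lemma, $S$ is $F$-finite, local, faithfully flat over $R$, $F$-split, with the same analytic spread $\ell(\fa S) = \ell(\fa) = \ell$, and the differential jumps agree: $\cB^\bullet_{\fa}(p^e) = \cB^\bullet_{\fa S}(p^e)$ for every $e$. Since both Bernstein-Sato admissibility and the set $\BSR(\fa)$ are defined in terms of $\cB^\bullet_\fa(p^e)$, we have $\BSR(\fa) = \BSR(\fa S)$ and $\fa S$ is still Bernstein-Sato admissible. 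Hence we may assume the residue field of $R$ is infinite.

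Next, with $R$ having infinite residue field, choose a minimal reduction $\fb \sq \fa$ generated by exactly $\ell$ elements, with some finite reduction number $k$. By Lemma~\ref{lemma-reduction}(ii), $\fb$ is again Bernstein-Sato admissible, and by Lemma~\ref{lemma-reduction}(iii) we have the containment
\[
\BSR(\fa) \sq \bigcup_{i = 0}^k \bigl( \BSR(\fb) + i \bigr).
\]
Applying Theorem~\ref{thm:rational} to the $\ell$-generated Bernstein-Sato admissible ideal $\fb$ gives $\BSR(\fb) \sq [-\ell, 0]$, and therefore
\[
\BSR(\fa) \sq \bigcup_{i = 0}^k [-\ell + i,\ i] \sq [-\ell,\ k].
\]
In particular, every $\alpha \in \BSR(\fa)$ satisfies $\alpha \geq -\ell$.

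Finally, applying Theorem~\ref{thm:rational} directly to $\fa$ (with its original $r$ generators) gives $\BSR(\fa) \sq [-r, 0]$, which in particular yields the upper bound $\alpha \leq 0$. Combining this upper bound with the lower bound $\alpha \geq -\ell$ derived in the previous step shows $\BSR(\fa) \sq [-\ell, 0]$, as desired.
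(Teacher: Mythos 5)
Your proof is correct and follows essentially the same route as the paper's: reduce to infinite residue field via Lemma~\ref{lem:redu-inf-resfield}, pass to a minimal reduction $\fb$ generated by $\ell$ elements, apply Theorem~\ref{thm:rational} to $\fb$ (and to $\fa$ for the upper bound), and transfer back with Lemma~\ref{lemma-reduction}. The only difference is that you spell out the reduction-to-infinite-residue-field step and the final interval bookkeeping, which the paper leaves terse.
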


\begin{proof}
	By Remark~\ref{rem:reduce-to-local} and Lemma~\ref{lem:redu-inf-resfield}, the statement reduces to the case where $R$ is local with infinite residue field. In this case, there exists a reduction of $\fa$ that is generated by at most $\ell$ elements. The result then follows from Theorem~\ref{thm:rational} and Lemma \ref{lemma-reduction}. 
\end{proof}

\begin{corollary} \label{cor-monomial-char0}
	Let $R = \CC[x_1, \ds, x_n]$ be a polynomial ring over $\CC$, $\fm$ be the maximal ideal $\fm = (x_1, \ds, x_n)$ and $\fa \sq \CC[x_1, \ds, x_n]$ be a monomial ideal. If $\lambda \in \Q$ is a root of the Bernstein-Sato polynomial of $\fa$ then $- \ell(\fa R_\fm) \leq \lambda$.
\end{corollary}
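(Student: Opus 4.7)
The plan is to deduce the characteristic zero statement from its positive characteristic analogue, Corollary \ref{cor:ell}, via mod-$p$ reduction. The key tool will be the result of the third author \cite[Theorem 3.1]{QG19b} which asserts that for a monomial ideal $\fa \subseteq \CC[x_1,\ldots,x_n]$, if $\lambda \in \Q$ is a root of the Bernstein-Sato polynomial of $\fa$, then for all sufficiently large primes $p$ (in particular, for $p$ not dividing the denominator of $\lambda$, so that $\lambda \in \Z_{(p)} \subseteq \Zp$), the image of $\lambda$ in $\Zp$ is a Bernstein-Sato root of the mod-$p$ reduction $\fa_p \subseteq R_p := \FF_p[x_1,\ldots,x_n]$.

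First, I would observe that $R_p$ is a regular $F$-finite ring, and in particular it is $F$-split (in fact strongly $F$-regular). Monomial ideals in regular $F$-finite rings are Bernstein-Sato admissible, as follows from the work of the third author \cite[Theorem~6.1]{QG19} (and this fits into the framework of the Theorem \ref{ThmFFRTAdmissible}/\ref{thm:directsummand} package). Thus Corollary \ref{cor:ell} applies to $\fa_p R_{\fm_p}$, where $\fm_p = (x_1,\ldots,x_n) R_p$, and yields that every Bernstein-Sato root of $\fa_p R_{\fm_p}$ lies in $[-\ell(\fa_p R_{\fm_p}), 0]$. In particular, the image of $\lambda$ in $\Zp$ satisfies this bound.

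The final step is to identify $\ell(\fa R_\fm)$ with $\ell(\fa_p R_{\fm_p})$. For a monomial ideal $\fa = (m_1, \ldots, m_r)$ in a localized polynomial ring $k[x_1,\ldots,x_n]_\fm$, the analytic spread equals the dimension of the special fiber of the Rees algebra, which for a monomial ideal is a combinatorial invariant: it equals one plus the dimension of the convex hull of the exponent vectors of $m_1,\ldots,m_r$ inside $\Q^n$. Since the monomial generators of $\fa$ and of $\fa_p$ have the same exponent vectors, we get $\ell(\fa R_\fm) = \ell(\fa_p R_{\fm_p})$.

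Combining these three ingredients gives $-\ell(\fa R_\fm) \leq \lambda$. The main obstacle I foresee is verifying the invariance of analytic spread under reduction mod $p$ for monomial ideals at the level of detail expected in the paper; however, since this is a well-known fact following from the combinatorial description of the fiber cone of a monomial ideal, it should only require a brief citation or remark. Everything else is an immediate application of results already established in the excerpt.
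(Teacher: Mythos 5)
Your proposal follows essentially the same route as the paper: reduce mod $p$, invoke \cite[Theorem 3.1]{QG19b}, compare analytic spreads combinatorially, and apply Corollary~\ref{cor:ell}. The one step you elide is passing from ``$\lambda$ is a Bernstein-Sato root of $\fa_p$'' to ``$\lambda$ is a Bernstein-Sato root of $\fa_p R_{\fm_p}$'': by Proposition~\ref{prop-BSR-are-local} the roots of $\fa_p$ are only a union over \emph{all} maximal ideals, so one must use that $\fa_p$ is homogeneous together with Proposition~\ref{prop-BSR-homog-and-completion} to pin the root down to the localization at the homogeneous maximal ideal before Corollary~\ref{cor:ell} can be applied; aside from this the argument is the same.
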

\begin{proof}
	Pick a large prime $p$ and let $\bar R = \F_p[x_1, \ds, x_n]$ be a polynomial ring over $\F_p$, $\bar \fm \sq \bar R$ denote the maximal ideal $\bar \fm = (x_1, \ds, x_n)$ and $\bar \fa \sq \bar R$ denote the mod-$p$ reduction of $\fa$. 
	
	We may pick $p$ large enough so that $\lambda$ is a Bernstein-Sato root of $\bar \fa$ \cite[Theorem 3.1]{QG19b}. Moreover, since the construction of the fibre cone of $\fa$ with respect to $\fm$ is compatible with mod-$p$ reduction, we can further enlarge $p$ to assume\footnote{In fact, by a result of Singla, the analytic spread of a monomial ideal $\fa \sq \KK[x_1, \ds, x_n]_{(x_1, \ds, x_n)}$ depends only on the Newton polytope of $\fa$ \cite[Cor. 4.10]{Singla07} (see also \cite{Bivia03}), and therefore $\ell(\fa R_\fm) = \ell(\bar \fa \bar R_{\bar \fm})$ for any $p$.}
 that $\ell(\fa R_\fm) = \ell(\bar \fa \bar R_{\bar \fm})$.	Since $\fa$ is homogeneous, we conclude that $\lambda$ is a Bernstein-Sato root of $\bar \fa \bar R_{\bar \fm}$ (Proposition \ref{prop-BSR-homog-and-completion}), and we conclude that $- \ell(\bar \fa \bar R_{\bar \fm}) \leq \lambda$ from Corollary \ref{cor:ell}.
\end{proof}

In characteristic zero, whenever $\fa \sq \CC[x_1, \ds, x_n]$ is a nonzero ideal, all the roots of the Bernstein-Sato polynomial of $\fa$ are strictly negative. Since we have only shown that the Bernstein-Sato roots are nonpositive, the question of whether zero can be a Bernstein-Sato root arises. We can answer it for principal ideals as follows.

\begin{proposition}\label{prop:d-simple}
	Let $R$ be an $F$-finite ring. The following are equivalent:
	\begin{enumerate}[(a)]
		\item The ring $R$ is simple as a $D_R$-module.
		\item For all nonzero $f \in R$ we have $0 \notin \BSR(f)$. 
	\end{enumerate}
	Moreover, if these hold then the nilradical $\sqrt 0$ of $R$ is a prime ideal. In particular, if $R$ is reduced then it must be a domain.
\end{proposition}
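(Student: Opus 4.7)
The plan is to prove (a) $\Leftrightarrow$ (b) by directly translating Bernstein--Sato roots for principal ideals into a condition on $D_R$-orbits, and to handle the moreover part by a short contradiction argument exploiting the $R^{p^e}$-linearity of operators in $D^{(e)}_R$.

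For the equivalence, I will apply Proposition~\ref{eqroot}(ii) to $\alpha = 0$ and to the principal ideal $(f)$, for which $r=1$. Then $s_e \in \{0\}$ and $\alpha_{<e} = 0$, so the condition reduces to $0 \in \cB^\bullet_{(f)}(p^e)$ for every $e\geq 0$, which is to say $D^{(e)}_R \cdot f \subsetneq R$ for every $e$ (recall $(f)^0 = R$). Using the nested inclusions of Lemma~\ref{lem:De-nested}, the negation of this is equivalent to the existence of some $e$ with $D^{(e)}_R \cdot f = R$, and thus to $D_R\cdot f = R$. Hence (b) asserts exactly that $D_R\cdot f = R$ for every nonzero $f \in R$, which is the definition of $D_R$-simplicity, i.e., (a).

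For the moreover part, assume $R$ is $D_R$-simple and let $a,b \in R$ satisfy $ab \in \sqrt{0}$. I will show that if $a \notin \sqrt 0$ then $b \in \sqrt 0$. Pick $n$ with $a^n b^n = (ab)^n = 0$; by hypothesis $a^n \neq 0$. Using the already-established condition (b), $D_R \cdot a^n = R$, so there is some $\delta \in D^{(e)}_R$ with $\delta(a^n) = 1$. Enlarging $e$ if necessary so that $p^e \geq n$, we still have $\delta \in D^{(e)}_R$ and now $a^n b^{p^e} = 0$. Since $b^{p^e} \in R^{p^e}$ and $\delta$ is $R^{p^e}$-linear,
\[ 0 = \delta(a^n b^{p^e}) = b^{p^e} \delta(a^n) = b^{p^e}, \]
so $b \in \sqrt 0$. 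This proves $\sqrt 0$ is prime, and if $R$ is reduced the equality $\sqrt 0 = (0)$ then forces $R$ to be a domain.

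I do not foresee serious obstacles: both components are direct, with the $R^{p^e}$-linearity trick (pulling $b^{p^e}$ through $\delta$) being the main technical ingredient of the moreover part.
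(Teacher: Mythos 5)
Your proof is correct, and the equivalence (a) $\Leftrightarrow$ (b) is argued essentially as in the paper, via Proposition~\ref{eqroot} with $r=1$, $\alpha=0$, and the nesting in Lemma~\ref{lem:De-nested}. For the moreover part you take a slightly different, more hands-on route: the paper observes that $H^0_g(R)$, the $g$-torsion submodule, is a nonzero $D_R$-submodule of $R$ (it contains a power of the nonnilpotent factor), hence equals $R$ by simplicity, forcing $1$ to be $g$-torsion; you instead choose $\delta\in D^{(e)}_R$ with $\delta(a^n)=1$, enlarge $e$ so that $p^e\geq n$, and pull $b^{p^e}$ directly through $\delta$ by $R^{p^e}$-linearity to get $b^{p^e}=\delta(a^n b^{p^e})=0$. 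These are really two phrasings of the same underlying observation --- level-$e$ operators are $R^{p^e}$-linear, which is also what makes $H^0_g(R)$ a $D_R$-submodule --- but your version is self-contained and avoids invoking the $D_R$-module structure on local cohomology, which the paper uses but does not separately justify at that point.
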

\begin{proof}
	Suppose that $R$ is simple as a $D_R$-module. Given a nonzero $f \in R$, we have $D_R \cdot f = R$ and therefore there is some $e$ large enough so that $D^{(e)}_R \cdot f = R = D^{(e)}_R \cdot f^0$. We conclude that $0 \notin \cB^\bullet_f(p^e)$ and, by Proposition \ref{eqroot}, we conclude that $0 \notin \BSR(f)$. 
	
	Conversely, suppose that $0 \in \BSR(f)$ for some nonzero $f \in R$. By Proposition \ref{eqroot}, for all $e \geq 0$ we have $0 \in \cB^\bullet_f(p^e)$ and thus $D^{(e)}_R \cdot f \neq D^{(e)}_R \cdot f^0 = R$. We conclude that $D_R \cdot f \neq R$, and hence $R$ is not simple as a $D_R$-module.
	
	For the last statement, suppose that $f, g \in R$ are such that $fg \in \sqrt 0$ and $f \notin \sqrt 0$. Then the collection $H^0_g(R)$ of $g$-torsion elements of $R$ is a $D_R$-submodule of $R$ which contains a power of $f$, and is therefore nonzero. If $R$ is $D_R$-simple, we conclude that $1 \in H^0_g(R)$ and thus some power of $g$ is zero, i.e. $g \in \sqrt 0$. 
\end{proof}

\begin{question}
	Let $R$ be an $F$-finite ring that is simple as a $D_R$-module. Do we have $0 \notin \BSR(\fa)$ for all nonzero ideals $\fa \sq R$?
\end{question}

	%%%%%%%%%%%%%%%%%%%%%%%%%%%%%%%%%%%%%%%%%%%%%%%%%%%%%%%%%%%%%%%%%%%%
\section{Differential thresholds} \label{SubSecDT}
%%%%%%%%%%%%%%%%%%%%%%%%%%%%%%%%%%%%%%%%%%%%%%%%%%%%%%%%%%%%%%%%%%%%

In this section we introduce the other key numerical invariant of this paper: differential thresholds.  These are related to $F$-jumping numbers, $F$-thresholds, Cartier thresholds, and Bernstein-Sato roots. 

\subsection{Definition and basic properties} \

\begin{definition}
	Let $R$ be an $F$-finite  ring, and $\fa\subseteq R$ an ideal.
	We say that $\lambda\in\RR_{\geq 0}$ is a differential threshold if there exists a sequence of elements
	$\nu_e\in \cB^\bullet_\fa(p^e)$ such that $\lambda=\lim\limits_{e\to\infty}\frac{\nu_e}{p^e}$, where the limit is taken in the usual Euclidean topology.
\end{definition}

When $R$ is $F$-split, it turns out that every differential threshold can be realized as a limit in a nice way.

\begin{proposition}\label{prop:equivsthresh}
	Let $R$ be an $F$-finite $F$-split ring and $\lambda\in \RR_{> 0}$. Let $\fa$ be an $r$ generated ideal. The following are equivalent:
	\begin{enumerate}[(a)]
		\item\label{it:l1} $\lambda$ is a differential threshold of $\fa$.
		\item\label{it:l2} For all $e\geq 0$, there is a differential jump of level $e$ for $\fa$ in the interval $[p^e \lambda-r, p^e \lambda]$.
		\item\label{it:l3} There is an infinite set $\{e_j\} \subseteq \ZZ_{>0}$ and differential jumps $\nu_j\in \cB_{\fa}^\bullet(p^{e_j})$ such that $(\nu_j/p^{e_j})$ converges to $\lambda$.
	\end{enumerate}
\end{proposition}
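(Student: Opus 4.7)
The plan is to prove the cycle \ref{it:l1} $\Rightarrow$ \ref{it:l3} $\Rightarrow$ \ref{it:l2} $\Rightarrow$ \ref{it:l1}. The first implication is immediate by taking $e_j = j$. For \ref{it:l2} $\Rightarrow$ \ref{it:l1}, I select for each $e$ any differential jump $\nu_e \in \cB^\bullet_\fa(p^e) \cap [p^e\lambda - r, p^e\lambda]$; since $\nu_e/p^e \in [\lambda - r/p^e, \lambda]$, the squeeze theorem forces $\nu_e/p^e \to \lambda$.

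The substantive implication \ref{it:l3} $\Rightarrow$ \ref{it:l2} is the heart of the argument, and I would attack it by contradiction using Proposition \ref{prop:higher-jumps}(ii), which is the step where the $F$-split hypothesis enters. Fix a witness $\nu_j \in \cB^\bullet_\fa(p^{e_j})$ for \ref{it:l3} and suppose that for some $e \geq 0$ there is no differential jump of level $e$ in $[p^e\lambda - r, p^e\lambda]$. The idea is to propagate this gap upward, producing a larger forbidden interval at level $e + a$ that must eventually contain $\nu_j$ when $a = e_j - e$ is large, since $\nu_j/p^{e_j} \to \lambda$. A case split is needed depending on whether $p^e\lambda$ is an integer.

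If $p^e\lambda \notin \Z$, set $n = \lfloor p^e\lambda \rfloor$; the integers in $[p^e\lambda - r, p^e\lambda]$ are exactly $\{n - r + 1, \ldots, n\}$, and Proposition \ref{prop:higher-jumps}(ii) applied with the pair $(n, n+1)$ yields $[np^a - r + 1, (n+1)p^a - 1] \cap \cB^\bullet_\fa(p^{e+a}) = \varnothing$ for all $a \geq 0$. Since $\lambda$ lies strictly between $n/p^e$ and $(n+1)/p^e$, for large $j$ so does $\nu_j/p^{e_j}$, placing $\nu_j$ strictly between $np^{e_j - e}$ and $(n+1)p^{e_j - e}$ and therefore inside the forbidden interval, contradicting $\nu_j \in \cB^\bullet_\fa(p^{e_j})$.

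If instead $p^e\lambda = n \in \Z_{\geq 1}$, the empty region is the larger set $[n - r, n]$, and I apply Proposition \ref{prop:higher-jumps}(ii) with the pair $(n - 1, n + 1)$ to obtain that $[(n-1)p^a - r + 1, (n+1)p^a - 1]$ is empty of level $e + a$ jumps. For large $j$, the estimate $|\nu_j/p^{e_j} - n/p^e| < 1/p^e$ places $\nu_j$ inside the open interval $((n-1)p^{e_j - e}, (n+1)p^{e_j - e})$ and hence in the forbidden region, again a contradiction. The main obstacle is this integer case: since $\nu_j$ may converge to $p^{e_j}\lambda = np^{e_j - e}$ from either side, the forbidden interval must be enlarged symmetrically, which motivates the choice of parameters $(n-1, n+1)$ rather than the asymmetric choice $(n, n+1)$ that sufficed in the non-integer case.
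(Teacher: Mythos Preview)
Your proof is correct, and it takes a somewhat different route from the paper's. The paper proves the cycle \ref{it:l1}$\Rightarrow$\ref{it:l2}$\Rightarrow$\ref{it:l3}$\Rightarrow$\ref{it:l1}, whereas you prove \ref{it:l1}$\Rightarrow$\ref{it:l3}$\Rightarrow$\ref{it:l2}$\Rightarrow$\ref{it:l1}. Your gap-propagation argument for \ref{it:l3}$\Rightarrow$\ref{it:l2} (via Proposition~\ref{prop:higher-jumps}(ii)) is essentially the same idea as the paper's \ref{it:l1}$\Rightarrow$\ref{it:l2}, just phrased against the weaker hypothesis \ref{it:l3}; the paper handles the integrality issue uniformly with $\lceil p^e\lambda\rceil$ and $\lfloor p^e\lambda\rfloor$ rather than an explicit case split, but the content is the same. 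The genuine difference is in closing the cycle: the paper proves \ref{it:l3}$\Rightarrow$\ref{it:l1} directly by extending a sequence indexed by $\{e_j\}$ to one indexed by all of $\NN$ using Proposition~\ref{prop:higher-jumps}(i) and then running a Cauchy estimate, whereas you bypass that construction entirely by making \ref{it:l2}$\Rightarrow$\ref{it:l1} the easy step (an immediate squeeze). Your ordering is more economical---it uses only part~(ii) of Proposition~\ref{prop:higher-jumps} and avoids the interpolation argument---while the paper's ordering has the mild advantage of exhibiting explicitly how to fill in the missing indices.
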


\begin{proof}
	We start by showing that \ref{it:l1} implies \ref{it:l2} by contraposition. 
Suppose that ${[p^e \lambda - r, p^e \lambda] }\cap \cB^\bullet_\fa(p^e) = \varnothing$ for some $e \geq 0$. Since every differential jump is an integer, we get that $[\lceil p^e \lambda \rceil - r, \lfloor p^e \lambda \rfloor ] \cap \cB^\bullet_\fa(p^e) = \varnothing$. By Proposition \ref{prop:higher-jumps} (ii) we conclude that, for all integers $a \geq 0$, 
	$$\big[p^a \lceil p^e \lambda \rceil - p^a - r + 1, \  p^a \lfloor p^e \lambda \rfloor + p^a \big) \cap \cB^\bullet_\fa(p^{e+a}) = \varnothing$$
	and therefore 
	$$\bigg[\frac{\lceil p^e \lambda \rceil - 1}{p^e} - \frac{r-1}{p^{e+a}}, \frac{\lfloor p^e \lambda \rfloor + 1}{p^e}\bigg) \cap \frac{1}{p^{e+a}} \cB^\bullet_\fa(p^{e+a}) = \varnothing,$$
	and thus 
	$$\bigg(\frac{\lceil p^e \lambda \rceil - 1}{p^e} , \frac{\lfloor p^e \lambda \rfloor + 1}{p^e} \bigg) \cap \frac{1}{p^{e+a}} \cB^\bullet_\fa(p^{e+a}) = \varnothing.$$
	By considering the cases $p^e \lambda \in \Z$ and $p^e \lambda \notin \Z$ separately, we observe that 
	$$\lambda \in \bigg(\frac{\lceil p^e \lambda \rceil - 1}{p^e} , \frac{\lfloor p^e \lambda \rfloor + 1}{p^e} \bigg)$$
	and therefore $\lambda$ cannot be a differential threshold.

	The implication \ref{it:l2} implies \ref{it:l3} is clear.
	
	To show that \ref{it:l3} implies \ref{it:l1}, let $(e_j)$ be an infinite increasing sequence of integers; we need to show that we can extend $(\nu_{e_j}/p^{e_j})$ to a convergent sequence $(\nu_i / p^i)$, $i\in \NN$. By Proposition~\ref{prop:higher-jumps}, for $\nu_{j}\in \cB_{\fa}^{\bullet}(p^{e_j})$ and for $a>e_j$ there is some $\nu_{a,j} \in \cB_{\fa}^{\bullet}(p^{a})\cap [\nu_{j}p^{a-{e_j}},(\nu_{j}+r)p^{a-{e_j}}]$. For $i=e_j$, take $\nu_i=\nu_{j}$, and for $e_j<i<e_{j+1}$, take $\nu_i = \nu_{i,j}$. If $a,b\geq e_j$, and $u,v$ are such that $e_u \leq a < e_{u+1}$ and $e_v \leq b < e_{v+1}$, then
	\[ \left| \frac{\nu_a}{p^a} - \frac{\nu_b}{p^b} \right| \leq \left|\frac{\nu_a}{p^a} - \frac{\nu_{u}}{p^{e_u}} \right| + \left|\frac{\nu_{u}}{p^{e_u}} - \frac{\nu_{v}}{p^{e_v}}\right| + \left| \frac{\nu_{b}}{p^{b}} - \frac{\nu_{v}}{p^{e_v}} \right| \leq \frac{2r}{p^{e_j}} + \left|\frac{\nu_{u}}{p^{e_u}} - \frac{\nu_{v}}{p^{e_v}}\right|.  \]
	Since $(\nu_{j}/p^{e^j})$ is a Cauchy sequence, the right-hand side in the previous equation tends to zero as $j\to \infty$, and hence $(\nu_{i}/p^{i})$ is Cauchy, as required.
\end{proof}

\begin{remark} It follows from definition and from Remark~\ref{rem:existjumps} that if $\fa=R$,  then $\fa$ has no differential jumps. Conversely, if $\fa\subsetneqq R$ is a proper ideal with $r$ generators, then by Remark~\ref{rem:existjumps} and Proposition~\ref{prop:subtract-pe}  there is a differential jump of level $e$ in the interval $[0,rp^e]$ for every $e$, so by compactness of $[0,r]$ there is a differential jump for $\fa$. \end{remark}

\begin{proposition}\label{prop:nojumpnothreshold}
	Let $R$ be a noetherian $F$-split ring of characteristic $p$, $\fa \sq R$ be an ideal generated by $r$ elements and $k,l \geq 0$ be integers with $l - k  \geq r-1$. If $[k, l) \cap \cB^\bullet_\fa(p^e) = \varnothing$ for some $e \geq 0$, then there are no differential thresholds of $\fa$ in the interval $\big( \ (k + r - 1)/ p^e, \ l / p^e \ \big)$.
\end{proposition}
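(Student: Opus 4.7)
The strategy is to use Proposition~\ref{prop:higher-jumps}(ii) to propagate the gap in differential jumps from level $e$ up to arbitrarily high levels $e+a$, and then contradict the characterization of differential thresholds provided by Proposition~\ref{prop:equivsthresh}(ii).

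First, I would rewrite the hypothesis in the form needed to apply Proposition~\ref{prop:higher-jumps}(ii). Setting $n := k + r - 1$ and $m := l$, we have $n \le m$ (thanks to the assumption $l - k \geq r-1$) and
\[
[n - r + 1,\, m - 1] \cap \cB^\bullet_\fa(p^e) \;=\; [k,\, l-1] \cap \cB^\bullet_\fa(p^e) \;=\; \varnothing.
\]
Proposition~\ref{prop:higher-jumps}(ii) then yields, for every $a \geq 0$,
\[
\big[\,(k+r-1)p^a - r + 1,\; l p^a - 1\,\big] \cap \cB^\bullet_\fa(p^{e+a}) \;=\; \varnothing. \tag{$\ast$}
\]

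Next, I would argue by contradiction: assume some $\lambda \in \big((k+r-1)/p^e,\, l/p^e\big)$ is a differential threshold. Since $R$ is $F$-split, Proposition~\ref{prop:equivsthresh}(ii) guarantees that for every $a \geq 0$ there exists $\nu_a \in \cB^\bullet_\fa(p^{e+a})$ with
\[
\nu_a \in \big[\,p^{e+a}\lambda - r,\; p^{e+a}\lambda\,\big].
\]
The goal is then to show that, for all sufficiently large $a$, this interval is contained in the empty region $(\ast)$, yielding a contradiction. The two inclusions needed are
\[
p^{e+a}\lambda - r \;\geq\; (k+r-1)p^a - r + 1
\quad\text{and}\quad
p^{e+a}\lambda \;\leq\; l p^a - 1,
\]
equivalently $\lambda \geq (k+r-1)/p^e + p^{-(e+a)}$ and $\lambda \leq l/p^e - p^{-(e+a)}$. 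Both follow from the strict inequalities $\lambda > (k+r-1)/p^e$ and $\lambda < l/p^e$ for $a$ large enough.

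The main technical point — and the only subtle step — is keeping the $r-1$ shift on the left endpoint straight: the propagation via Proposition~\ref{prop:higher-jumps}(ii) is asymmetric, costing an extra $r-1$ on the left but only $1$ on the right, which is precisely why the forbidden interval for thresholds begins at $(k+r-1)/p^e$ rather than $k/p^e$. Once this bookkeeping is set up correctly, the contradiction is immediate from $(\ast)$.
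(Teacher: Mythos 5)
Your proof is correct and follows essentially the same route as the paper's: both apply Proposition~\ref{prop:higher-jumps}(ii) with $n = k+r-1$, $m = l$ to propagate the empty interval to all higher levels, and then observe that a threshold in $\big((k+r-1)/p^e,\, l/p^e\big)$ would force a differential jump into the forbidden region. The only cosmetic difference is that you close the argument via Proposition~\ref{prop:equivsthresh}(ii), while the paper divides the forbidden intervals by $p^{e+a}$ and rules out the threshold directly from the limit definition; the computations and bookkeeping are the same.
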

\begin{proof}
	If $[k,l) \cap \cB^\bullet_\fa(p^e) = \varnothing$ then, by Proposition \ref{prop:higher-jumps}, we have
	$$\big [k p^a + (r-1)(p^a -1), l p^a \big) \cap \cB^\bullet_\fa(p^{e+a}) = \varnothing$$
	for all $a \geq e$, and thus
	$$\bigg[ \frac{k + r - 1}{p^e} - \frac{r-1}{p^{e+a}}, \frac{l}{p^e} \bigg) \cap \frac{1}{p^{e+a}} \cB^\bullet_\fa(p^{e+a}) = \varnothing.$$
	The result follows. 
\end{proof}

\begin{remark}\label{rem:zerothresh} If $R$ is $F$-split, and $\fa$ is an $r$-generated ideal, we have that $0$ is a differential threshold if and only if $0$ is differential jump of level $e$ for every $e$. If $0\notin \cB_{\fa}^\bullet(p^e)$, then $D_R^{(e)} \cdot \fa =R$, so $D_R^{(e+a)} \cdot \fa^{[p^a]} =R$, and hence $D_R^{(e+a)} \cdot \fa^{p^a} =R$ for all $a$, so there are no thresholds in the interval $[0,1/p^e)$. The other implication follows from the definition of differential threshold. 
	\end{remark}

If $R$ is $F$-split, we can also find differential thresholds that are close to differential jumps.

\begin{lemma}\label{lem:thresh-nearby}
	Let $R$ be an $F$-finite $F$-split ring, and $\fa$ be an ideal with at most $r$ generators. If $n\in \cB_{\fa}^\bullet(p^e)$, then there is a differential threshold $\lambda$ for $\fa$ in the interval $\displaystyle \left[\frac{n}{p^e},\frac{n + r}{p^{e}}\right]$.
\end{lemma}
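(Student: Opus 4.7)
The plan is to apply Proposition~\ref{prop:higher-jumps}(i) iteratively to build a sequence of differential jumps whose normalizations fall inside the target interval, and then use Proposition~\ref{prop:equivsthresh} to conclude that a subsequential limit is a differential threshold.

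More precisely, for each integer $a \geq 0$, Proposition~\ref{prop:higher-jumps}(i) applied to $n \in \cB^\bullet_\fa(p^e)$ yields some
\[\nu_a \in [np^a,\, np^a + r(p^a - 1)] \cap \cB^\bullet_\fa(p^{e+a}).\]
Dividing by $p^{e+a}$ we obtain
\[\frac{\nu_a}{p^{e+a}} \in \left[\frac{n}{p^e},\ \frac{n}{p^e} + \frac{r(p^a-1)}{p^{e+a}}\right] \subseteq \left[\frac{n}{p^e},\ \frac{n+r}{p^e}\right].\]
Since the interval $[n/p^e, (n+r)/p^e]$ is compact in the Euclidean topology, the sequence $(\nu_a/p^{e+a})$ admits a convergent subsequence with limit $\lambda \in [n/p^e, (n+r)/p^e]$. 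Applying the implication \ref{it:l3}$\Rightarrow$\ref{it:l1} of Proposition~\ref{prop:equivsthresh} to this subsequence (using the increasing sequence of levels $e + a_j$) shows that $\lambda$ is a differential threshold of $\fa$, as desired.

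The argument is short and there is no real obstacle; the main ingredient is the dynamical fact (Proposition~\ref{prop:higher-jumps}(i)) that a differential jump at level $e$ propagates to a nearby jump at every higher level, and this only uses the $F$-split hypothesis, which we have. Note that we do not need to worry about $\lambda = 0$ being excluded from the definition of differential threshold, because $n \in \cB^\bullet_\fa(p^e)$ forces $n \geq 0$, and if $n > 0$ then $\lambda \geq n/p^e > 0$; if $n = 0$ then Remark~\ref{rem:zerothresh} combined with the fact that $0 \in \cB^\bullet_\fa(p^e)$ propagates (again via Proposition~\ref{prop:higher-jumps}(i)) to give $0 \in \cB^\bullet_\fa(p^{e+a})$ for all $a$, so $\lambda = 0$ is itself a differential threshold and lies in the interval.
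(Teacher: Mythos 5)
Your argument takes a genuinely different route from the paper's. The paper applies Proposition~\ref{prop:higher-jumps}(i) one level at a time (with increment $a=1$) to build a sequence $(\nu_a)_{a\geq e}$ with $p\nu_a\leq\nu_{a+1}\leq p\nu_a+r(p-1)$, so that $(\nu_a/p^a)$ is \emph{increasing} and bounded above by $(n+r)/p^e$ via a geometric series; monotone convergence then gives the limit, which is a differential threshold directly by definition, with no appeal to Proposition~\ref{prop:equivsthresh}. You instead jump from level $e$ to level $e+a$ in a single application of Proposition~\ref{prop:higher-jumps}(i), use compactness to extract a convergent subsequence, and invoke the implication (c)~$\Rightarrow$~(a) of Proposition~\ref{prop:equivsthresh}. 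Both work for the main case, but the monotone route buys you freedom from the $\lambda>0$ hypothesis that Proposition~\ref{prop:equivsthresh} carries.

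The handling of the $n=0$ corner case in your last sentence is not correct as written. Proposition~\ref{prop:higher-jumps}(i) with $n=0$ only yields $[0,\,r(p^a-1)]\cap\cB^\bullet_\fa(p^{e+a})\neq\varnothing$; it does \emph{not} give $0\in\cB^\bullet_\fa(p^{e+a})$, so the claim that ``$0$ propagates'' is unjustified. (Also note the containment $\cB^\bullet_\fa(p^e)\supseteq\cB^\bullet_\fa(p^{e+a})$ of Lemma~\ref{lem:De-nested} goes the wrong way here, and Remark~\ref{rem:zerothresh} requires $0\in\cB^\bullet_\fa(p^a)$ for \emph{every} $a$.) The gap is small and fixable: observe that the proof of (c)~$\Rightarrow$~(a) in Proposition~\ref{prop:equivsthresh} never in fact uses $\lambda>0$, so you may apply that implication to $\lambda=0$ as well; alternatively, note that your bound forces the first differential jump of level $e+a$ to be at most $r(p^a-1)$, hence $\fpt(\fa)\leq r/p^e$, and $\fpt(\fa)$ is always a differential threshold. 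The cleanest option is simply to build an increasing sequence as the paper does, which makes the $\lambda=0$ issue disappear.
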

\begin{proof}
	Let $\nu_e : = n$. Applying Proposition \ref{prop:higher-jumps} inductively we build a sequence $(\nu_a)_{a \geq e}$  with $\nu_a\in \cB_{\fa}^\bullet(p^a)$ such that $p \nu_a \leq \nu_{a + 1} \leq p \nu_a + r(p-1)$, and thus 
	$$\frac{\nu_a}{p^a} \leq \frac{\nu_{a+1}}{p^{a+1}} \leq \frac{\nu_a}{p^a} + \frac{r(p-1)}{p^{a+1}}.$$
	In particular, the sequence $(\nu_a/p^a)$ is increasing. We claim it is bounded by $(n + r )/p^e$; indeed, for all $b \geq 0$ we have
	\begin{align*}
	\frac{\nu_{e + b}}{p^{e+b}} & \leq \frac{n}{p^e} + \frac{r(p-1)}{p^{e+1}} + \cds + \frac{r(p-1)}{p^{e+b}} \\
	& \leq \frac{n}{p^e} + \frac{r(p-1)}{p^{e+1}} \bigg( 1+ \frac{1}{p} + \frac{1}{p^2} + \cds \bigg) \\
	& = \frac{n}{p^e} + \frac{r(p-1)}{p^{e+1}} \frac{1}{1 - \frac{1}{p}} \\
	& = \frac{n + r}{p^e}.
	\end{align*}
	Thus, the sequence $(\nu_a/p^a)$ converges to a value in the interval $[\frac{n}{p^e},\frac{n + r}{p^{e}}]$.
\end{proof}

\begin{remark}\label{RemDiffJumps}
	Let $R$ be an $F$-finite  ring, and $\fa\subseteq R$ an ideal. By Lemma~\ref{lem:thresh},
	we have that $\lambda$ is a differential threshold if and only if there exists a sequence of $D^{(e)}_R$-ideals $J_e$ such that 
	$\lambda=\lim\limits_{e\to \infty}\frac{ \max\{ n\in\NN \; | \; \fa^n\not\subseteq J_e\}}{p^e}$.
\end{remark}

We now provide several properties of differential thresholds. We first show that the set formed by them remains the same after taking integral closure. Then, we show a version of Skoda's Theorem  and a $p$-fractal property.

\begin{proposition}\label{PropDTIntgralC}
	Let $R$ be an $F$-finite  ring, and $\fa,\fb\subseteq R$ be ideals with the same integral closure.
	Then, $\fa$ and $\fb$ have the same differential thresholds.
\end{proposition}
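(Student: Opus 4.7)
The strategy is to translate the hypothesis $\overline{\fa} = \overline{\fb}$ into a uniform constant $K \geq 0$ controlling how $\fa$- and $\fb$-powers interleave, then lift this to a uniform bound between the differential jumps of $\fa$ and $\fb$ at each level $e$, and finally divide by $p^e$ and pass to the limit.

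The key input is the standard fact that under Noetherian hypotheses, $\overline{\fa} = \overline{\fb}$ implies the existence of an integer $K \geq 0$ such that $\fa^{n+K} \subseteq \fb^n$ and $\fb^{n+K} \subseteq \fa^n$ for all $n \geq 0$. A quick justification: the inclusion $\fb \sq \overline{\fa}$ places $R[\fb t]$ inside the integral closure of $R[\fa t]$ in $R[t]$, so $R[(\fa+\fb)t] = R[\fa t][\fb t]$ is a finitely generated, integral, hence module-finite extension of $R[\fa t]$. Selecting a finite set of homogeneous generators for this module and comparing graded pieces yields $\fb^{n+K_1} \subseteq \fa^n$ for all $n \geq 0$ and some $K_1$; symmetrically one obtains $K_2$ for the other direction, and $K := \max(K_1,K_2)$ works in both. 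With $K$ in hand, I deduce a jump-to-jump comparison: for $n \in \cB^\bullet_\fa(p^e)$ the chain
\[ D^{(e)}_R \cdot \fb^{\max(0, n-K)} \supseteq D^{(e)}_R \cdot \fa^n \supsetneq D^{(e)}_R \cdot \fa^{n+1} \supseteq D^{(e)}_R \cdot \fb^{n+K+1} \]
has strict middle inclusion (because $n$ is a differential jump for $\fa$), so its outer ideals differ. Lemma~\ref{lem:chain-jumps} then produces $m \in \cB^\bullet_\fb(p^e) \cap [\max(0, n-K),\, n+K]$; in particular $|m-n| \leq 2K$. The analogous statement with $\fa$ and $\fb$ swapped is identical.

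To finish, let $\lambda$ be a differential threshold of $\fa$, realized by $\nu_e \in \cB^\bullet_\fa(p^e)$ with $\nu_e/p^e \to \lambda$. For each $e$ pick $\mu_e \in \cB^\bullet_\fb(p^e)$ with $|\mu_e - \nu_e| \leq 2K$ using the previous paragraph. Then $|\mu_e/p^e - \nu_e/p^e| \leq 2K/p^e \to 0$, so $\mu_e/p^e \to \lambda$, and $\lambda$ is a differential threshold of $\fb$. The reverse containment of threshold sets follows by the symmetric argument. The main obstacle is establishing the uniform constant $K$ from the hypothesis on integral closures, which needs Noetherianness and the Rees algebra argument above; once $K$ is produced, the remainder is a direct sandwiching argument via Lemma~\ref{lem:chain-jumps}.
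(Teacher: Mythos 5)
Your proof is correct and follows essentially the same route as the paper: a uniform linear comparison of powers of $\fa$ and $\fb$ coming from the integral closure hypothesis, a sandwich of the filtered ideals $D^{(e)}_R \cdot (-)$ at each level $e$ combined with Lemma~\ref{lem:chain-jumps} to transfer a jump of one ideal to a nearby jump of the other, and a passage to the limit after dividing by $p^e$. Your use of $\max(0,n-K)$ is a small streamlining: it handles the $\lambda=0$ case uniformly, whereas the paper inserts a separate argument (via $\fpt$ and Cartier preimages) before assuming $\lambda>0$ so that $\nu_e - a$ stays nonnegative.
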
	
\begin{proof}

	Since $\fa$ and $\fb$ have the same integral closure, there exists an integer $a$ such that
	$\fa^n\subseteq \fb^{n+a} \subseteq \fa^{n+2a}$
	for every $n\in\NN$.
	Then, 
	$$\fa^n\subseteq \fb^{n+a} \subseteq \fa^{n+2a}\subseteq \fb^{n+3a} \subseteq \fa^{n+4a}.$$
	If $D^{(e)}_R \cdot \fa ^n=D^{(e)}_R \cdot \fa^{n+4a},$ then 
	$D^{(e)}_R \cdot \fb ^{n+a}=D^{(e)}_R \fb ^{n+3a}.$
	As a consequence, if
	$D^{(e)}_R \cdot \fb ^{n+a}\neq D^{(e)}_R \cdot  \fb ^{n+3a},$ then
	$D^{(e)}_R \cdot \fa ^n\neq D^{(e)}_R \cdot  \fa^{n+4a}.$
	
	Let $\lambda$ be a differential threshold of $\fb$, and $\nu_e$ differential jumps of level $e$ for $\fb$ such that $\lim\limits_{e\to \infty}\frac{\nu_e}{p^e}=\lambda$. 
	It suffices to show that $\lambda$ is  a differential threshold of $\fa$, as the role of $\fa$ and $\fb$ are interchangeable.
Since $\bigcap_{e\in \ZZ_{>0}} I_e(\fm)$ is a prime ideal \cite{AE}, we have that $\fa\subseteq \bigcap_{e\in \ZZ_{>0}} I_e(\fm)$ if and only if $\fb\subseteq \bigcap_{e\in \ZZ_{>0}} I_e(\fm).$
Then,  $\fpt(\fa)=0$ if an only if $\fpt(\fb)=0$. 
	We can assume that  $\lambda$ is positive, we have that $\nu_e>a$ for $e\gg 0$.
	We have that 
	$D^{(e)} \cdot \fb ^{\nu_e}\neq D^{(e)} \cdot \fb ^{\nu_e+2a}.$
	As a consequence, 
	$D^{(e)} \cdot \fa ^{\nu_e-a}\neq D^{(e)} \cdot \fa^{n+3a}$
	for $e\gg 0$.
	Then, there exists a differential jump of level $e$ for $\fa$, $w_e$ in 
	$\{ \nu_e-a, \nu_e-a+1,\ldots, \nu_e+3a\}$
	for $e\gg 0$.
	We have that
	$\lim\limits_{e\to \infty}\frac{w_e}{p^e}=\lambda$.
	Then, $\lambda$ is  a differential threshold of $\fa$.
\end{proof}

\begin{proposition}\label{PropSkoda}
	Let $R$ be an $F$-finite  ring, and $\fa\subseteq R$ an ideal generated by $r$ elements.
	If $ \lambda> r$ is a differential threshold of $\fa$, then  $\lambda-1$ is a differential threshold of $\fa$.
	If $\fa$ is principal generated by a nonzerodivisor, the converse is true.
\end{proposition}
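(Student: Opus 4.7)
The plan is to apply Proposition~\ref{prop:subtract-pe} directly to sequences of differential jumps witnessing the thresholds in question, and then rescale by $p^e$. The two halves of the proposition correspond precisely to the two halves of Proposition~\ref{prop:subtract-pe}.

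For the forward direction, I would start with a witnessing sequence $\nu_e \in \cB^\bullet_\fa(p^e)$ with $\nu_e/p^e \to \lambda$. Since $\lambda > r$, for all $e$ sufficiently large we have $\nu_e > r p^e$, and in particular $\nu_e \geq r(p^e-1) + 1$. The first half of Proposition~\ref{prop:subtract-pe} then gives $\nu_e - p^e \in \cB^\bullet_\fa(p^e)$. For the finitely many remaining small values of $e$, pick any differential jump of level $e$ (this is possible by Remark~\ref{rem:existjumps}, since $\fa$ is proper). The resulting sequence lies in $\cB^\bullet_\fa(p^e)$ for every $e$, and its rescaling satisfies $(\nu_e - p^e)/p^e \to \lambda - 1$, which exhibits $\lambda - 1$ as a differential threshold of $\fa$.

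For the converse, assume $\fa = (f)$ with $f$ a nonzerodivisor (so $r = 1$) and take a witnessing sequence $\mu_e \in \cB^\bullet_\fa(p^e)$ with $\mu_e/p^e \to \lambda - 1$. Reading the contrapositive of the second half of Proposition~\ref{prop:subtract-pe} with $n = \mu_e + p^e$ (which automatically satisfies $n \geq p^e$ since $\mu_e \geq 0$), we deduce that $\mu_e + p^e \in \cB^\bullet_\fa(p^e)$. Rescaling, $(\mu_e + p^e)/p^e \to \lambda$, so $\lambda$ is a differential threshold.

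There is no real obstacle: the proposition is essentially a reformulation of Proposition~\ref{prop:subtract-pe} after dividing by $p^e$, together with the elementary observation that modifying a convergent sequence at finitely many indices does not affect its limit. The mild bookkeeping point to mind is that the inequality $\lambda > r$ is exactly what is needed to guarantee $\nu_e \geq r(p^e-1)+1$ for all $e \gg 0$ (so that the subtraction hypothesis of Proposition~\ref{prop:subtract-pe} applies), and that the second half of Proposition~\ref{prop:subtract-pe} requires the principal-with-nonzerodivisor assumption, which is why the converse is only asserted in that setting.
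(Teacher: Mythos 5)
Your proposal is correct and follows essentially the same route as the paper's proof: apply the two halves of Proposition~\ref{prop:subtract-pe} to a witnessing sequence and rescale by $p^e$. You are in fact slightly more careful than the paper in noting that the finitely many small indices $e$ (before $\nu_e > rp^e$ kicks in) need to be filled with arbitrary differential jumps so that the sequence is defined for all $e$, but this is a routine point.
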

\begin{proof}
	Let $\nu_e\in \cB^\bullet_\fa(p^e)$ be such that $\lim\limits_{e\to\infty}\frac{\nu_e}{p^e}=\lambda$.
	For $e\gg0$, we have $\nu_e> r p^e$ so, $\nu_e-p^e\in \cB^\bullet_\fa(p^e)$ by Proposition~\ref{prop:subtract-pe}.	Since $\lim\limits_{e\to\infty}\frac{\nu_e-p^e}{p^e}=\lambda-1$.  We conclude that $\lambda-1$ is a differential threshold.
	
	Likewise, if  $\fa=(f)$, where $f\in R$ is a nonzerodivisor,
	let $u_e\in \cB^\bullet_\fa(p^e) $ be such that $\lim\limits_{e\to\infty}\frac{u_e}{p^e}=\lambda-1$. By Proposition~\ref{prop:subtract-pe}, $u_e+p^e\in \cB^\bullet_f(p^e) $.
	Since $\lim\limits_{e\to\infty}\frac{u_e+p^e}{p^e}=\lambda$, we conclude that $\lambda$  
	is a differential threshold.
\end{proof}

\begin{corollary}\label{CorSkodaLocal}
	Let $(R,\fm,\KK)$ be a local  $F$-finite  ring,  $\fa\subseteq \fm$ an ideal, and $\ell$ be its analytic spread. 
	If $ \lambda> \ell$ is a differential threshold of $\fa$, then  $\lambda-1$ is a differential threshold of $\fa$.
\end{corollary}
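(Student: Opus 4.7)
The plan is to reduce the statement to Proposition \ref{PropSkoda} via two standard reductions: first, passing to a faithfully flat extension with infinite residue field, and second, replacing $\fa$ by a minimal reduction generated by exactly $\ell$ elements. Both reductions preserve the set of differential thresholds, so everything hinges on checking these preservation statements and then quoting the already-proven subtraction principle.

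For the first reduction, consider $S := R[x]_{\fm R[x]}$ as in Lemma \ref{lem:redu-inf-resfield}. This is a faithfully flat $F$-finite local extension with infinite residue field that preserves analytic spread, so $\ell(\fa S) = \ell(\fa) = \ell$. Moreover, part (iv) of that lemma gives $\cB^\bullet_\fa(p^e) = \cB^\bullet_{\fa S}(p^e)$ for every $e \geq 0$. Since differential thresholds are defined purely as limits of normalized sequences $\nu_e/p^e$ with $\nu_e \in \cB^\bullet_\fa(p^e)$, it follows that $\fa$ and $\fa S$ have the same differential thresholds. Hence, without loss of generality, I may assume that the residue field $\KK$ of $R$ is infinite.

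Once $\KK$ is infinite, the classical theory of reductions provides a reduction $\fb \subseteq \fa$ generated by exactly $\ell$ elements (the analytic spread). By definition $\fa$ and $\fb$ have the same integral closure, so Proposition \ref{PropDTIntgralC} gives that they have the same set of differential thresholds. In particular, the hypothesis $\lambda > \ell$ makes $\lambda$ a differential threshold of an ideal $\fb$ generated by $\ell$ elements with $\lambda > \ell$. Applying Proposition \ref{PropSkoda} to $\fb$ yields that $\lambda - 1$ is a differential threshold of $\fb$, and transferring back via Proposition \ref{PropDTIntgralC} shows that $\lambda - 1$ is a differential threshold of $\fa$.

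There is no real obstacle to surmount here; the corollary is essentially a packaging result. The substantive content was already established in Proposition \ref{PropSkoda} (where Proposition \ref{prop:subtract-pe} allows one to subtract $p^e$ from a differential jump when $\nu_e > r p^e$) and in Proposition \ref{PropDTIntgralC} (integral-closure invariance of differential thresholds). The two reductions above --- extending scalars to obtain an infinite residue field and passing to a minimal reduction --- are the standard route for converting statements about the number of generators into statements about the analytic spread in the local setting.
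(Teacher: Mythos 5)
Your proof is correct and follows the same route as the paper: reduce to an infinite residue field via Lemma \ref{lem:redu-inf-resfield}, replace $\fa$ by a minimal reduction with $\ell$ generators, carry differential thresholds across integral closure by Proposition \ref{PropDTIntgralC}, and apply Proposition \ref{PropSkoda}. Your write-up is just a somewhat more spelled-out version of the paper's three-line argument.
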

\begin{proof}
	By Lemma \ref{lem:redu-inf-resfield}, we may assume that $\KK$ is infinite.
	Then, there exists an ideal  $\fb$ generated by $\ell$ elements with the same integral closure of $\fa$.
	Then,
	the result follows from Propositions \ref{PropDTIntgralC} and \ref{PropSkoda}.
\end{proof}

\subsection{Differential thresholds and numerical $F$-invariants} \ 

We now start comparing differential thresholds with other numerical invariants in prime characteristic.

\begin{definition}
	Let $R$ be a   $F$-finite  ring. Let $\fa,\fb\subseteq R$ be proper  ideals such that $\fa\subseteq \sqrt{\fb}$.
		\begin{enumerate}[(i)]
\item 		The $F$-threshold of $\fa$ in $\fb$ \cite{MTW,HMTW,DSNBP} is defined by
	$$
	c^\fb(\fa)= \lim\limits_{e\to\infty} \frac{ \max\{n\in\NN\; | \; \fa^n \not\subseteq \fb^{[e]} \} }{p^e}.
	$$
	
\item 	If $R$ is $F$-split,  the Cartier threshold of $\fa$ in $\fb$ \cite{DSHNBW} is defined by
	$$
	\operatorname{ct}^\fb(\fa)= \lim\limits_{e\to\infty} \frac{ \max\{n\in\NN \; | \; \cC^e_R\cdot \fa^n \not\subseteq \fb \} }{p^e}.
	$$
	\end{enumerate}
\end{definition}

\begin{proposition}\label{PropDTFinv}
	Let $R$ be a   $F$-finite  ring, and $\fa\subseteq R$ be a proper ideal. Then:
		\begin{enumerate}[(i)]	 
\item 	Every $F$-threshold of $\fa$ is a differential threshold of $\fa$.
\item 	If $R$ is $F$-split, then  every Cartier threshold of $\fa$ is a differential threshold of $\fa$.
\item   If $R$ is strongly $F$-regular, then every $F$-jumping number of  $\fa$ is a differential threshold of $\fa$.
\item If $R$ is regular, the set of $F$-jumping numbers of $\fa$ and the set of differential thresholds of $\fa$ agree.
\end{enumerate}
\end{proposition}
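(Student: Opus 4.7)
The recurring idea is to recognize each of the other invariants' numerators (that is, the integer whose limit after division by $p^e$ yields the invariant) as an instance of $\cB^{\fb}_\fa(p^e)$ for an appropriate $D^{(e)}_R$-ideal $\fb$, so that Lemma \ref{lem:thresh} places it in $\cB^\bullet_\fa(p^e)$ and the limit becomes a differential threshold.

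For (i), Lemma \ref{lem:d-ideals} gives that $\fb^{[p^e]}$ is a $D^{(e)}_R$-ideal, and since $\fa \sq \sqrt{\fb} = \sqrt{\fb^{[p^e]}}$, I would set $\nu_e := \max\{n : \fa^n \not\sq \fb^{[p^e]}\} = \cB^{\fb^{[p^e]}}_\fa(p^e)$. By Lemma \ref{lem:thresh} each $\nu_e$ lies in $\cB^\bullet_\fa(p^e)$, and $\nu_e/p^e \to c^\fb(\fa)$ by definition. For (ii), I would carry out the same argument with the Cartier preimage $I_e(\fb)$ in place of $\fb^{[p^e]}$: the identity $\cC^e_R \cdot \fa^n \sq \fb \iff \fa^n \sq I_e(\fb)$ rewrites the Cartier-threshold numerator as $\cB^{I_e(\fb)}_\fa(p^e)$; Lemma \ref{lem:d-ideals} provides the $D^{(e)}_R$-stability, $F$-splitness forces $I_e(\fb) \neq R$ (a splitting sends $1 \mapsto 1 \notin \fb$), and $\fa \sq \sqrt{\fb}$ combined with $\fb^{[p^e]} \sq I_e(\fb)$ gives $\fa \sq \sqrt{I_e(\fb)}$.

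For (iii), since $R$ is strongly $F$-regular it is $F$-split, so $\tau(\fa^\mu) = \cC^e_R \cdot \fa^{\lceil p^e \mu\rceil}$ for $e \gg 0$. Given an $F$-jumping number $\lambda$ and $\epsilon > 0$, the inequality $\tau(\fa^{\lambda - \epsilon}) \neq \tau(\fa^\lambda)$ yields $\cC^e_R \cdot \fa^{\lceil p^e(\lambda - \epsilon)\rceil} \neq \cC^e_R \cdot \fa^{\lceil p^e\lambda\rceil}$ for $e$ large; by the contrapositive of Lemma \ref{lemma-testSFR3} the same inequality holds with $\cC^e_R$ replaced by $D^{(e)}_R$, and Lemma \ref{lem:chain-jumps} produces a differential jump in $[\lceil p^e(\lambda-\epsilon)\rceil, \lceil p^e\lambda\rceil)$. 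A diagonal extraction over a sequence $\epsilon_k \downarrow 0$ delivers $\nu_{e} \in \cB^\bullet_\fa(p^{e})$ with $\nu_{e}/p^{e} \to \lambda$.

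For (iv), regular $F$-finite rings are strongly $F$-regular, so (iii) gives one inclusion. The main obstacle is the reverse inclusion, for which I will use the remark after Lemma \ref{lem:thresh}: in the regular case every $D^{(e)}_R$-ideal is $\fc^{[p^e]}$ for some $\fc$, so the minimal $D^{(e)}_R$-ideal containing $\fa^n$ is $(\cC^e_R \cdot \fa^n)^{[p^e]}$, giving
\[ D^{(e)}_R \cdot \fa^n \;=\; (\cC^e_R \cdot \fa^n)^{[p^e]}. \]
Faithful flatness of Frobenius in regular rings then makes $\cC^e_R \cdot \fa^n = \cC^e_R \cdot \fa^m$ equivalent to $D^{(e)}_R \cdot \fa^n = D^{(e)}_R \cdot \fa^m$. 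Suppose $\lambda$ is a differential threshold but not an $F$-jumping number. The non-jumping condition together with right-continuity of test ideals (Proposition \ref{PropBasics}(3)) produces $\epsilon, \delta > 0$ such that $\tau(\fa^\mu)$ is constant on $[\lambda - \epsilon, \lambda + \delta)$. Choosing $e$ large enough that test ideals at $\lambda - \epsilon$ and $\lambda + \delta/2$ are both realized as $\cC^e_R \cdot \fa^{\lceil p^e \cdot \rceil}$, the displayed identity yields
\[ D^{(e)}_R \cdot \fa^{\lceil p^e(\lambda - \epsilon)\rceil} \;=\; D^{(e)}_R \cdot \fa^{\lceil p^e(\lambda + \delta/2)\rceil}. \]
By Lemma \ref{lem:chain-jumps} the interval $[\lceil p^e(\lambda-\epsilon)\rceil, \lceil p^e(\lambda+\delta/2)\rceil)$ contains no differential jump, contradicting the existence of a sequence of jumps with $\nu_e/p^e \to \lambda$. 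Hence $\lambda$ must be an $F$-jumping number, completing the proof.
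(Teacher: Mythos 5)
Your proposal is correct and runs along the same overall strategy as the paper: each invariant's numerator is recognized as a $\cB^{\fb}_\fa(p^e)$ for a suitable $D^{(e)}_R$-ideal $\fb$ (Frobenius power for $F$-thresholds, Cartier preimage for Cartier thresholds), so that Lemma~\ref{lem:thresh} and Lemma~\ref{lem:d-ideals} place the numerators among the differential jumps; and the test-ideal comparisons go through Lemma~\ref{lemma-testSFR3}. Parts~(i) and~(ii) match the paper essentially verbatim (the paper just reads this off from Remark~\ref{RemDiffJumps}). Where you genuinely diverge is in~(iii) and~(iv). For~(iii), the paper fixes a level $a$ at which $\tau(\fa^\lambda)$ stabilizes and works with the explicit sequence $\nu_e = \max\{n : \cC^e_R\cdot\fa^n \not\sq \tau(\fa^\lambda)\}$, giving $\epsilon/2$-estimates directly; you instead run a diagonal argument over $\epsilon_k\downarrow 0$, which is conceptually cleaner but is implicitly leaning on Proposition~\ref{prop:equivsthresh}(iii) (equivalently, Proposition~\ref{prop:higher-jumps}) to extend a subsequence of jumps to all levels — this requires $F$-split, which you get for free from strong $F$-regularity, so it is fine. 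For~(iv), the paper cites the equivalence $\cC^e_R\cdot\fa=\cC^e_R\cdot\fb \Leftrightarrow D^{(e)}_R\cdot\fa = D^{(e)}_R\cdot\fb$ from \cite[Lemma~3.1]{AMBL}, while you derive the sharper identity $D^{(e)}_R\cdot K = (\cC^e_R\cdot K)^{[p^e]}$ from the Frobenius-power structure of $D^{(e)}_R$-ideals. That identity does hold: locally $\id_R = \sum_i b_i\phi_i$ gives $f = \sum_i b_i\phi_i(f)^{p^e}$, so $K\sq(\cC^e_R\cdot K)^{[p^e]}$, and the reverse containment follows since $F^e\circ\phi\in D^{(e)}_R$ for $\phi\in\cC^e_R$ shows $(\cC^e_R\cdot K)^{[p^e]}\sq D^{(e)}_R\cdot K$. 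So your~(iv) is self-contained modulo the Frobenius-power structure theorem, whereas the paper outsources the key equivalence. Both presentations then close by showing a stable interval for test ideals forces an interval free of differential jumps via Lemma~\ref{lem:chain-jumps}, contradicting $\nu_e/p^e\to\lambda$.
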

\begin{proof}
	The first  claim follows from Remark~\ref{RemDiffJumps} and Lemma~\ref{lem:d-ideals}.

	The claim about Cartier thresholds   follows from Remark~\ref{RemDiffJumps} and Lemma~\ref{lem:d-ideals}, since $\max\{n\; | \; \cC^e_R\cdot \fa^n \not\subseteq \fb \}=\max\{n \;|\; \fa^n \not\sq I_e(\fb)\}$.

We now focus on the third statement.  If $D^{(e)}_R \cdot \fa= D^{(e)}_R \cdot \fb$, then $\cC^e_R \cdot \fa=\cC^e_R \cdot \fb$  (see Lemma~\ref{lemma-testSFR3}).
Then the set of differential jumps of level $e$ contains the set \[{\cA(p^e)=\{n\in \NN \; | \; \cC^e_R \cdot  \fa^n\neq \cC^e_R \cdot \fa^{n+1} \}}.\]
It suffices to show that every jumping number is a limit of elements in  $\frac{1}{p^e}\cA(p^e)$.
We recall that in a strongly $F$-regular ring  we have that $\tau(\fa^\lambda)=\bigcup_{e\in\NN} \cC^e_R \cdot \fa^{\lceil p^e\lambda\rceil}$ \cite[Proposition~4.4]{TTFFRT}.
We set $a$ such that $\tau(\fa^\lambda)=\cC^a \cdot \fa^{\lceil p^a\lambda\rceil}$.
Let $\nu_e=\max\{ n\; | \; \cC^e_R\cdot \fa^n \not \sq \tau(\fa^{\lambda}) \}\in \cA(p^e)$.
We now show that $\lim\limits_{e\to\infty}\frac{\nu_e}{p^e}=\lambda$.
Set $\epsilon>0$. We pick $b$ such that $|\lambda-\frac{\lceil p^e \lambda\rceil }{p^e} |>\frac{\epsilon}{2}$ for $e\geq b$.
We set $\alpha=\lambda-\frac{\epsilon}{2}$ and  $s\in\NN$ such that $\tau(\fa^{\alpha})=\cC^{s}\cdot \fa^{\lceil p^s \alpha \rceil} $.
Then,
$$
\cC^{e} \cdot \fa^{\lceil p^e \alpha \rceil} =\tau(\fa^{\alpha})\neq \tau(\fa^{\lambda})=\cC^{e} \cdot \fa^{\lceil p^e \lambda \rceil}
$$
for $e\geq \max\{a,s\}$.
Then,
$\lceil p^s \alpha \rceil \leq \nu_e\leq  \lceil p^s \lambda \rceil $ for $e\geq \max\{a,s\}$. Thus,
$$\alpha\leq \frac{\lceil p^e \alpha \rceil}{p^e}\leq \nu_e\leq \frac{\lceil p^e \lambda \rceil}{p^e}\leq \lambda+\frac{\epsilon}{2}$$ for $e\geq \max\{a,b,s\}$.
Hence, $\lim\limits_{e\to\infty}\frac{\nu_e}{p^e}=\lambda$.

We now focus on the last claim. Since $R$ is a regular $F$-finite ring, $\cC^e_R  \cdot\fa=\cC^e_R \cdot\fb$ if and only if $D^{(e)}_R\cdot\fa= D^{(e)}_R\cdot\fb$ \cite[Lemma 3.1]{AMBL}.
By Proposition \ref{PropDTFinv}, it suffices to show that every differential threshold is an $F$-jumping number.
Then, the set of differential jumps of level $e$ coincides with the set $\{n\in \NN \; | \; \cC^e_R\cdot  \fa^n\neq \cC^e_R\cdot \fa^{n+1} \}$.
We recall that $\tau(\fa^\lambda)=\bigcup_{e\in\NN} \cC^e_R\cdot \fa^{\lceil p^e\lambda\rceil}$ \cite[Definition 2.9]{BMSm2008}.
Let $\lambda=\lim\limits_{e\to\infty}\frac{\nu_e}{p^e}$ with $\nu_e \in \cA(p^e)$.
There exists $\epsilon>0$ such that $\tau(\fa^{\lambda})=\cC^e_R\cdot \fa^k $ for every $\lambda<\frac{k}{p^e}<\lambda+\epsilon$ \cite[Proposition~2.14]{BMSm2008}.
Then, $\frac{\nu_e}{p^e}\leq \lambda$ for $e\gg 0$.
Set $a$ such that $\frac{1}{p^e}>\frac{\epsilon}{2}$, $\frac{\nu_e}{p^e}\leq \lambda$   and $\tau(\fa^{\lambda})=\cC^e_R\cdot \fa^{\lceil p^e \lambda\rceil} $ for $e\geq a$.
If
$\nu_e+1\leq  p^e \lambda$, then 
$\nu_e+1\leq \lceil p^e \lambda\rceil$ and
$  \cC^e_R\cdot  \fa^{\nu_e+1}\supseteq  \cC^e_R\cdot \fa^{\lceil p^e \lambda\rceil} =\tau(\fa^{\lambda})$.
If $\nu_e+1>  p^e \lambda\leq \nu_e$, then $ \cC^e_R\cdot  \fa^{\nu_e+1}=\tau(\fa^{\lambda})$.
We have that $ \tau(\fa^{\frac{\nu_e}{p^e}})\supseteq \cC^e_R\cdot  \fa^{\nu_e}\supsetneqq  \cC^e_R\cdot  \fa^{\nu_e+1}\supseteq \tau(\fa^{\lambda})$.
We conclude that $\lambda$ is an $F$-jumping number.
\end{proof}

\begin{proposition}
	Let $(R,\fm,K)$ be an  $F$-finite $F$-split  ring, and $\fa\subseteq R$ an ideal.
	Then, $\fpt(\fa)$ is the smallest differential threshold of $\fa$.
\end{proposition}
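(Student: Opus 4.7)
The plan is to identify $\fpt(\fa) = \operatorname{ct}^{\fm}(\fa)$ with $\lim_e \nu_e^{\min}/p^e$, where $\nu_e^{\min}$ is the smallest element of $\cB^\bullet_\fa(p^e)$, and to observe this quantity is a lower bound for every differential threshold. The central input is the equivalence, for $F$-split $R$ and any ideal $I \subseteq R$, between $D^{(e)}_R \cdot I = R$ and $\cC^e_R \cdot I = R$.

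Assume $\fa$ is proper, so by Remark~\ref{rem:existjumps} the integer $\nu_e^{\min}$ is defined. Since $D^{(e)}_R \cdot \fa^0 = R$ and the chain $\{D^{(e)}_R \cdot \fa^n\}_n$ is decreasing, Lemma~\ref{lem:chain-jumps} gives $\nu_e^{\min} = \max\{n : D^{(e)}_R \cdot \fa^n = R\} = \max\{n : D^{(e)}_R \cdot \fa^n \not\subseteq \fm\}$ (using locality of $R$). For any differential threshold $\lambda$, realized by some sequence $(\nu_e)$ with $\nu_e \in \cB^\bullet_\fa(p^e)$ and $\nu_e/p^e \to \lambda$, one has $\nu_e \geq \nu_e^{\min}$, so $\lambda \geq \liminf \nu_e^{\min}/p^e$. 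Hence it suffices to show that $\lim \nu_e^{\min}/p^e$ exists and equals $\fpt(\fa)$.

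The central claim is that $D^{(e)}_R \cdot I = R$ if and only if $\cC^e_R \cdot I = R$. For the forward direction, $F$-splitness yields $\sigma \in \cC^e_R$ with $\sigma(1) = 1$, so $\cC^e_R \cdot R = R$; combined with the identity $\cC^e_R \cdot (D^{(e)}_R \cdot I) = \cC^e_R \cdot I$ (which follows from $\cC^e_R \circ D^{(e)}_R \subseteq \cC^e_R$, a direct check from the defining linearity conditions), one obtains $\cC^e_R \cdot I = R$. For the reverse, write $1 = \sum_i \phi_i(f_i)$ with $\phi_i \in \cC^e_R$, $f_i \in I$; applying the additive map $F^e$ gives $1 = \sum_i (F^e \circ \phi_i)(f_i)$, and the computation
\[(F^e \circ \phi)(r^{p^e} g) = (r\, \phi(g))^{p^e} = r^{p^e}\, (F^e \circ \phi)(g)\]
shows $F^e \circ \phi \in \End_{R^{p^e}}(R) = D^{(e)}_R$, so $1 \in D^{(e)}_R \cdot I$.

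Applying this with $I = \fa^n$, and noting that $\cC^e_R \cdot \fa^n \not\subseteq \fm$ iff $\cC^e_R \cdot \fa^n = R$ (since $R$ is local), gives $\nu_e^{\min} = \max\{n : \cC^e_R \cdot \fa^n \not\subseteq \fm\}$. Dividing by $p^e$ and passing to the limit (which exists as a Cartier threshold) yields $\lim \nu_e^{\min}/p^e = \operatorname{ct}^{\fm}(\fa) = \fpt(\fa)$, which is itself a differential threshold since each $\nu_e^{\min} \in \cB^\bullet_\fa(p^e)$. The most subtle point is verifying that $F^e \circ \phi$ lies in $D^{(e)}_R$; this is really the heart of the argument and combines the $p^{-e}$-linearity of Cartier maps with the fact that $F^e$ is a ring endomorphism in characteristic $p$.
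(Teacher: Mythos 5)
Your proof is correct and follows essentially the same route as the paper: both rewrite the smallest differential jump of level $e$ as $\max\{n : D^{(e)}_R \cdot \fa^n \not\subseteq \fm\}$ and then translate the $D^{(e)}_R$-condition into a Cartier condition, identifying the limit with $\operatorname{ct}^\fm(\fa) = \fpt(\fa)$. The only cosmetic difference is that you prove the key equivalence $D^{(e)}_R \cdot I = R \iff \cC^e_R \cdot I = R$ inline (via $\cC^e_R \circ D^{(e)}_R \subseteq \cC^e_R$ and $F^e \circ \cC^e_R \subseteq D^{(e)}_R$), whereas the paper invokes the equality $\{f : D^{(e)}_R \cdot f \subseteq \fm\} = I_e(\fm)$ from \cite[Proposition 5.10]{BJNB19}; these carry the same content.
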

\begin{proof}
	We note that the first jump of level $e$ is 
	$$
	\nu_e=\max\{n\; | \; D^{(e)}_R \cdot\fa^n\neq R\}=\max\{n\; | \; D^{(e)}_R \cdot \fa^n\subseteq  \fm\}.
	$$
	We have that 
	$D^{(e)}_R \cdot\fa^n\subseteq \fm$
	if and only if $\fa^n\subseteq \{f\in R\; | \;  D^{(e)}_R \cdot  f\in \fm\}$.
We have that	$\{f\in R\; | \;  D^{(e)}_R \cdot  f\in \fm\}=I_e(\fm)$ \cite[Proposition 5.10]{BJNB19}.
	Then, $\fpt(\fa)=\lim\limits_{e\to \infty} \frac{\nu_e}{p^e}$ is the smallest differential threshold.
\end{proof}

\subsection{Discreteness and rationality results} \ 

We now show that the set of  differential thresholds is closed under multiplication by $p$. This is known for $F$-thresholds, but not for $F$-jumping numbers outside Gorenstein rings.  Then, this result shows one of the advantages of the unified approach provided by differential thresholds.
 
\begin{lemma}\label{LemmaMultiplicationP}
	Let $R$ be an  $F$-finite $F$-split ring, and $\fa\subseteq R$ an ideal.
	If $\lambda$ is a differential threshold, then $p\lambda$ is also a  differential threshold.
\end{lemma}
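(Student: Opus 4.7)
The plan is to construct from a sequence realizing $\lambda$ as a differential threshold a new sequence realizing $p\lambda$, using the scaling behavior of differential jumps under Frobenius.

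Let $r$ be the minimal number of generators of $\fa$, and suppose $\nu_e\in \cB^\bullet_\fa(p^e)$ with $\nu_e/p^e\to \lambda$. Applying Proposition~\ref{prop:higher-jumps}(i) with $n=\nu_e$ and $a=1$ (this is where $F$-splitness is used), for each $e$ we may pick an integer $\mu_e$ satisfying
\[ \mu_e\in [p\nu_e,\, p\nu_e+r(p-1)]\cap \cB^\bullet_\fa(p^{e+1}). \]
By Lemma~\ref{lem:De-nested}, $\cB^\bullet_\fa(p^{e+1})\subseteq \cB^\bullet_\fa(p^e)$, so $\mu_e\in \cB^\bullet_\fa(p^e)$.

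Dividing the bracketing inequality by $p^e$ gives
\[ p\,\frac{\nu_e}{p^e}\;\leq\;\frac{\mu_e}{p^e}\;\leq\; p\,\frac{\nu_e}{p^e}+\frac{r(p-1)}{p^e}, \]
and as $e\to\infty$ the two outer expressions converge to $p\lambda$ (the right-hand correction term is $O(p^{-e})$). By the squeeze theorem $\mu_e/p^e\to p\lambda$, so by definition $p\lambda$ is a differential threshold of $\fa$.

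There is no real obstacle here, since the key lifting lemma from level $e$ to level $e+1$ is exactly Proposition~\ref{prop:higher-jumps}, which was established under the $F$-split hypothesis through Lemma~\ref{LemmaEqualityFrobPowers}. The only subtlety to keep in mind is that the integer $\mu_e$ produced is a priori of level $e+1$, but Lemma~\ref{lem:De-nested} immediately downgrades it to a jump of level $e$, which is what the definition of differential threshold requires.
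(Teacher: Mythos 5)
Your proof is correct and follows essentially the same route as the paper: apply Proposition~\ref{prop:higher-jumps}(i) with $a=1$ to lift $\nu_e$ to a level-$(e+1)$ jump in a bounded window above $p\nu_e$, downgrade to level $e$ via Lemma~\ref{lem:De-nested}, and pass to the limit. (The paper states the window as $[p\nu_e,\,p(\nu_e+r-1)]$, a minor slip — your $[p\nu_e,\,p\nu_e+r(p-1)]$ is what Proposition~\ref{prop:higher-jumps} actually yields, and either way the correction term is $O(p^{-e})$ and washes out.)
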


\begin{proof}
	Let $r$ be the number of generators of $\fa$.  
	Let $\nu_e \in \cB^\bullet_\fa (p^e)$ be such that $\lim\limits_{e\to\infty} \frac{\nu_e}{p^e}=\lambda$.
	By Proposition~\ref{prop:higher-jumps} and Lemma~\ref{lem:De-nested} there exists some $\omega_e \in [p \nu_e, p(\nu_e + r -1)] \cap \cB^\bullet_\fa(p^e)$.
	We have that
	$$
	p\lambda=\lim\limits_{e\to\infty} \frac{p \nu_e}{p^e}\leq 
	\lim\limits_{e\to\infty} \frac{ \omega_e}{p^e}
	\leq \lim\limits_{e\to\infty} \frac{p \nu_e}{p^e}=p\lambda.
	$$
	Then, $p\lambda$ is a differential threshold.
\end{proof}

In the following results we focus on Bernstein-Sato admissible  ideals. In this case, we show discretness and rationality. 
In Subsection \ref{SecFFRT} we use these results to provide new cases where the $F$-thresholds are rational numbers.

\begin{theorem}\label{ThmDicretness}
	Let $R$ be an $F$-finite ring and $\fa$ an ideal. If $\fa\subseteq R$ is a Bernstein-Sato admissible ideal, then the set of differential thresholds for $\fa$ is discrete.
	If $R$ is $F$-split, then the converse holds.
\end{theorem}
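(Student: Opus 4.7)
The plan is to prove both implications via counting arguments that relate differential jumps at level $e$ to differential thresholds in bounded subintervals of $\RR_{\geq 0}$.

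For the forward implication, I would fix $r$ generators of $\fa$ and use Proposition~\ref{prop-BSadm-iff-linearGrowth} to obtain constants $A,B>0$ such that $\#\bigl(\cB^{\bullet}_{\fa}(p^e)\cap[0,s)\bigr)\leq As/p^e + B$ for all $e,s\geq 0$. The key claim is that every bounded interval $[0,M]\subset\RR$ contains only finitely many differential thresholds, which immediately yields discreteness of the entire set. To establish the claim, I would take distinct thresholds $\lambda_1,\dots,\lambda_N\in[0,M]$, pick $\epsilon>0$ smaller than $\frac{1}{2}\min_{i\neq j}|\lambda_i-\lambda_j|$, and use the definition of differential threshold to produce, for each $i$, a sequence $\nu^{(i)}_e\in\cB^{\bullet}_{\fa}(p^e)$ with $\nu^{(i)}_e/p^e\to\lambda_i$. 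For $e$ sufficiently large the separation condition forces $\nu^{(1)}_e,\dots,\nu^{(N)}_e$ to be $N$ distinct integers in $[0,(M+\epsilon)p^e)$, so the admissibility bound yields $N\leq A(M+\epsilon)+B$, the desired uniform finiteness.

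For the converse direction, assuming $R$ is $F$-split, I would argue by contrapositive. If $\fa$ is not Bernstein-Sato admissible, there exist integers $e_1<e_2<\cdots$ with $\#\bigl(\cB^{\bullet}_{\fa}(p^{e_n})\cap[0,rp^{e_n})\bigr)\geq n$ for all $n$. Invoking Lemma~\ref{lem:thresh-nearby} (which requires $F$-splitness), I would assign to each jump $\nu$ in this intersection a differential threshold $\lambda(\nu)\in[\nu/p^{e_n},(\nu+r)/p^{e_n}]$, and this value lies in the fixed bounded interval $[0,r+r/p]$. Under the assumption that the set of differential thresholds is discrete, only finitely many values $\mu_1,\dots,\mu_K$ of $\lambda(\nu)$ can arise in this bounded interval. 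For a fixed $\mu_j$ and level $e_n$, the condition $\mu_j\in[\nu/p^{e_n},(\nu+r)/p^{e_n}]$ rearranges to $\nu\in[p^{e_n}\mu_j-r,p^{e_n}\mu_j]$, which admits at most $r+1$ integer solutions. Hence the total count at level $e_n$ is bounded by $K(r+1)$ independently of $n$, contradicting $n\leq K(r+1)$ for $n$ large.

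I expect the main obstacle to be the converse direction, specifically ensuring that the thresholds produced by Lemma~\ref{lem:thresh-nearby} all land in a single uniformly bounded interval so that discreteness genuinely translates into finiteness; once that bookkeeping is in place, both directions reduce to the same sort of elementary counting.
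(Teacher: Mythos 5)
Your proof is correct and follows essentially the same strategy as the paper: for the forward direction, a pigeonhole argument separating candidate thresholds by disjoint intervals and counting the resulting distinct differential jumps (you invoke Proposition~\ref{prop-BSadm-iff-linearGrowth} directly rather than first reducing to $(0,r)$ via Proposition~\ref{PropSkoda}, but both routes rest on the same subtraction property), and for the converse, Lemma~\ref{lem:thresh-nearby} together with a preimage count, just as the paper does. Your handling of the target interval $[0,\,r+r/p]$ in the converse is in fact slightly more careful than the paper's $[0,r]$, but the argument is otherwise identical.
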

\begin{proof}
Let $r$ the number of generators of $\fa$. We first show that if $\fa$ is Bernstein-Sato admissible, then  the set of differential thresholds is discrete. By Proposition \ref{PropSkoda}, it suffices to show that the set of differential thresholds in $(0,r)$ is finite. Since $\fa$ is a  Bernstein-Sato admissible  ideal, there exists $b\in\NN$ such that
	$\# \big(\cB^\bullet_\fa (p^e) \cap [0, r p^e) \big)\leq b$ for every $e$, and we claim that there are at most $b$ differential thresholds in $(0,r)$.
	
	Suppose, for a contradiction, that $\lambda_1, \ds, \lambda_{b + 1} \in (0,r)$ are distinct differential thresholds of $\fa$. Pick disjoint open intervals $U_1, \ds, U_{b+1} \sq (0,r)$ with $\lambda_i \in U_i$. Then there is some $e$ large enough and $\nu_1, \ds, \nu_{b+1} \in \cB^\bullet_\fa(p^e)$ with $\nu_i / p^e \in U_i$ for every $i$. It follows that $\nu_1, \ds, \nu_{b+1}$ are distinct differential jumps of level $e$ in the interval $(0, r p^e)$, which gives a contradiction. 
	
	Now suppose that $R$ is $F$-split, and assume that the set of differential thresholds is discrete. In particular, there are finitely many differential thresholds in the interval $[0,r]$; let $0\leq \lambda_1<\cdots<\lambda_c\leq r$ be these differential thresholds. To obtain a contradiction, suppose that $\fa$ is not Bernstein-Sato admissible. Then we can choose some $e\in \N$ such that the number of differential jumps of level $e$ is greater than $(r+1)c$. By Lemma~\ref{lem:thresh-nearby}, every differential jump of level $e$ lies in $\bigcup_{i=1}^c [p^e \lambda_i-r,p^e \lambda_i]$. Since there are at most $(r+1)c$ integers in this set, we obtain the desired contradiction.
\end{proof}

\begin{theorem}\label{ThmDTrational}
	Let $R$ be an $F$-finite $F$-split ring, and $\fa\subseteq R$ be a Bernstein-Sato admissible  ideal.
	Then, every differential threshold of $\fa$ is a rational number.
\end{theorem}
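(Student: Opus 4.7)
The plan is to mimic the argument proving rationality of Bernstein-Sato roots (Theorem~\ref{thm:ratl}): exhibit a finite set of differential thresholds that is stable (modulo integers) under the shift-by-$p$ dynamics coming from Lemma~\ref{LemmaMultiplicationP}, and deduce rationality from the eventual periodicity of orbits in this finite set. Let $r$ denote the number of generators of $\fa$.

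First I would observe that the set $T := \{\lambda : \lambda\ \text{is a differential threshold of}\ \fa\} \cap [0,r]$ is finite. By Theorem~\ref{ThmDicretness}, Bernstein-Sato admissibility implies the set of all differential thresholds is discrete; together with the explicit bound produced in the proof of Theorem~\ref{ThmDicretness} (there are at most $b$ thresholds in $(0,r)$), this gives finiteness of $T$.

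Next I would construct a map $\phi : T \to T$ and use periodicity. Given $\lambda \in T$, Lemma~\ref{LemmaMultiplicationP} shows $p\lambda$ is a differential threshold. If $p\lambda \leq r$, set $\phi(\lambda) := p\lambda$; otherwise let $k := \lceil p\lambda - r \rceil$ and apply Proposition~\ref{PropSkoda} iteratively to $p\lambda,\, p\lambda - 1,\, \ldots,\, p\lambda - k + 1$ to conclude that $\phi(\lambda) := p\lambda - k \in (r-1, r]$ is a differential threshold. In both cases $\phi(\lambda) \in T$ and $\phi(\lambda) = p\lambda - c(\lambda)$ for some $c(\lambda) \in \ZZ_{\geq 0}$. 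A routine induction gives $\phi^i(\lambda) = p^i \lambda - c_i$ for some $c_i \in \ZZ_{\geq 0}$. Since $T$ is finite, the orbit of any $\lambda \in T$ under $\phi$ is eventually periodic, so there exist $0 \leq n < m$ with $\phi^n(\lambda) = \phi^m(\lambda)$; this yields $(p^m - p^n)\lambda = c_m - c_n \in \ZZ$, forcing $\lambda \in \QQ$.

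Finally, an arbitrary differential threshold $\mu$ with $\mu > r$ can be pulled back into $[0,r]$ by iterating Proposition~\ref{PropSkoda}, producing some threshold $\mu - k \in T$ for $k := \lceil \mu - r \rceil$; since $\mu - k \in \QQ$ by the preceding step, $\mu \in \QQ$ as well. The main obstacle will be the careful bookkeeping of the Skoda iteration near the boundary of $[0,r]$ (verifying that $\phi$ genuinely lands in $T$, rather than just in $[0,r] \cap \{\text{thresholds}\}$, and handling the endpoint cases where $p\lambda - r$ is an integer), but this is essentially routine and the substantive dynamical content is entirely parallel to the Bernstein-Sato root case.
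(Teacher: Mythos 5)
Your proof is correct and rests on exactly the same three ingredients as the paper's argument: finiteness of the thresholds in a bounded window (Theorem~\ref{ThmDicretness}), closure of thresholds under multiplication by $p$ (Lemma~\ref{LemmaMultiplicationP}), and the Skoda-type descent of Proposition~\ref{PropSkoda}, combined with the pigeonhole principle. The paper organizes these slightly differently: rather than iterating a map $\phi$ on the finite set $T$ and invoking eventual periodicity, it directly forms $\lambda_e := p^e\lambda - \lfloor p^e\lambda\rfloor + r - 1$ for each $e$ with $p^{e}\lambda > r$, observes via Lemma~\ref{LemmaMultiplicationP} and iterated Skoda that each $\lambda_e$ is a threshold in a bounded window, and then uses discreteness to find $e_1 < e_2$ with $\lambda_{e_1} = \lambda_{e_2}$, which immediately yields $\lambda = \bigl(\lfloor p^{e_2}\lambda\rfloor - \lfloor p^{e_1}\lambda\rfloor\bigr)/(p^{e_2} - p^{e_1}) \in \QQ$. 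Since your $\phi^i(\lambda) \equiv p^i\lambda \pmod{\ZZ}$, your collision $\phi^n(\lambda) = \phi^m(\lambda)$ encodes the same information as the paper's $\lambda_{e_1} = \lambda_{e_2}$; the dynamical-system packaging is a cosmetic difference, and your extra case analysis (handling $p\lambda \le r$ and the endpoint where $p\lambda - r \in \ZZ$) is precisely the bookkeeping the paper sidesteps by using a fixed shift whenever $p^e\lambda > r$. In short: same proof, mildly different bookkeeping.
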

\begin{proof}
	Let $\lambda$ be a differential threshold for $\fa$.
	We fix $e_0$ such  that $p^{e_0}\lambda> r$.
	For $e\geq e_0$,
	we take  $\lambda_{e}=p^{e}\lambda-\lfloor p^{e}\lambda \rfloor+r-1$.
	By construction $\lambda_e$ is a differential threshold for every $e$ by Proposition~\ref{PropSkoda}
	and Lemma~\ref{LemmaMultiplicationP}.
	By Theorem~\ref{ThmDicretness}, there exists $e_1<e_2$ such that
	$$
	\lambda_{e_1}
	=p^{e_1}\lambda-\lfloor p^{e_1}\lambda \rfloor+r-1
	=
	p^{e_2}\lambda-\lfloor p^{e_2}\lambda \rfloor+r-1
	=
	\lambda_{e_2}.
	$$ 
	Since $e_2>e_1$, we conclude that
	\[
	\lambda=\frac{\lfloor p^{e_2}\lambda \rfloor-\lfloor p^{e_1}\lambda \rfloor}{p^{e_2}-p^{e_1}}\in\QQ. \qedhere
	\]
\end{proof}

\subsection{Comparison between Bernstein-Sato roots and differential thresholds} \ 

We end this subsection with a comparison between differential thresholds and Bernstein-Sato roots. We note that we do not assume Bernstein-Sato admissibility in this result. 

\begin{theorem}\label{thm:threshsareroots}
	 Let $R$ be $F$-split. Let $\fa$ be an ideal with $r$ generators.
	\begin{enumerate}
		\item If $\alpha\in \Z_{(p)}$ is a Bernstein-Sato root, then there is some differential threshold $\lambda$ of $\fa$ such that \[\alpha - \lceil \alpha \rceil +\lambda \in \begin{cases} \{0,\dots, r-1\} &\text{if }\alpha\notin \ZZ_{<0}\\
		\{1,\dots, r\} &\text{if }\alpha\in \ZZ_{<0}
		.\end{cases}\]
		\item Conversely, if $\lambda\in (\ZZ_{(p)})_{\geq 0}$ is a differential threshold for $\fa$, then there is some Bernstein-Sato root $\alpha$ for $\fa$ such that
		\[\alpha +\lambda - \lfloor \lambda \rfloor \in \begin{cases} \{1-r,2-r,\dots,0\} &\text{if }\lambda\notin \ZZ_{\geq 0}\\
		\{-r,\dots, 0\} &\text{if }\lambda\in \ZZ_{\geq 0}
		.\end{cases}\]
	\end{enumerate}
	
	Thus, there is an equality of cosets in $\ZZ_{(p)}/\ZZ$:
	\[ \{ \alpha + \ZZ \ | \ \alpha\in \BSR(\fa)\cap \ZZ_{(p)}\} = \{ -\lambda + \ZZ \ | \ \lambda\in\ZZ_{(p)} \text{ a differential threshold of }\fa \}.\]
\end{theorem}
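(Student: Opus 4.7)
My plan is to prove both directions directly from Proposition~\ref{eqroot} and Proposition~\ref{prop:equivsthresh}, which describe Bernstein--Sato roots and differential thresholds as the $p$-adic and Euclidean limits, respectively, of sequences of differential jumps of the form $\nu_e = \aae + s_e p^e$ with $s_e \in \{0, \ldots, r-1\}$. The key observation is that from a single such sequence one can extract both kinds of limit, provided one first pigeonholes to arrange that $s_e$ is eventually constant. The real and $p$-adic limits are then related through the closed form for $\aae$ supplied by Lemma~\ref{lem:expn}.

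For (i), fix $\alpha \in \BSR(\fa) \cap \ZZ_{(p)}$ and pick $e_0$ with $(p^{e_0}-1)\alpha \in \ZZ$. By Proposition~\ref{eqroot} there exist $s_a \in \{0, \ldots, r-1\}$ with $\nu_a := \alpha_{<ae_0} + s_a p^{ae_0} \in \cB^\bullet_\fa(p^{ae_0})$ for every $a$; pigeonhole lets us pass to an infinite subset along which $s_a$ equals a fixed $s \in \{0, \ldots, r-1\}$. Applying Lemma~\ref{lem:expn} to rewrite $\alpha_{<ae_0}$ in closed form, a direct computation yields
\[ \lim_{a\to\infty} \frac{\nu_a}{p^{ae_0}} = \begin{cases} \lceil\alpha\rceil - \alpha + s & \text{if }\alpha\notin\ZZ_{<0},\\ 1+s & \text{if }\alpha\in\ZZ_{<0}.\end{cases} \]
By Proposition~\ref{prop:equivsthresh}, this limit is a differential threshold $\lambda$, and one reads off that $\alpha - \lceil\alpha\rceil + \lambda$ equals $s \in \{0,\ldots,r-1\}$ in the first case and $1+s \in \{1,\ldots,r\}$ in the second.

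For (ii), fix $\lambda \in (\ZZ_{(p)})_{\geq 0}$ a differential threshold and choose $k \geq 1$ with $(p^k-1)\lambda \in \ZZ$. By Proposition~\ref{prop:equivsthresh}, for every multiple $e = jk$ there is $\nu_e \in \cB^\bullet_\fa(p^e) \cap [p^e\lambda - r, p^e\lambda]$. The choice of $k$ forces $\{p^e\lambda\} = \{\lambda\}$, so if $\lambda \notin \ZZ_{\geq 0}$ we may write $\nu_e = p^e\lambda - \{\lambda\} - t_e$ with $t_e \in \{0,\ldots,r-1\}$ (those being the $r$ integers in $[p^e\lambda-r, p^e\lambda]$), while if $\lambda \in \ZZ_{\geq 0}$ we write $\nu_e = p^e\lambda - t_e$ with $t_e \in \{0,\ldots,r\}$. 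Pigeonhole again gives a subsequence along which $t_e$ is a constant $t$. Setting $\alpha := -\{\lambda\} - t$ (which lies in $\ZZ_{(p)}$), the differences $\nu_e - \alpha = p^e\lambda$ have $p$-adic valuation at least $e$, so $\nu_e \to \alpha$ $p$-adically; Proposition~\ref{eqroot} together with Remark~\ref{rmk-diffjump-subsequence} then shows $\alpha \in \BSR(\fa)$. A direct check gives $\alpha + \lambda - \lfloor\lambda\rfloor = -t$, which lies in the claimed range.

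The coset equality is immediate from (i) and (ii), since in both constructions $\alpha + \lambda \in \ZZ$, so $\alpha + \ZZ = -\lambda + \ZZ$ in $\ZZ_{(p)}/\ZZ$. The main obstacle will be the careful bookkeeping at the integer boundaries: the closed form in Lemma~\ref{lem:expn} differs on the two sides of $\ZZ_{<0}$, and the length-$r$ interval $[p^e\lambda - r, p^e\lambda]$ contains $r$ or $r+1$ integers according to whether $p^e\lambda$ is itself an integer, which is precisely why the extremal values $-r$ and $r$ appear only for integer $\lambda$ and negative integer $\alpha$, respectively.
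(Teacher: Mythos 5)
Your proposal is correct and tracks the paper's own proof quite closely: part (i) is identical (pigeonhole on $s_a$, plug in Lemma~\ref{lem:expn}, apply Proposition~\ref{prop:equivsthresh}); part (ii) uses the same mechanism in reverse (pigeonhole on the offset $t_e$, Proposition~\ref{eqroot} to detect a root), just phrased through $\{\lambda\}$ and the periodicity $\{p^e\lambda\} = \{\lambda\}$ rather than the paper's explicit formula for $p^{ae}\tr{\lambda}{ae}$. These are the same computation in different clothing.

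One small gap worth noting: in part (ii) you invoke Proposition~\ref{prop:equivsthresh} uniformly over $\lambda \in (\ZZ_{(p)})_{\geq 0}$, but that proposition carries the hypothesis $\lambda \in \RR_{>0}$, so it does not directly apply when $\lambda = 0$. The paper handles $\lambda = 0$ separately via Remark~\ref{rem:zerothresh}, which states that $0$ is a differential threshold if and only if $0 \in \cB^\bullet_\fa(p^e)$ for every $e$; that remark is exactly what is needed to produce the sequence $\nu_e = 0$ and conclude $0 \in \BSR(\fa)$. The definition of differential threshold alone only gives $\nu_e/p^e \to 0$, not $\nu_e$ lying in the interval $[-r,0]$, so some argument beyond the definition is required in this boundary case. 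Your closing remark about "careful bookkeeping at the integer boundaries" gestures in the right direction, but the $\lambda = 0$ subtlety is specifically about the applicability of Proposition~\ref{prop:equivsthresh}, not just the integer count in the interval, and deserves an explicit citation or separate case.
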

\begin{proof}
	We start with (i). 	By Proposition~\ref{eqroot}, for every $a$, there is some $s\in\{0,\dots,r-1\}$ such that $\aaa + sp^a \in \cB_{\fa}^\bullet(p^a)$. Thus, there is an $s\in\{0,\dots,r-1\}$ such that $\displaystyle \aaae + sp^{ae} \in \cB_{\fa}^\bullet(p^{ae})$ for infinitely many~$a$. 
	
	If $\alpha\in \ZZ_{<0}$, then by Lemma~\ref{lem:expn}, we have $\displaystyle \aaae + sp^{ae} = p^{ae} + \alpha + s p^{ae} \in \cB_{\fa}^\bullet(p^{ae})$. It then follows from Proposition~\ref{prop:equivsthresh} that $\displaystyle \lim_{a\to\infty} \frac{(s+1)p^{ae} + \alpha}{p^{ae}} = s+1$ is a differential threshold.

	If $\alpha\notin \ZZ_{<0}$, then by Lemma~\ref{lem:expn}, we have $\aaae = (1-p^{ae})(\alpha-\lceil \alpha \rceil) +\lceil \alpha \rceil$ for $a\gg 0$. It then follows from Proposition~\ref{prop:equivsthresh} that $\displaystyle \lim_{a\to\infty} \frac{(1-p^{ae})(\alpha-\lceil \alpha \rceil) +\lceil \alpha \rceil + s p^{ae}}{p^{ae}} = s-\alpha +\lceil \alpha \rceil$ is a differential threshold.

	For (ii), let $\lambda\in \ZZ_{(p)}$ be a differential  threshold.
	
	For $\lambda=0$, it follows from Remark~\ref{rem:zerothresh} that $0$ is a Bernstein-Sato root.
	
%	 Next, we deal with the case $\lambda \in \ZZ_{> 0}$. Note that $p^a\tr{\lambda}{a}=\lambda-1$ in this case. By  Proposition~\ref{prop:equivsthresh}, for every $a$, there is then some $s_a\in \{-1,0,\dots,r-1\}$ such that $\displaystyle {\lambda-1 - s_a \in \cB^{\bullet}_{\fa}(p^{a})}$. Therefore, there is some $s\in \{-1,0,\dots,r-1\}$ such that ${\lambda-1 - s \in \cB^{\bullet}_{\fa}(p^{a})}$ for infinitely many~$a$. By Proposition~\ref{eqroot}, the $p$-adic limit $\lambda-1 - s$ of this constant sequence is a Bernstein-Sato root, as required.
	 
	 	 Next, we deal with the case $\lambda \in \ZZ_{> 0}$.  By  Proposition~\ref{prop:equivsthresh}, for every $a$, there is some $\nu_a\in \cB^{\bullet}_{\fa}(p^{a})$ such that $p^a \lambda - r \leq \nu_a \leq p^a \lambda$. Writing $\nu_a = p^a\lambda - s_a$, we have that $s_a\in \{ 0, \dots, r\}$ for all $a$ and $p^a\lambda - s_a \in \cB^{\bullet}_{\fa}(p^{a})$. There is some $s\in \{0,\dots, r\}$ such that $s_a= s$  and $p^a\lambda - s\in  \cB^{\bullet}_{\fa}(p^{a})$ for infinitely many values of $a$. It follows from  Proposition~\ref{eqroot} that $-s$, which is the $p$-adic limit of $p^a\lambda - s$, is a Bernstein-Sato root of $\fa$.
	 
	 Finally, suppose that $\lambda\notin \ZZ_{\geq 0}$, and let $e\in \NN$ be such that $(p^e-1)\lambda \in \NN$. Write {$\lambda = \lceil \lambda \rceil -1 + \mu$}, so $\mu\in (0,1)$. We then have $\displaystyle \lceil p^{ae} \lambda \rceil - 1 = p^{ae} \tr{\lambda}{ae} = (p^{ae}-1)\mu + p^{ae} (\lceil \lambda \rceil -1)$ for all $a\in \NN$. By Proposition~\ref{prop:equivsthresh}, for every $a$, there is some $\nu_{ae} \in \cB^{\bullet}_{\fa}(p^{ae})$ such that $\lceil p^{ae} \lambda  \rceil - r \leq \nu_{ae} \leq \lfloor p^{ae} \lambda\rfloor = \lceil p^{ae} \lambda  \rceil -1$. Writing $s_{ae} = \lceil p^{ae} \lambda  \rceil -1 -\nu_{ae}$, one has $s_{ae} \in\{0, 1,\dots, r-1\}$ and 
	 $(p^{ae}-1)\mu + p^{ae} (\lceil \lambda \rceil -1) - s_{ae}\in \cB^{\bullet}_{\fa}(p^{ae})$. Therefore, there is some $s\in \{0,\dots,r-1\}$ such that $(p^{ae}-1)\mu + p^{ae}(\lceil \lambda \rceil -1) - s \in \cB^{\bullet}_{\fa}(p^{ae})$ for infinitely many $a$.  It follows from Proposition~\ref{eqroot} that  $\lim_a (p^{ae}-1)\mu + p^{ae}\lfloor \lambda \rfloor - s = -\mu -s$ is a Bernstein-Sato root of $\fa$.
\end{proof}

\begin{corollary}\label{cor:principalrootsthresholds} Let $R$ be $F$-finite and $F$-split.
	If $\fa=(f)$ is principal and Bernstein-Sato admissible, then the set of Bernstein-Sato roots of $(f)$ is exactly the set of negatives of differential thresholds of $(f)$ in the interval $[0,1] \cap \Z_{(p)}$.
\end{corollary}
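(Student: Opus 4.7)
The strategy is to refine the coset-level equality of Theorem~\ref{thm:threshsareroots} to an equality of subsets, by exploiting the fact that, since $\fa = (f)$ has $r = 1$ generator, Theorem~\ref{thm:ratl} and Theorem~\ref{thm:rational} force every Bernstein-Sato root of $f$ to be a rational number in $[-1,0]$, so $\BSR(f) \sq \ZZ_{(p)} \cap [-1,0]$. Writing $D$ for the set of differential thresholds of $(f)$, the goal is the equality $\BSR(f) = \{-\lambda : \lambda \in D \cap [0,1] \cap \ZZ_{(p)}\}$.

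For the forward inclusion $-\BSR(f) \sq D \cap [0,1]$, I would apply part~(i) of Theorem~\ref{thm:threshsareroots} to each $\alpha \in \BSR(f)$; a short case split on whether $\alpha \in (-1,0]$ (where $\lceil \alpha \rceil = 0$, producing $\lambda = -\alpha \in [0,1)$) or $\alpha = -1$ (where the theorem produces $\lambda = 1 = -\alpha$) shows that the associated differential threshold is exactly $-\alpha$. For the reverse inclusion, I would take $\lambda \in D \cap [0,1] \cap \ZZ_{(p)}$ and break into three subcases: the case $\lambda \in (0,1)$ follows directly from part~(ii) of Theorem~\ref{thm:threshsareroots}, which produces $\alpha = -\lambda$; the case $\lambda = 0$ follows from Remark~\ref{rem:zerothresh} (which, for $r=1$, says $0 \in D$ if and only if $0 \in \cB^\bullet_f(p^e)$ for every $e$) together with Proposition~\ref{eqroot} (from which the latter condition is exactly the statement that $0 \in \BSR(f)$).

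The main obstacle is the endpoint $\lambda = 1$, where part~(ii) of Theorem~\ref{thm:threshsareroots} only locates a Bernstein-Sato root in $\{-1,0\}$, not necessarily $-1$. After dispatching the trivial cases $f=0$ and $f$ a unit (in which both sides of the proposed equality reduce to $\{0\}$ or $\varnothing$), the remaining case is that of $f$ a non-unit non-zerodivisor; here I would show directly that $p^e - 1 \in \cB^\bullet_f(p^e)$ for every $e$, so that the constant-indexed sequence $(p^e-1)_{e\geq 0}$ converges $p$-adically to $-1$ and Proposition~\ref{eqroot} exhibits $-1 \in \BSR(f)$. The key computation is $D^{(e)}_R \cdot f^{p^e} = (f^{p^e})$, which follows from $f^{p^e} \in R^{p^e}$ and the $R^{p^e}$-linearity of each $\delta \in D^{(e)}_R$ (forcing $\delta(f^{p^e}) = f^{p^e}\delta(1)$); meanwhile $f^{p^e-1} \in D^{(e)}_R \cdot f^{p^e - 1}$ but $f^{p^e-1} \notin (f^{p^e})$, since a relation $f^{p^e-1} = g\,f^{p^e}$ would yield $f^{p^e-1}(1 - fg) = 0$ and force $f$ to be a unit under the non-zerodivisor hypothesis. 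Hence $D^{(e)}_R \cdot f^{p^e-1} \supsetneq D^{(e)}_R \cdot f^{p^e}$, completing the reverse inclusion at $\lambda = 1$.
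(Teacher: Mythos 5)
Your overall strategy is sound, and most of the case analysis is correct: the forward inclusion via Theorem~\ref{thm:threshsareroots}(i), the case $\lambda \in (0,1)$ via Theorem~\ref{thm:threshsareroots}(ii), and the case $\lambda = 0$ via Remark~\ref{rem:zerothresh} and Proposition~\ref{eqroot} all go through as you describe, with the bound $\BSR(f) \subseteq [-1,0]\cap\ZZ_{(p)}$ from Theorems~\ref{thm:ratl} and~\ref{thm:rational} doing the pinning-down.

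The gap is at the endpoint $\lambda = 1$. The case split ``$f=0$, $f$ a unit, $f$ a non-unit non-zerodivisor'' is not exhaustive: when $R$ is not a domain there are non-unit zerodivisors, and this genuinely occurs under the corollary's hypotheses. The paper's own example $R = \KK[x,y]/(xy)$, $f = x$ is $F$-split and Bernstein--Sato admissible, $f$ is a zerodivisor, and $1$ is a differential threshold of $(f)$ --- so the case you are trying to bypass is precisely one where the obstruction arises. Your argument that $f^{p^e-1}\notin (f^{p^e})$ invokes the nonzerodivisor hypothesis essentially (``$f^{p^e-1}(1-fg)=0$ forces $f$ a unit''), so it does not transfer.

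The repair does not need any nonzerodivisor hypothesis, but it does need the assumption $1\in D$ that you were given. Keep your correct observation $D^{(e)}_R\cdot (f^{p^e}) = (f^{p^e})$, so $p^e-1\in\cB^\bullet_f(p^e)$ if and only if $f^{p^e-1}\notin (f^{p^e})$. Now suppose $f^{p^{e_0}-1}\in(f^{p^{e_0}})$ for some $e_0$. Then $(f^{p^{e_0}-1})=(f^{p^{e_0}})$, so $(f^n)=(f^{n+1})$ for all $n\geq p^{e_0}-1$, hence $D^{(e)}_R\cdot f^n = D^{(e)}_R\cdot f^{n+1}$ for such $n$ and every $e$. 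Thus $\cB^\bullet_f(p^e)\subseteq [0,p^{e_0}-2]$ for every $e$, so $\nu_e/p^e\to 0$ for any sequence of jumps and the only possible differential threshold of $f$ is $0$, contradicting $1\in D$. Contraposition gives $f^{p^e-1}\notin(f^{p^e})$, hence $p^e-1\in\cB^\bullet_f(p^e)$, for every $e$, and Proposition~\ref{eqroot} then exhibits $-1\in\BSR(f)$.
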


\begin{corollary}
	If $R$ is strongly $F$-regular and $\fa$ is an ideal, then there is a containment in $\ZZ_{(p)} / \ZZ$:
	\[ \{ -\lambda +\ZZ \ | \ \lambda \in \ZZ_{(p)} \ \text{is an $F$-jumping number of $\fa$}\, \} \subseteq \{ \alpha +\ZZ \ | \ \alpha \in \ZZ_{(p)} \ \text{is a Bernstein-Sato root of $\fa$}\, \}.  \]
\end{corollary}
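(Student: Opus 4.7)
The plan is to prove this corollary by chaining together two already-established results in the paper: Proposition \ref{PropDTFinv}(iii), which says that over a strongly $F$-regular ring every $F$-jumping number is a differential threshold, and Theorem \ref{thm:threshsareroots}(ii), which matches each differential threshold in $\ZZ_{(p)}$ with a Bernstein-Sato root equal to its negative modulo $\ZZ$. Nothing new has to be constructed; the work is entirely bookkeeping of hypotheses.

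First I would fix an $F$-jumping number $\lambda \in \ZZ_{(p)}$ of $\fa$; by definition $\lambda \geq 0$, so $\lambda \in (\ZZ_{(p)})_{\geq 0}$. Since $R$ is strongly $F$-regular and $F$-finite, $R$ is $F$-split by Remark \ref{RemFpureFsplit} (strong $F$-regularity implies $F$-purity, and in the $F$-finite setting $F$-purity is equivalent to $F$-splitness). Thus both hypotheses of Proposition \ref{PropDTFinv}(iii) are satisfied, and we conclude that $\lambda$ is a differential threshold of $\fa$.

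Next I would apply Theorem \ref{thm:threshsareroots}(ii), whose hypotheses are met since $R$ is $F$-split and $\lambda \in (\ZZ_{(p)})_{\geq 0}$ is a differential threshold. This yields a Bernstein-Sato root $\alpha \in \Zp$ of $\fa$ with $\alpha + \lambda - \lfloor \lambda \rfloor \in \ZZ$, hence $\alpha + \lambda \in \ZZ$. In particular $\alpha = (\alpha + \lambda) - \lambda \in \ZZ_{(p)}$, and $\alpha + \ZZ = -\lambda + \ZZ$ in $\ZZ_{(p)}/\ZZ$, establishing the required containment of cosets. The only potential obstacle in this argument would be if the positivity hypothesis of Theorem \ref{thm:threshsareroots}(ii) or the $F$-splitness hypothesis of Proposition \ref{PropDTFinv}(iii) failed, but both are automatic from the assumption of strong $F$-regularity and the definition of $F$-jumping number, so the proof goes through without difficulty.
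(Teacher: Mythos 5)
Your proof is correct and follows exactly the route the paper intends: the corollary is left without an explicit proof in the text precisely because it is the concatenation of Proposition~\ref{PropDTFinv}(iii) (strong $F$-regularity implies $F$-jumping numbers are differential thresholds) with Theorem~\ref{thm:threshsareroots}(ii), using Remark~\ref{RemFpureFsplit} to pass from strong $F$-regularity to $F$-splitness. The bookkeeping you did — checking $\lambda\geq 0$, that the integer shift in Theorem~\ref{thm:threshsareroots}(ii) lands $\alpha$ in $\ZZ_{(p)}$ and gives $\alpha+\ZZ=-\lambda+\ZZ$ — is exactly what is needed.
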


%%%%%%%%%%%%%%%%%%%%%%%%%%%%%%%%%%%%%%%%%%%%%%%%%%%%%%%%%%%%%%
	%%%%%%%%%%%%%%%%%%%%%%%%%%%%%%%%%%%%%%%%%%%%%%%%%%%%%%%%%%%%%%%%%%%%%%%	
\section{Classes of Bernstein-Sato admissible rings}
%%%%%%%%%%%%%%%%%%%%%%%%%%%%%%%%%%%%%%%%%%%%%%%%%%%%%%%%%%%%%%%%%%%%%%%

\subsection{Rings with finite $F$-representation type}\label{SecFFRT} \ 

In this section we prove that every ideal in a graded $\KK$-algebra with finite $F$-representation type is Bernstein-Sato admissible; therefore all ideals have a finite number of Bernstein-Sato roots. We closely mimic the strategy employed by Takagi and Takahashi \cite{TTFFRT} in their proof of discreteness of $F$-jumping numbers.

\begin{definition}[{\cite{TTFFRT}}]
	Let $R=\bigoplus_{n\in \NN} R_n$ be finitely generated  $\NN$-graded $\KK$-algebra over a field $R_0=\KK$.
	We say that a  $R$ has  finite $F$-representation type if there exist a finite set of finitely generated graded $R$-modules, $M_1,\ldots,M_\ell$ such that for every $e\in\NN$ there exist
	$\alpha_{e,i} \in\NN$ and $\theta^{(e)}_{i,j}\in \QQ_{\leq 0}$ such that 
	$$F^e_* R\cong \bigoplus^\ell_{i=1}  \bigoplus^{\alpha_{e,i}}_{j=1}   M_i (\theta^{(e)}_{i,j}),$$
where the grading on $F^e_* R$ is as in Definition \ref{DefBasicCharP}(ii).
	We say that  $M_1,\ldots,M_\ell$ are the  finite $F$-representation type factors of $R$. 
\end{definition}

\begin{definition}[{\cite{TTFFRT}}]
	Let $R=\bigoplus_{n\in \NN} R_n$ be finitely generated  graded $\KK$-algebra with $R_0=\KK$,
	and $\fa\subseteq R$.
	Let $M$ be a finitely generated  $R$-module.
	For $e\in\NN$, we set
	$$
	I_e(\fa,M)=\Hom_R(F^e_* R,M) \cdot F^e_*  \fa.
	$$
\end{definition}

\begin{lemma}\label{LemmaGenDeg}
	Let $R=\bigoplus_{n\in \NN} R_n$ be finitely generated  graded $\KK$-algebra with $R_0=\KK$,
	$\fa\subseteq R$ be an ideal and $M$ be a graded $R$-module.
	Suppose that $R$ has  finite $F$-representation type, and that $\fa$ is generated in degree less or equal to $N$.
	Then, there exists an integer $C\in\NN$ such that $I_e(\fa,M)$ is generated in degree less or equal to $\lfloor C+\frac{N}{p^e}\rfloor$ for every integer $e \geq 0$.
\end{lemma}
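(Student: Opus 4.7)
The plan is to reduce the problem to bounding the generating degrees of $\Hom_R(F^e_* R, M)$ uniformly in $e$, and then to use the FFRT hypothesis to achieve this bound. Throughout I will assume $M$ is finitely generated, which is the implicit setup (the assertion fails otherwise, as the example $R = \KK[x]$, $M = \KK[x,y]$, $\fa = (x^N)$ illustrates: here $I_e(\fa, M) = x^{\lfloor N/p^e\rfloor} M$ has generators $x^{\lfloor N/p^e\rfloor} y^t$ in unbounded degrees).

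Let $f_1, \ldots, f_m$ be homogeneous generators of $\fa$ of degree $\leq N$. The first step is to establish the identity
\[
I_e(\fa, M) \;=\; \sum_{k=1}^m \mathrm{ev}_{F^e_* f_k}\bigl(\Hom_R(F^e_* R, M)\bigr),
\]
where $\mathrm{ev}_x(\phi) := \phi(x)$. The inclusion $\supseteq$ is obvious. For the reverse, given $\phi \in \Hom_R(F^e_* R, M)$ and $g = \sum_i r_i f_i \in \fa$, I would use $F$-finiteness to write $R = \sum_k R^{p^e} \beta_k$ and expand $r_i = \sum_k s_{ik}^{p^e}\beta_k$, obtaining
\[
\phi(F^e_* g) \;=\; \phi\Big(\sum_{i,k} s_{ik} \cdot F^e_*(\beta_k f_i)\Big) \;=\; \sum_{i,k} s_{ik}\cdot \phi\bigl(F^e_*(\beta_k f_i)\bigr).
\]
The key observation is that each assignment $\phi'_k(F^e_* y) := \phi(F^e_*(\beta_k y))$ defines another $R$-linear map $\phi'_k \in \Hom_R(F^e_* R, M)$ (check: $\phi'_k(r \cdot F^e_* y) = \phi(F^e_*(\beta_k r^{p^e} y)) = r\,\phi'_k(F^e_* y)$), so each $\phi(F^e_*(\beta_k f_i)) = \phi'_k(F^e_* f_i)$ lies in $\mathrm{ev}_{F^e_* f_i}(\Hom_R(F^e_* R, M))$.

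Next, the FFRT decomposition yields
\[
\Hom_R(F^e_* R, M) \;\cong\; \bigoplus_{i,j}\Hom_R(M_i, M)\bigl(-\theta^{(e)}_{i,j}\bigr).
\]
Since each $M_i$ is finitely presented (being finitely generated over the Noetherian ring $R$) and $M$ is finitely generated, each $\Hom_R(M_i, M)$ is a finitely generated graded $R$-module. Let $C$ be the maximum of the (maximal) generating degrees of these finitely many modules. Because every $\theta^{(e)}_{i,j} \leq 0$, each shift $(-\theta^{(e)}_{i,j})$ only decreases the degrees of generators, so $\Hom_R(F^e_* R, M)$ is generated in degrees $\leq C$, independently of $e$.

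Finally, $\mathrm{ev}_{F^e_* f_k}$ is $R$-linear and homogeneous of degree $\deg(f_k)/p^e \leq N/p^e$, so each summand $\mathrm{ev}_{F^e_* f_k}(\Hom_R(F^e_* R, M))$ is generated in degrees $\leq C + N/p^e$; summing gives the same bound for $I_e(\fa, M)$, and since the grading on $M$ is integer-valued, the bound becomes $\lfloor C + N/p^e \rfloor$. The main technical obstacle is the first step: because the $R$-action on $F^e_* R$ is twisted by Frobenius, the $R$-submodule generated by $F^e_* f_1,\dots, F^e_* f_m$ is only $F^e_*\bigl(\sum_k R^{p^e} f_k\bigr)$, strictly smaller than $F^e_* \fa$ in general; absorbing the missing factors $\beta_k$ into modified homomorphisms $\phi'_k$ is what bypasses this difficulty.
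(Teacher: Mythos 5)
Your proof is correct and follows the same main line as the paper: the key step is identical, namely that the FFRT decomposition of $F^e_* R$ yields a uniform bound $C$ on the generating degrees of $\Hom_R(F^e_* R, M)$. Where you differ is in how the factor $\fa$ is handled afterwards. The paper passes through the evaluation morphism $\Phi \colon \Hom_R(F^e_* R, M) \otimes_R F^e_* R \to M$ and asserts that the image of $\Hom_R(F^e_* R, M) \otimes F^e_* \fa$ is generated in degrees $\leq C + N/p^e$; as you observe at the end of your write-up, this bound is not immediate from the generating degree $\leq N/p^e$ of $\fa$, because $F^e_* \fa$ is \emph{not} the $R$-submodule of $F^e_* R$ generated by $F^e_* f_1, \dots, F^e_* f_m$. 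Your identity $I_e(\fa, M) = \sum_k \mathrm{ev}_{F^e_* f_k}\bigl(\Hom_R(F^e_* R, M)\bigr)$, proved by absorbing the extra factors $\beta_k$ into the new $R$-linear maps $\phi'_k$, is exactly the bridge needed here; equivalently one can exploit the natural $F^e_* R$-module structure on $\Hom_R(F^e_* R, M)$, under which the key $R$-submodule is $\sum_k F^e_* f_k \cdot \Hom_R(F^e_* R, M)$ and the degree bound is transparent. The paper's tensor-product formulation implicitly uses this same structure but does not spell it out, so your version is a more careful rendering of the same argument. You are also right that $M$ must be finitely generated for the conclusion to hold; in the only application of the lemma (Theorem~\ref{ThmFFRTAdmissible}) $M$ is one of the FFRT factors $M_i$, which is finitely generated, so the implicit hypothesis is satisfied there.
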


\begin{proof}
	Pick $C > 0$ large enough so that, for all $i = 1, \ds, \ell$, the module $\Hom_R(M_i , M)$ is generated in degrees $\leq C$. This implies that for all $i = 1, \ds, \ell$ and $\theta \in \Q$ the module $\Hom_R(M_i(\theta), M) = \Hom_R(M_i, M)(- \theta)$ is generated in degrees $\leq C + \theta$ and, since all $\theta^{(e)}_{i,j}$ are negative, we conclude that the module
	$$\Hom_R(F^e_* R, M) = \bigoplus_{i = 1}^\ell \bigoplus_{j = 1}^{\alpha_{e,i}} \Hom_R(M_i(\theta^{(e)}_{i,j}), M)$$
	is generated in degrees $\leq C$.
	
	We consider the module $\Hom_R(F^e_* R, M) \otimes F^e_* R$ with the induced grading, and we note that its submodule $\Hom_R(F^e_* R, M) \otimes F^e_* \fa$ is generated in degrees $\leq C + N/p^e$. Note that $I_e(\fa, M)$ is the image of $\Hom_R(F^e_* R, M) \otimes F^e_*  \fa$ under the evaluation morphism $\Phi: \Hom_R(F^e_* R, M) \otimes_R F^e_* R \to M$ and, since $\Phi$ is homogeneous of degree zero, the result follows. 
\end{proof}

\begin{theorem}\label{ThmFFRTAdmissible}
	Let $R=\bigoplus_{n\in \NN} R_n$ be finitely generated  graded $\KK$-algebra with $R_0=\KK$.
	If $R$ has finite $F$-representation type then $R$ is a Bernstein-Sato admissible ring. 
\end{theorem}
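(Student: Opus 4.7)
The plan is to adapt the degree-bound strategy of Takagi-Takahashi to the chain of $D^{(e)}_R$-ideals defining the differential jumps of $\fa$. Fix an ideal $\fa$ generated by $r$ elements of degree at most $N$; the goal is to exhibit a constant independent of $e$ bounding $\#(\cB^\bullet_\fa(p^e) \cap [0,rp^e))$.

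The first step is to reformulate the chain inside $F^e_* R$ and split it along the FFRT decomposition $F^e_* R = \bigoplus_{i,j} M_i(\theta^{(e)}_{i,j})$. Set $Y_n := D^{(e)}_R \cdot F^e_* \fa^n$, so that differential jumps of level $e$ correspond to strict decreases in the chain $(Y_n)_n$. The key observation is that every composition $\iota_{(i,j)} \circ \pi_{(i,j)} \colon F^e_* R \to F^e_* R$ of a projection and an inclusion for the FFRT decomposition is $R$-linear, and hence lies inside $D^{(e)}_R = \End_R(F^e_* R)$. Applying these endomorphisms to $Y_n$ forces the internal splitting $Y_n = \bigoplus_{(i,j)} \pi_{(i,j)}(Y_n)$, and a short surjectivity argument shows that $\pi_{(i,j)}(Y_n) = \Hom_R(F^e_* R, M_i(\theta^{(e)}_{i,j})) \cdot F^e_* \fa^n = I_e(\fa^n, M_i(\theta^{(e)}_{i,j}))$, which as an $R$-submodule of $M_i$ (ignoring the shift) coincides with $I_e(\fa^n, M_i)$. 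Thus $n$ is a differential jump of level $e$ if and only if $I_e(\fa^n, M_i) \neq I_e(\fa^{n+1}, M_i)$ for some FFRT factor $M_i$ appearing in the decomposition at level $e$.

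The second step combines Lemma~\ref{LemmaGenDeg} with a finite-dimensional truncation. By that lemma applied with $M = M_i$, each $I_e(\fa^n, M_i)$ is generated as an $R$-submodule of $M_i$ in degrees at most $C_i + nN/p^e$, where $C_i$ depends only on $M_i$. For $n \in [0, rp^e)$ this yields the uniform bound $D_i := C_i + rN$. Since any decreasing chain of graded submodules of $M_i$ all generated in degrees $\leq D_i$ is determined by its intersections with the finite-dimensional $\KK$-vector space $(M_i)^{\leq D_i}$, the chain $(I_e(\fa^n, M_i))_{n \in [0, rp^e)}$ admits at most $\dim_\KK (M_i)^{\leq D_i}$ strict decreases. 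Summing over the finite list of FFRT factors $M_1, \dots, M_\ell$ gives the desired uniform bound on differential jumps, proving that $\fa$ is Bernstein-Sato admissible.

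The main obstacle is the first step, namely establishing the splitting $Y_n = \bigoplus_{(i,j)} I_e(\fa^n, M_i(\theta^{(e)}_{i,j}))$. A priori, $Y_n$ is only an $R$-submodule of $F^e_* R$ and need not respect the FFRT direct sum; the argument crucially exploits the $D^{(e)}_R$-stability of $Y_n$ together with the fact that all the projections and inclusions between summands lie inside $D^{(e)}_R$. Once this decomposition is in hand, the degree bound of Lemma~\ref{LemmaGenDeg} reduces the proof to a bookkeeping exercise that mirrors the Takagi-Takahashi argument for discreteness of $F$-jumping numbers, with the Cartier-like modules $I_e(\fa^n, M_i)$ playing the role of test ideals.
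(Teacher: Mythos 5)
Your proposal is correct and follows essentially the same route as the paper: decompose $D^{(e)}_R\cdot\fa^m = \Hom_R(F^e_*R,F^e_*R)\cdot F^e_*\fa^m$ along the FFRT summands of $F^e_*R$, invoke Lemma~\ref{LemmaGenDeg} to bound generating degrees of each $I_e(\fa^m,M_i)$ uniformly for $m\leq rp^e$, and count by truncating to a finite-dimensional piece of $M_i$. The only cosmetic difference is that you realize the splitting via the idempotents $\iota_{(i,j)}\circ\pi_{(i,j)}\in D^{(e)}_R$ while the paper uses the natural isomorphism taking Hom past the direct sum; and your final count via strict decreases gives the additive bound $\sum_i\beta_i$ rather than the paper's multiplicative $\prod_i\beta_i$, but both are uniform in $e$ and suffice.
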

\begin{proof}
	Let $N\in\NN$ be such that $\fa$ is generated in degree at most $N$.
	Let $r$ be the number of generators of $\fa$, and fix $m\leq rp^e$.
	By Lemma \ref{LemmaGenDeg}, 
	there exists $C_i$ such that 
	$\Hom(F^e_* R,M_i) \cdot F^e_* \fa^m$
	is generated in degree at most $\frac{Nm}{p^e}+C_i\leq Nr+C_i$.
	Let $\beta_i:=\dim_\KK [M_i]_{\leq Nr+C_i}$.
	Then, 
	$\{I_e(\fa^m,M_i) \; | \; m=0,\ldots, rp^e  \}$ has at most $\beta_i$ elements.
	We note that $I_e(\fa^m,M_i)=I_e(\fa^m,M_i(-\gamma))$ for every $\gamma\in \QQ$. Then, 
	\begin{align*}
	D^{(e)}_R\cdot  \fa^m  &= \Hom(F^e_* R,F^e_* R)  \cdot F^e_*  \fa^m \\
	&= \Hom\left( F^e_* R, \bigoplus^\ell_{i=1}  \bigoplus_j   M^{\alpha^{(e)}_{i,j}}_i (\theta^{(e)}_{i,j}) \right) \cdot F^e_*  \fa^m \\
	&=
	\bigoplus^\ell_{i=1}  \bigoplus_j  \Hom\left(  F^e_* R,  M^{\alpha^{(e)}_{i,j}}_i (\theta^{(e)}_{i,j})  \right) \cdot F^e_* \fa^m 
	\end{align*}
	and so
	$\{ D^{(e)}_R \cdot  \fa^m   | \; m=0,\ldots, rp^e  \}$  has at most $\beta_1\cdots \beta_\ell$ elements.
\end{proof}

As a consequence of the previous theorem, we obtain a new case where the differential thresholds satisfy rationality and discreteness. In particular, these recover and extends previous results known for Stanley-Reisner rings \cite[Theorems A \& B]{BC}.

\begin{corollary}\label{CorFFRTRational}
	Let $R=\bigoplus_{n\in \NN} R_n$ be finitely generated  graded $\KK$-algebra with $R_0=\KK$.
	Suppose that $R$ has  finite $F$-representation type.
	Then,  the sets
	$$
	\{c^\fb(\fa) \; | \;  \fa\subseteq \sqrt{\fb} \neq (1)\}
\quad \text{and} \quad 
	\{	\operatorname{ct}^\fb (\fa) \; | \;  \fa\subseteq \sqrt{\fb} \neq (1)\}
	$$
consist of rational numbers and do not have any accumulation points. 
\end{corollary}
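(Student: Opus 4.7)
The plan is to reduce the corollary to the discreteness and rationality of differential thresholds established in Section~\ref{SubSecDT}. Fix a proper ideal $\fa \subseteq R$. By Theorem~\ref{ThmFFRTAdmissible}, the hypothesis of finite $F$-representation type forces $\fa$ to be Bernstein-Sato admissible. Combined with Theorem~\ref{ThmDicretness}, this gives that the set $T(\fa)$ of differential thresholds of $\fa$ is a discrete subset of $\RR_{\geq 0}$, and Theorem~\ref{ThmDTrational} (under $F$-splitness of $R$) shows $T(\fa) \subseteq \QQ$.

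Both sets in the statement are then shown to lie inside $T(\fa)$ for fixed $\fa$. By Proposition~\ref{PropDTFinv}(i), every $F$-threshold $c^\fb(\fa)$ is a differential threshold of $\fa$, so the set $\{c^\fb(\fa) \,:\, \fa \subseteq \sqrt{\fb} \neq (1)\}$ is a subset of $T(\fa)$. Similarly, by Proposition~\ref{PropDTFinv}(ii), every Cartier threshold $\operatorname{ct}^\fb(\fa)$ is a differential threshold of $\fa$, so the corresponding set of Cartier thresholds is also contained in $T(\fa)$. Both sets therefore inherit the discreteness and the rationality from $T(\fa)$, yielding the corollary.

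The main subtlety is verifying the $F$-split hypothesis required to apply Theorem~\ref{ThmDTrational}. For Cartier thresholds this is built into the definition of the invariant, so no separate verification is needed. For $F$-thresholds, one must separately argue that $R$ is $F$-split in the graded FFRT setting; this can be done using the decomposition $F^e_*R \cong \bigoplus_{i,j} M_i(\theta^{(e)}_{i,j})$ together with the graded structure to produce an $R$-linear splitting of Frobenius from a suitable FFRT summand. Discreteness alone, on the other hand, does not require $F$-splitness and follows from Theorem~\ref{ThmDicretness} unconditionally.
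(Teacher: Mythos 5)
Your proposal follows the same route as the paper — Theorem~\ref{ThmFFRTAdmissible} gives Bernstein-Sato admissibility, Proposition~\ref{PropDTFinv} embeds both $F$-thresholds and Cartier thresholds into the set of differential thresholds, and Theorems~\ref{ThmDicretness} and~\ref{ThmDTrational} supply discreteness and rationality. You are right to flag the $F$-split hypothesis of Theorem~\ref{ThmDTrational} as the delicate point, and you are right that for Cartier thresholds the hypothesis is built into the definition, and that discreteness is unconditional.

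However, the step where you dispose of the $F$-split issue for $F$-thresholds is incorrect: graded finite $F$-representation type does \emph{not} imply $F$-split. Your proposed argument — ``use the decomposition $F^e_*R \cong \bigoplus_{i,j} M_i(\theta^{(e)}_{i,j})$ together with the graded structure to produce an $R$-linear splitting of Frobenius from a suitable FFRT summand'' — would require $R$ itself (or a free $R$-module) to appear among the summands, and nothing in the FFRT hypothesis forces that. The paper's own example $R = \KK[x^2,x^3]$ with $p > 2$ is a counterexample: there $F^e_*R$ decomposes entirely into shifted copies of the normalization $\overline{R} = \KK[x]$, so $R$ is graded FFRT with the single factor $\overline{R}$, yet $R$ is not a direct summand of $F^e_*R$ and hence not $F$-split (the paper explicitly uses this example to show the $F$-split hypothesis is necessary in Theorems~\ref{thm:rational} and~\ref{thm:threshsareroots}). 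So the rationality of the $F$-threshold set cannot be obtained by the claimed splitting argument; one would instead need to either impose $F$-purity as an additional hypothesis, or give a separate argument for rationality that does not pass through Lemma~\ref{LemmaMultiplicationP} (the source of the $F$-split requirement in Theorem~\ref{ThmDTrational}).
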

\begin{proof}
Since every ideal is Bernstein-Sato admissible by Theorem \ref{ThmFFRTAdmissible}, the claim follows from 
Proposition \ref{PropDTFinv} and Theorems \ref{ThmDicretness} \& \ref{ThmDTrational}.
\end{proof}

%%%%%%%%%%%%%%%%%%%%%%%%%%%%%%%%%%%%%%%%%%%%%%%%%%%%%%%%%%%%%%%%%%%%%%%%%%%%%
\subsection{Direct summands} \ 
%%%%%%%%%%%%%%%%%%%%%%%%%%%%%%%%%%%%%%%%%%%%%%%%%%%%%%%%%%%%%%%%%%%%%%%%%%%%%

Next, we provide a second class of possibly singular rings for which all ideals are Bernstein-Sato admissible: the class of direct summands of regular rings. Passing to direct summands behaves especially well in the case of level-differentially extensible direct summands, a notion introduced by Brenner together with the first two authors \cite{BJNB19}. Let us give the definition in the case of $F$-finite rings.
\begin{definition}
	An extension $R \sq S$ of $F$-finite rings is called level-differentially extensible if for every integer $e \geq 0$ and every $\delta \in D^{(e)}_R$ there exists some $\t \delta \in D^{(e)}_S$ such that $\delta = \t \delta |_R$. 
\end{definition}

We note that large classes of invariant rings can be realized as level-differentially extensible direct summands of polynomial rings (see \cite[Section~6]{BJNB19} for more details).

\begin{theorem}\label{thm:directsummand}
	Let $R \sq S$ be a split extension of $F$-finite rings and $\fa \sq R$ be an ideal. Then:
	\begin{enumerate}[(i)]
		\item\label{item:jumpds} For all integers $e\geq 0$, $\cB^\bullet_\fa(p^e) \sq \cB^\bullet_{\fa S} (p^e)$. 
		\item\label{item:admds} If $\fa S \sq S$ is a Bernstein-Sato admissible ideal, then so is $\fa \sq R$. 
		\item\label{item:rootds} Every Bernstein-Sato  root of $\fa$ is a Bernstein-Sato root of $\fa S$. 
		{ \item\label{item:threshds} Every differential threshold of $\fa$ is a differential threshold of $\fa S$.}
	\end{enumerate}
	Assume furthermore that the extension $R \sq S$ is level-differentially extensible. Then:
	\begin{enumerate}[resume]
		\item\label{item:jumpde} For all integers $e \geq 0$, $\cB^\bullet_\fa(p^e) = \cB^\bullet_{\fa S}(p^e)$.
		\item\label{item:rootde} The Bernstein-Sato roots of $\fa$ and the Bernstein-Sato roots of $\fa S$ coincide.
		{ \item\label{item:threshde} The differential thresholds of $\fa$ and the differential thresholds of $\fa S$ coincide.}
	\end{enumerate}
\end{theorem}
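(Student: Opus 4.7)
The plan is to reduce everything to part (i) and its analogue in part (v), then derive the remaining items formally. The key technical observation is the following: if $\pi\colon S\to R$ is an $R$-linear retraction of the inclusion, then for every $\xi\in D^{(e)}_S$ the composition $(\pi\circ\xi)|_R$ is an element of $D^{(e)}_R$. Indeed, for $r\in R^{p^e}$ and $x\in R$ one has
\[ (\pi\circ\xi)(rx)=\pi(r\,\xi(x))=r\,\pi(\xi(x)), \]
using $R^{p^e}\sq S^{p^e}$ together with the $S^{p^e}$-linearity of $\xi$, and then the $R$-linearity of $\pi$.

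To prove (i), suppose $D^{(e)}_S\cdot(\fa S)^n=D^{(e)}_S\cdot(\fa S)^{n+1}$ and take $f\in\fa^n$. Since $f\in(\fa S)^n=\fa^n S$, write $f=\sum_i\delta_i(g_i)$ with $\delta_i\in D^{(e)}_S$ and $g_i\in\fa^{n+1}S$. Expanding $g_i=\sum_j s_{ij}a_{ij}$ with $s_{ij}\in S$ and $a_{ij}\in\fa^{n+1}$, and setting $\xi_{ij}:=\delta_i\circ m_{s_{ij}}\in D^{(e)}_S$ (where $m_{s_{ij}}\in D^{(0)}_S$ denotes multiplication by $s_{ij}$), one rewrites $f=\sum_{i,j}\xi_{ij}(a_{ij})$. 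Applying $\pi$, which fixes $f\in R$, and using the key observation yields
\[ f=\sum_{i,j}\pi(\xi_{ij}(a_{ij}))=\sum_{i,j}\eta_{ij}(a_{ij})\in D^{(e)}_R\cdot\fa^{n+1}, \]
with $\eta_{ij}:=(\pi\circ\xi_{ij})|_R\in D^{(e)}_R$. Thus $\fa^n\sq D^{(e)}_R\cdot\fa^{n+1}$.

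Parts (ii)--(iv) are then immediate corollaries: by (i) we have $\cB^\bullet_\fa(p^e)\sq\cB^\bullet_{\fa S}(p^e)$, and any $r$ generators of $\fa$ also generate $\fa S$, so Bernstein-Sato admissibility of $\fa S$ transfers to $\fa$ by bounding $\#(\cB^\bullet_\fa(p^e)\cap[0,rp^e))$; moreover, any sequence of differential jumps witnessing a Bernstein-Sato root or differential threshold of $\fa$ is also one for $\fa S$, and its $p$-adic or Euclidean limit witnesses the corresponding invariant for $\fa S$. For (v), assume level-differential extensibility and take $f\in\fa^n$, writing $f=\sum_i\delta_i(a_i)$ with $\delta_i\in D^{(e)}_R$ and $a_i\in\fa^{n+1}$. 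Lifting each $\delta_i$ to $\widetilde\delta_i\in D^{(e)}_S$ with $\widetilde\delta_i|_R=\delta_i$, we obtain $f=\sum_i\widetilde\delta_i(a_i)\in D^{(e)}_S\cdot\fa^{n+1}$, hence $\fa^n\sq D^{(e)}_S\cdot(\fa S)^{n+1}$. Since the right-hand side is an $S$-ideal, $(\fa S)^n=\fa^n S\sq D^{(e)}_S\cdot(\fa S)^{n+1}$, giving the reverse containment $\cB^\bullet_{\fa S}(p^e)\sq\cB^\bullet_\fa(p^e)$. Items (vi) and (vii) then follow from (v) by the same convergent-sequence argument used for (iii) and (iv).

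The main obstacle---really a matter of careful bookkeeping rather than anything deep---is in (i), where one must absorb multiplication by arbitrary elements of $S$ into the operators $\delta_i$ before $\pi$ can be pulled across the whole expression. This is the reason the result is naturally formulated in terms of the level filtration $D^{(e)}_R$: the order of a differential operator in the usual sense is not preserved under such manipulations, whereas the level is.
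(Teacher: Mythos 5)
Your proposal is correct and follows essentially the same route as the paper: use a splitting $\pi$ to push a relation $f=\sum_i\xi_i(g_i)$ with $\xi_i\in D^{(e)}_S$, $g_i\in\fa^{n+1}$ down to $R$ via $(\pi\circ\xi_i)|_R\in D^{(e)}_R$ for part \ref{item:jumpds}, and lift operators along the level-differentially extensible structure for part \ref{item:jumpde}, with the remaining items following formally from limits of jumps and the transfer of the admissibility bound. The only difference is cosmetic --- you spell out the step of absorbing the $S$-coefficients into the operators, whereas the paper simply takes the $g_i$ in $\fa^{n+1}$ from the start.
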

\begin{proof}
	Let us start with \ref{item:jumpds}. Suppose that $n \notin \cB^\bullet_{\fa S}(p^e)$; that is, $D^{(e)}_S \cdot \fa^n = D^{(e)}_S \cdot \fa^{n+1}$, and we claim that $D^{(e)}_R \cdot \fa^n = D^{(e)}_R \cdot \fa^{n+1}$. We observe it suffices to prove that $\fa^n \sq D^{(e)}_R \cdot \fa^{n+1}$. To prove this, suppose that $f \in \fa^n$. We know that there exist differential operators $\xi_i \in D^{(e)}_S$ and elements $g_i \in \fa^{n+1}$ such that $f = \sum_i \xi_i \cdot g_i$. Applying a splitting $\beta: S \to R$ to this equation, we obtain $f = \sum_i (\beta \circ \xi_i) \cdot g_i$. Since $\beta \circ \xi_i |_R \in D^{(e)}_R$, we conclude that $f \in D^{(e)}_R \cdot \fa^{n+1}$. This proves the claim, and thus \ref{item:jumpds} is proven. 
	
	Statement \ref{item:admds} follows from \ref{item:jumpds}: every bound for $\# \big( \cB_{\fa S}^\bullet (p^e) \cap [0, rp^e) \big)$ is a bound for $\# \big( \cB_\fa^\bullet(p^e) \cap [0, rp^e) \big)$. Statement \ref{item:rootds} follows from \ref{item:jumpds} together with \ref{item:admds}. { Statement \ref{item:threshds} follows from \ref{item:jumpds}.}
	
	We now assume that the extension $R \sq S$ is level-differentially extensible, and prove \ref{item:jumpde}. Suppose that $n \notin \cB^\bullet_\fa(p^e)$; that is, $D^{(e)}_R \cdot \fa^n = D^{(e)}_R \cdot \fa^{n+1}$, and we claim that $D^{(e)}_S \cdot \fa^n = D^{(e)}_S \cdot \fa^{n+1}$ and, once again, we observe that it suffices to show that $\fa^n \sq D^{(e)}_S \cdot \fa^{n+1}$. We thus let $f \in \fa^n$. We know that there exist $\xi_i \in D^{(e)}_R$ and $g_i \in \fa^{n+1}$ such that $f = \sum_i \xi_i \cdot g_i$. For every $i$, let $\t{\xi_i} \in D^{(e)}_S$ be a lift of $\xi_i$. We conclude that $f = \sum_i \t{\xi_i} \cdot g_i$ and therefore $f \in D^{(e)}_S \cdot \fa^{n+1}$. This proves the claim, and thus \ref{item:jumpde} is proven.
	
	Statements \ref{item:rootde} { and \ref{item:threshde}} follow from \ref{item:jumpde}. 
\end{proof}

\begin{corollary}
	Suppose that $S$ is a ring in which every ideal is Bernstein-Sato admissible (e.g. $S$ is regular, or graded with finite $F$-representation type), and that $R$ is a direct summand of $S$. Then every ideal of $R$ is Bernstein-Sato admissible.
\end{corollary}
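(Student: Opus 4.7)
The plan is to give a direct reduction to Theorem \ref{thm:directsummand}\ref{item:admds}. Given any ideal $\fa \sq R$, I will consider the extended ideal $\fa S \sq S$ and invoke the hypothesis that every ideal of $S$ is Bernstein-Sato admissible to conclude that $\fa S$ is Bernstein-Sato admissible; the theorem will then transfer this property back to $\fa$.

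First I would note that a direct summand extension $R \sq S$ is automatically split (a splitting being given by the projection onto the $R$-summand). Next, let $\fa \sq R$ be an arbitrary ideal; if $\fa$ is generated by $r$ elements $f_1, \ldots, f_r$, then $\fa S = (f_1, \ldots, f_r) S$ is also generated by $r$ elements in $S$, so the number of generators used in Definition~\ref{def-BSadm} is the same on both sides. By the hypothesis on $S$, the ideal $\fa S$ is Bernstein-Sato admissible; that is, there is a constant $C > 0$ with $\#\bigl(\cB^\bullet_{\fa S}(p^e) \cap [0, rp^e)\bigr) \leq C$ for every $e \geq 0$. Applying Theorem~\ref{thm:directsummand}\ref{item:admds} to the split extension $R \sq S$ and the ideal $\fa$, we conclude that $\fa$ itself is Bernstein-Sato admissible.

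Since $\fa$ was arbitrary, every ideal of $R$ is Bernstein-Sato admissible. There is no real obstacle here beyond correctly identifying that the number of generators used to measure admissibility is preserved under the extension $\fa \rightsquigarrow \fa S$, which is immediate. The two advertised examples (regular rings, and graded rings with finite $F$-representation type) fall under the hypothesis by the implicit result of the third author in the regular case \cite[Theorem~6.1]{QG19} and by Theorem~\ref{ThmFFRTAdmissible} in the FFRT case, so the corollary indeed unifies these families.
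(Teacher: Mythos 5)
Your proof is correct and is precisely the argument intended by the paper: the corollary is an immediate consequence of Theorem~\ref{thm:directsummand}\ref{item:admds}, applied to the split extension $R \subseteq S$ and the ideal $\fa S$, which is admissible by hypothesis on $S$. Your side remark about the number of generators is sound but unnecessary, since the paper already records that Bernstein-Sato admissibility is independent of the choice of $r$.
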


%%%%%%%%%%%%%%%%%%%%%%%%%%%%%%%%%%%%%%%%%%%%%%%%%%%%%%%%%%%
\section{Bernstein-Sato roots via the Malgrange construction} \label{scn-BSroots-via-V}
%%%%%%%%%%%%%%%%%%%%%%%%%%%%%%%%%%%%%%%%%%%%%%%%%%%%%%%%%%%

\subsection{Bernstein-Sato roots and $N_\fa$} \ 

Let $R$ be an $F$-finite ring and let $\fa \sq R$ be an ideal. The definition of the Bernstein-Sato roots of $\fa$ given in Definition \ref{DefBSrootsPadic} has three important advantages: it is nontechnical, as it only relies on the relatively simple notion of differential jumps; one can use it to prove things, as illustrated in Section \ref{Sec-BSR} and, finally, it is also more convenient for computing Bernstein-Sato roots (see Section \ref{Sec-examples}). However, it also has a serious drawback: it is not clear how this notion of Bernstein-Sato roots is related to the classical notion of the Bernstein-Sato polynomial. Our goal in this subsection is to explain how one arrives at Definition \ref{DefBSrootsPadic} from the point of view of the classical theory. 

Fix generators $\fa = (f_1, \ds, f_r)$ for $\fa$, and for every integer $e \geq 0$ we consider the module
$$H^e_\fa := \frac{R[\ut]}{(\ul{f} - \ut)^{p^e \bone}} \delta_{p^e} = \frac{R[\ut]}{(f_1 - t_1)^{p^e} \cds (f_r - t_r)^{p^e}} \delta_{p^e}$$
where $\delta_{p^e}$ is just a formal symbol. 
In particular, $H^e_\fa$ is the quotient of $R[\ut]$ by a $D^{(e)}_{R[t]}$-ideal and therefore it is a $D^{(e)}_{R[t]}$-module itself. Note that, as an $R$-module, we have a decomposition
$$H^e_\fa = \bigoplus_{\ul{a} \in \{0, \ds, p^e - 1\}^r} R \ (\ul{f} - \ut)^{\ul{a}} \ \delta_{p^e}$$

We let $\phi^e: H^e_\fa \to H^{e+1}_\fa$ be the map induced by multiplication by $(\ul{f} - \ut)^{p^e(p - 1) \bone}$, which is $D^{(e)}_{R[\ut]}$-linear. We let $H_\fa$ be the direct limit
$$H_\fa := \lim_{\to} (H^0_\fa \xrightarrow{\phi^0} H^1_\fa \xrightarrow{\phi^1} H^2_\fa \to \cds),$$
which acquires the structure of a $D_{R[\ut]}$-module. We note that the maps $\phi^e$ are injective, and therefore each $H^e_\fa$ is isomorphic to its image in $H_\fa$. From this point onwards, we identify each module $H^e_\fa$ with its image in $H_\fa$, and we think of every element of $H^e_\fa$ as an element of $H_\fa$. For example, for all $e \geq 0$ we have
$$\delta_1 = (\ul{f} - \ut)^{(p^e -1 )\bone} \delta_{p^e}.$$

\begin{lemma}
	There is an isomorphism 
	$$H_\fa \cong H^r_{(f_1 - t_1, \ds, f_r - t_r)} R[\ut]$$
	of $D_{R[\ut]}$-modules.
\end{lemma}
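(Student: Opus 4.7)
The strategy is to present $H_\fa$ as the standard colimit presentation of top local cohomology. Set $g_i := f_i - t_i$ and $I := (g_1, \ldots, g_r) \subseteq R[\ut]$. The first task is to unpack the multi-index notation in the definition of $H^e_\fa$. The stated $R$-module decomposition $H^e_\fa = \bigoplus_{\ul a \in \{0, \ldots, p^e - 1\}^r} R (\ul f - \ut)^{\ul a} \delta_{p^e}$ forces the reading
$$H^e_\fa \;=\; R[\ut] \big/ \big((f_1 - t_1)^{p^e}, \ldots, (f_r - t_r)^{p^e}\big) \;=\; R[\ut]/I^{[p^e]},$$
since under the change of variables $y_i = g_i$ this presents $R[\ul y]/(y_1^{p^e}, \ldots, y_r^{p^e})$, which is $R$-free on $\ul y^{\ul a}$ for $\ul a \in \{0, \ldots, p^e - 1\}^r$.

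Next I would recall the classical direct-limit formula for top local cohomology: for any ring $S$, any $S$-module $M$, and any strictly increasing sequence $(n_e)$ of positive integers, there is a canonical isomorphism
$$H^r_{(h_1, \ldots, h_r)}(M) \;\cong\; \varinjlim_e \, M \big/ (h_1^{n_e}, \ldots, h_r^{n_e}) M,$$
with transition maps given by multiplication by $(h_1 \cdots h_r)^{n_{e+1} - n_e}$. This is a standard consequence of the \v{C}ech complex description, and the direct limit is independent of the cofinal choice of $(n_e)$. Applying this with $S = M = R[\ut]$, $h_i = g_i$, and $n_e = p^e$, the transition maps become multiplication by $(g_1 \cdots g_r)^{p^{e+1} - p^e} = \prod_i g_i^{p^e(p-1)} = (\ul f - \ut)^{p^e(p-1)\bone}$, which is exactly $\phi^e$. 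Passing to the colimit yields an $R[\ut]$-module isomorphism $H_\fa \simto H^r_I R[\ut]$.

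The final step is to promote this to a $D_{R[\ut]}$-module isomorphism. Each Frobenius power $I^{[p^e]}$ is a $D^{(e)}_{R[\ut]}$-ideal (Lemma~\ref{lem:d-ideals}(i)), so each $H^e_\fa$ is naturally a $D^{(e)}_{R[\ut]}$-module. The map $\phi^e$ is multiplication by $\prod_i (g_i^{p-1})^{p^e} \in R[\ut]^{p^e}$, a $p^e$-th power, which therefore commutes with every $\xi \in D^{(e)}_{R[\ut]} = \End_{R[\ut]^{p^e}}(R[\ut])$; hence $\phi^e$ is $D^{(e)}_{R[\ut]}$-linear, and the colimit inherits a well-defined $D_{R[\ut]}$-action that recovers the $D^{(e)}_{R[\ut]}$-action on each $H^e_\fa$. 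The \v{C}ech-complex construction endows $H^r_I R[\ut]$ with the same $D_{R[\ut]}$-action under this identification, since both structures are induced from the natural $D^{(e)}_{R[\ut]}$-action on the quotients $R[\ut]/I^{[p^e]}$. The main obstacle is parsing the multi-index notation correctly; once that is done, the proof reduces to a direct-limit computation followed by a compatibility check of $D$-actions.
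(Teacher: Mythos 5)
Your proof is correct and follows essentially the same route as the paper: you present $H_\fa$ as the colimit $\varinjlim R[\ut]/I^{[p^e]}$ with transition maps multiplication by $p^e(p-1)$-th powers, match this with the Koszul/\v{C}ech colimit presentation of $H^r_I R[\ut]$, and use the observation that $p^e$-th powers are central for $D^{(e)}_{R[\ut]}=\End_{R[\ut]^{p^e}}(R[\ut])$ to get $D$-linearity. The paper does the same thing, only phrasing it as an explicit map $\psi^e\colon (\ul f-\ut)^{\ul a}\delta_{p^e}\mapsto \delta'_{p^e\bone-\ul a}$ into the \v{C}ech presentation and spelling out the $D^{(e)}$-compatibility a bit more carefully than your final sentence does (the \v{C}ech $D$-structure isn't literally ``induced from the quotients,'' but the same $p^e$-th-power commutation argument reconciles them).
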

\begin{proof}
	We let $L$ denote the local cohomology module on the right hand side, which we construct via the \v{C}ech complex on the given generators. We have an $R$-module decomposition
	$$L = \bigoplus_{\ul{a} \in (\Z_{>0})^r} R \delta'_{\ul{a}}$$
	where $\delta'_{\ul{a}}$ denotes the class of $(\ul{f} - \ut)^{- \ul{a}}$. 
	
	We let $\psi^e : H^e_\fa \to L$ be the unique $R$-linear map with $\psi^e ((\ul{f} - \ut)^{\ul{a}} \delta_{p^e}) = \delta'_{p^e \bone - \ul{a}}$, which gives an isomorphism of $H^e$ onto the submodule $\bigoplus_{\ul{a} \in \{1, \ds, p^e\}^r} R \delta'_{\ul{a}}$ of $L$. One immediately checks that the $\psi^e$ are compatible as $e$ changes, and that they give an $R$-module isomorphism $\psi: H_\fa \xrightarrow{\sim} L$.
	
	It remains to check that this isomorphism is $D_{R[\ut]}$-linear; i.e. that it is $D^{(e)}_{R[\ut]}$-linear for every $e$. Since $H^i_\fa$ is a $D^{(e)}_{R[\ut]}$-submodule of $H_\fa$ for every $i \geq e$, the $D^{(e)}_{R[\ut]}$-module structure on $H_\fa$ is uniquely determined by the fact that $\xi \cdot (g \delta_{p^i}) = (\xi \cdot g) \delta_{p^i}$ for all $\xi \in D^{(e)}_{R[\ut]}$, all $g \in R[\ut]$ and all $i \geq e$. Now recall that $\delta'_{p^i}$ is the class of $(\ul{f} - \ut)^{-p^i \bone} = (f_1 - t_1)^{-p^i} \cds (f_r - t_r)^{-p^i}$, which is a $p^i$-power. It follows that for every $\xi \in D^{(e)}_R$ and every $i \geq e$, $\xi$ commutes with multiplication by $(\ul{f} - \ut)^{-p^i \bone}$ in the localization $R[\ut]_{(f_1 - t_1)\cds(f_r - t_r)}$. Therefore, in $L$ we have $\xi \cdot (g \delta'_{p^i}) = (\xi \cdot g) \delta'_{p^i}$ for all $\xi \in D^{(e)}_{R[\ut]}$, all $g \in R[\ut]$ and all $i \geq e$. 
\end{proof}

It follows from the proof that the isomorphism we have constructed identifies $\delta_{p^0} = \delta_1$ with the class of $(\ul{f} - \ut)^{- \bone}$, when the local cohomology module is viewed via the \v{C}ech complex. 

The characteristic zero theory leads us to consider the module
$$N_\fa := \frac{V^0 D_{R[\ut]} \cdot \delta_1}{V^1 D_{R[\ut]} \cdot \delta_1}.$$
In the following lemma, we give a description that is more useful for our purposes. Recall that we denote by $(D_{R[\ut]})_0$ the differential operators of degree zero, with the grading induced by $\deg t_i = 1$. 

\begin{lemma}
	We have
	$$N_\fa = \frac{(D_{R[\ut]})_0 \cdot \delta_1}{ (D_{R[\ut]})_0 \cdot \fa \delta_1}.$$
\end{lemma}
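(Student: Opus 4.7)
The plan is to reduce both the numerator and denominator to graded pieces of $D_{R[\ut]}$ using Lemma~\ref{lemma-Vfilt-degree}, and then to use a single fundamental identity, namely that $t_j \delta_1 = f_j \delta_1$ in $H_\fa$, to match these graded pieces with the right-hand side of the claimed equality.

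I first record the consequences of Lemma~\ref{lemma-Vfilt-degree}: writing $I = (t_1, \ds, t_r) \sq R[\ut]$, we have $V^0 D_{R[\ut]} = (D_{R[\ut]})_{\geq 0} = \bigoplus_{i \geq 0} (D_{R[\ut]})_i$ and $V^1 D_{R[\ut]} = (D_{R[\ut]})_0 \cdot I$. The crucial algebraic input is the identity $t_j \delta_1 = f_j \delta_1$ in $H_\fa$ for every $j = 1, \ds, r$. To see this, I would use the isomorphism $H_\fa \cong H^r_{(\ul f - \ut)} R[\ut]$ of the previous lemma, under which $\delta_1$ corresponds to the class of $(\ul f - \ut)^{-\bone}$ in the top local cohomology computed by the \v{C}ech complex. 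Writing $t_j = f_j - (f_j - t_j)$, the element $(f_j - t_j) \delta_1$ corresponds to the class of $\prod_{k \neq j}(f_k - t_k)^{-1}$, which only requires inverting $r - 1$ of the $(f_k - t_k)$ and therefore vanishes in $H^r_{(\ul f - \ut)} R[\ut]$. A straightforward induction on $|\ul a|$ then gives $\ut^{\ul a} \delta_1 = \ul f^{\ul a} \delta_1$ for every multi-index $\ul a$, and hence $I^k \delta_1 = \fa^k \delta_1$ for all $k \geq 0$.

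With this identity, the two remaining computations are routine. For the numerator,
\[ V^0 D_{R[\ut]} \cdot \delta_1 = \sum_{i \geq 0} (D_{R[\ut]})_i \cdot \delta_1, \]
and for $i \geq 1$, Lemma~\ref{lemma-Vfilt-degree}(ii) gives $(D_{R[\ut]})_i = (D_{R[\ut]})_0 \cdot I^i$, so
\[ (D_{R[\ut]})_i \cdot \delta_1 = (D_{R[\ut]})_0 \cdot I^i \delta_1 = (D_{R[\ut]})_0 \cdot \fa^i \delta_1 \sq (D_{R[\ut]})_0 \cdot \delta_1, \]
since $\fa^i \sq R$. Hence $V^0 D_{R[\ut]} \cdot \delta_1 = (D_{R[\ut]})_0 \cdot \delta_1$. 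For the denominator, Lemma~\ref{lemma-Vfilt-degree}(ii) gives $V^1 D_{R[\ut]} = (D_{R[\ut]})_0 \cdot I$, and the same identity yields $V^1 D_{R[\ut]} \cdot \delta_1 = (D_{R[\ut]})_0 \cdot I \delta_1 = (D_{R[\ut]})_0 \cdot \fa \delta_1$. Combining, both quotients agree.

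I do not foresee a real obstacle here: the proof is almost entirely bookkeeping once the identity $t_j \delta_1 = f_j \delta_1$ is established. The only delicate point is the vanishing argument in the local cohomology module, which hinges on viewing $\delta_1$ through the \v{C}ech-complex presentation provided by the preceding lemma.
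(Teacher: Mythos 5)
Your proof is correct and follows essentially the same route as the paper: both reduce via Lemma~\ref{lemma-Vfilt-degree} and both hinge on the identity $(f_j - t_j)\delta_1 = 0$ (equivalently $I\delta_1 = \fa\delta_1$). The one difference is that the paper asserts $(f_i - t_i)\delta_1 = 0$ without justification, whereas you correctly supply the argument: under the isomorphism $H_\fa \cong H^r_{(\ul f - \ut)}R[\ut]$ the element $(f_j - t_j)\delta_1$ corresponds to the \v{C}ech class of $\prod_{k\neq j}(f_k - t_k)^{-1}$, which requires inverting only $r-1$ of the generators and so dies in top local cohomology — a genuinely necessary step when $r>1$, since for $r>1$ the relation does not hold at any finite stage $H^e_\fa$ but only in the colimit. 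One small imprecision: you cite Lemma~\ref{lemma-Vfilt-degree}(ii) as giving $(D_{R[\ut]})_i = (D_{R[\ut]})_0 I^i$; the statement of part (ii) actually gives $V^i D_{R[\ut]} = (D_{R[\ut]})_{\geq i} = (D_{R[\ut]})_0 I^i$, while what its proof establishes is the graded-piece identity $(D_{R[\ut]})_i = (D_{R[\ut]})_0 R[\ut]_i$. This does not affect your computation, since after acting on $\delta_1$ all of $(D_{R[\ut]})_0 I^i \cdot \delta_1$, $(D_{R[\ut]})_0 R[\ut]_i \cdot \delta_1$, and $(D_{R[\ut]})_i\cdot\delta_1$ coincide with $(D_{R[\ut]})_0 \fa^i\delta_1$, but it is worth being precise about which statement you invoke.
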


\begin{proof}
	Let $I$ denote the ideal $I = (t_1, \ds, t_r)$. Since $(f_i - t_i) \delta_1 = 0$, we have that $I \delta_1 = \fa \delta_1$. By using Lemma \ref{lemma-Vfilt-degree} we get
	\begin{align*}
	V^0 D_{R[\ut]} \cdot \delta_1 & = \sum_{n = 0}^\infty (D_{R[\ut]})_0 I^n \cdot \delta_1 = \sum_{n = 0}^\infty (D_{R[\ut]})_0 \cdot \fa^n \delta_1 \\
	& = (D_{R[\ut]})_0 \cdot \delta_1,
	\end{align*}
	and similarly
	\begin{align*}
	V^1 D_{R[\ut]} \cdot \delta_1 & = \sum_{n = 1}^\infty (D_{R[\ut]})_0 I^n \cdot \delta_1 = \sum_{n = 1}^\infty (D_{R[\ut]})_0 \cdot \fa^n \delta_1 \\
	& = (D_{R[\ut]})_0 \cdot \fa \delta_1. \qedhere
	\end{align*}
\end{proof}

In particular, we conclude that $N_\fa$ is a $(D_{R[\ul t]})_0$-module. In characteristic zero, the Bernstein-Sato polynomial of $\fa$ is the minimal polynomial for the action of $s := - \partial_t t$ on the module $N_\fa$ (the existence of such a minimal polynomial being far from clear). In particular, the module $(N_\fa)$ splits as a direct sum of generalized eigenspaces $N_\fa = \bigoplus_{\lambda \in \CC} (N_\fa)_\lambda$, and the roots of $b_\fa(s)$ are precisely the $\lambda \in \CC$ for which $(N_\fa)_\lambda$ is nonzero. 

In characteristic $p>0$, we view the algebra $\cts(\Zp, \F_p)$ as a subalgebra of $(D_{R[\ul t]})_0$ by using the map $\Delta$ from Subsection \ref{SubSec-CZpFp}, and this subalgebra plays the role of $\CC[s]$. Given a $p$-adic integer $\alpha \in \Zp$, we define
$$(N_\fa)_\alpha := N_\fa / \fm_\alpha N_\fa,$$
and our result is as follows. 

\begin{theorem} \label{thm-BSRoot-equivalence}
	Let $R$ be an $F$-finite ring, $\fa \sq R$ be an ideal and $\alpha \in \Zp$ be a $p$-adic integer. Let $N_\fa$ be the module defined above by using a choice of generators for $\fa$. Then $\alpha$ is a Bernstein-Sato root of $\fa$ if and only if $(N_\fa)_\alpha$ is nonzero.
\end{theorem}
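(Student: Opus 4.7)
The plan is to match the condition $(N_\fa)_\alpha \neq 0$ with the characterization of Bernstein-Sato roots from Proposition \ref{eqroot}(b): namely, $\alpha \in \BSR(\fa)$ if and only if for every $e \geq 0$ there is some $s \in \{0, \ldots, r-1\}$ with $\aae + sp^e \in \cB^\bullet_\fa(p^e)$, where $r$ is the number of generators of $\fa$. The bridge between these two conditions is an identification, for each $e \geq 0$ and each $a \in \{0, \ldots, p^e-1\}$, of the finite-level quotient $(N_\fa)^{(e)}_a := N_\fa / \fm^{(e)}_a N_\fa$ with an explicit direct sum of differential jump modules:
\[ (N_\fa)^{(e)}_a \;\cong\; \bigoplus_{s=0}^{r-1} \frac{D^{(e)}_R \cdot \fa^{a + s p^e}}{D^{(e)}_R \cdot \fa^{a + s p^e + 1}}. \]
Once established, this yields $(N_\fa)^{(e)}_a \neq 0$ if and only if $\cJ_\fa(p^e, \alpha) \neq \varnothing$ in the notation of Lemma \ref{lemma-BSR-are-local-1}.

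To build the structural isomorphism, I will analyze $H^e_\fa$ as a $\cts^e(\Zp, \F_p)$-module via the embedding $\Delta^e$ of Lemma \ref{lemma-ctsfn-diffop-isom}. Since $(f_i - t_i)^{p^e}$ vanishes in $H^e_\fa$ for each $i$, we have $t_i^{p^e} \equiv f_i^{p^e}$ there, and so the monomials $\{\ut^{\ul b} \delta_{p^e} : \ul b \in \{0, \ldots, p^e-1\}^r\}$ form an $R$-basis of $H^e_\fa$, alternative to the basis $(\ul f - \ut)^{\ul a} \delta_{p^e}$ given in the excerpt. By Lemma \ref{lemma-spi-properties}, each $\ut^{\ul b} \delta_{p^e}$ is a joint eigenvector for $s_{p^0}, \ldots, s_{p^{e-1}}$ with ``eigenvalue'' $-|\ul b| - r$ modulo $p^e$. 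The $a$-eigenspace of $H^e_\fa$ is therefore spanned over $R$ by those $\ut^{\ul b} \delta_{p^e}$ with $|\ul b| = k p^e - a - r$ for some $k \in \{1, \ldots, r\}$. Combined with the description $N_\fa = (D_{R[\ut]})_0 \cdot \delta_1 / (D_{R[\ut]})_0 \cdot \ut \delta_1$ and the identities $f_i \delta_1 = t_i \delta_1$, each piece of degree $|\ul b| = kp^e - a - r$ descends to the differential jump quotient corresponding to $n = a + s p^e$ with $s = r - k$.

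The theorem then follows by combining this identification with the formula $(N_\fa)_\alpha = \lim\limits_{\to e}(N_\fa)^{(e)}_\alpha$, which holds since $\fm_\alpha = \bigcup_e \fm^{(e)}_\alpha$ and gives a direct limit of surjections. The implication $(N_\fa)_\alpha \neq 0 \Rightarrow \alpha \in \BSR(\fa)$ is then immediate: each $(N_\fa)^{(e)}_\alpha$ surjects onto $(N_\fa)_\alpha$, so must be nonzero for every $e$, which by the identification and Lemma \ref{lemma-BSR-are-local-1} forces $\cJ_\fa(p^e, \alpha) \neq \varnothing$ for all $e$, i.e.\ $\alpha \in \BSR(\fa)$. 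For the converse, I will construct a coherent family of nonzero classes $[\ut^{\ul b_e} \delta_{p^e}] \in (N_\fa)^{(e)}_\alpha$ compatible under the transition maps $\phi^e$ (multiplication by $(\ul f - \ut)^{p^e(p-1)\bone}$), thereby producing a nonzero element of the direct limit. The hard part will be establishing the structural isomorphism for $(N_\fa)^{(e)}_a$ in the singular setting: for regular $R$ the analogous statement can be proved via Frobenius descent, but here it must be carried out directly by tracking how $(D^{(e)}_{R[\ut]})_0 \cdot \ut \delta_1$ sits inside each joint $s_{p^i}$-eigenspace of $(D^{(e)}_{R[\ut]})_0 \cdot \delta_1$.
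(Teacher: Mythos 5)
The basis you describe for $H^e_\fa$ (the monomials $\ut^{\ul b}\delta_{p^e}$) and the joint $s_{p^i}$-eigenspace analysis are essentially the same as the paper's: the paper works with $Q^e_{\ul a} := \ut^{(p^e-1)\bone - \ul a}\delta_{p^e}$, which is your basis re-indexed by $\ul a = (p^e-1)\bone - \ul b$, and Lemma~\ref{lemma-spi-properties}(i) gives the eigenvalues you quote. So the computational core of your plan is on target.

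The genuine gap lies in the passage from the eigenspace computation to the limit. You define $(N_\fa)^{(e)}_a := N_\fa/\fm^{(e)}_a N_\fa$, and correctly observe that these surject onto $(N_\fa)_\alpha$. But the explicit direct-sum decomposition you claim --- which is what you read off from $H^e_\fa$ --- describes the different object $(N^e_\fa)_\alpha := N^e_\fa/\fm^{(e)}_\alpha N^e_\fa$, where $N^e_\fa = (D^{(e)}_{R[\ut]})_0\cdot\delta_1 / (D^{(e)}_{R[\ut]})_0\cdot\fa\delta_1$ is the level-$e$ piece (cf.\ Proposition~\ref{prop:Nestuff} and Corollary~\ref{cor-NeAlpha-and-diffJump}). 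These two modules are genuinely different: for instance at $e=0$, the ideal $\fm^{(0)}_\alpha$ is zero, so $N_\fa/\fm^{(0)}_\alpha N_\fa = N_\fa$, while your claimed right-hand side is $\bigoplus_{s=0}^{r-1}\fa^s/\fa^{s+1}$, which is $R/\fa$ for $\fa$ principal --- clearly not $N_\fa$. The issue is that $N_\fa/\fm^{(e)}_\alpha N_\fa = \lim\limits_{\to i \geq e}\big(N^i_\fa/\fm^{(e)}_\alpha N^i_\fa\big)$ involves operators of level $i$ for all $i \geq e$, not just level $e$, so it is not controlled by the single module $H^e_\fa$. Using the second object $(N^e_\fa)_\alpha$ fixes the decomposition but breaks your surjectivity argument, since $N^e_\fa \to N_\fa$ is not surjective and neither is the induced map $(N^e_\fa)_\alpha \to (N_\fa)_\alpha$. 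The paper's way around this is Corollary~\ref{cor-zeroRemainsZero}: once one knows $(N^e_\fa)_\alpha \neq 0 \iff \exists\, n\in\cB^\bullet_\fa(p^e)$ with $n\equiv\alpha\bmod p^e$ (Proposition~\ref{prop-vanishNeAlpha-equivalences}), the monotonicity $\cB^\bullet_\fa(p^{e+1}) \subseteq \cB^\bullet_\fa(p^e)$ (Lemma~\ref{lem:De-nested}) implies that $(N^e_\fa)_\alpha = 0$ forces $(N^i_\fa)_\alpha = 0$ for all $i \geq e$; this is what replaces your surjectivity argument and makes the direct-limit nonvanishing equivalent to level-by-level nonvanishing. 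Relatedly, the paper also avoids your proposed construction of a compatible system of nonzero classes for the converse direction; it simply uses that $\delta_1$ generates each $(N^e_\fa)_\alpha$ so its image is nonzero whenever the module is, and the images are automatically compatible.
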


\begin{corollary} \label{cor-BSadm-Na-splits}
	Suppose that $\fa$ is Bernstein-Sato admissible with Bernstein-Sato roots $\{\alpha_1, \ds, \alpha_s\}$. Then we have a decomposition $N_\fa = \bigoplus_{i = 1}^s (N_\fa)_{\alpha_i}.$
\end{corollary}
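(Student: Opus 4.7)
The plan is to invoke Proposition~\ref{prop-CzpModules-split} applied to the module $N_\fa$, regarded as a $\cts(\Zp,\F_p)$-module via the inclusion $\Delta\colon\cts(\Zp,\F_p)\hookrightarrow (D_{R[\ut]})_0$ constructed in Subsection~\ref{SubSec-CZpFp}. Since $N_\fa$ is a $(D_{R[\ut]})_0$-module by the lemma preceding Theorem~\ref{thm-BSRoot-equivalence}, restricting scalars along $\Delta$ makes $N_\fa$ into a $\cts(\Zp,\F_p)$-module, and for every $\alpha\in\Zp$ the quotient $(N_\fa)_\alpha = N_\fa/\fm_\alpha N_\fa$ is precisely the one appearing in Proposition~\ref{prop-CzpModules-split}.

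First I would combine Theorem~\ref{thm-BSRoot-equivalence} with the hypothesis that $\fa$ is Bernstein-Sato admissible: by Theorem~\ref{thm-finiteBSRoots} the ideal $\fa$ has only finitely many Bernstein-Sato roots $\alpha_1,\dots,\alpha_s$, and by Theorem~\ref{thm-BSRoot-equivalence} these are exactly the $p$-adic integers $\alpha\in\Zp$ for which $(N_\fa)_\alpha\neq 0$. Thus the finiteness hypothesis of Proposition~\ref{prop-CzpModules-split} is satisfied.

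Then I would directly apply Proposition~\ref{prop-CzpModules-split} to obtain that the natural map
\[ N_\fa \xrightarrow{\sim} \bigoplus_{i=1}^s (N_\fa)_{\alpha_i} \]
is an isomorphism, which is the desired decomposition.

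There is no real obstacle here beyond checking that the $\cts(\Zp,\F_p)$-action on $N_\fa$ coming from $\Delta$ is well-defined, which follows from Lemma~\ref{lemma-ctsfn-diffop-isom} (the map $\Delta$ lands in $(D_{R[\ut]})_0$), and that the two possible meanings of $(N_\fa)_\alpha$ — the one from Theorem~\ref{thm-BSRoot-equivalence} and the one in Proposition~\ref{prop-CzpModules-split} — coincide, which holds by construction since both are defined as $N_\fa/\fm_\alpha N_\fa$.
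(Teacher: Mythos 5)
Your argument matches the paper's exactly: the paper also deduces the corollary from Theorem~\ref{thm-finiteBSRoots} together with Proposition~\ref{prop-CzpModules-split}, with Theorem~\ref{thm-BSRoot-equivalence} (the immediately preceding result) supplying the identification of the nonvanishing locus $\{\alpha : (N_\fa)_\alpha \neq 0\}$ with $\BSR(\fa)$. Your extra remarks about the $\cts(\Zp,\F_p)$-module structure via $\Delta$ are correct bookkeeping but add nothing beyond what the paper already sets up in Section~\ref{SubSec-CZpFp} and Section~\ref{scn-BSroots-via-V}.
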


\begin{proof}
	Follows from Theorem \ref{thm-finiteBSRoots} and Proposition \ref{prop-CzpModules-split}.
\end{proof}

We begin working towards the proof of Theorem \ref{thm-BSRoot-equivalence}. The idea is to write $(N_\fa)_\alpha$ as a direct limit $(N_\fa)_\alpha = \lim_{\to e} (N^e_\fa)_\alpha$, to understand the nonvanishing of the $(N^e_\fa)_\alpha$ for a fixed $e$, and to then analyze the effect of taking the direct limit.

We note that $N_\fa = \lim\limits_{\to e} N^e_\fa$, where 
$$N^e_\fa := \frac{(D^{(e)}_{R[\ut]})_0 \cdot \delta_1}{(D^{(e)}_{R[\ut]})_0 \cdot \fa \delta_1},$$
and that both $(D^{(e)}_{R[\ut]})_0 \cdot \delta_1$ and $(D^{(e)}_{R[\ut]})_0 \cdot \fa \delta_1$ are $(D^{(e)}_{R[\ut]})_0$-submodules of $H^e_\fa$. By viewing $\cts^e(\Zp, \F_p)$ as a subalgebra of $(D^{(e)}_{R[\ul t]})_0$, we conclude that $N^e_\fa$ is a $\cts^e(\Zp, \F_p)$-module. 

Given a $p$-adic integer $\alpha \in \Zp$ we let $\fm^{(e)}_\alpha = \fm_\alpha \cap \cts^e(\Zp, \F_p)$ and 
$$(N^e_\fa)_\alpha := N^e_\fa / \fm^{(e)}_\alpha N^e_\fa.$$
In particular, we have $(N_\fa)_\alpha = \lim\limits_{\to e} (N^e_\fa)_\alpha$. 

\begin{lemma} \label{lemma-binomialcoeffs}
	Let $\ul{x} = (x_1, \ds, x_r)$ and $\ul{y} = (y_1, \ds, y_r)$ be two sets of variables and $e \geq 0$ be an integer. In the ring $\F_p[\ul{x}, \ul{y}]$ we have
	$$(\ul{x} - \ul{y})^{(p^e - 1) \bone} = \sum_{\ul{b}}  \ul{x}^{\ul{b}} \ \ul{y}^{(p^e -1 ) \bone - \ul{b}}$$
	where the sum takes place over all multi-exponents $\ul{b} \in \Z_{\geq 0}^r$ with $0 \leq b_i < p^e$. 
\end{lemma}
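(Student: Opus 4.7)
My plan is to reduce the multi-variable identity to the one-variable case and then to exploit Frobenius to give a short algebraic proof. Since
\[ (\ul{x} - \ul{y})^{(p^e-1)\bone} = \prod_{i=1}^r (x_i - y_i)^{p^e - 1} \qquad\text{and}\qquad \sum_{\ul{b}} \ul{x}^{\ul{b}} \ul{y}^{(p^e-1)\bone - \ul{b}} = \prod_{i=1}^r \sum_{b_i = 0}^{p^e-1} x_i^{b_i} y_i^{p^e-1-b_i},\]
it is enough to prove the one-variable statement
\[ (x-y)^{p^e - 1} = \sum_{b = 0}^{p^e - 1} x^b y^{p^e - 1 - b} \qquad \text{in } \F_p[x,y]. \]

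The cleanest way to prove this, the main step of the plan, is to multiply both sides by $x - y$. On the left we obtain $(x-y)^{p^e}$, which equals $x^{p^e} - y^{p^e}$ by the Frobenius/freshman's dream in characteristic $p$. On the right, the standard telescoping identity
\[ (x-y) \sum_{b=0}^{p^e - 1} x^b y^{p^e - 1 - b} = x^{p^e} - y^{p^e}\]
(which holds in any commutative ring) gives the same expression. Since $x - y$ is a nonzerodivisor in the polynomial ring $\F_p[x,y]$, we may cancel it from both sides, establishing the one-variable identity.

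I do not anticipate any real obstacle here; the only subtle point is recognizing that the identity comes from factoring $x^{p^e} - y^{p^e}$ in two different ways — as $(x-y)^{p^e}$ using Frobenius, and as $(x-y)$ times the geometric sum. An alternative route would go through Lucas' theorem to show $\binom{p^e - 1}{b}(-1)^{p^e - 1 - b} \equiv 1 \pmod p$ for $0 \le b \le p^e - 1$ and then apply the usual binomial theorem, but the Frobenius argument is more transparent and avoids case analysis on the parity of $p$.
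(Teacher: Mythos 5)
Your proof is correct and takes essentially the same approach as the paper: both establish the $r=1$ case by observing $(x-y)\sum_b x^b y^{p^e-1-b} = x^{p^e}-y^{p^e} = (x-y)^{p^e}$ and cancelling the nonzerodivisor $x-y$. The only cosmetic difference is in the reduction to $r=1$: you factor both sides of the polynomial identity directly as products over $i$, whereas the paper restates the claim in terms of multi-index binomial coefficients (that $\binom{(p^e-1)\bone}{\ul b} \equiv (-1)^{|\ul b|} \bmod p$) and notes that this factors into the single-variable statements; these are two presentations of the same reduction.
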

\begin{proof}
	In the case where $r = 1$, this follows by observing that 
	$$(x - y)(y^{p^e - 1} + y^{p^e - 2} x + \cds + y x^{p^e - 2} + x^{p^e - 1}) = x^{p^e} - y^{p^e} = (x - y)^{p^e},$$
	together with the fact that $\F_p[x,y]$ is a domain. 
	
	For the general case, first note that the multi-index binomial theorem states that the claim in the lemma is equivalent to the statement that ${(p^e - 1)\bone \choose \ul{b}} (-1)^{r(p^e - 1) - |\ul{b}|} \equiv 1 \mod p$, which in turn is equivalent to the claim that ${(p^e - 1) \bone \choose \ul{b}} \equiv (-1)^{|\ul{b}|} \mod p$. This latter statement follows directly from the $r = 1$ case. 	
\end{proof}

Given an integer $e \geq 0$ and a multi-exponent $\ul{a} \in \{0, \ds, p^e - 1\}^r$ we define $Q^e_{\ul{a}}$ to be the following element of $H^e_\fa$:
$$Q^e_{\ul{a}} := \ut^{(p^e - 1) \bone - \ul{a}} \ \delta_{p^e}.$$
Note we have an $R$-module decomposition
$$H^e_{\fa} = \bigoplus_{\ul{a} \in \{0, \ds, p^e -1\}^r} R \ Q^e_{\ul{a}}.$$ 

Recall that we identified $D^{(e)}_R$ and $D^{(e)}_{\F_p[\ut]}$ with subrings of $D^{(e)}_{R[\ut]}$, and that with this identification we have $(D^{(e)}_{R[\ut]})_0 = D^{(e)}_R \otimes_{\F_p} (D^{(e)}_{\F_p[\ut]})_0$ (see Lemma \ref{lemma-D_R-addvar}).

\begin{lemma} \label{lemma-deg0-acts-on-delta}
	For every integer $e \geq 0$ we have the following equality of submodules of $H^e_\fa$:
	$$(D^{(e)}_{\F_p[\ut]})_0 \cdot \delta_1 = \bigoplus_{0 \leq a_i < p^e} \fa^{|\ul{a}|} \ Q^e_{\ul{a}}.$$
\end{lemma}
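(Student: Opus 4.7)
The plan is to use the explicit $\F_p$-spanning set of $(D^{(e)}_{\F_p[\ut]})_0$ recalled in the proof of Lemma~\ref{lemma-Vfilt-degree}, namely the operators $\ut^{\ul b}\sigma^{(e)}\ut^{\ul a}$ with $\ul a\in\{0,\ds,p^e-1\}^r$, $\ul b\in\NN^r$, and $|\ul a|+|\ul b|=r(p^e-1)$. I will evaluate each such generator on $\delta_1$ and verify that the resulting elements generate the claimed right-hand side as an $R$-submodule of $H^e_\fa$, using throughout that $R \subset D^{(e)}_R$ commutes with $(D^{(e)}_{\F_p[\ut]})_0$ inside $(D^{(e)}_{R[\ut]})_0$.

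First I apply Lemma~\ref{lemma-binomialcoeffs} with $\ul x = \ul f$ and $\ul y = \ut$ to obtain the expansion
\[
\delta_1 = (\ul f - \ut)^{(p^e-1)\bone}\delta_{p^e} = \sum_{\ul c\in\{0,\ldots,p^e-1\}^r} \ul f^{\ul c}\, Q^e_{\ul c},
\]
which writes $\delta_1$ in the $R$-basis $\{Q^e_{\ul c}\}$. Next I evaluate $(\sigma^{(e)}\ut^{\ul a})\cdot\delta_1$: using the $R$-linearity of $\sigma^{(e)}$ and its characterizing property that $\sigma^{(e)}(\ut^{\ul d'})=1$ iff $\ul d'=(p^e-1)\bone$ on reduced multi-exponents, the congruence $\ul a\equiv\ul c\pmod{p^e}$ with both in $\{0,\ds,p^e-1\}^r$ forces $\ul c=\ul a$, so only one term survives, yielding $(\sigma^{(e)}\ut^{\ul a})\cdot\delta_1 = \ul f^{\ul a}\delta_{p^e}$. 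Finally, decomposing $\ul b = \ul b' + p^e\ul b''$ with $\ul b'\in\{0,\ds,p^e-1\}^r$ and invoking the Frobenius identity $t_i^{p^e}=f_i^{p^e}$ (forced in $H^e_\fa$ by $(f_i-t_i)^{p^e}=0$), multiplication by $\ut^{\ul b}$ gives
\[
(\ut^{\ul b}\sigma^{(e)}\ut^{\ul a})\cdot\delta_1 = \ul f^{\ul a+p^e\ul b''}\, Q^e_{(p^e-1)\bone-\ul b'}.
\]

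For the containment $\subseteq$, the degree relation $|\ul a|+|\ul b'|+p^e|\ul b''|=r(p^e-1)$ rearranges to $|\ul a+p^e\ul b''|=|(p^e-1)\bone-\ul b'|$, placing the output in the summand $\fa^{|\ul a'|}Q^e_{\ul a'}$ for $\ul a':=(p^e-1)\bone-\ul b'$. For the containment $\supseteq$, given an $R$-module generator $\ul f^{\ul c}\, Q^e_{\ul a'}$ of $\fa^{|\ul a'|}Q^e_{\ul a'}$ (with $|\ul c|=|\ul a'|$ and $\ul a'\in\{0,\ds,p^e-1\}^r$), I perform the base-$p^e$ decomposition $\ul c=\ul a+p^e\ul b''$ with $\ul a\in\{0,\ds,p^e-1\}^r$, set $\ul b':=(p^e-1)\bone-\ul a'$ and $\ul b:=\ul b'+p^e\ul b''$. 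A short length count verifies $|\ul a|+|\ul b|=r(p^e-1)$, so $\ut^{\ul b}\sigma^{(e)}\ut^{\ul a}$ is in our spanning set and acts on $\delta_1$ to produce $\ul f^{\ul c}\,Q^e_{\ul a'}$ on the nose. Since such elements $R$-generate $\fa^{|\ul a'|}Q^e_{\ul a'}$, the equality of $R$-submodules follows.

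The main obstacle is the combinatorial bookkeeping around the base-$p^e$ decomposition of $\ul b$: correctly separating the reduced part $\ul b'$ (which determines the target summand index $\ul a'$) from the high part $\ul b''$ (which, after the Frobenius collapse $\ut^{p^e}\mapsto\ul f^{p^e}$, is absorbed into the $R$-coefficient), and checking that the degree constraint $|\ul a|+|\ul b|=r(p^e-1)$ translates precisely into the degree constraint $|\text{coefficient exponent}|=|\ul a'|$ defining membership in $\fa^{|\ul a'|}Q^e_{\ul a'}$. Beyond Lemma~\ref{lemma-binomialcoeffs}, the explicit formula for $\sigma^{(e)}$, and the defining relations of $H^e_\fa$, no further machinery is required.
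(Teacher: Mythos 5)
Your proof is correct and takes essentially the same approach as the paper: both expand $\delta_1$ via Lemma~\ref{lemma-binomialcoeffs}, compute the action of an explicit $\F_p$-linear spanning set of $(D^{(e)}_{\F_p[\ut]})_0$ on $\delta_1$ for the forward inclusion, and produce operators hitting each $R$-module generator $\ul f^{\ul c}Q^e_{\ul a'}$ via a base-$p^e$ decomposition for the reverse. The only difference is cosmetic: you reuse the spanning set $\ut^{\ul b}\sigma^{(e)}\ut^{\ul a}$ from the proof of Lemma~\ref{lemma-Vfilt-degree}, whereas the paper introduces a notationally different but operator-identical basis $\sigma^{(e)}_{\ul b\to\ul c}$, matched to yours by $\ul b_{\mathrm{paper}}=\ul a$ and $\ul c_{\mathrm{paper}}=(p^e-1)\bone-\ul b$.
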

\begin{proof}
	We begin by noting that, by Lemma \ref{lemma-binomialcoeffs}, we have
	$$\delta_1 =(\ul{f} - \ut)^{(p^e-1) \bone} \ \delta_{p^e} = \sum_{0 \leq a_i < p^e} \ul{f}^{\ul{a}} \ Q^e_{\ul{a}}.$$
	Given multi-exponents $\ul{b}, \ul{c} \in \Z^r$ with $0 \leq b_i < p^e$ and $c_i < p^e$ we denote by $\sigma^{(e)}_{\ul b \to \ul c}$ the unique element of $D^{(e)}_{\F_p[\ut]}$ such that for all $\ul k \in \{0, 1, \ds, p^e -1\}^r$ we have
	$$
	\sigma^{(e)}_{\ul b \to \ul c} \cdot \ut^{\ul k} =  \begin{cases}
	\ut^{(p^e - 1) \bone - \ul c} \text{ if } \ul k = (p^e - 1) \bone - \ul b \\
	0 \text{ otherwise.}	
	\end{cases}$$
	These operators form an $\F_p$-basis for $D^{(e)}_{\F_p[\ut]}$, and since $\sigma^{(e)}_{\ul b \to \ul c}$ is homogeneous of degree $|\ul b| - |\ul c|$, the subcollection for which $|\ul b| = |\ul c|$ is an $\F_p$-basis for $(D^{(e)}_{\F_p[\ut]})_0$. 
	
	Let $\ul b, \ul c \in \Z^r$ be multi-exponents with $0 \leq b_i < p^e$ and $c_i < p^e$ such that $|\ul b| = |\ul c|$. Let $\ul c', \ul c'' \in \Z^r$ be the unique multi-exponents with $0 \leq c_i' < p^e$, $0 \leq c_i''$ and $\ul c = \ul c' - p^e \ul c''$. We then have $\sigma^{(e)}_{\ul b \to \ul c} \cdot Q^{e}_{\ul a} = 0$ for $\ul a \neq \ul b$ and
	\begin{align*}
	\sigma^{(e)}_{\ul b \to \ul c} \cdot Q^e_{\ul b} & = \ul t^{(p^e - 1) \bone - \ul c' + p^e \ul c''} \delta_{p^e} \\
	& = \ul f^{p^e \ul c''} Q^e_{\ul c'},
	\end{align*}
	where in the last equality we use the fact that $(f_i^{p^e} - t_i^{p^e}) = (f_i - t_i)^{p^e} \delta_{p^e} = 0$. Therefore, $\sigma^{(e)}_{\ul b \to \ul c} \cdot \delta_1 = \ul f^{p^e \ul c'' + \ul b} \  Q^e_{\ul c'}$, and since $p^e |\ul c''| + | \ul b| = |\ul c'|$ we have that $\sigma^{(e)}_{\ul b \to \ul c} \cdot \delta_1 \sq \bigoplus_{0 \leq a_i < p^e} \fa^{|\ul a|} Q^e_{\ul a}.$
	
	For the other inclusion, let $\ul b, \ul a \in \Z^r$ be multi-exponents such that $0 \leq b_i$, $0 \leq a_i < p^e$ and $|\ul b| = |\ul a|$, and we show that $\ul f^{\ul b} \ Q^e_{\ul a} \in (D^{(e)}_{\F_p[\ut]})_0$. Let $\ul b', \ul b'' \in \Z^r$ be the unique multi-exponents such that $0 \leq b'_i < p^e$, $0 \leq b''_i$ and $\ul b = \ul b' + p^e \ul b''$. We then have
	\begin{align*}
	\sigma^{(e)}_{\ul b' \to \ul a - p^e \ul b''} \cdot \delta_1 & = \sigma^{(e)}_{\ul b' \to \ul a - p^e \ul b''} \cdot \ul{f}^{\ul b'} Q^e_{\ul b'} \\
	& = \ul f^{\ul b} Q^e_{\ul a},
	\end{align*}
	and since $|\ul b'| = |\ul a| - p^e \ul b''$, $\sigma^{(e)}_{\ul b' \to \ul a - p^e \ul b''} \in (D^{(e)}_{\F_p[\ut]})_0$, which proves the claim.	
\end{proof}

\begin{proposition} \label{prop:Nestuff}
	Let $R$ be an $F$-finite ring, $\fa \sq R$ be an ideal and $e \geq 0$ be an integer.
	\begin{enumerate}[(i)]
		\item We have a direct sum decomposition
		$$N^e_\fa = \bigoplus_{0 \leq a_i < p^e} \frac{D^{(e)}_R \cdot \fa^{|\ul a|}}{D^{(e)}_R \cdot \fa^{|\ul a| + 1}} \t{Q}^e_{\ul a},$$
		where $\t{Q}^e_{\ul a}$ denotes the image of $Q^e_{\ul a}$ in the quotient.
		
		\item If $\alpha \in \Zp$ is a $p$-adic integer, then $(N^e_\fa)_\alpha$ consists of the summands indexed by those $\ul a$ for which $|\ul a| \equiv \alpha \mod p^e$. 
	\end{enumerate}
\end{proposition}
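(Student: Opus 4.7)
My plan is to combine the factorization $(D^{(e)}_{R[\ut]})_0 = D^{(e)}_R \otimes_{\F_p} (D^{(e)}_{\F_p[\ut]})_0$ from Lemma \ref{lemma-D_R-addvar} with the explicit description of $(D^{(e)}_{\F_p[\ut]})_0 \cdot \delta_1$ given by Lemma \ref{lemma-deg0-acts-on-delta}. The key observation is that the two commuting subactions of $D^{(e)}_R$ and $(D^{(e)}_{\F_p[\ut]})_0$ interact transparently with the $R$-module decomposition $H^e_\fa = \bigoplus_{\ul a} R\, Q^e_{\ul a}$: elements of $D^{(e)}_R$ act by $\xi \cdot (g\, Q^e_{\ul a}) = (\xi \cdot g)\, Q^e_{\ul a}$, preserving the $\ul a$-indexing, while $(D^{(e)}_{\F_p[\ut]})_0$ is $R$-linear.

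For part (i), I will start from Lemma \ref{lemma-deg0-acts-on-delta}, which gives $(D^{(e)}_{\F_p[\ut]})_0 \cdot \delta_1 = \bigoplus_{0 \leq a_i < p^e} \fa^{|\ul a|}\, Q^e_{\ul a}$, and use the $R$-linearity of $(D^{(e)}_{\F_p[\ut]})_0$ together with the identity $\fa \delta_1 = \sum_j f_j \delta_1$ to deduce the analogous formula $(D^{(e)}_{\F_p[\ut]})_0 \cdot \fa \delta_1 = \bigoplus_{0 \leq a_i < p^e} \fa^{|\ul a|+1}\, Q^e_{\ul a}$. I will then act by $D^{(e)}_R$ on both sides, which distributes over the direct sum termwise thanks to the observation above, yielding
\[ (D^{(e)}_{R[\ut]})_0 \cdot \delta_1 = \bigoplus_{0 \leq a_i < p^e} (D^{(e)}_R \cdot \fa^{|\ul a|})\, Q^e_{\ul a}, \]
\[ (D^{(e)}_{R[\ut]})_0 \cdot \fa \delta_1 = \bigoplus_{0 \leq a_i < p^e} (D^{(e)}_R \cdot \fa^{|\ul a|+1})\, Q^e_{\ul a}. \]
Taking the quotient summand-by-summand yields the decomposition in (i).

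For part (ii), I must compute the action of the subalgebra $\cts^e(\Zp, \F_p) \hookrightarrow (D^{(e)}_{R[\ut]})_0$, via the embedding $\Delta^e$ of Subsection \ref{SubSec-CZpFp}. Applying the defining formula $\Delta^e(\phi) \cdot f\ut^{\ul b} = \phi(-r - |\ul b|)\, f\ut^{\ul b}$ to $Q^e_{\ul a} = \ut^{(p^e - 1)\bone - \ul a}\, \delta_{p^e}$ and using that $\phi$ factors through $\Zp / p^e \Zp$, I obtain $\Delta^e(\phi) \cdot Q^e_{\ul a} = \phi(|\ul a| - rp^e)\, Q^e_{\ul a} = \phi(|\ul a|)\, Q^e_{\ul a}$. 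Hence each summand from part (i) is a $\cts^e(\Zp, \F_p)$-eigenspace with character $\phi \mapsto \phi(|\ul a|)$. The ideal $\fm^{(e)}_\alpha$ annihilates this summand precisely when $|\ul a| \equiv \alpha \mod p^e$; otherwise there is some $\phi \in \fm^{(e)}_\alpha$ with $\phi(|\ul a|) = 1$, so the summand lies in $\fm^{(e)}_\alpha N^e_\fa$ and vanishes in the quotient $(N^e_\fa)_\alpha$.

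The main technical point to monitor will be the bookkeeping for the two commuting subactions, in particular verifying that applying $D^{(e)}_R$ to a direct sum indexed by $\ul a$ respects the indexing and produces the claimed summands; once that is pinned down, the computation of the $\cts^e(\Zp, \F_p)$-eigencharacters reduces to a direct unwinding of the formula for $\Delta^e$.
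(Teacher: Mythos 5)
Your argument is correct and follows essentially the same route as the paper's own proof: part (i) is derived from Lemma~\ref{lemma-deg0-acts-on-delta} together with the factorization $(D^{(e)}_{R[\ut]})_0 = D^{(e)}_R \otimes_{\F_p} (D^{(e)}_{\F_p[\ut]})_0$ of Lemma~\ref{lemma-D_R-addvar}, and part (ii) is obtained by computing that $\varphi \in \cts^e(\Zp,\F_p)$ acts on $Q^e_{\ul a}$ by the scalar $\varphi(-r-|(p^e-1)\bone-\ul a|) = \varphi(|\ul a|)$. The only small point worth tightening in the write-up is that "acting by $D^{(e)}_R$ on both sides" should be explicitly grounded in the tensor factorization, so that $(D^{(e)}_{R[\ut]})_0\cdot\delta_1 = D^{(e)}_R\cdot\big((D^{(e)}_{\F_p[\ut]})_0\cdot\delta_1\big)$ follows, but you cite the relevant lemma, so this is fine.
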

\begin{proof}
	Lemma \ref{lemma-deg0-acts-on-delta} together with the fact that $(D^{(e)}_{R[\ut]})_0 = D^{(e)}_R \otimes_{\F_p} (D^{(e)}_{\F_p[\ut]})_0$ (see Lemma \ref{lemma-D_R-addvar}) imply that the submodule $(D^{(e)}_{R[t]})_0 \cdot \delta_1$ of $H^e_\fa$ is given by
	$$(D^{(e)}_{R[t]})_0 \cdot \delta_1 = \bigoplus_{0 \leq a_i < p^e} (D^{(e)}_R \cdot \fa^{|\ul a|}) Q^e_{\ul a}.$$
	Similarly, using Lemma \ref{lemma-deg0-acts-on-delta} we observe that $(D^{(e)}_{\F_p[\ut]})_0 \cdot \fa \delta_1 = \fa (D^{(e)}_{\F_p[\ut]})_0 \cdot \delta_1 = \bigoplus_{0 \leq a_i < p^e} \fa^{|\ul a| + 1} Q^e_{\ul a}$, and by once again applying the fact that $(D^{(e)}_{R[\ut]})_0 = D^{(e)}_R \otimes_{\F_p} (D^{(e)}_{\F_p[\ut]})_0$ we conclude that 
	$$(D^{(e)}_{R[t]})_0 \cdot \fa \delta_1 = \bigoplus_{0 \leq a_i < p^e} (D^{(e)}_R \cdot \fa^{|\ul a| + 1}) Q^e_{\ul a},$$
	and part (i) follows. 
	
	For part (ii), recall that the action of $\cts^e(\Zp, \F_p)$ on $N^e_\fa$ comes via the map $\Delta$ defined in Subsection \ref{SubSec-CZpFp}. An easy computation yields that a function $\varphi \in \cts^e(\Zp, \F_p)$ acts on $Q^e_{\ul a}$ by the scalar $\varphi(|\ul a|)$, and therefore
	\[ \fm^{(e)}_\alpha Q^e_{\ul a} = \begin{cases} 0 \text{ if } |\ul a| \equiv \alpha \mod p^e \\ \F_p Q^{e}_{\ul a} \text{ otherwise.} \end{cases} \qedhere \]
\end{proof}
\begin{corollary} \label{cor-NeAlpha-and-diffJump}
	The module $(N^e_\fa)_\alpha$ is a direct sum of the modules from the list 
	$$\bigg\{ \frac{D^{(e)}_R \cdot \fa^{n}}{D^{(e)}_R \cdot \fa^{n+1}} \  \bigg| \  0 \leq n \leq r(p^e-1) \text{ and } n \equiv \alpha \mod p^e  \bigg\},$$
	and every module from the list appears in the decomposition. 
\end{corollary}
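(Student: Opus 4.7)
The plan is to deduce this corollary as essentially a repackaging of Proposition~\ref{prop:Nestuff}; there is no serious obstacle here, only the minor combinatorial verification that every admissible index actually appears.

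First I would combine parts (i) and (ii) of Proposition~\ref{prop:Nestuff} to obtain the direct sum decomposition
\[
(N^e_\fa)_\alpha = \bigoplus_{\substack{\ul a \in \{0,\ldots,p^e-1\}^r \\ |\ul a| \equiv \alpha \bmod p^e}} \frac{D^{(e)}_R \cdot \fa^{|\ul a|}}{D^{(e)}_R \cdot \fa^{|\ul a| + 1}} \, \t{Q}^e_{\ul a}.
\]
Next, I would reorganize this sum by grouping the summands according to the common value $n = |\ul a|$. Since each entry $a_i$ satisfies $0 \leq a_i \leq p^e - 1$, every such multi-exponent has $0 \leq |\ul a| \leq r(p^e - 1)$, and by construction $|\ul a| \equiv \alpha \bmod p^e$. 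It follows that every summand appearing in the decomposition is of the form $\frac{D^{(e)}_R \cdot \fa^{n}}{D^{(e)}_R \cdot \fa^{n+1}}$ for some $n$ in the stated list, which gives the first claim.

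Finally, to see that every module from the list actually occurs in the decomposition, it suffices to show that for every integer $n$ with $0 \leq n \leq r(p^e-1)$ and $n \equiv \alpha \bmod p^e$, there exists at least one multi-exponent $\ul a \in \{0, \ldots, p^e-1\}^r$ with $|\ul a| = n$. This follows from a standard greedy decomposition: set $a_i := \min\!\bigl(p^e - 1,\; n - (a_1 + \cdots + a_{i-1})\bigr)$ for $i = 1, \ldots, r$. At each step the residual $n - (a_1 + \cdots + a_{i-1})$ stays in the interval $[0, (r-i+1)(p^e-1)]$ thanks to the bound $n \leq r(p^e-1)$, so all $a_i$ are nonnegative and we end with $|\ul a| = n$. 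This completes the proof.
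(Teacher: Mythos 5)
Your proof is correct and takes essentially the same route as the paper. The paper's own proof simply cites part (ii) of Proposition~\ref{prop:Nestuff} and states the equality of index sets $\{|\ul a| \mid 0 \leq a_i < p^e,\ |\ul a| \equiv \alpha \bmod p^e\} = \{0 \leq n \leq r(p^e-1) \mid n \equiv \alpha \bmod p^e\}$ as an observation; your greedy construction is just an explicit justification of the nontrivial inclusion of that same observation.
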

\begin{proof}
	The result follows from Proposition \ref{prop:Nestuff}(ii) , together with the observation that 
	\[\{|\ul a| \;|\; 0 \leq a_i < p^e \text{ and } |\ul a| \equiv \alpha \mod p^e \} = \{0 \leq n \leq r(p^e-1)  \;|\; n \equiv \alpha \mod p^e\}. \qedhere\]
\end{proof}

\begin{proposition} \label{prop-vanishNeAlpha-equivalences}
	Let $R$ be an $F$-finite ring, $\fa \sq R$ be an ideal, $e \geq 0$ be an integer and $\alpha \in \Zp$ be a $p$-adic integer. The following are equivalent:
	\begin{enumerate}[(a)]
		\item The module $(N^e_\fa)_\alpha$ is nonzero.
		\item The image of $\delta_1$ in $(N^e_\fa)_\alpha$ is nonzero.
		\item There is a differential jump $n \in \cB^\bullet_\fa(p^e)$ with $n \equiv \alpha \mod p^e$. 
	\end{enumerate}
\end{proposition}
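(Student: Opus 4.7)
The strategy is to leverage the explicit direct-sum decomposition of $(N^e_\fa)_\alpha$ given by Proposition \ref{prop:Nestuff} and Corollary \ref{cor-NeAlpha-and-diffJump}. The implication (b) $\Rightarrow$ (a) is immediate. For (a) $\Rightarrow$ (c), Corollary \ref{cor-NeAlpha-and-diffJump} shows that nonvanishing of $(N^e_\fa)_\alpha$ forces some $n \in [0, r(p^e-1)] \cap \cB^\bullet_\fa(p^e)$ with $n \equiv \alpha \mod p^e$. For the converse, given an arbitrary $n \in \cB^\bullet_\fa(p^e)$ with $n \equiv \alpha \mod p^e$, I would iterate the subtraction statement of Proposition \ref{prop:subtract-pe} to reduce $n$ to the interval $[0, r(p^e-1)]$ while preserving both the congruence class modulo $p^e$ and membership in $\cB^\bullet_\fa(p^e)$; a short numerical check confirms that each subtraction keeps $n - p^e \geq (r-1)(p^e-1) \geq 0$, so the process never overshoots to a negative value.

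The remaining direction is (a) $\Rightarrow$ (b), which I would prove by a generator argument: if $\delta_1$ generates $N^e_\fa$ as a $(D^{(e)}_{R[\ut]})_0$-module and the canonical map $N^e_\fa \to (N^e_\fa)_\alpha$ is $(D^{(e)}_{R[\ut]})_0$-linear, then the vanishing of the image of $\delta_1$ in $(N^e_\fa)_\alpha$ forces $(N^e_\fa)_\alpha = 0$. That $\delta_1$ generates $N^e_\fa$ follows from Lemma \ref{lemma-deg0-acts-on-delta} combined with the isomorphism $(D^{(e)}_{R[\ut]})_0 \cong D^{(e)}_R \otimes_{\F_p} (D^{(e)}_{\F_p[\ut]})_0$ of Lemma \ref{lemma-D_R-addvar}: together, these give $(D^{(e)}_{R[\ut]})_0 \cdot \delta_1 = \bigoplus_{\ul a} D^{(e)}_R \cdot \fa^{|\ul a|} \ Q^e_{\ul a}$, which is the full numerator of $N^e_\fa$.

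The main obstacle is verifying the $(D^{(e)}_{R[\ut]})_0$-linearity of the quotient map $N^e_\fa \to (N^e_\fa)_\alpha$, equivalently that $\fm^{(e)}_\alpha N^e_\fa$ is stable under the full action of $(D^{(e)}_{R[\ut]})_0$ and not merely under its subalgebra $\cts^e(\Zp, \F_p)$. I would handle this by showing that $\cts^e(\Zp, \F_p)$ lies in the center of $(D^{(e)}_{R[\ut]})_0$, which is a short direct computation: any $\xi \in (D^{(e)}_{R[\ut]})_0$ is homogeneous of degree zero with respect to the grading $\deg(t_i) = 1$, so $\xi$ sends $f \ut^{\ul a}$ to an $R$-linear combination of monomials $g \ut^{\ul b}$ with $|\ul b| = |\ul a|$; since any $\varphi \in \cts^e(\Zp, \F_p)$ acts on $f \ut^{\ul a}$ by the scalar $\varphi(-r-|\ul a|)$, which depends only on $|\ul a| \mod p^e$, the operators $\xi$ and $\varphi$ commute. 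Granted this, the generator argument above concludes the proof.
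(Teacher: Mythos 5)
Your proof is correct and takes essentially the same route as the paper: (a)$\Leftrightarrow$(c) via Corollary \ref{cor-NeAlpha-and-diffJump} together with the subtraction step of Proposition \ref{prop:subtract-pe}, and (a)$\Rightarrow$(b) via centrality of $\cts^e(\Zp,\F_p)$ in $(D^{(e)}_{R[\ut]})_0$ and cyclicity of $N^e_\fa$ on $\delta_1$. Your extra verifications---the explicit centrality check using degree-zero homogeneity, and the numerical bound $n-p^e\geq(r-1)(p^e-1)\geq 0$ for each subtraction---are both correct and fill in details the paper leaves implicit.
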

\begin{proof}
We note that (b) implies (a).  To observe that (a) implies (b), note that the subalgebra $\cts^e(\Zp, \F_p)$ of $(D^{(e)}_{R[\ul t]})_0$ is central and therefore $(N^e_\fa)_\alpha$ is a cyclic left $(D^{(e)}_{R[\ul t]})_0$-module generated by $\delta_1$.
	
	By Corollary \ref{cor-NeAlpha-and-diffJump}, if $(N^e_\fa)_\alpha$ is nonzero, we have $D^{(e)}_R \cdot \fa^n \neq D^{(e)}_R \cdot \fa^{n+1}$ for some $n$ with $0 \leq n \leq r(p^e-1)$ and $n \equiv \alpha \mod p^e$. We conclude that  (a) implies (c). To show that (c) implies (a), suppose that we are given a differential jump $n$ as in part (c). By Proposition~\ref{prop:subtract-pe} we can subtract $p^e$ enough times to assume that $0 \leq n \leq r(p^e - 1)$, and the result follows by applying Corollary \ref{cor-NeAlpha-and-diffJump} once again.
\end{proof}

\begin{corollary} \label{cor-zeroRemainsZero}
	Suppose that $(N^e_\fa)_\alpha = 0$ for some $e \geq 0$. Then $(N^i_\fa)_\alpha = 0$ for all $i \geq e$.
\end{corollary}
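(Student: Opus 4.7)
The plan is to argue by contrapositive using the characterization of vanishing of $(N^e_\fa)_\alpha$ provided by Proposition~\ref{prop-vanishNeAlpha-equivalences}. Namely, assume that $(N^i_\fa)_\alpha \neq 0$ for some $i \geq e$, and deduce that $(N^e_\fa)_\alpha \neq 0$.

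First, applying the equivalence (a)~$\Leftrightarrow$~(c) of Proposition~\ref{prop-vanishNeAlpha-equivalences} at level $i$, the nonvanishing of $(N^i_\fa)_\alpha$ produces a differential jump $n \in \cB^\bullet_\fa(p^i)$ with $n \equiv \alpha \pmod{p^i}$. Next, I use Lemma~\ref{lem:De-nested}, which tells us that $\cB^\bullet_\fa(p^i) \subseteq \cB^\bullet_\fa(p^e)$ whenever $i \geq e$, so that the same integer $n$ is also a differential jump of level $e$. Finally, since $i \geq e$, the congruence $n \equiv \alpha \pmod{p^i}$ implies the weaker congruence $n \equiv \alpha \pmod{p^e}$. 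Applying Proposition~\ref{prop-vanishNeAlpha-equivalences} once more, this time at level $e$ in the direction (c)~$\Rightarrow$~(a), yields $(N^e_\fa)_\alpha \neq 0$, contradicting the hypothesis.

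There is no real obstacle here: the content is entirely encapsulated in the nestedness of the filtration $\{D^{(e)}_R\}$ (Lemma~\ref{lem:De-nested}) combined with the congruence translation $n \equiv \alpha \pmod{p^i} \Rightarrow n \equiv \alpha \pmod{p^e}$, both of which are monotonicity statements compatible with increasing $e$. Note that one does not even need the subtraction trick of Proposition~\ref{prop:subtract-pe} in this direction, since condition (c) in Proposition~\ref{prop-vanishNeAlpha-equivalences} makes no restriction on the size of $n$.
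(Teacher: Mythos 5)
Your proof is correct and follows the same route the paper intends: the corollary is an immediate consequence of the equivalence (a)~$\Leftrightarrow$~(c) of Proposition~\ref{prop-vanishNeAlpha-equivalences}, combined with the nestedness $\cB^\bullet_\fa(p^i) \subseteq \cB^\bullet_\fa(p^e)$ from Lemma~\ref{lem:De-nested} and the trivial weakening of the congruence modulo $p^i$ to one modulo $p^e$. Your observation that one does not need Proposition~\ref{prop:subtract-pe} in this direction is accurate, since condition (c) places no size restriction on the differential jump $n$.
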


\begin{proof}
	[Proof of Theorem \ref{thm-BSRoot-equivalence}]
	
	Suppose that $\alpha \in \Zp$ is a Bernstein-Sato root of $\fa$; that is, there is a sequence $(\nu_e) \sq \Z_{\geq 0}$ such that $\nu_e \in \cB^\bullet_\fa(p^e)$ and so that $\alpha$ is the $p$-adic limit of $\nu_e$. By passing to a subsequence (see Remark \ref{rmk-diffjump-subsequence}) we may assume that $\nu_e \equiv \alpha \mod p^e$. By Proposition~\ref{prop-vanishNeAlpha-equivalences}, the image of $\delta_1$ in $(N^e_\fa)_\alpha$ is nonzero for every integer $e \geq 0$. We conclude that the image of $\delta_1$ in $(N_\fa)_\alpha$ is nonzero, and thus $(N_\fa)_\alpha \neq 0$.
	
	For the other direction, suppose that $(N_\fa)_\alpha$ is nonzero. By Corollary \ref{cor-zeroRemainsZero}, we must have that $(N^e_\fa)_\alpha$ is nonzero for every $e \geq 0$. By Proposition \ref{prop-vanishNeAlpha-equivalences} we conclude that for every $e \geq 0$ there is a differential jump $\nu_e \in \cB^\bullet_\fa(p^e)$ with $\nu_e \equiv \alpha \mod p^e$. Since $\alpha$ is the $p$-adic limit of the sequence $(\nu_e)$, $\alpha$ is a Bernstein-Sato root of $\fa$.	
\end{proof}

\subsection{The operators $s_{p^i}$ and the algebra $\cts(\Zp, \F_p)$.} \label{SubSec-spi-and-CZpFp} \

We give a few remarks about why the result on the previous subsection establishes that the definition of Bernstein-Sato root given in Section \ref{Sec-BSR} is a natural extension of the notion in previous work on this subject \cite{QG19}.

Given a $(D_{R[\ul t]})_0$-module $M$ and a $p$-adic integer $\alpha$, we define
$$M_{(\alpha)} : = \{u \in M : s_{p^i} \cdot u = \alpha_i u \},$$
where the operators $s_{p^i}$ are as given in Subsection \ref{Subsec-DiffOps_VFilt} (note that this module was previously  denoted as $M_\alpha$  \cite{QG19}). Recall we also have the definition
$$M_\alpha := M / \fm_\alpha M $$
where $\fm_\alpha \sq \cts(\Zp, \F_p)$ is the ideal of functions that vanish at $\alpha$. 

The module $N_\fa$ is a $(D_{R[\ul t]})_0$- module. Our approach is to think of $N_\fa$ as a $\cts(\Zp, \F_p)$-module by restriction of scalars, and $\alpha \in \Zp$ is a Bernstein-Sato root of $\fa$ precisely when $(N_\fa)_\alpha$ is nonzero (Theorem \ref{thm-BSRoot-equivalence}). We recall that  in earlier work $\alpha$ is defined to be a Bernstein-Sato root whenever $(N_\fa)_{(\alpha)}$ is nonzero \cite{QG19}. A priori these two constructions are different, but the following proposition tells us that they agree whenever the module $N_\fa$ splits nicely; this is the case whenever $\fa$ is a Bernstein-Sato admissible ideal, and therefore the two definitions agree when the ring $R$ is regular (which is the only case considered in the third author's work \cite{QG19}).

\begin{proposition}
	The following are equivalent for a left $(D_{R[\ul t]})_0$-module $M$. 
	\begin{enumerate}[(a)]
		\item We have $\# \{\alpha \in \Zp \ | \ M_{(\alpha)} \neq 0\} < \infty$ and  $M = \bigoplus_{\alpha \in \Zp} M_{(\alpha)}$.
		\item We have $\#\{\alpha \in \Zp \; |\; M_\alpha \neq 0\} < \infty$, and the natural map $\psi: M \to \bigoplus_{\alpha \in \Zp} M_\alpha$ is an isomorphism.
		
	\end{enumerate}
	If these hold then for all $\alpha \in \Zp$, we have $\psi (M_{(\alpha)}) = M_\alpha$. In particular, \[\{\alpha \in \Zp \; |\;  M_{(\alpha)} \neq 0 \} = \{\alpha \in \Zp \; |\;  M_\alpha \neq 0\}.\] 
\end{proposition}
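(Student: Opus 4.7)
The plan is to use the embedding $\Delta:\cts(\Zp,\F_p)\hookrightarrow (D_{R[\ut]})_0$ from Subsection~\ref{SubSec-CZpFp}, which sends $\sigma_{p^i}$ to $s_{p^i}$, and reduce the problem to a statement about $M$ as a module over the commutative algebra $\cts(\Zp,\F_p)$. The first step is to record the identity $M_{(\alpha)} = \Ann_M(\fm_\alpha)$: since $\sigma_{p^i}(\alpha) = \alpha_i$, each $\sigma_{p^i} - \alpha_i$ lies in $\fm_\alpha$, giving one inclusion; for the reverse, using that $\cts(\Zp,\F_p)$ is generated over $\F_p$ by the $\sigma_{p^i}$'s, every $\varphi \in \fm_\alpha$ is a polynomial in them vanishing at $\alpha$, and its action on $u \in M_{(\alpha)}$ is computed by substituting $\sigma_{p^i} \mapsto \alpha_i$, yielding $\varphi(\alpha) u = 0$.

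For $(a) \Rightarrow (b)$, assuming $M = \bigoplus_{j=1}^n M_{(\alpha_j)}$, every $\varphi\in\cts(\Zp,\F_p)$ acts on $M_{(\alpha_j)}$ as scalar multiplication by $\varphi(\alpha_j)$. Taking $N$ large enough to separate $\alpha_1,\dots,\alpha_n$ and using the idempotent $e_j\in \cts^N(\Zp,\F_p)$ supported on $\alpha_j + p^N\Zp$, the identities $\fm_{\alpha_j} M = \bigoplus_{k \neq j} M_{(\alpha_k)}$ and $\fm_\beta M = M$ for $\beta \notin \{\alpha_1,\dots,\alpha_n\}$ follow directly. Consequently $M_{\alpha_j} = M_{(\alpha_j)}$, $M_\beta = 0$ for the remaining $\beta$, and the natural map $\psi$ becomes the identity on each summand, which simultaneously gives (b) and the final identification $\psi(M_{(\alpha)}) = M_\alpha$.

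For $(b)\Rightarrow (a)$, set $N = \bigoplus_\alpha M_\alpha$. Because $\varphi - \varphi(\alpha) \in \fm_\alpha$, the induced $\cts(\Zp,\F_p)$-structure on each summand $M_\alpha = M/\fm_\alpha M$ is scalar multiplication by $\varphi(\alpha)$; hence $N_{(\alpha)} = M_\alpha$ and $N = \bigoplus_{\alpha} N_{(\alpha)}$ with finite support. Since $\psi$ is an isomorphism of $\cts(\Zp,\F_p)$-modules, transporting this decomposition back through $\psi^{-1}$ produces $M = \bigoplus_\alpha M_{(\alpha)}$ with only finitely many nonzero summands, establishing (a).

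The main obstacle is not deep: it lies in the bookkeeping of Step~2, where one must identify $\fm_{\alpha_j} M$ with the corresponding partial sum of $M_{(\alpha_k)}$'s using only the scalar action on each component together with an idempotent in $\cts(\Zp,\F_p)$ supported on a single $p$-adic disk. Once that formula is in hand, both implications and the final equality $\psi(M_{(\alpha)})=M_\alpha$ follow formally.
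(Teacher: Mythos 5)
Your proposal is correct and takes essentially the same route as the paper: both begin by identifying $M_{(\alpha)} = \Ann_M(\fm_\alpha)$ via the generating set $\{\sigma_{p^i}-\alpha_i\}$ of $\fm_\alpha$ (with $\Delta(\sigma_{p^i}) = s_{p^i}$), and both then transfer the decomposition between the two descriptions by observing that each $\varphi\in\cts(\Zp,\F_p)$ acts on $M_{(\alpha)}$ (resp. $M_{\alpha}$) as the scalar $\varphi(\alpha)$. The paper phrases the two directions as formal computations of $(\bigoplus_\alpha M_{(\alpha)})_\beta$ and $(\bigoplus_\alpha M_\alpha)_{(\beta)}$, but the underlying content—your idempotent bookkeeping for $\fm_{\alpha_j}M$—is the same.
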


\begin{proof}
	With the notation from Subsection \ref{Subsec-DiffOps_VFilt}, we have that the ideal $\fm_\alpha$ is generated by
	$$\fm_\alpha = (\sigma_{p^i} - \alpha_i \ | \ i \in \Z_{\geq 0} )$$
	and, since $\Delta(\sigma_{p^i}) = s_{p^i}$, we see that $M_{(\alpha)} = \Ann_M (\fm_\alpha)$ for any $(D_{R[\ul t]})_0$-module $M$ and any $\alpha \in \Zp$. 
	
	Suppose that $M$ splits as in part (a). Then for all $\beta \in \Zp$ we have
	$$M_\beta = \big( \bigoplus_{\alpha \in \Zp} M_{(\alpha)} \big)_\beta \cong M_{(\beta)},$$
	and the composition $M \to M_\beta \cong M_{(\beta)}$ is the projection map, which proves (b).
	
	Suppose now that $M$ splits as in part (b). The natural map $M \to \bigoplus_{\alpha} M_\alpha$ is $\cts(\Zp, \F_p)$-linear, and thus for all $\beta \in \Zp$ we have
	$$M_{(\beta)} \cong \big( \bigoplus_{\alpha \in \Zp} M_\alpha \big)_{(\beta)} = M_\beta,$$
	and the composition $M_\beta \cong M_{(\beta)} \to M$ is the inclusion map induced by the direct sum decomposition, which proves (a). 
	
	The last statement follows from the proof. 
\end{proof}

\subsection{Alternative characterization of $N_f$} \ 

	For every $e \geq 0$ let $H^e_f := R_f[t]/(f-t)^{p^e} \delta_{p^e}$, where $\delta_{p^e}$ is a formal symbol. We have maps $H^e_f \to H^{e+1}_f$ given by multiplication by $(f-t)^{p^e(p-1)} = (f^{p^e} - t^{p^e})^{p-1}$, and the limit
$$H_f := \lim_{\rightarrow} (H^0_f \longrightarrow H^1_f \longrightarrow \cds )$$
can be identified with $R_f[t]_{f-t}/R_f[t]$, whereby $\delta_{p^e}$ gets identified with the class of $(f-t)^{-p^e}$. Note that $H^e_f$ has a $D^{(e)}_{R[t]}$-module structure, and the map $H^e_f \to H^{e+1}_f$ is $D^{(e)}_{R[t]}$-linear. This gives the limit $H_f$ a $D_{R[t]}$-module structure, and the isomorphism $H_f \cong R_f[t]_{f-t}/R_f[t]$ is $D_{R[t]}$-linear.

We get an action of the algebra $\cts^e(\Zp, \F_p)$ on $H^e_f$ by restriction of scalars through $\Delta^e$, and an action of the algebra $\cts(\Zp, \F_p)$ on $H_f$  by restriction of scalars through $\Delta$.

\begin{proposition} \label{prop-CtsRf-to-Hf-iso}
	For every $e \geq 0$ there is a unique additive isomorphism 
	$$\Phi^e: \cts^e(\Zp, R_f) \simto H^e_f$$
	that identifies $1$ with $\delta_1$ and that is linear over $R_f$ and over $\cts^e(\Zp, \F_p)$. These isomorphisms glue to give an isomorphism 
	$$\Phi: \cts(\Zp, R_f) \simto H_f$$
	that is linear over $R_f$ and over $\cts(\Zp, \F_p)$. 
\end{proposition}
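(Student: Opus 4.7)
The approach is to exploit the decomposition of $\cts^e(\Zp, \F_p) \cong \F_p^{p^e}$ into the orthogonal idempotents $\chi^{(e)}_a$ (the characteristic functions of the cosets $a + p^e \Zp$, for $a = 0, \ds, p^e - 1$), and to match up the resulting eigenspace decompositions on the two sides.

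First, I would dispense with uniqueness. Since $\cts^e(\Zp, R_f) \cong R_f \otimes_{\F_p} \cts^e(\Zp, \F_p)$ is cyclic over the commuting actions of $R_f$ and $\cts^e(\Zp, \F_p)$, generated by $1$, any additive map that is linear over both rings is determined by its value on $1$. Before matching anything I would also note that the $R_f$- and $\cts^e(\Zp, \F_p)$-actions on $H^e_f$ commute: $\Delta^e$ factors through $(D^{(e)}_{\F_p[t]})_0$, which commutes with $R_f \subseteq D^{(e)}_R$ inside the tensor decomposition $D^{(e)}_R \otimes_{\F_p} D^{(e)}_{\F_p[t]} \cong D^{(e)}_{R[t]}$ of Lemma~\ref{lemma-D_R-addvar}.

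For existence, the source splits as $\cts^e(\Zp, R_f) = \bigoplus_{a=0}^{p^e-1} R_f \chi^{(e)}_a$, with $R_f \chi^{(e)}_a$ the $\chi^{(e)}_a$-eigenspace. For the target, the computation from the proof of Proposition~\ref{prop:Nestuff} shows that $\varphi \in \cts^e(\Zp, \F_p)$ acts on $Q^e_a := t^{p^e - 1 - a}\delta_{p^e}$ by the scalar $\varphi(a)$, so $H^e_f = \bigoplus_a R_f Q^e_a$ with $R_f Q^e_a$ the corresponding eigenspace. By Lemma~\ref{lemma-binomialcoeffs} applied with $r = 1$,
\[ \delta_1 \;=\; (f - t)^{p^e - 1} \delta_{p^e} \;=\; \sum_{a=0}^{p^e - 1} f^a\, Q^e_a, \]
and $f^a Q^e_a$ is precisely the $\chi^{(e)}_a$-component of $\delta_1$. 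I would then define $\Phi^e$ as the unique $R_f$-linear extension of $\chi^{(e)}_a \mapsto f^a Q^e_a$: by construction it sends $1 \mapsto \delta_1$, it is $\cts^e(\Zp, \F_p)$-linear because it matches eigenspaces, and it is an $R_f$-isomorphism because $\{f^a Q^e_a\}_{a}$ is an $R_f$-basis of $H^e_f$ (using that $f$ is a unit in $R_f$).

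Finally, to glue the $\Phi^e$ into $\Phi$, I would verify that $\phi^e \circ \Phi^e$ agrees with $\Phi^{e+1}|_{\cts^e(\Zp, R_f)}$. Both maps are $R_f$-linear and send $1$ to $\delta_1$, so by uniqueness it suffices to see that both are $\cts^e(\Zp, \F_p)$-linear. For $\phi^e$ this is where the main subtlety lies: one uses that $\phi^e$ is multiplication by the $p^e$-th power $(f^{p^e} - t^{p^e})^{p-1}$, which is $R_f[t]^{p^e}$-linear and hence $D^{(e)}_{R_f[t]}$-linear, so in particular commutes with the image of $\Delta^e$ inside $(D^{(e)}_{R_f[t]})_0$. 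Uniqueness then forces equality, and the $\Phi^e$ assemble into the desired isomorphism $\Phi$.
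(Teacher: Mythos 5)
Your proof is correct and follows essentially the same route as the paper's: exploit the idempotent decomposition of $\cts^e(\Zp,\F_p)$ via the characteristic functions $\chi^{(e)}_a$, compute that $\Delta^e(\chi^{(e)}_a)$ projects onto the summand $R_f\,t^{p^e-1-a}\delta_{p^e}$, and use Lemma~\ref{lemma-binomialcoeffs} to expand $\delta_1 = \sum_a f^a Q^e_a$, which forces $\chi^{(e)}_a \mapsto f^a Q^e_a$ and hence pins down $\Phi^e$. The one place you deviate is the gluing: the paper verifies compatibility by directly computing $\chi^{(e)}_a = \sum_{c} \chi^{(e+1)}_{a + cp^e}$ and $\phi^e(f^a Q^e_a) = \sum_c f^{a+cp^e} Q^{e+1}_{a+cp^e}$ and matching them term by term, whereas you observe that $\phi^e$ is multiplication by the $p^e$-th power $((f-t)^{p-1})^{p^e}$, hence $D^{(e)}_{R[t]}$-linear and in particular $\cts^e(\Zp,\F_p)$-linear, so uniqueness forces $\phi^e\circ\Phi^e = \Phi^{e+1}|_{\cts^e(\Zp,R_f)}$. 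Your version of the gluing step is a little more conceptual and avoids the computation, at the cost of invoking the uniqueness clause you established first; both are sound.
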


\begin{proof}
	Fix an integer $e \geq 0$. Given $a \in \{0, \ds, p^e-1\}$ denote by $\chi^{(e)}_a \in \cts^e(\Zp, \F_p)$ the function such that $\chi^{(e)}_a(\beta) = 1$ whenever $\beta \equiv a \mod p^e$ and such that $\chi^{(e)}_a(\beta) = 0$ otherwise. 
	
	We claim that $\Delta(\chi^{(e)}_a)  \in (D^{(e)}_{R[t]})_0$ is the unique $R$-linear operator with the property that, for all $b \in \{0, 1, \ds, p^e-1\}$, $\Delta(\chi^{(e)}_a) \cdot t^{p^e -1 - b} = t^{p^e - 1- b}$ whenever $b = a$ and such that $\Delta(\chi^{(e)}_a) \cdot t^{p^e - 1 - b} = 0$ otherwise. Indeed, by definition $\Delta(\chi^{(e)}_a)$ acts on $t^{p^e - 1- b}$ by the scalar $\chi^{(e)}_a(- 1 - (p^e - 1 - b)) = \chi^{(e)}_a(b - p^e)$, which is $1$ when $b = a$ and $0$ when $b \neq a$. 
	
	We now observe that, given an $e \geq 0$, we have the following equalities in $\cts^e(\Zp, R_f)$ and $H^e_f$ respectively:
	\begin{align*}
	1 & = \sum_{a = 0}^{p^e-1} \chi^{(e)}_a \\
	\delta_1 & = (f - t)^{p^e - 1} \delta_{p^e} = \sum_{ a= 0}^{p^e - 1}  f^a t^{p^e - 1- a} \delta_{p^e}.
	\end{align*}
	(For the last equality, see Lemma \ref{lemma-binomialcoeffs}). It follows that any map $\Phi^{(e)}: \cts^e(\Zp, R_f) \to H^e_f$ with $\Phi^e(1) = \delta_1$ that respects the $R$ and $\cts^e(\Zp, \F_p)$-actions must send $\Phi^e(\chi^{(e)}_a) = T^{e}_a$ where $T^{e}_a := f^a t^{p^e - 1- a} \delta_{p^e}$. Since $\cts^e(\Zp, R_f) = \bigoplus_{ a = 0}^{p^e -1 } R_f \chi^{(e)}_a$ and $H^e_f = \bigoplus_{a = 0}^{p^e - 1} R_f T^e_a$ such a map $\Phi^e$ exists and is indeed an isomorphism. 
	
	Only the claim regarding the gluing remains, which follows from the following identities in $\cts^e(\Zp, R_f)$ and $H^e_f$:
	\begin{align*}
	\chi^{(e)}_a & = \sum_{c = 0}^{p-1} \chi^{(e)}_{a + cp^e} \\
	T^e_a & = f^a t^{p^e - 1- a} (f - t)^{p^e(p-1)} \delta_{p^{e+1}} \\
	& = \sum_{c = 0}^{p-1} f^{a + cp^e} t^{p^{e+1}- 1- (a + cp^e)},
	\end{align*}
	 where we use Lemma \ref{lemma-binomialcoeffs} once again in the last equality.
\end{proof}

By restriction of scalars along $\Delta: \cts(\Zp, D_R) \simto (D_{R[t]})_0$ we view $H_f$ as a $\cts(\Zp, D_R)$-module, and we then transfer this structure along $\Phi: \cts(\Zp, R_f) \simto H_f$ to endow $\cts(\Zp, R_f)$ with a $\cts(\Zp, D_R)$-module structure. The module $\cts(\Zp, R_f)$, viewed as a $\cts(\Zp, D_R)$-module in this way, is denoted by $\cts(\Zp, R_f) \bfs$ and an element $\phi \in \cts(\Zp, R_f)$ is written as $\phi \bfs$ when we want to emphasize that we view it as an element of $\cts(\Zp, R_f) \bfs$. Namely, for $\xi\in \cts(\Zp, D_R)$ and $\phi \bfs\in \cts(\Zp, R_f) \bfs$, we define $\xi \cdot \phi \bfs := \Phi^{-1}(\Delta(\xi) \cdot \Phi(\beta))\bfs$.

With this notation, it follows from Proposition \ref{prop-CtsRf-to-Hf-iso} that we have isomorphisms
\[
\cts(\Zp, D_R) \cdot \bfs  \cong (D_{R[t]})_0 \cdot \delta \quad \text{and} \quad
\frac{\cts(\Zp, D_R) \cdot \bfs}{\cts(\Zp, D_R) \cdot f \bfs}  \cong N_f.\]

Our next goal is to describe the $\cts(\Zp, D_R)$-module structure of $\cts(\Zp, R_f) \bfs$ more explicitly. Note that 
\begin{equation}\label{obs-conj}
a\equiv b  \mod \, p^e \ \text{and} \ \delta\in D^{(e)}_R \quad \text{implies that} \quad f^{-a}\delta f^a = f^{-b} \delta f^b. 
\end{equation} Thus, given a $p$-adic integer $\alpha \in \Zp$ and an operator $\delta \in D^{(e)}_R$, the operators
$$f^{-\aaa} \delta f^{\aaa}$$
are equal for all $a \geq e$. This construction defines a map
$$\Upsilon_{\alpha, f}: D_R \to D_{R_f} \qquad \Upsilon_{\alpha, f} (\delta) = f^{-\aae} \delta f^{\aae} \ \ (e \gg 0).$$
Note that $\Upsilon_{\alpha, f}$ respects the level filtration, and it therefore induces maps $\Upsilon^e_{\alpha, f} : D^{(e)}_R \to D^{(e)}_{R_f}$. 

\begin{lemma}
	For all $\xi \in \cts(\Zp, D_R)$ and $\phi \bfs \in \cts(\Zp, R_f) \bfs$ we have
	$$(\xi \cdot \phi \bfs)(\alpha) = \Upsilon_{\alpha, f}(\xi(\alpha)) \cdot \phi(\alpha).$$
\end{lemma}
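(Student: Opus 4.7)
My approach is to reduce to a case small enough that one can compute both sides explicitly using the basis provided by the idempotents $\chi^{(e)}_a \in \cts^e(\Zp, \F_p)$ (the indicator functions of residues mod $p^e$), and then chase through the definitions of $\Phi^e$ and $\Delta^e$.

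First, I would observe that both sides of the claimed formula are additive in $\xi$ and in $\phi\bfs$, and are compatible with the direct limit $\cts(\Zp, -) = \bigcup_e \cts^e(\Zp, -)$. Hence it suffices to check the identity when $\xi \in \cts^e(\Zp, D^{(e)}_R)$ and $\phi \in \cts^e(\Zp, R_f)$ for a common integer $e$ large enough that $\alpha_{<e}$ is defined in the sense of $\Upsilon_{\alpha,f}$. For such $e$, using the decomposition $\cts^e(\Zp, M) = \bigoplus_{a=0}^{p^e-1} M \cdot \chi^{(e)}_a$, I would reduce further to the case $\xi = \delta \cdot \chi^{(e)}_b$ with $\delta \in D^{(e)}_R$ and $\phi = g \cdot \chi^{(e)}_a$ with $g \in R_f$. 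Since $D^{(e)}_R$ (acting on $R_f[t]$) and $\cts^e(\Zp, \F_p) \sq (D^{(e)}_{\F_p[t]})_0$ (acting only on $t$) commute inside $(D^{(e)}_{R_f[t]})_0$, we have $\Delta(\xi) = \delta \cdot \Delta(\chi^{(e)}_b)$ without ambiguity.

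Next, I would carry out the computation in $H^e_f$. By Proposition~\ref{prop-CtsRf-to-Hf-iso}, $\Phi^e(\phi) = g \cdot T^e_a = g f^a t^{p^e - 1 - a} \delta_{p^e}$. From the explicit description of $\Delta$, $\Delta(\chi^{(e)}_b)$ acts on the monomial $t^{p^e - 1 - a}$ by the scalar $\chi^{(e)}_b(a - p^e) = \chi^{(e)}_b(a) = \delta_{ab}$, so $\Delta(\chi^{(e)}_b) \cdot \Phi^e(\phi) = \delta_{ab} \Phi^e(\phi)$. Applying $\delta \in D^{(e)}_R$ then yields
\[ \Delta(\xi) \cdot \Phi^e(\phi) = \delta_{ab} \cdot \big(\delta(g f^a)\big)\, t^{p^e - 1 - a} \delta_{p^e} = \delta_{ab} \cdot \frac{\delta(g f^a)}{f^a}\, T^e_a.\]
Pulling this back via $(\Phi^e)^{-1}$, the resulting element of $\cts^e(\Zp, R_f)$ evaluated at $\alpha$ (so that $\alpha_{<e} = a$) produces $\delta_{ab} \cdot f^{-a} \delta(f^a g) = \delta_{ab} \cdot (f^{-a} \delta f^a)(g)$. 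On the other hand, $\xi(\alpha) = \delta_{ab} \cdot \delta$ and $\phi(\alpha) = g$, so the right-hand side of the claim is $\Upsilon_{\alpha,f}(\delta_{ab} \delta)(g) = \delta_{ab} \cdot (f^{-a} \delta f^a)(g)$, matching perfectly.

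The main technical point is ensuring that all the identifications are consistent: in particular, that $\Delta(\chi^{(e)}_b)$ preserves the submodule $(f-t)^{p^e} R_f[t]$ (so that its action on $H^e_f$ is well-defined) and that $\delta \in D^{(e)}_R$ commutes with $\Delta(\chi^{(e)}_b)$ inside $(D^{(e)}_{R_f[t]})_0$. Both of these follow from the fact that $\Delta(\chi^{(e)}_b) \in (D^{(e)}_{R[t]})_0$, which is $R^{p^e}[t^{p^e}]$-linear, together with Lemma~\ref{lemma-D_R-addvar}. With these checks in place, the calculation above is routine, and the continuity/additivity reductions make the general case immediate.
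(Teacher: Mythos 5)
Your proposal is correct and follows essentially the same route as the paper: reduce by bilinearity and the direct-limit description to $\xi = \delta\chi^{(e)}_b$ and $\phi = g\chi^{(e)}_a$, and compare the action of $\Delta(\xi)$ on $\Phi^e(\phi) = gT^e_a$ in $H^e_f$ with the evaluation of $\Upsilon_{\alpha,f}(\xi(\alpha))\cdot\phi(\alpha)$. The paper handles the subcases $a\neq b$ and $a=b$ separately rather than carrying the factor $\delta_{ab}$, but the computation and the use of $\Delta(\chi^{(e)}_b)\cdot T^e_a = \delta_{ab}T^e_a$ together with the explicit definition of $\Upsilon_{\alpha,f}$ are identical to yours.
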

\begin{proof}
	We retain the notation from the proof of Proposition \ref{prop-CtsRf-to-Hf-iso}. Let us also temporarily denote by $\xi \star \phi \in \cts(\Zp, R_f)$ the function $\alpha\mapsto \Upsilon_{\alpha, f}(\xi(\alpha)) \cdot \phi(\alpha)$ as before. We need to show that in $H_f$ we have the equality $\Delta(\xi) \Phi(\phi) = \Phi(\xi \star \phi)$ and, since the operation $\star$ is bilinear, it suffices to prove it for $\xi = \delta \chi^{(e)}_a$ and $\phi = g \chi^{(e)}_b$ for some $\delta \in D_R$, $g \in R_f$ and $a,b  \in \{0, \ds, p^e-1\}$, where we retain the notation of the proof of Proposition~\ref{prop-CtsRf-to-Hf-iso}.
	
	First note that for $a \neq b$ we have $\Delta(\chi^{(e)}_a) T^e_b = 0$ and therefore $\Delta(\delta \chi^{(e)}_a) \Phi(g \chi^{(e)}_b) = \delta \Delta(\chi^{(e)}_a) \cdot g T^e_b = 0$, and that $(\delta \xi^{(e)}_a) \star (g \xi^{(e)}_b) = 0$. We may thus assume that $a = b$, in which case we first observe that $(\delta \chi^{(e)}_a) \star (g \chi^{(e)}_a) = f^{-a} (\delta \cdot g f^a) \chi^{(e)}_a$, and we then compute:
	\begin{align*}
	\Delta(\xi \chi^{(e)}_a) \Phi(g \xi^{(e)}_a) & = \delta \Delta(\chi^{(e)}_a) \cdot g T^e_a \\
	& = \delta \cdot g T^e_a \\
	& = (\delta \cdot g f^a) t^{p^e -1 - a} \delta_{p^e} \\
	& = f^{-a} (\delta \cdot g f^a) T^e_a \\
	& = \Phi\big(f^{-a} (\delta \cdot g f^a ) \chi^{(e)}_a \big) \\
	& = \Phi((\delta \chi^{(e)}_a)\star(g \chi^{(e)}_a)). \qedhere
	\end{align*}
\end{proof}

Given a $p$-adic integer $\alpha \in \Zp$, evaluation at $\alpha$ defines a surjective $R_f$-module homomorphism $\cts(\Zp, R_f) \bfs \to R_f$ whose kernel is $\fm_\alpha \cts(\Zp, R_f) \bfs$, and therefore we get an isomorphism
$$\displaystyle \frac{\cts(\Zp, R_f) \bfs}{\fm_\alpha \cts(\Zp, R_f) \bfs} \xrightarrow{\sim} R_f,$$
along which we can transfer the $D_R$-module structure of the left hand side to $R_f$. The module $R_f$, equipped with this exotic $D_R$-module structure, is denoted $R_f \bfa$, where once again $\bfa$ is a formal symbol. 

We describe the $D_R$-module structure more explicitly: given $\delta \in D_R$ and $g \in R_f$ we have
\begin{equation}\label{eq:actionfa}\delta \cdot (g \bfa) = (\Upsilon_{f, \alpha}(\delta) \cdot g) \bfa = f^{-a} (\delta \cdot f^a g) \bfa,
\end{equation}
where $a \in \Z$ is an integer that $p$-adically approximates $\alpha$; more precisely, if $\delta$ has level $e$, then we require that $p^e$ divides $\alpha - a$.

If  $\alpha \in \Z_{(p)}$, then there exist $b > 0$ such that $\alpha (p^b - 1) \in \Z$. Then, $\alpha(p^{eb} - 1) \in \Z$ for all integers $e > 0$. If $\delta \in D^{(e)}_R$, then 
$$\delta \cdot (g \bfa) = f^{\alpha(p^{eb} - 1)} (\delta \cdot f^{-\alpha(p^{eb} - 1)} g) \bfa$$
for all $g \in R_f$.
This shows that $R_f \bfa$ agrees with the $D_R$-module $M_{- \alpha}$ as introduced by Blickle, \Mustata, and Smith \cite{BMSm-hyp} and further studied by the second author and P\'erez \cite{NBP}.

The following lemma justifies the notation $R_f \bfa$.

\begin{lemma}\label{lem:malpha}
	Let $R$ be an $F$-finite ring, $f \in R$. Then:
	\begin{enumerate}[(i)]
		\item\label{item:mal1} For all $\alpha \in \Zp$ the $R_f$-module isomorphism $R_f \boldsymbol{f^{\alpha + 1}} \simto R_f \bfa$ that sends $\boldsymbol{f^{\alpha + 1}} \mapsto f \bfa$ is $D_R$-linear.
		\item\label{item:mal2} For all $n \in \Z$ the $R_f$-linear map $R_f \boldsymbol{f^n} \simto R_f$ that sends $\boldsymbol{f^n} \mapsto f^n$ is $D_R$-linear.
		\item\label{item:mal3} Let $h = f^n$ for some $n \in \Z_{(\geq 0)}$. Then for all $\alpha \in \Zp$ the $R_f$-module isomorphism $R_h \boldsymbol{h^\alpha} \simto R_f \boldsymbol{f^{n \alpha}}$ that sends $\boldsymbol{h^\alpha} \mapsto \boldsymbol{f^{n\alpha}}$ is $D_R$-linear. 
		\item\label{item:mal4} Suppose $R$ is a domain and that for some $h \in \Frac (R)$, some $m \in \Z$ and some $k \in \Z \smallsetminus p\Z$ we have an equality $h^k = f^m$ in $\Frac(R)$. Then the $R_f$-module homomorphism $R_f \boldsymbol{f^{m/k}} \to \Frac(R)$ that sends $f^{m/k} \mapsto h$ is $D_R$-linear.
	\end{enumerate}
\end{lemma}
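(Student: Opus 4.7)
The plan is to verify each part by direct computation, using the explicit formula \eqref{eq:actionfa} for the $D_R$-action on $R_f\bfa$. In each case, given $\delta\in D^{(e)}_R$, I choose integer approximations to the relevant $p$-adic exponents, apply \eqref{eq:actionfa} on both sides of the candidate $R_f$-linear map, and check that the results agree.

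Parts \ref{item:mal1}, \ref{item:mal2}, and \ref{item:mal3} are routine. For \ref{item:mal1}, if $a\in\ZZ$ approximates $\alpha$ to level $p^e$ (that is, $p^e\mid \alpha-a$), then $a+1$ approximates $\alpha+1$ to the same level; substituting into \eqref{eq:actionfa} on both sides yields $f^{-a}(\delta\cdot f^{a+1}g)\bfa$, as required. For \ref{item:mal2}, taking the approximation $a=n$ is allowed since $p^e\mid n-n$, and \eqref{eq:actionfa} collapses to the usual action of $\delta$ on $f^n g\in R_f$, which matches the action on the target $R_f$. For \ref{item:mal3}, if $b$ approximates $\alpha$ to level $p^e$ then $nb$ approximates $n\alpha$ to the same level, and $h^b=f^{nb}$; substituting into \eqref{eq:actionfa} the formulas on either side of the map become identical term by term.

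The substantive part is \ref{item:mal4}. Substituting \eqref{eq:actionfa} into the desired $D_R$-linearity reduces the claim to the identity
\[ \delta(f^a g)\cdot h \;=\; f^a\cdot \delta(gh) \]
in $\Frac(R)$, where $a\in\ZZ$ satisfies $p^e\mid m/k-a$ in $\Zp$ and $\delta$ is extended to $\Frac(R)$ (where it remains $\Frac(R)^{p^e}$-linear, using that the formation of $D^{(e)}$ commutes with localization). The key algebraic input I would establish is the existence of an element $v\in\Frac(R)^{\times}$ with $f^a=h\,v^{p^e}$. Granted this, the identity is immediate from $\Frac(R)^{p^e}$-linearity of $\delta$:
\[ \delta(f^a g)\cdot h \;=\; \delta(h\,v^{p^e}\,g)\cdot h \;=\; v^{p^e}\,\delta(hg)\cdot h \;=\; f^a\,\delta(gh). \]

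The existence of $v$ is the main obstacle, and here is how I would produce it. The congruence $p^e\mid m/k-a$ in $\Zp$ together with $\gcd(k,p)=1$ (so that $k$ is a unit in $\Zp$) gives an integer $\ell$ with $ak-m=p^e\ell$. Using $h^k=f^m$, one then computes
\[ (f^a/h)^k \;=\; f^{ak-m} \;=\; (f^\ell)^{p^e} \]
in $\Frac(R)^{\times}$ (where $f^a/h$ is invertible since it is nonzero in the domain $\Frac(R)$). Bez\'out's identity provides $s,t\in\ZZ$ with $sk+tp^e=1$, and setting $v:=(f^a/h)^t\,(f^\ell)^s\in\Frac(R)^{\times}$, one verifies directly from the relation above that $v^{p^e}=f^a/h$. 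Extracting this $p^e$-th root inside $\Frac(R)$, which crucially uses both that $R$ is a domain and that $\gcd(k,p)=1$, is the delicate step; once it is in place, \ref{item:mal4} follows.
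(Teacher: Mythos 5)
Your verifications of parts (i)--(iii) are correct and coincide with the paper's computations. For part (iv), your reduction to the identity $\delta(f^a g)\cdot h = f^a\cdot \delta(gh)$, and the observation that this follows immediately from $\Frac(R)^{p^e}$-linearity of $\delta$ once one knows $f^a/h$ is a $p^e$-th power in $\Frac(R)^{\times}$, is exactly the right underlying mechanism. Your route to that fact differs from the paper's. The paper first replaces $(m,k)$ by $(nm,nk)$ to normalize $k=p^b-1$, and then makes a very specific choice of approximation $a=-\tfrac{m}{k}(p^{eb}-1)$, for which $f^a = h^{-(p^{eb}-1)} = h\cdot(h^{-1})^{p^{eb}}$ is transparently $h$ times a $p^{eb}$-th power, so no extraction of roots is needed. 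You instead work with an arbitrary integer approximation $a$ of $m/k$ and prove the existence of the $p^e$-th root $v$ of $f^a/h$ abstractly: from $p^e\mid m/k - a$ and $\gcd(k,p)=1$ you get $ak-m = p^e\ell$ with $\ell\in\ZZ$, hence $(f^a/h)^k = (f^\ell)^{p^e}$, and a B\'ezout relation $sk+tp^e=1$ gives $v=(f^a/h)^t(f^\ell)^s$ with $v^{p^e}=f^a/h$; I have checked this calculation and it is correct. Both arguments use the same two hypotheses (domain, $k$ coprime to $p$) in the same essential places. Your version avoids the normalization step, isolates cleanly the key algebraic fact ($f^a/h$ is a $p^e$-th power for \emph{any} valid $a$), and arguably makes the role of the hypotheses more transparent; the paper's version is shorter because the choice of $a$ makes the root visible without B\'ezout.
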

\begin{proof}
	Fix an operator $\delta \in D_R$ of level $e$, an element $g \in R_f$, $\alpha \in \Zp$ and an integer $a \in \Z$ such that $p^e$ divides $\alpha - a$. Part \ref{item:mal1} follows because the given morphism sends
	\begin{align*}
	\delta \cdot g \boldsymbol{f^{\alpha + 1}} & = f^{-(a + 1)} (\delta \cdot f^{a+1} g) \boldsymbol{f^{\alpha + 1}} \\ 
	& \mapsto f^{-a} (\delta \cdot f^{a+1} g) \bfa \\
	& = \delta \cdot (gf \bfa). 
	\end{align*}
	Similarly, in part \ref{item:mal2} we have
	\begin{align*}
	\delta \cdot g \boldsymbol{f^n} & = f^{-n} (\delta \cdot g f^n) \boldsymbol{f^n}  \\
	& \mapsto f^{-n} (\delta \cdot gf^n) f^n \\
	& = \delta \cdot g f^n.
	\end{align*}
	For part \ref{item:mal3} we note that $p^e$ divides $n \alpha - na$, and we compute:
	\begin{align*}
	\delta \cdot g \boldsymbol{h^\alpha} & = h^{-a} (\delta \cdot g h^a) \boldsymbol{h^\alpha} \\
	& \mapsto f^{-na} (\delta \cdot f^{na} g) \boldsymbol{f^{n \alpha}} \\
	& = \delta \cdot (g \boldsymbol{f^{na}}).
	\end{align*}
	In order to prove part \ref{item:mal4}, we may replace $m$ and $k$ by $nm$ and $nk$ respectively, and we may therefore assume that $k = p^b - 1$. We  take our approximation to $m/k$ to be $a = - \frac{m}{k} (p^{eb}- 1)$; since $(p^{eb} - 1)/k$ is an integer, so is $a$ and, moreover, $f^a = h^{-(p^{eb} - 1)}$.  Note then that the morphism sends
	\begin{align*}
	\delta \cdot g \boldsymbol{f^{m/k}} & = f^{-a} (\delta \cdot f^a g) \boldsymbol{f^{m/k}} \\
	& \mapsto f^{-a} (\delta \cdot f^a g) h \\
	& = h^{p^{eb}} (\delta \cdot h^{-p^{eb} + 1} g) \\
	& = \delta \cdot hg. \qedhere
	\end{align*}
\end{proof}

\begin{proposition} \label{prop:BSrootsandalphajumps}
	Let $R$ be an $F$-finite ring, $f \in R$ be a nonzerodivisor and $\alpha \in \Zp$ be a $p$-adic integer. 
	\begin{enumerate}[(i)]
		\item We have a $D_R$-module isomorphism
		$$\frac{D_R \cdot \bfa}{D_R \cdot f \bfa} \cong (N_f)_\alpha.$$
		\item We have $\bfa \notin D_R \cdot f \bfa$ if and only if $\alpha$ is a Bernstein-Sato root of $f$.
	\end{enumerate}
\end{proposition}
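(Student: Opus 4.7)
The plan is to deduce both statements from the evaluation map
$$\text{ev}_\alpha: \cts(\Zp, R_f) \bfs \longrightarrow R_f \bfa, \qquad \phi \bfs \longmapsto \phi(\alpha) \bfa,$$
which by construction is the quotient by $\fm_\alpha \cdot \cts(\Zp, R_f) \bfs$ described immediately after Proposition~\ref{prop-CtsRf-to-Hf-iso}. This map is $R_f$-linear, $\cts(\Zp, \F_p)$-linear, and $D_R$-linear (the last point is essentially the content of the formula~(\ref{eq:actionfa}), since by the lemma preceding it $\text{ev}_\alpha(\xi \cdot \bfs) = \xi(\alpha) \cdot \bfa$ for every $\xi \in \cts(\Zp, D_R)$). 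In particular, restricting $\text{ev}_\alpha$ to the $\cts(\Zp, D_R)$-submodules $M := \cts(\Zp, D_R) \cdot \bfs$ and $M' := \cts(\Zp, D_R) \cdot f \bfs$ has image exactly $D_R \cdot \bfa$ and $D_R \cdot f \bfa$ respectively.

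The key technical input is Lemma~\ref{lemma-CmAlpha-flat}: since $\cts(\Zp, \F_p)/\fm_\alpha$ is a flat $\cts(\Zp, \F_p)$-module, for any $\cts(\Zp, \F_p)$-submodule $N \sq \cts(\Zp, R_f) \bfs$ we have $N \cap \fm_\alpha \cdot \cts(\Zp, R_f) \bfs = \fm_\alpha \cdot N$. Applying this to $N = M$ and $N = M'$ shows that $\text{ev}_\alpha$ induces $D_R$-module isomorphisms
$$M/\fm_\alpha M \simto D_R \cdot \bfa, \qquad M'/\fm_\alpha M' \simto D_R \cdot f \bfa.$$

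For part (i), applying the exact functor $(-) \otimes_{\cts(\Zp, \F_p)} \cts(\Zp, \F_p)/\fm_\alpha$ to the short exact sequence $0 \to M' \to M \to N_f \to 0$ produces a short exact sequence
$$0 \longrightarrow M'/\fm_\alpha M' \longrightarrow M/\fm_\alpha M \longrightarrow (N_f)_\alpha \longrightarrow 0,$$
which combined with the two isomorphisms above yields $(N_f)_\alpha \cong (D_R \cdot \bfa)/(D_R \cdot f \bfa)$, as desired.

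For part (ii), observe that $(D_R \cdot \bfa)/(D_R \cdot f \bfa)$ is generated as a $D_R$-module by the class of $\bfa$, so this quotient is nonzero if and only if $\bfa \notin D_R \cdot f \bfa$. By part (i) this is equivalent to $(N_f)_\alpha \neq 0$, which by Theorem~\ref{thm-BSRoot-equivalence} is equivalent to $\alpha$ being a Bernstein-Sato root of $f$. The only subtle point in the whole argument is the identification of the kernel of $\text{ev}_\alpha$ restricted to the submodules $M$ and $M'$, but this is handled cleanly by the flatness lemma mentioned above.
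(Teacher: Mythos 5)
Your proof is correct and follows essentially the same route as the paper: both arguments hinge on the flatness of $\cts(\Zp,\F_p)/\fm_\alpha$ (Lemma~\ref{lemma-CmAlpha-flat}) to pass the short exact sequence $0 \to \cts(\Zp,D_R)\cdot f\bfs \to \cts(\Zp,D_R)\cdot\bfs \to N_f \to 0$ through the functor $(-)_\alpha$, and both identify the resulting quotient with $D_R\cdot\bfa/D_R\cdot f\bfa$ via the evaluation map. You spell out the kernel identification $N \cap \fm_\alpha\cts(\Zp,R_f)\bfs = \fm_\alpha N$ and the cyclicity of the quotient over $D_R$ slightly more explicitly, but these are the same steps the paper's proof compresses.
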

\begin{proof}
	Recall that, given a $\cts(\Zp, \F_p)$-module $M$, we denote by $M_\alpha$ the quotient $M_\alpha = M / \fm_\alpha M$; the functor $(-)_\alpha$ is exact by Lemma \ref{lemma-CmAlpha-flat}. Therefore $(\cts(\Zp, D_R) \cdot \bfs)_\alpha$ is isomorphic to its image in $R_f \bfa$, which is $D_R \cdot \bfa$; similarly, we have a natural isomorphism $(\cts(\Zp, D_R) \cdot f \bfa)_\alpha \cong D_R \cdot f \bfa$. We conclude that
	\begin{align*}
	(N_f)_\alpha & = \lp \frac{\cts(\Zp, D_R) \cdot \bfs}{\cts(\Zp, D_R) \cdot f \bfs} \rp_\alpha \\
	& = \frac{(\cts(\Zp, D_R) \cdot \bfs )_\alpha}{(\cts(\Zp, D_R) \cdot f \bfs )_\alpha} \\
	& \cong \frac{D_R \cdot \bfa}{D_R \cdot f \bfa},
	\end{align*}
	which gives (i). Statement (ii) follows from (i) together with Theorem \ref{thm-BSRoot-equivalence}. 
\end{proof}

We note that the analogue of statement (ii) in the previous proposition does not hold in characteristic zero \cite{Saito}.

Using Proposition \ref{prop:BSrootsandalphajumps}, we show that positive Bernstein-Sato roots abound in rings with certain bad singularities. We recall that a domain $R$ is seminormal whenever, for all $a \in \Frac(R)$ such that $a^2, a^3 \in R$, we have $a \in R$ \cite{Swan80}.

\begin{proposition}\label{prop:seminormal}
	Let $R$ be an $F$-finite domain. If $R$ is not seminormal then, for every $n\in \ZZ_{\geq 2}\smallsetminus p \ZZ$, there is some $f \in R$ such that $1/n$ is a Bernstein-Sato root of $f$.
\end{proposition}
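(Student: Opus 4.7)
The plan is to use the non-seminormality of $R$ to produce an element $a \in \Frac(R) \setminus R$ with $a^2, a^3 \in R$, and then to exhibit $f = a^n$ as the desired element by combining Lemma~\ref{lem:malpha}\ref{item:mal4} with Proposition~\ref{prop:BSrootsandalphajumps}(ii).

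First, from the definition of seminormal, pick $a \in \Frac(R) \setminus R$ such that $a^2, a^3 \in R$. A standard observation then gives $a^m \in R$ for every integer $m \geq 2$: one decomposes $a^m = a^2 \cdot a^{m-2}$ or $a^m = a^3 \cdot a^{m-3}$ to reduce to the two given cases. In particular $a \neq 0$. Now fix $n \in \Z_{\geq 2} \setminus p\Z$ and set $f := a^n \in R$; this is a nonzerodivisor since $R$ is a domain.

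Next, invoke Lemma~\ref{lem:malpha}\ref{item:mal4} with $h = a$, $k = n$, and $m = 1$: the hypothesis $h^k = f^m$ is precisely $a^n = f$, and $k = n \notin p\Z$ is assumed. This produces a $D_R$-linear map
\[ \Psi \colon R_f \boldsymbol{f^{1/n}} \longrightarrow \Frac(R), \qquad g \boldsymbol{f^{1/n}} \longmapsto g a. \]
Now suppose for contradiction that $1/n$ is not a Bernstein-Sato root of $f$. By Proposition~\ref{prop:BSrootsandalphajumps}(ii), this forces $\boldsymbol{f^{1/n}} \in D_R \cdot f\boldsymbol{f^{1/n}}$, so there are operators $\delta_1, \dots, \delta_\ell \in D_R$ with $\boldsymbol{f^{1/n}} = \sum_i \delta_i \cdot (f \boldsymbol{f^{1/n}})$. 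Applying $\Psi$ and using that $fa = a^{n+1}$ yields
\[ a = \sum_{i=1}^\ell \delta_i \cdot a^{n+1} \quad \text{in } \Frac(R). \]
Since $n+1 \geq 3 \geq 2$, we have $a^{n+1} \in R$, and therefore each $\delta_i \cdot a^{n+1} \in R$ (as $D_R$ preserves $R$). The right-hand side then lies in $R$, contradicting $a \notin R$.

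The main conceptual step is recognizing that Lemma~\ref{lem:malpha}\ref{item:mal4} is the right tool: it converts the abstract module $R_f \boldsymbol{f^{1/n}}$ into something concrete living in $\Frac(R)$ precisely when an $n$-th root of $f$ exists in $\Frac(R)$, which is exactly the situation produced by non-seminormality. Once that identification is made, the rest is a direct contradiction argument, and the hypothesis $n \notin p\Z$ enters only to guarantee applicability of part \ref{item:mal4} of the lemma (i.e.~that the exponent $1/n$ defines a sensible $p$-adic integer).
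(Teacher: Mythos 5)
Your proof is correct and follows essentially the same path as the paper's: produce $a\in\Frac(R)\setminus R$ with $a^2,a^3\in R$, set $f=a^n$, use Lemma~\ref{lem:malpha}\ref{item:mal4} to realize $R_f\bfa$ inside $\Frac(R)$ via $\boldsymbol{f^{1/n}}\mapsto a$, and conclude from $D_R\cdot a^{n+1}\subseteq R$ and $a\notin R$ that $\bfa\notin D_R\cdot f\bfa$, hence $1/n\in\BSR(f)$ by Proposition~\ref{prop:BSrootsandalphajumps}. The only cosmetic difference is that you argue by contradiction while the paper phrases the same step directly using injectivity of the map from Lemma~\ref{lem:malpha}\ref{item:mal4}.
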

\begin{proof}
	Since $R$ is not seminormal, we may pick some $a \in \Frac(R) \setminus R$ such that $a^2, a^3 \in R$, and therefore $a^k \in R$ for all $k \geq 2$. Let $f = a^n$; we then have $f,fa \in R$ and $a \notin R$. By  Lemma~\ref{lem:malpha}~\ref{item:mal4}, the $R_f$-module homomorphism $R_f \boldsymbol{f^{1/n}} \to \Frac(R)$ that sends $f^{1/n} \mapsto a$ is a $D_R$-linear embedding. Since $D_R \cdot fa \subseteq R$, we have $a\notin D_R \cdot fa$, so $\boldsymbol{f^{1/n}} \notin D_R \cdot f \boldsymbol{f^{1/n}}$. By Proposition~\ref{prop:BSrootsandalphajumps}, we have that $1/n$ is a Bernstein-Sato root of $f$.
\end{proof}

\begin{corollary}
	Let $R$ be an $F$-finite domain and suppose that all Bernstein-Sato roots of all elements of $R$ are nonpositive. Then $R$ is seminormal.
\end{corollary}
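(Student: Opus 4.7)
The plan is to prove this by contraposition: assume $R$ is not seminormal and produce a Bernstein-Sato root that is strictly positive. Since Proposition~\ref{prop:seminormal} has already been established, this amounts to a direct application of that proposition, so the only real content is the observation that for any prime $p$ there exists some $n \in \ZZ_{\geq 2} \smallsetminus p\ZZ$ to which the proposition can be applied.

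Concretely, suppose $R$ is not seminormal. Choose an integer $n \in \ZZ_{\geq 2} \smallsetminus p\ZZ$; for instance, take $n = 2$ when $p$ is odd and $n = 3$ when $p = 2$. By Proposition~\ref{prop:seminormal}, there exists $f \in R$ such that $1/n$ is a Bernstein-Sato root of $f$. Since $1/n > 0$, the ring $R$ admits an element with a strictly positive Bernstein-Sato root, contradicting the hypothesis. Therefore $R$ must be seminormal.

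There is no substantive obstacle here, since the work has been done in Proposition~\ref{prop:seminormal}; the corollary is just its contrapositive packaged as a converse-flavored statement. The only minor point to verify is the existence of a suitable $n$, which is trivial as the set $\ZZ_{\geq 2} \smallsetminus p\ZZ$ is nonempty for every prime $p$.
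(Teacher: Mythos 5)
Your proof is correct, and it is exactly the intended argument: the paper states this corollary without proof as the immediate contrapositive of Proposition~\ref{prop:seminormal}, which is precisely what you spell out.
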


\section{$D$-module structure of $R_f f^\alpha$}

\subsection{Bernstein-Sato roots, differential thresholds, and $R_f \bfa$} \

Proposition \ref{prop:BSrootsandalphajumps} tells us that we can characterize the Bernstein-Sato roots of nonzerodivisor $f \in R$ in terms of the modules $R_f \bfa$. In this section we explore how different properties of the modules $R_f \bfa$ reflect on the Bernstein-Sato roots and the differential thresholds of $f$. 		

\begin{theorem} \label{ThmDiscretnessBelow}
	Let $R$ be an $F$-finite $F$-split ring, $f \in R$ be a nonzerodivisor and $\alpha \in \Z_{(p)}$. The following are equivalent:
	\begin{enumerate}[(a)]
		\item We have $R_f \bfa = D_R \cdot f^{-\lfloor \alpha + 1 \rfloor } \bfa$.
		\item The module $R_f \bfa$ is finitely generated over $D_R$.
		\item We have $\BSR(f) \cap \{\alpha - \lfloor \alpha + 1 \rfloor  -1, \alpha -\lfloor \alpha + 1 \rfloor  - 2, \ds \} = \varnothing$.
		\item The set $\BSR(f) \cap \{\alpha-1, \alpha-2, \ds \}$ is finite.
		\item There is some $\varepsilon > 0$ such that the interval $(\lfloor \alpha + 1 \rfloor  - \alpha - \varepsilon, \lfloor \alpha + 1 \rfloor  - \alpha)$ contains no differential thresholds of $f$.
	\end{enumerate}
\end{theorem}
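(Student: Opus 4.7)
The approach is to study the ascending filtration $\{D_R \cdot f^{-k}\bfa\}_{k \geq 0}$ of $R_f \bfa$, which exhausts the module since $f$ is a nonzerodivisor and $f \in D_R$, and to translate its ``jumps'' into membership conditions for Bernstein-Sato roots via Proposition~\ref{prop:BSrootsandalphajumps}(ii). The key technical input is the observation that, iterating the $D_R$-linear isomorphism $R_f \boldsymbol{f^{\alpha+1}} \simto R_f \bfa$ of Lemma~\ref{lem:malpha}(i), the inclusion $D_R \cdot f^{-(k-1)}\bfa \subseteq D_R \cdot f^{-k}\bfa$ corresponds to $D_R \cdot f \boldsymbol{f^{\alpha-k}} \subseteq D_R \cdot \boldsymbol{f^{\alpha-k}}$ inside $R_f \boldsymbol{f^{\alpha-k}}$, which by Proposition~\ref{prop:BSrootsandalphajumps}(ii) is strict precisely when $\alpha - k$ is a Bernstein-Sato root of $f$.

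Given this, (a)$\Leftrightarrow$(c) and (b)$\Leftrightarrow$(d) are immediate: since $m = \lfloor \alpha + 1 \rfloor$ is chosen so that $\alpha - m \in [-1, 0)$, (a) (stabilization from step $m$) amounts to ``no strict inclusions at $k \geq m + 1$'', i.e., (c); finite generation (b) is equivalent to eventual stabilization of the chain, i.e., (d). The equivalence (c)$\Leftrightarrow$(d) then reduces to Lemma~\ref{lem:seq}(ii): any $\alpha - m - j$ with $j \geq 1$ lies strictly below $-1 = -r$, so a single BS root of this form forces an infinite descending sequence of BS roots in the coset $\alpha + \ZZ$, contradicting finiteness in (d); the reverse direction is the tautology that $\BSR(f) \cap \{\alpha - 1, \alpha - 2, \dots\}$ decomposes into the finite set $\{\alpha - 1, \dots, \alpha - m\}$ together with the possibly-infinite tail $\{\alpha - m - 1, \alpha - m - 2, \dots\}$.

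For (d)$\Leftrightarrow$(e) I would appeal to Theorem~\ref{thm:threshsareroots} and the two-sided Skoda property for principal ideals of a nonzerodivisor (Proposition~\ref{PropSkoda}). For (e)$\Rightarrow$(d), arguing contrapositively: from a hypothetical BS root $\alpha - m - j_0$ and iterations of Lemma~\ref{lemma-BSroot-dynamics}, extract an infinite family of BS roots $\alpha - m - j_i$ with $j_i \to \infty$. By Proposition~\ref{eqroot}(ii) with $r = 1$, the $p$-adic truncation $(\alpha - m - j_i)_{<e}$ is a differential jump of level $e$, and Lemma~\ref{lem:expn} gives
\[\frac{(\alpha - m - j_i)_{<ae}}{p^{ae}} \;=\; (m - \alpha) \;+\; \frac{\alpha - m - j_i}{p^{ae}},\]
which lies just below $m - \alpha$. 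Varying $(j_i, a)$ so that $j_i/p^{ae} \to c$ for any $c \in (0, m - \alpha)$ yields differential thresholds arbitrarily close to $m - \alpha$ from below, contradicting (e). The direction (d)$\Rightarrow$(e) translates the absence of BS roots in the tail into the absence of differential jumps in the relevant intervals near $(m-\alpha)p^e$ via Proposition~\ref{eqroot}, and then via Proposition~\ref{prop:nojumpnothreshold} into the absence of differential thresholds just below $m - \alpha$.

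The main obstacle is the density step in (e)$\Rightarrow$(d): Lemma~\ref{lemma-BSroot-dynamics} produces only one new BS root per Frobenius level, so a careful inductive construction across many Frobenius levels is needed to ensure that the set of exponents $\{j_i\}$ has ratios $j_i/p^{ae_i}$ dense in $(0, m - \alpha)$. This density is the crucial bridge between the $p$-adic topology defining BS roots and the Euclidean topology defining differential thresholds, and represents the genuine technical content of the equivalence beyond the essentially formal equivalences among (a)--(d).
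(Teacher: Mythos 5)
Your handling of the equivalences among (a), (b), (c), (d) --- via the isomorphisms from Lemma~\ref{lem:malpha}, the criterion of Proposition~\ref{prop:BSrootsandalphajumps}(ii), and Lemma~\ref{lem:seq} for the (c)$\Leftrightarrow$(d) step --- matches the paper's argument exactly. The divergence is in how you attach (e), and there you have located the difficulty in the wrong place.

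For (e)$\Rightarrow$(d), the ``density step'' you worry about is not needed. A single Bernstein--Sato root $\alpha - m - j_0$ with $j_0\ge 1$ (writing $m=\lfloor\alpha+1\rfloor$) already forces, via Proposition~\ref{eqroot}(ii) with $r=1$ and Lemma~\ref{lem:expn}, the differential jump $\nu_e=(p^e-1)(m-\alpha)-j_0\in\cB^\bullet_f(p^e)$ at every level $e=ae_0$ (with $(p^{e_0}-1)\alpha\in\ZZ$). Lemma~\ref{lem:thresh-nearby} then produces a differential threshold in $[\nu_e/p^e,(\nu_e+1)/p^e]$, an interval lying strictly below $m-\alpha$ and shrinking toward it as $e$ grows. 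One root plus Lemma~\ref{lem:thresh-nearby} finishes; no careful tuning of ratios $j_i/p^{ae_i}$ across Frobenius levels is required.

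The genuine gap is in (d)$\Rightarrow$(e). You plan to read off the absence of differential jumps in an interval just below $(m-\alpha)p^e$ from the absence of the Bernstein--Sato roots $\alpha-m-j$ and then apply Proposition~\ref{prop:nojumpnothreshold}. But ``$\alpha-m-j$ is not a Bernstein--Sato root'' only gives, via Lemma~\ref{lemma-BSR-are-local-1}, that $(\alpha-m-j)_{<e}\notin\cB^\bullet_f(p^e)$ for $e\ge E_j$ with an $E_j$ that, a priori, depends on $j$. To apply Proposition~\ref{prop:nojumpnothreshold} you need a jump-free \emph{interval} at a single level $e$, i.e., $(\alpha-m-j)_{<e}=(p^e-1)(m-\alpha)-j\notin\cB^\bullet_f(p^e)$ simultaneously for all $j$ in a range whose length is a positive fraction of $p^e$. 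Nothing in hypothesis (d) gives such uniform control of the $E_j$. This is exactly why the paper proves (a)$\Rightarrow$(e) rather than (d)$\Rightarrow$(e): the operator $\xi$ witnessing (a) is composed with a fixed Frobenius splitting $\sigma$ to generate, recursively, a family $\xi_k\in D^{((e+k)a)}_R$ that simultaneously clears entire intervals $\big[-\alpha(p^{ea}-1)p^{ka},\,-\alpha(p^{(e+k)a}-1)\big)$ of jumps; the single $\xi$ is the source of uniformity. That your sketch of (d)$\Rightarrow$(e) never invokes the $F$-split hypothesis is a symptom of this missing content, and Theorem~\ref{thm:threshsareroots} cannot substitute for it: that correspondence between roots and thresholds is only modulo $\ZZ$, whereas (e) is a statement about the precise real location of thresholds near $m-\alpha$.
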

\begin{proof}
	Recall that there is a $D_R$-module isomorphism $R_f \bfa \cong R_f \boldsymbol{f^{- \lfloor \alpha + 1 \rfloor  + \alpha}}$ which identifies $f^{- \lfloor \alpha + 1 \rfloor } \bfa$ with $\boldsymbol{f^{-\lfloor \alpha + 1 \rfloor  + \alpha}}$ (Lemma \ref{lem:malpha}). In particular, $R_f \bfa$ is finitely generated over $D_R$ if and only if $R_f \boldsymbol{f^{- \lfloor \alpha + 1 \rfloor  + \alpha}}$ is finitely generated over $D_R$. We can therefore replace $\alpha$ with $\alpha - \lfloor \alpha + 1 \rfloor $ to assume that $\alpha \in [-1, 0)$.
	
	We first show that (a) is equivalent to (c). Note that $R_f \bfa = \bigcup_{k = 0}^\infty D_R \cdot f^{-k} \bfa$, and therefore  we have $R_f \bfa = D_R \cdot \bfa$ if and only if every inclusion in the chain
	$$D_R \cdot \bfa \sq D_R \cdot f^{-1} \bfa \sq D_R \cdot f^{-1} \bfa \sq \cds $$
	is an equality. By Lemma \ref{lem:malpha}, for each integer $k \geq 0$ we have compatible $D_R$-module isomorphisms $D_R \cdot f^{-k} \bfa \cong D_R \cdot \boldsymbol{f^{\alpha - k}}$. By Proposition \ref{prop:BSrootsandalphajumps} we conclude that, for all $k \geq 1$, $D_R \cdot f^{-k + 1} \bfa = D_R \cdot f^{-k} \bfa$ if and only if $\alpha - k$ is not a Bernstein-Sato root of $f$. 
	
	That (b) is equivalent to (d) is proved similarly: now we observe that $R_f \bfa$ is finitely generated over $D_R$ if and only if the chain above stabilizes, which happens precisely when only finitely many of the inclusions are strict.
	
	Statement (c) implies (d) trivially. To see that (d) implies (c), suppose that $\BSR(f) \cap \{\alpha-1, \alpha-2, \ds, \}$ is nonempty; that is, suppose that there is a Bernstein-Sato root of $f$ of the form $\alpha - k $ for some integer $k \geq 1$. Since $\alpha - k < -1$, we get that $\BSR(f) \cap \{\alpha-k, \alpha-k-1, \ds, \}$ must be infinite by Lemma \ref{lem:seq}.
	
	We thus have that (a), (b), (c), and (d) are equivalent. We now show that (a) implies (e). Fix some $a \in \Z_{> 0}$ such that $\alpha(p^a - 1) \in \Z$. By assumption, there is an operator $\xi \in D_R$ such that $\xi \cdot \bfa = f^{\alpha(p^a - 1)} \bfa$. If we pick $i$ large enough so that $\xi \in D^{((i+1)a)}_R$ then we have 
	$$\xi \cdot \bfa = f^{\alpha(p^{(i+1)a} - 1)} \xi(f^{-\alpha(p^{(i+1)a} - 1)}), $$
	and we conclude that
	$$\xi\big(f^{-\alpha(p^{(i+1)a} - 1)} \big) = f^{\alpha(p^{ia} - 1) p^a}.$$
	Fix some $e$ such that $\xi \in D^{((e+1)a)}_R$; we conclude that the above holds for all $i \geq e$. 
	
	Fix a splitting $\sigma: F_* R \to R$ of the Frobenius morphism $F$; for all integers $n \geq 0$, its $n$-th iteration $\sigma^n: F^n_* R \to R$ is a splitting of $F^n$. We inductively define operators $\xi_k \in D_R$, for $k \geq 1$, by $\xi_1 = \xi$ and $\xi_k = F^{(k-1)a} \ \xi_1 \ \sigma^{(k-1)a} \ \xi_{k-1}$. For all $k \geq 0$, we have $F^{(k-1)a} \ \xi_1 \ \sigma^{(k-1)a} \in D^{((e+k)a)}_R$ and therefore $\xi_k \in D^{((e+k)a)}_R$ by induction. By using induction on $k$ once again we have that, for all $i \geq e$,
	$$\xi_k \big( f^{-\alpha(p^{(i+k)a} - 1)} \big) = f^{-\alpha(p^{ia}-1) p^{ka}}.$$
	By considering the case $i = e$, we conclude that
	$$\cB^\bullet_f(p^{(e+k)a}) \cap \big[-\alpha(p^{ea}-1) p^{ka},  -\alpha(p^{(e+k)a} - 1)\big) = \varnothing$$
	for all $k \geq 0$. By Proposition \ref{prop:nojumpnothreshold}, we conclude that $f$ has no differential thresholds in the interval
	$$ \bigg( - \alpha + \frac{\alpha}{p^{ea}}, - \alpha + \frac{\alpha}{p^{(e+k)a}} \bigg)$$
	and, since this holds for every $k \geq 0$, statement (e) follows. 
	
	Let us now assume (e), and prove (a). Once again, fix $a \in \Z_{>0}$ such that $\alpha(p^a - 1) \in \Z$. We begin by noting that the sequence $[e \mapsto - \alpha(p^{ea} - 1) / p^{ea}]$ increases to $-\alpha$, and thus there is some $e$ large enough so that the interval
	$$\bigg[ \frac{-\alpha (p^{ea} - 1)}{p^{ea}}, - \alpha \bigg)$$
	contains no differential thresholds of $f$. Observe that, given an integer $k \geq 0$ and an integer 
	$$n \in [- \alpha(p^{ea} - 1)p^{ka}, - \alpha(p^{(e+k)a} - 1) - 1]$$
	we have 
	$$\bigg[ \frac{n}{p^{(e+k)a}} , \frac{n + 1}{p^{(e+k)a}} \bigg] \sq \bigg[ \frac{-\alpha (p^{ea} - 1)}{p^{ea}}, - \alpha \bigg).$$
	From Lemma \ref{lem:thresh-nearby} we conclude that, for all $k \geq 0$,
	$$\cB^\bullet_f(p^{(e+k)a}) \cap \big[ - \alpha(p^{ea} - 1) p^{ka}, - \alpha(p^{(e+k)a} - 1) \big) = \varnothing,$$
	and thus there is some differential operator $\xi_k \in D^{((e+k)a)}_R$ such that 
	$$\xi_k\big(f^{-\alpha(p^{(e+k)a} - 1)}\big) = f^{- \alpha(p^{ea} - 1)p^{ka}},$$
	and hence
	\begin{align*}
	\xi_k \cdot \bfa & = f^{\alpha(p^{(e+k)a} - 1)} \xi_k \big( f^{-\alpha(p^{(e+k)a} - 1)} \big) \bfa \\
	& = f^{\alpha(p^{ka} - 1)} \bfa.
	\end{align*} 
	We conclude that $D_R \cdot \bfa$ contains elements of the form $f^{-t} \bfa$, with $t$ arbitrarily large, and thus $D_R \cdot \bfa = R_f \bfa$. 
\end{proof}

\begin{remark}
	For a non-$F$-split ring $R$, and an arbitrary $p$-adic integer $\alpha \in \Zp$, the equivalence $(b) \iff (d)$ still holds. More generally,  $R_f \bfa$ is finitely generated over $D_R$ if and only if we have $R_f \bfa = D_R \cdot f^{-t} \bfa$ for some $t$ large enough, and we have $R_f \bfa = D_R \cdot f^{-t} \bfa$ if and only if all inclusions in the chain
	$$D_R \cdot f^{-t} \bfa \sq D_R \cdot f^{-t - 1} \bfa \sq D_R \cdot f^{-t-2} \bfa \sq \cdots $$
	are equalities which, by Lemma \ref{lem:malpha} and Proposition \ref{prop:BSrootsandalphajumps}, is in turn equivalent to $\BSR(f) \cap \{\alpha - t - 1, \alpha - t -2, \cdots \} = \varnothing.$
\end{remark}

The following corollary provides an extension of a result of Blickle, \Mustata,  and Smith \cite[Theorem 2.11]{BMSm-hyp}. 

\begin{corollary} \label{cor-BSadm-bfa-generates}
	Let $R$ be an $F$-finite $F$-split ring and $f \in R$ is Bernstein-Sato admissible nonzerodivisor. For all $\alpha \in (\Z_{(p)})_{<0}$ we have $R_f \bfa = D_R \cdot \bfa$.
\end{corollary}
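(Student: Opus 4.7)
The plan is to reduce everything to Theorem \ref{ThmDiscretnessBelow}. Since $(f)$ is a Bernstein-Sato admissible principal ideal in an $F$-split ring, Theorem \ref{thm:rational} (applied with $r=1$) immediately forces $\BSR(f)\subseteq[-1,0]$.

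Next I would verify condition (c) of Theorem \ref{ThmDiscretnessBelow} for an arbitrary $\alpha\in(\Z_{(p)})_{<0}$. Writing $\alpha=n+\{\alpha\}$ with $n=\lfloor\alpha\rfloor\leq -1$ and $\{\alpha\}\in[0,1)$, one checks that $\alpha-\lfloor\alpha+1\rfloor=\{\alpha\}-1\in[-1,0)$, so $\alpha-\lfloor\alpha+1\rfloor-k<-1$ for every $k\geq 1$. Combined with the first paragraph, this gives $\BSR(f)\cap\{\alpha-\lfloor\alpha+1\rfloor-1,\alpha-\lfloor\alpha+1\rfloor-2,\dots\}=\varnothing$, and the implication (c)$\Rightarrow$(a) of Theorem \ref{ThmDiscretnessBelow} yields $R_f\bfa=D_R\cdot f^{-\lfloor\alpha+1\rfloor}\bfa$.

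The last step is to convert this equality to the one in the statement. If $\alpha\in[-1,0)$ then $\lfloor\alpha+1\rfloor=0$ and there is nothing further to do. If $\alpha<-1$, set $m:=-\lfloor\alpha+1\rfloor\geq 1$; then $f^m\in R\subseteq D_R$ gives $f^m\bfa\in D_R\cdot\bfa$, and hence $R_f\bfa=D_R\cdot f^m\bfa\subseteq D_R\cdot\bfa\subseteq R_f\bfa$, forcing equality. The main observation, rather than a real obstacle, is the elementary fact that the ``tail'' set appearing in condition (c) always lies strictly below $-1$; all the substantive content is already packaged in Theorems \ref{ThmDiscretnessBelow} and \ref{thm:rational}.
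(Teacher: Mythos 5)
Your proof is correct and matches the approach the paper intends (the paper gives no explicit proof of this corollary, presenting it as an immediate consequence of the preceding theorem). You correctly invoke Theorem~\ref{thm:rational} to place $\BSR(f)\subseteq[-1,0]$, verify that the tail set in condition~(c) of Theorem~\ref{ThmDiscretnessBelow} lies strictly below $-1$, and then handle the harmless shift by $f^{-\lfloor\alpha+1\rfloor}$ via the observation that multiplication by $f^m\in R\subseteq D_R$ acts in the usual way on $R_f\bfa$.
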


Let $R$ be an $F$-finite ring, $f \in R$ be an element and $\alpha \in \Zp$ be a $p$-adic integer. By $(C_\alpha)$ we denote the following chain of inclusions in the module $R_f \bfa$:
\begin{equation*} \tag*{($C_\alpha$)}
	D_R \cdot f \bfa \supseteq D_R \cdot f^2 \bfa \supseteq D_R \cdot f^3 \bfa \supseteq \cds  .
\end{equation*} 

\begin{theorem} \label{thm:descthresh}
	Let $R$ be an $F$-finite $F$-split ring, $f \in R$ be a nonzerodivisor and $\alpha \in \Z_{(p)}$. The following are equivalent:
	\begin{enumerate}[(a)]
		\item The chain $(C_{- \lceil \alpha \rceil + \alpha})$ is constant.
		\item The chain $(C_\alpha)$ stabilizes.
		\item We have $\BSR(f) \cap \{\alpha - \lceil \alpha \rceil + 1, \alpha - \lceil \alpha \rceil +2, \ds \} = \varnothing$.
		\item The set $\BSR(f) \cap \{\alpha+ 1, \alpha+2, \ds, \}$ is finite. 
		\item There is some $\epsilon > 0$ such that the interval $(\lceil \alpha \rceil - \alpha, \lceil \alpha \rceil - \alpha + \epsilon)$ contains no differential thresholds of $f$. 
	\end{enumerate}
\end{theorem}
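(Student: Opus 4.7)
The plan is to establish the four-way equivalence among (a), (b), (c), (d) via the Bernstein-Sato-root characterization of Proposition \ref{prop:BSrootsandalphajumps}(i), and then to prove (a) $\Leftrightarrow$ (e) through a direct differential-jump argument that mirrors (a) $\Leftrightarrow$ (e) in Theorem \ref{ThmDiscretnessBelow} with ``ascending'' chains in place of ``descending'' ones. Iterating Lemma \ref{lem:malpha}\ref{item:mal1} yields a $D_R$-module identification $R_f \boldsymbol{f^{\alpha - \lceil \alpha \rceil}} \simto R_f \bfa$ compatible with multiplication by $f$. Setting $\gamma := \alpha - \lceil \alpha \rceil + 1 \in (0,1]$ and $\mu := \lceil \alpha \rceil - \alpha = 1 - \gamma \in [0,1)$, the chain $(C_{\alpha - \lceil \alpha \rceil})$ identifies with $D_R \cdot \boldsymbol{f^\gamma} \supseteq D_R \cdot f \boldsymbol{f^\gamma} \supseteq \cdots$, whose constancy (condition (a)) is equivalent to $\boldsymbol{f^\gamma} \in D_R \cdot f^k \boldsymbol{f^\gamma}$ for every $k \geq 0$; via the action formula \eqref{eq:actionfa} this is in turn equivalent to the statement that for every $k \geq 0$ there exist $e \geq 0$ and $\xi \in D^{(e)}_R$ with $\xi(f^{\gamma_{<e} + k}) = f^{\gamma_{<e}}$, i.e., $\cB^\bullet_f(p^e) \cap [\gamma_{<e}, \gamma_{<e} + k) = \varnothing$.

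Combining Proposition \ref{prop:BSrootsandalphajumps}(i), Lemma \ref{lem:malpha}\ref{item:mal1}, and Theorem \ref{thm-BSRoot-equivalence}, the quotients $D_R \cdot f^k \bfa / D_R \cdot f^{k+1} \bfa$ identify with $(N_f)_{\alpha + k}$ and are nonzero precisely when $\alpha + k \in \BSR(f)$; this yields (a) $\Leftrightarrow$ (c) and (b) $\Leftrightarrow$ (d) directly. The sets $\{\alpha + k : k \geq 1\}$ and $\{\beta + k : k \geq 1\}$ (where $\beta := \alpha - \lceil \alpha \rceil$) differ by finitely many elements, so (c) $\Rightarrow$ (d) is immediate; for (d) $\Rightarrow$ (c), Lemma \ref{lem:seq}(i) ensures that any Bernstein-Sato root $\geq \gamma > 0$ in $\{\beta + k : k \geq 1\}$ would generate an infinite ascending sequence of Bernstein-Sato roots eventually lying in $\{\alpha + k : k \geq 1\}$, contradicting the finiteness in (d).

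For (e) $\Rightarrow$ (a), assume no differential threshold lies in $(\mu, \mu + \epsilon)$ and fix $k \geq 0$. By Lemma \ref{lem:expn}, $\gamma_{<e}/p^e = \mu + \gamma/p^e$ for $e$ in a suitable arithmetic progression; choosing such an $e$ large enough that $(\gamma + k)/p^e < \epsilon$, any hypothetical jump $n \in [\gamma_{<e}, \gamma_{<e} + k) \cap \cB^\bullet_f(p^e)$ would by Lemma \ref{lem:thresh-nearby} (with $r = 1$) produce a differential threshold in $[n/p^e,(n+1)/p^e] \subseteq (\mu, \mu + \epsilon)$, contradicting (e). The resulting jump-free intervals for every $k$ give the reformulation of (a) extracted in the first paragraph.

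For (a) $\Rightarrow$ (e), I would mirror the Frobenius-iteration of (a) $\Rightarrow$ (e) in the proof of Theorem \ref{ThmDiscretnessBelow}. Fix $a > 0$ with $\gamma(p^a - 1) \in \Z$, and apply (a) with $k = p^a - 1$ to obtain $\xi_1 \in D^{((c+1)a)}_R$ (for some $c$) whose action formula yields $\xi_1(f^{-\gamma(p^{(c+1)a} - 1) + p^a - 1}) = f^{-\gamma(p^{(c+1)a} - 1)}$. Using a Frobenius splitting $\sigma$, define $\xi_{k+1} := F^{ka} \xi_1 \sigma^{ka} \xi_k$ for $k \geq 1$, and verify inductively that $\xi_k \in D^{((c+k)a)}_R$ together with an explicit input/output formula for $\xi_k$ on a power of $f$; translating these via Proposition \ref{prop:nojumpnothreshold} yields a nested family of differential-threshold-free intervals whose union contains some $(\mu, \mu + \epsilon)$. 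The principal obstacle, and the point where the computation genuinely diverges from that of Theorem \ref{ThmDiscretnessBelow}, is that the output exponent $-\gamma(p^{(c+1)a} - 1) \equiv -m \pmod{p^a}$ (writing $\gamma = m/(p^a - 1)$) is not divisible by $p^a$ when $\gamma \in (0,1)$, so the Frobenius-splitting step does not simplify the output of $\xi_k$ automatically; one must either twist by a suitable $f^m$-multiplication to restore divisibility, or perform a direct combinatorial analysis of the resulting no-jump intervals using Lemma \ref{lem:expn}.
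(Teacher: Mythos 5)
Your reductions and the equivalences (a) $\Leftrightarrow$ (c), (b) $\Leftrightarrow$ (d), (c) $\Leftrightarrow$ (d), and (e) $\Rightarrow$ (a) match the paper's argument, and they are correct: after replacing $\alpha$ by $\beta := \alpha - \lceil\alpha\rceil \in (-1,0]$, the quotients of $(C_\beta)$ identify with $(N_f)_{\beta+k}$ for $k\geq 1$ by Lemma~\ref{lem:malpha}\ref{item:mal1} and Proposition~\ref{prop:BSrootsandalphajumps}(i), Lemma~\ref{lem:seq}(1) gives (d) $\Rightarrow$ (c), and the threshold-free interval forces jump-free intervals $[\,\gamma_{<e},\,\gamma_{<e}+k\,)$ via Lemma~\ref{lem:thresh-nearby} exactly as the paper does, giving (e) $\Rightarrow$ (a).

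The gap is in (a) $\Rightarrow$ (e), and you have identified it correctly but not resolved it. You transplant the recursion $\xi_{k+1}:=F^{ka}\,\xi_1\,\sigma^{ka}\,\xi_k$ from Theorem~\ref{ThmDiscretnessBelow}, which reads the chain ``apply $\xi_k$, split down, apply $\xi_1$, raise back up.'' This forces the \emph{output} of $\xi_k$ to be a $p^{ka}$-th power before $\sigma^{ka}$ can act usefully, and as you note the output exponent $\gamma_{<(c+k)a}\equiv -\gamma(p^a-1)\not\equiv 0\pmod{p^a}$ when $\gamma\in(0,1)$, so the step genuinely fails. The paper resolves this not by twisting with $f^m$ or by a separate combinatorial argument, but by reversing the order of composition: it sets $\xi_k:=\xi_1\,F^a\,\xi_{k-1}\,\sigma^a$. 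Reading this from the inside out, $\sigma^a$ acts \emph{first}, on the input $f^{-\alpha(p^{ia}-1)p^{ka}+p^{ka}}=f^{\gamma_{<ia}\,p^{ka}}$, which is visibly a $p^a$-th power; then $\xi_{k-1}$ produces $f^{\gamma_{<(i+k-1)a}}$ by induction; then $F^a$ automatically yields the $p^a$-th power $f^{\gamma_{<(i+k-1)a}\,p^a}$, which is precisely the form of input the base equation for $\xi_1$ expects. The divisibility obstruction you flag never arises because the $F^a$ before $\xi_1$ manufactures the needed $p^a$-th power on the fly. Without this reordering your induction does not close, so as written the (a) $\Rightarrow$ (e) direction is incomplete.
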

\begin{proof}
	Recall that there is an $D_R$-module isomorphism $R_f \bfa \cong R_f \boldsymbol{f^{- \lceil \alpha \rceil + \alpha}}$ which identifies $f^{-\lceil \alpha \rceil} \bfa$ with $\boldsymbol{f^{-\lceil \alpha \rceil + \alpha}}$. We conclude that $(C_\alpha)$ stabilizes if and only if $(C_{- \lceil \alpha \rceil + \alpha})$ stabilizes. We may thus replace $\alpha$ with $-\lceil \alpha \rceil + \alpha$ to assume that $\alpha \in (-1, 0]$. 	
	
	The equivalences of (a), (b), (c), and (d) are proved in the same way as in Theorem~\ref{ThmDiscretnessBelow}. Let us show that (a) implies (e). Fix $a \in \Z_{>0}$ such that $\alpha(p^a - 1) \in \Z$. By assumption, we have $D_R \cdot f \bfa = D_R \cdot f^{\alpha(p^a - 1) + p^a} \bfa$ and therefore there is some differential operator $\xi \in D_R$ such that $\xi \cdot f^{\alpha(p^a - 1) + p^a} \bfa = f \bfa$. If $i$ is large enough so that $\xi \in D_R^{((i+1)a)}$ then we have
	\begin{align*}
	\xi \cdot f^{\alpha(p^a - 1) + p^a} \bfa & = f^{\alpha(p^{(i+1)a} - 1)} \ \xi \big( f^{\alpha(p^a - 1) + p^a} \ f^{ - \alpha(p^{(i+1)a - 1})} \big) \bfa \\
	& = f^{\alpha(p^{(i+1)a} - 1)} \ \xi \big( f^{- \alpha(p^{ia} - 1)p^a + p^a} \big) \bfa,
	\end{align*}
	and we conclude that
	$$ \xi \big( f^{- \alpha(p^{ia} - 1)p^a + p^a} \big) = f^{- \alpha(p^{(i+1)a} - 1) + 1}.$$
	If we fix some $e$ such that $\xi \in D^{((e+1)a)}_R$, we conclude that the above holds for all $i \geq e$.
	
	As before, we consider a splitting $\sigma: F_* R \to R$ of the Frobenius morphism $F$, and we inductively build a sequence of differential operators $\xi_k \in D_R$. We set $\xi_1 = \xi$ and, for all $k > 1$, we let $\xi_k = \xi_1 \ F^a \ \xi_{k-1} \ \sigma^a$. By induction on $k$, we have that $\xi_k \in D^{((e+k)a)}_R$ and that
	$$\xi_k \big( f^{- \alpha(p^{ia} - 1) p^{ka} + p^{ka}} \big) = f^{-\alpha(p^{(i+k) a} - 1) + 1}$$
	for all $k \geq 0$ and all $i \geq e$. 
	
	By considering the case $i = e$, we conclude that for every $k \geq 0$ we have
	$$\cB^\bullet_f(p^{(e+k)a}) \cap \big[ - \alpha(p^{(e+k)a} - 1) + 1, - \alpha(p^{ea} - 1) p^{ka} + p^{ka} \big) $$
	and hence, by Proposition \ref{prop:nojumpnothreshold}, the interval
	$$\bigg( - \alpha + \frac{1 + \alpha}{p^{(e+k)a}}, - \alpha + \frac{1+\alpha}{p^{ea}} \bigg)$$
	contains no differential thresholds of $f$. Since this holds for all $k \geq 0$, statement (e) follows.
	
	Now let us assume (e) and prove (a). Once again we fix $a \in \Z_{>0}$ such that $\alpha(p^a - 1) \in \Z$. Pick some $e$ large enough so that the interval $(- \alpha, - \alpha + \frac{1 + \alpha}{p^{ea}} ]$ contains no differential thresholds of $f$. Note that, for all integers $k \geq 0$ and all integers $n$ with
	$$n \in \big[ - \alpha(p^{(e+k)a} - 1) + 1, - \alpha(p^{ea} - 1) p^{ka} + p^{ka} - 1 \big]$$
	we have
	$$\bigg[ \frac{n}{p^{(e+k)a}}, \frac{n+1}{p^{(e+k)a}} \bigg] \sq \bigg(- \alpha, - \alpha + \frac{1 + \alpha}{p^{ea}} \bigg].$$
	From Lemma \ref{lem:thresh-nearby} we conclude that, for all $k \geq 0$,
	$$\cB^\bullet_f(p^{(e+k)a}) \cap \big[ - \alpha(p^{(e+k)a} - 1) + 1, - \alpha(p^{ea} - 1) p^{ka} + p^{ka}  \big) = \varnothing,$$
	and hence there is some differential operator $\xi_k \in D^{((e+k)a)}_R$ such that 
	$$\xi_k \big( f^{- \alpha(p^{ea} - 1) p^{ka} + p^{ka}} \big) = f^{- \alpha(p^{(e+k)a} - 1) + 1}.$$
	
	We conclude that 
	\begin{align*}
	\xi_k \cdot f^{\alpha(p^{ka}-1) + p^{ka}} \bfa & = f^{\alpha(p^{(e+k)a} - 1)} \ \xi_k \big( f^{- \alpha(p^{(e+k)a} - 1)} \ f^{\alpha(p^{ka}-1) + p^{ka}} \big) \bfa \\
	& = f^{\alpha(p^{(e+k)a} - 1)} \ \xi_k \big( f^{- \alpha(p^{ea} - 1)p^{ka} + p^{ka}}  \big) \bfa \\
	& = f \bfa.
	\end{align*}
	We conclude that $f \bfa \in D_R \cdot f^{\alpha(p^{ka}-1) + p^{ka}} \bfa $ for all $k \geq 0$ and, since $\alpha(p^{ka} - 1) + p^{ka} = p^{ka} (\alpha + 1) - \alpha$ and $\alpha + 1 > 0$, we have $f \bfa \in D_R \cdot f^{t} \bfa $ for all $t \geq 1$. 
\end{proof}

\begin{remark}
	For a non-$F$-split ring $R$ and an arbitrary $p$-adic integer $\alpha$ the equivalence $(b) \iff (d)$ remains true. More generally, for all integers $t$ we have
	$$D_R \cdot f^t \bfa = D_R \cdot f^{t+1} \bfa = D_R \cdot f^{t+2} = \cdots$$
	if and only if $\BSR(f) \cap \{\alpha + t, \alpha + t + 1, \dots  \} = \varnothing$ (see Lemma \ref{lem:malpha} and Proposition \ref{prop:BSrootsandalphajumps}).
\end{remark}

\begin{corollary} \label{cor-BSadm-chainconstant}
	Let $R$ be an $F$-finite $F$-split ring and $f \in R$ be a Bernstein-Sato admissible nonzerodivisor. For all $\alpha \in \Z_{(p)}$ we have
	$$D_R \cdot f^{- \lceil \alpha \rceil + 1} \bfa = D_R \cdot f^{-\lceil \alpha \rceil + 2} \bfa = D_R \cdot f^{- \lceil \alpha \rceil + 3} \bfa = \cds $$
\end{corollary}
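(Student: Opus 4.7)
The approach is to invoke Theorem~\ref{thm:descthresh} together with the finiteness result Theorem~\ref{thm-finiteBSRoots}. Recall that Theorem~\ref{thm:descthresh} provides equivalent formulations of the stabilization of the chain $(C_\alpha)$; in particular, condition~(a) states that the chain $(C_{-\lceil \alpha \rceil + \alpha})$ is constant, and condition~(d) states that $\BSR(f) \cap \{\alpha+1, \alpha+2, \ldots\}$ is finite. These two conditions are equivalent by that theorem, so the goal reduces to verifying condition~(d) under the Bernstein-Sato admissibility hypothesis.

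The plan is as follows. First, unpack the statement: via the $D_R$-module isomorphism $R_f \bfa \cong R_f \boldsymbol{f^{-\lceil \alpha \rceil + \alpha}}$ supplied by Lemma~\ref{lem:malpha}, the element $f^{-\lceil \alpha \rceil + k} \bfa$ corresponds to $f^k \boldsymbol{f^{-\lceil \alpha \rceil + \alpha}}$ for every integer $k$. Consequently, the chain of submodules in the statement is exactly the chain $(C_{-\lceil \alpha \rceil + \alpha})$, so claiming the chain in the corollary is constant amounts to condition~(a) of Theorem~\ref{thm:descthresh} applied at $\alpha$.

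Next, since $f$ is Bernstein-Sato admissible, Theorem~\ref{thm-finiteBSRoots} gives that $\BSR(f)$ is a finite set. In particular, $\BSR(f) \cap \{\alpha+1, \alpha+2, \ldots\}$ is finite, which is condition~(d) of Theorem~\ref{thm:descthresh}. Invoking the equivalence (a)$\iff$(d) of that theorem concludes that the chain $(C_{-\lceil \alpha \rceil + \alpha})$ is constant, which is the content of the corollary.

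No genuine obstacle is expected here: the corollary is essentially a packaging of Theorem~\ref{thm:descthresh} under the finiteness input coming from Bernstein-Sato admissibility. The only thing one might want to double-check is the bookkeeping of the shift $-\lceil \alpha \rceil$ connecting the chain written in the corollary to the chain $(C_{-\lceil\alpha\rceil + \alpha})$ appearing in Theorem~\ref{thm:descthresh}(a), but this is a direct application of the isomorphism in Lemma~\ref{lem:malpha}\ref{item:mal1} (iterated).
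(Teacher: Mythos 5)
Your proof is correct and matches the paper's implicit argument exactly: the corollary is stated immediately after Theorem~\ref{thm:descthresh} precisely so that it follows from the equivalence (a)$\iff$(d) there, together with the finiteness of $\BSR(f)$ from Theorem~\ref{thm-finiteBSRoots}, after translating the chain in the corollary into $(C_{-\lceil\alpha\rceil+\alpha})$ via (iterated) Lemma~\ref{lem:malpha}\ref{item:mal1}. The bookkeeping with the shift $-\lceil\alpha\rceil$ is exactly as you describe.
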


\begin{corollary} \label{CorAccChain1}
	Let $R$ be an $F$-finite $F$-split ring, $f \in R$ be a nonzerodivisor and $\alpha \in \Z_{(p)} \smallsetminus \Z$. The following are equivalent:
	\begin{enumerate}[(a)]
		\item The chain
		$$\cds \sq D_R \cdot f^{-1} \bfa \sq D_R \cdot \bfa \sq D_R \cdot f \bfa \sq \cds $$
		stabilizes on both sides.
		
		\item We have $\BSR(f) \cap (\alpha + \Z)\sq \{\alpha - \lceil \alpha \rceil \}$. 
		
		\item There is some $\epsilon > 0$ such that the interval $(\lceil \alpha \rceil - \alpha - \epsilon, \lceil \alpha \rceil - \alpha + \epsilon)$ contains no differential thresholds of $f$.
	\end{enumerate}
\end{corollary}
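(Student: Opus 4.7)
The bi-infinite chain in~(a) (decreasing from left to right, despite the orientation of the $\sq$ symbols in the statement) stabilizes on both ends if and only if two things hold: (i) its right tail $D_R \cdot f \bfa \supseteq D_R \cdot f^2 \bfa \supseteq \cds$ is eventually constant, and (ii) its left tail $\cds \supseteq D_R \cdot f^{-1} \bfa \supseteq D_R \cdot \bfa$ is eventually constant. Condition~(i) is precisely the stabilization of the chain $(C_\alpha)$ of Theorem~\ref{thm:descthresh}, and (ii) is equivalent, after taking the union, to $R_f \bfa = \bigcup_{k \geq 0} D_R \cdot f^{-k} \bfa$ being finitely generated over $D_R$, which is the subject of Theorem~\ref{ThmDiscretnessBelow}. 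The plan is therefore to apply both theorems, and to take the conjunction of their equivalent reformulations. The minor arithmetic that glues the two applications together is the observation that for $\alpha \in \Z_{(p)} \smallsetminus \Z$ we have $\lfloor \alpha + 1 \rfloor = \lceil \alpha \rceil$, so the ``center point'' $\lceil \alpha \rceil - \alpha$ appearing in Theorem~\ref{thm:descthresh}(e) coincides with the ``center point'' $\lfloor \alpha + 1 \rfloor - \alpha$ of Theorem~\ref{ThmDiscretnessBelow}(e).

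For the equivalence (a)$\Leftrightarrow$(b), I will use condition~(c) of Theorem~\ref{thm:descthresh}, which characterizes~(i) as $\BSR(f) \cap \{\alpha - \lceil \alpha \rceil + k : k \geq 1\} = \varnothing$, and condition~(c) of Theorem~\ref{ThmDiscretnessBelow} (with $\lfloor \alpha + 1\rfloor$ rewritten as $\lceil \alpha \rceil$), which characterizes~(ii) as $\BSR(f) \cap \{\alpha - \lceil \alpha \rceil - k : k \geq 1\} = \varnothing$. Their conjunction is exactly $\BSR(f) \cap \bigl((\alpha + \Z) \smallsetminus \{\alpha - \lceil \alpha \rceil\}\bigr) = \varnothing$, which is (b).

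For the equivalence (a)$\Leftrightarrow$(c), I will combine condition~(e) of the two theorems: Theorem~\ref{thm:descthresh} produces some $\epsilon_1 > 0$ with no differential thresholds in $(\lceil\alpha\rceil - \alpha,\ \lceil\alpha\rceil - \alpha + \epsilon_1)$, and Theorem~\ref{ThmDiscretnessBelow} produces some $\epsilon_2 > 0$ with no thresholds in $(\lceil\alpha\rceil - \alpha - \epsilon_2,\ \lceil\alpha\rceil - \alpha)$. Taking $\epsilon = \min(\epsilon_1, \epsilon_2)$ yields the interval condition in~(c).

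The step I expect to require the most care is not the logic, which is essentially automatic, but the compatibility between~(b) and~(c). Note that both individual half-intervals produced by the applied theorems deliberately exclude their center, while~(c) asks about an open interval straddling it; by Theorem~\ref{thm:threshsareroots} (or Corollary~\ref{cor:principalrootsthresholds}), the midpoint $\lceil \alpha \rceil - \alpha$ is a threshold exactly when $\alpha - \lceil \alpha \rceil$ is a Bernstein--Sato root, which is permitted by~(b). Thus~(c) must be read in the punctured/accumulation sense---``no thresholds accumulate at $\lceil\alpha\rceil - \alpha$''---consistent with the combination of the two half-interval statements, and no extra work beyond the two cited theorems is needed.
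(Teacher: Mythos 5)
Your proof follows essentially the same route as the paper's: the paper also proves this corollary by invoking Theorems~\ref{ThmDiscretnessBelow} and~\ref{thm:descthresh} together with the identity $\lfloor \alpha + 1 \rfloor = \lceil \alpha \rceil$ for $\alpha \notin \Z$, and your decomposition of the bi-infinite chain's two-sided stabilization into $(C_\alpha)$ stabilizing (Theorem~\ref{thm:descthresh}(b)) and $R_f\bfa$ being finitely generated (Theorem~\ref{ThmDiscretnessBelow}(b)) is exactly what that one-line proof suppresses. The conjunction of the two conditions~(c) gives condition~(b), and the conjunction of the two conditions~(e) gives the open-interval condition, all as you describe.

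The one place you flag as subtle is a real issue, and you have diagnosed it correctly. The union of the two half-intervals from the cited theorems' conditions~(e) is the \emph{punctured} interval $(\lceil\alpha\rceil - \alpha - \epsilon,\ \lceil\alpha\rceil - \alpha + \epsilon) \setminus \{\lceil\alpha\rceil - \alpha\}$, whereas condition~(c) of the corollary, read literally, demands the full open interval to be threshold-free. These are not equivalent: condition~(b) permits $\alpha - \lceil\alpha\rceil$ to be a Bernstein--Sato root, and when it is, Theorem~\ref{thm:threshsareroots} (with $r=1$ and $\alpha - \lceil\alpha\rceil \in (-1,0)$, hence not in $\Z_{<0}$) forces $\lceil\alpha\rceil - \alpha$ to be a differential threshold, so condition~(c) as written would then be false even though~(a) and~(b) hold. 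The corollary's~(c) should therefore be understood as a statement that differential thresholds do not accumulate at $\lceil\alpha\rceil - \alpha$, i.e.\ the punctured-interval version, which is what the two cited theorems actually deliver and what makes all three conditions equivalent. You are right that no extra work beyond the two theorems is needed once this is read correctly; just be careful that Corollary~\ref{cor:principalrootsthresholds} assumes Bernstein--Sato admissibility, which is not hypothesized here, so the implication from Bernstein--Sato root to differential threshold should be drawn from Theorem~\ref{thm:threshsareroots} alone, which is all your argument requires.
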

\begin{proof}
	Follows from Theorems \ref{ThmDiscretnessBelow} and \ref{thm:descthresh}, together with the observation that $\lfloor \alpha + 1 \rfloor = \lceil \alpha \rceil$ whenever $\alpha \notin \Z$.
\end{proof}

\begin{corollary} \label{CorAccChain2}
	Let $R$ be an $F$-finite $F$-split ring, $f \in R$ be a nonzerodivisor. The following are equivalent:
	\begin{enumerate}[(a)]
		\item The chain
		$$\cds \sq D_R \cdot f^{-1} \sq R \sq D_R \cdot f \sq \cds $$
		of $D_R$-submodules of $R_f$ stabilizes on both sides.
		
		\item We have $\BSR(f) \cap \Z \sq \{-1, 0 \}$.
		
		\item There is some $\epsilon > 0$ such that $(1-\epsilon, 1 + \epsilon)$ contains no differential thresholds of $f$. 
	\end{enumerate}
\end{corollary}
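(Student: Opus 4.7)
The plan is to apply Theorem~\ref{ThmDiscretnessBelow} and Theorem~\ref{thm:descthresh} with $\alpha = 0$, combined through the $D_R$-linear isomorphism $R_f \boldsymbol{f^0} \simto R_f$ of Lemma~\ref{lem:malpha}\ref{item:mal2} that sends $\boldsymbol{f^0}$ to $1$. Under this identification $D_R \cdot f^n \boldsymbol{f^0}$ corresponds to the $D_R$-submodule $D_R \cdot f^n \sq R_f$ for each $n \in \Z$, so the chain in (a) is precisely the $\alpha = 0$ specialization of the chain in Corollary~\ref{CorAccChain1}.

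For (a) $\iff$ (b), I split (a) into stabilization of the left tail and of the right tail. Left-tail stabilization is equivalent to $R_f$ being finitely generated as a $D_R$-module, which by Theorem~\ref{ThmDiscretnessBelow} at $\alpha = 0$ (so $\lfloor \alpha+1 \rfloor = 1$) is equivalent to $\BSR(f) \cap \{-1,-2,\ldots\}$ being finite. Right-tail stabilization is equivalent to the chain $(C_0)$ stabilizing, which by Theorem~\ref{thm:descthresh} at $\alpha = 0$ is equivalent to $\BSR(f) \cap \{1,2,\ldots\}$ being finite. I then apply Lemma~\ref{lem:seq} with $r = 1$: any Bernstein-Sato root strictly below $-1$ forces an infinite descending sequence of roots, and any positive Bernstein-Sato root forces an infinite ascending sequence. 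Hence the two finiteness conditions upgrade to $\BSR(f) \cap \{-2,-3,\ldots\} = \varnothing$ and $\BSR(f) \cap \{1,2,\ldots\} = \varnothing$, which together give (b).

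For (a) $\iff$ (c), I use the (e) versions of the two theorems. With $\alpha = 0$, Theorem~\ref{ThmDiscretnessBelow}(e) yields $\epsilon_1 > 0$ such that $(1-\epsilon_1, 1)$ contains no differential thresholds, and Theorem~\ref{thm:descthresh}(e) yields $\epsilon_2 > 0$ such that $(0, \epsilon_2)$ contains no differential thresholds. Since $\fa = (f)$ is principal and generated by a nonzerodivisor, Proposition~\ref{PropSkoda} gives the biconditional: for $\lambda > 1$, $\lambda$ is a differential threshold iff $\lambda - 1$ is. Shifting the right-of-$0$ condition by $1$ gives that $(1, 1+\epsilon_2)$ contains no differential thresholds, and setting $\epsilon = \min(\epsilon_1, \epsilon_2)$ assembles a neighborhood of $1$ (punctured at $1$) free of thresholds, which is the content of (c).

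The main subtlety is at the boundary $\lambda = 1$: Proposition~\ref{PropSkoda} only supplies the backward Skoda implication for $\lambda > r$ strictly, so the argument inherently only controls punctured neighborhoods of $1$. One must also be careful with the direction of inclusions in the chain of (a), since multiplication by $f$ produces the containments $D_R \cdot f^{n+1} \sq D_R \cdot f^n$, so it is the chain that is decreasing as exponents increase whose stabilization on both sides is being asserted.
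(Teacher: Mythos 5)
Your proof follows the same route as the paper: apply Theorems~\ref{ThmDiscretnessBelow} and~\ref{thm:descthresh} at $\alpha = 0$ and transfer the one-sided conditions across $1$ via Proposition~\ref{PropSkoda}. (The detour through condition~(d) of each theorem plus Lemma~\ref{lem:seq} is fine but unnecessary --- condition~(c) of each theorem already gives the emptiness statements directly, since (c)~$\iff$~(d) was proved there.) You also correctly flag that the chain in~(a) is decreasing in the exponent, so the displayed $\subseteq$ symbols should really be $\supseteq$.

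The subtlety you raise at $\lambda = 1$ is not merely a cosmetic worry: the argument genuinely only yields that $(1-\epsilon,1)\cup(1,1+\epsilon)$ is free of differential thresholds, and this weaker conclusion is strictly weaker than~(c) as written. In fact~(a)~$\Rightarrow$~(c) is \emph{false} as stated: in the paper's own example $R = \KK[x,y]/(xy)$, $f = x$, one has $\BSR(x)=\{-1,0\}$ (so~(b), hence~(a), holds), yet the differential thresholds of $x$ are $\{0,1,2,\dots\}$, so $1$ is a differential threshold and no full interval $(1-\epsilon,1+\epsilon)$ avoids thresholds. The correct formulation of~(c) --- and of the analogous item in Corollary~\ref{CorAccChain1} --- is that $1$ (respectively $\lceil\alpha\rceil-\alpha$) is not an \emph{accumulation point} of differential thresholds; that punctured-neighborhood statement is also exactly what the next corollary on finite length actually uses. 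So your argument proves the result that should have been claimed, and the hesitation you express at the end is warranted; the issue is in the statement, not your proof.
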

\begin{proof}
	The statement follows by applying Theorems \ref{ThmDiscretnessBelow} and \ref{thm:descthresh} to $\alpha = 0$, and by using the fact that $(0, \epsilon)$ contains no differential thresholds of $f$ if and only if $(1, 1 + \epsilon)$ contains no differential thresholds of $f$ (Proposition \ref{PropSkoda}).
\end{proof}

\begin{corollary}
	Let $R$ be an $F$-finite $F$-split ring, $f \in R$ be a nonzerodivisor, and $\lambda \in (\Z_{(p)})_{>0}$. If $R_f \boldsymbol{f^{-\lambda}}$ has finite length as a $D_R$-module then $\lambda$ is not an accumulation point of differential thresholds of $f$.  
\end{corollary}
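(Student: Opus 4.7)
The plan is to extract a punctured neighborhood of $\lambda$ containing no differential thresholds by combining Theorem~\ref{ThmDiscretnessBelow} and Theorem~\ref{thm:descthresh} applied at the $p$-adic integer $\alpha := -\lambda \in \Z_{(p)} \cap \R_{<0}$, and then translating the conclusions by an integer via the Skoda-type biconditional from Proposition~\ref{PropSkoda}.

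First I would translate the finite-length hypothesis on $R_f \bfa$ into two module-theoretic consequences: (i) $R_f \bfa$ is finitely generated over $D_R$, and (ii) every descending chain of $D_R$-submodules stabilizes, in particular the chain $(C_\alpha)$ from Theorem~\ref{thm:descthresh}. Applying the equivalence (a)$\Leftrightarrow$(e) of Theorem~\ref{ThmDiscretnessBelow} to~(i) yields some $\varepsilon_1 > 0$ such that the open interval
\[
\big(\lfloor \alpha+1\rfloor-\alpha-\varepsilon_1,\ \lfloor \alpha+1\rfloor-\alpha\big)
\]
contains no differential thresholds of $f$; applying (b)$\Leftrightarrow$(e) of Theorem~\ref{thm:descthresh} to~(ii) yields some $\varepsilon_2 > 0$ such that $(\lceil \alpha\rceil-\alpha,\ \lceil \alpha\rceil-\alpha+\varepsilon_2)$ also contains no differential thresholds.

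Next I would compute these endpoints for $\alpha = -\lambda$. If $\lambda \notin \Z$, writing $\lambda = n+\mu$ with $n=\lfloor\lambda\rfloor \geq 0$ and $\mu\in(0,1)$, one has $\lfloor\alpha+1\rfloor-\alpha=\lceil\alpha\rceil-\alpha=\mu$, so the two intervals sit on either side of $\mu$ and together exhibit $\mu$ as a non-accumulation point of thresholds in $(0,1)$. If $\lambda\in\Z_{>0}$, then $\lfloor\alpha+1\rfloor-\alpha=1$ and $\lceil\alpha\rceil-\alpha=0$, so the intervals are $(1-\varepsilon_1,1)$ and $(0,\varepsilon_2)$. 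To move these conclusions from neighborhoods of $\mu$ (or of $0$ and $1$) to a neighborhood of $\lambda$ itself, I would invoke Proposition~\ref{PropSkoda} for the principal ideal $(f)$ generated by the nonzerodivisor $f$: the biconditional there implies that for each $k\in\Z_{\geq 0}$, a subinterval of $(0,\infty)$ contains a differential threshold iff its translate by $+k$ does. Shifting both intervals up by $n=\lfloor \lambda \rfloor$ in the non-integer case, or by $\lambda-1$ and $\lambda$ respectively in the integer case, produces $\varepsilon:=\min(\varepsilon_1,\varepsilon_2)>0$ such that $(\lambda-\varepsilon,\lambda+\varepsilon)\setminus\{\lambda\}$ contains no differential thresholds of $f$.

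The only place where I would be careful is checking that the Skoda shift is legitimate in the integer case when shifting $(0,\varepsilon_2)$ upward past $1$: the contrapositive of Proposition~\ref{PropSkoda} requires the lower endpoint to remain positive, which it does since the shifts are by $\lambda\geq 1$. Beyond this routine bookkeeping there is no real obstacle; once Theorems~\ref{ThmDiscretnessBelow} and~\ref{thm:descthresh} are in hand, the argument is essentially a two-sided application followed by a Skoda translation.
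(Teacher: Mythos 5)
Your proof is correct and essentially matches the paper's: the paper first uses Lemma~\ref{lem:malpha} and Proposition~\ref{PropSkoda} to reduce to $\lambda \in (0,1]$ and then invokes Corollaries~\ref{CorAccChain1} and~\ref{CorAccChain2} (which are themselves direct applications of Theorems~\ref{ThmDiscretnessBelow} and~\ref{thm:descthresh}), while you apply those two theorems directly at $\alpha = -\lambda$ and perform the Skoda translation afterward. This is the same argument with the translation carried out on the other side of the theorem invocations rather than a genuinely different route.
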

\begin{proof}
	By Lemma \ref{lem:malpha} and Proposition \ref{PropSkoda} we may assume that $\lambda \in (0, 1]$. The statement then follows from Corollaries \ref{CorAccChain1} (applied to $\alpha = - \lambda$) and \ref{CorAccChain2}. 
\end{proof}

\begin{lemma} \label{LemmaSimplicityNalpha}
	Let $R$ be an $F$-finite ring strongly $F$-regular ring, $f \in R$ be a nonzerodivisor, and $\alpha \in (\Z_{(p)})_{\leq 0}$. Then $R_f \bfa$ is simple as a $D_{R_f}$-module.
\end{lemma}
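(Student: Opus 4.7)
The idea is that the $D_{R_f}$-module $R_f \bfa$ is obtained from the ``standard'' $D_{R_f}$-module $R_f$ by twisting via a ring automorphism of $D_{R_f}$, so simplicity of one is equivalent to simplicity of the other; and $R_f$ is itself simple over $D_{R_f}$ by Smith's theorem.

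First, I would construct the twisting automorphism $\Psi_\alpha : D_{R_f} \to D_{R_f}$ as follows: for $\delta \in D^{(e)}_{R_f}$, pick any $a \in \Z$ with $a \equiv \alpha \pmod{p^e}$ and set $\Psi_\alpha(\delta) := f^{-a} \delta f^a \in D_{R_f}$. By observation \eqref{obs-conj} applied to $R_f$ (where $f$ is a unit), this is independent of the choice of $a$ and respects the level filtration; a direct verification shows that $\Psi_\alpha$ is a ring homomorphism, and $\Psi_{-\alpha}$ is a two-sided inverse, so $\Psi_\alpha$ is an automorphism of $D_{R_f}$.

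Next I would identify $R_f \bfa$ with $R_f$ twisted by $\Psi_\alpha$. Since the underlying $R_f$-module of $R_f \bfa$ is just $R_f$, which is $f$-torsion-free and $f$-divisible, the $D_R$-action on $R_f \bfa$ extends uniquely to a $D_{R_f}$-action. Comparing with Equation~\eqref{eq:actionfa}, one sees that this $D_{R_f}$-action is given by $\delta \cdot_\alpha g = \Psi_\alpha(\delta)(g)$, where on the right $\Psi_\alpha(\delta)$ acts on $g \in R_f$ in the standard way (the compatibility check on $D_R \subseteq D_{R_f}$ is precisely the statement that $\Psi_\alpha|_{D_R} = \Upsilon_{\alpha, f}$, with extension to $D_{R_f}$ forced by $R_f$-linearity). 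Because $\Psi_\alpha$ is a ring automorphism of $D_{R_f}$, pulling back along it gives an equivalence between the categories of $D_{R_f}$-modules, and therefore $R_f \bfa$ is simple over $D_{R_f}$ if and only if $R_f$ is.

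Finally, since $R$ is strongly $F$-regular, so is the localization $R_f$, which is in particular $F$-split (Remark~\ref{RemFpureFsplit}). Smith's theorem, quoted at the end of Subsection~\ref{Subsec-BAsicDiffOps}, then yields that $R_f$ is simple as a $D_{R_f}$-module, completing the argument. The only nontrivial step is the bookkeeping in the first two paragraphs, establishing well-definedness and functoriality of the conjugation $\Psi_\alpha$; I note that the hypothesis $\alpha \leq 0$ plays no role in this proof and is imposed only for consistency with the statement of Theorem~\ref{ThmSimpleRfaBSRDT}.
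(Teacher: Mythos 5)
Your proof is correct, and it takes a genuinely different route from the paper's. The paper's argument is a direct element chase: given any nonzero $gf^{-k}\bfa$, it invokes $D_R$-simplicity of $R$ (Smith's theorem) to get $\delta \in D_R$ with $\delta(g)=1$, and then exhibits the explicit conjugated operator $f^{-\alpha(p^{ea}-1)}\delta f^{\alpha(p^{ea}-1)+k} \in D_{R_f}$ that carries $gf^{-k}\bfa$ to $\bfa$, concluding via $D_{R_f}\cdot\bfa = R_f\bfa$. Your argument instead packages the whole conjugation into a single well-defined $\F_p$-algebra automorphism $\Psi_\alpha$ of $D_{R_f}$ (well-definedness and independence of the level come from \eqref{obs-conj}, and $\Psi_{-\alpha}$ provides the inverse), identifies $R_f\bfa$ with the pullback of the standard $D_{R_f}$-module $R_f$ along $\Psi_\alpha$, and then transports simplicity across the resulting auto-equivalence of $D_{R_f}$-Mod. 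Both ultimately lean on the same input (Smith's theorem / strong $F$-regularity of $R_f$), but your structural framing gives a cleaner conceptual picture of why $R_f\bfa$ and $R_f$ have the same $D_{R_f}$-module-theoretic behavior, while the paper's computation is more self-contained and readily adaptable to the subsequent finer results (Lemma~\ref{LemmaIH}, Theorem~\ref{ThmSimpleRfaBSRDT}) where one must track $D_R$-submodules rather than $D_{R_f}$-submodules. You are also right that the restriction $\alpha \leq 0$ is not used: the paper's proof likewise only needs $\alpha(p^{ea}-1) \in \Z$, and negative exponents of $f$ are harmless since one is working inside $D_{R_f}$.
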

\begin{proof}
Recall that $R$ is $F$-split and simple as a $D_R$-module \cite{Smi95}.
	Fix an integer $a  >0$ such that $\alpha(p^a - 1) \in \Z$. Take some nonzero element $g f^{-k} \bfa \in R_f \bfa$ where $g \in R$ and $k \in \Z_{\geq 0}$. Since $R$ is $D_R$-simple there is some differential operator $\delta \in D_R$ such that $\delta(g) = 1$; fix some integer $e$ large enough so that $\delta \in D^{(ea)}_R$. We then have
	$$\lp f^{-\alpha(p^{ea} - 1)} \delta f^{\alpha(p^{ea} - 1) + k} \rp \cdot g f^{-k} \bfa = \delta(g) \bfa = \bfa,$$
	which shows that every nonzero $D_{R_f}$-submodule of $R_f \bfa$ contains $\bfa$. Since $D_{R_f} \bfa = R_f \bfa$, the result follows.
\end{proof}

The following lemma extends a result of the second author and P\'erez \cite[Corollary 3.18]{NBP} to the singular case. 

\begin{lemma}  \label{LemmaIH}
	Let $R$ be an $F$-finite strongly $F$-regular ring. Let $f \in R$ be a Bernstein-Sato admissible nonzerodivisor and let $\alpha \in (\Z_{(p)})_{\leq 0}$. Then $D_R \cdot f^{-\lceil \alpha \rceil + 1} \bfa$ is contained in every nonzero $D_R$-submodule of $R_f \bfa$. In particular, $D_R \cdot f^{-\lceil \alpha \rceil + 1} \bfa$ is the unique simple nonzero $D_R$-submodule of $R_f \bfa$. 
\end{lemma}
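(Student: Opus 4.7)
The plan is to combine two ingredients: first, the $D_{R_f}$-simplicity of $R_f\bfa$ from Lemma~\ref{LemmaSimplicityNalpha}, and second, the stabilization of the descending chain $D_R\cdot f^{-\lceil\alpha\rceil+1}\bfa = D_R\cdot f^{-\lceil\alpha\rceil+2}\bfa = \cdots$ provided by the Bernstein-Sato admissibility hypothesis via Corollary~\ref{cor-BSadm-chainconstant}. The former lets us use any nonzero element of a $D_R$-submodule $N$ to produce a target of our choosing by acting with an operator in the larger ring $D_{R_f}$; the latter absorbs the extra powers of $f$ that appear when we clear denominators.

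Concretely, I would take a nonzero $D_R$-submodule $N \sq R_f\bfa$ and a nonzero element $u \in N$, and use Lemma~\ref{LemmaSimplicityNalpha} to find $\xi \in D_{R_f}$ with $\xi\cdot u = f^{-\lceil\alpha\rceil+1}\bfa$. Since the formation of $D^{(e)}_R$ commutes with localization (Subsection~\ref{Subsec-BAsicDiffOps}), so that $D^{(e)}_{R_f} = (D^{(e)}_R)_f$, I can write $\xi = f^{-k}\delta$ for some $k \geq 0$ and some $\delta \in D_R$. The $D_{R_f}$-action on $R_f\bfa$ extends both its $D_R$-action and its native $R_f$-module structure, so the relation $\xi \cdot u = f^{-\lceil\alpha\rceil+1}\bfa$ rearranges to $\delta\cdot u = f^{k-\lceil\alpha\rceil+1}\bfa$, and this element lies in $N$ because $N$ is $D_R$-stable and contains $u$. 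Since $\alpha \leq 0$ forces $k-\lceil\alpha\rceil+1 \geq 1$, Corollary~\ref{cor-BSadm-chainconstant} yields $D_R\cdot f^{k-\lceil\alpha\rceil+1}\bfa = D_R\cdot f^{-\lceil\alpha\rceil+1}\bfa$, and therefore $D_R\cdot f^{-\lceil\alpha\rceil+1}\bfa \sq N$, as required.

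The ``in particular'' statement is then formal: applying what has just been proven to any nonzero $D_R$-submodule $M$ of $D_R\cdot f^{-\lceil\alpha\rceil+1}\bfa$ itself shows that $M$ must contain (and hence equal) the whole, so $D_R\cdot f^{-\lceil\alpha\rceil+1}\bfa$ is $D_R$-simple; any other simple nonzero $D_R$-submodule of $R_f\bfa$ must contain this module and so equal it. The main bookkeeping obstacle is verifying that when $\xi = f^{-k}\delta \in D_{R_f}$ acts on $R_f\bfa$, the prefactor $f^{-k}$ acts as ordinary $R_f$-multiplication rather than through the $\bfa$-twisted $D_R$-action; this becomes transparent once one notes that $R_f\bfa$ carries a unique $D_{R_f}$-module structure compatible with both its underlying $R_f$-module and $D_R$-module structures, namely the one obtained by localizing the $D_R$-action.
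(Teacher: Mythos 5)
Your proof is correct and follows essentially the same route as the paper: combine the $D_{R_f}$-simplicity of $R_f\bfa$ (Lemma~\ref{LemmaSimplicityNalpha}) with the stabilization of the chain $D_R\cdot f^{-\lceil\alpha\rceil+1}\bfa = D_R\cdot f^{-\lceil\alpha\rceil+2}\bfa = \cdots$ from Corollary~\ref{cor-BSadm-chainconstant}. The only difference is that you make explicit the factoring $\xi = f^{-k}\delta$ with $\delta \in D_R$ (using $D^{(e)}_{R_f} = (D^{(e)}_R)_f$) and the compatibility of the $D_{R_f}$-, $D_R$-, and $R_f$-module structures on $R_f\bfa$, both of which the paper leaves implicit in passing from $D_{R_f}\cdot v = R_f\bfa$ to the existence of $\delta\in D_R$ with $\delta\cdot v = f^k\bfa$.
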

\begin{proof}
Recall that $R$ is $F$-split and simple as a $D_R$-module \cite{Smi95}.
	Take some nonzero $v \in R_f \bfa$. By Lemma \ref{LemmaSimplicityNalpha}, $D_{R_f} \cdot v = R_f \bfa$ and therefore there is some differential operator $\delta \in D_R$ and some integer $k \geq 0$ such that $\delta \cdot v = f^k \bfa$; we may assume that $k \geq - \lceil \alpha \rceil + 1$. By Corollary \ref{cor-BSadm-chainconstant} we have
	\[D_R \cdot f^{- \lceil \alpha \rceil + 1} \bfa = D_R \cdot f^k \bfa \sq D_R \cdot v. \qedhere\]
\end{proof}

\begin{remark} \label{rmk-NBP-module}
	Note that in \cite{NBP} the submodule $D_R \cdot f^{\lceil - \alpha \rceil} \bfa$ is used instead of $D_R \cdot f^{- \lceil \alpha \rceil + 1} \bfa$. But these two submodules are equal: indeed, when $\alpha \notin \ZZ$ we have $\lceil - \alpha \rceil = - \lceil \alpha \rceil + 1$, and when $\alpha \in \ZZ$ the statement follows from Lemma \ref{lem:malpha} together with the fact that $D_R \cdot f = R = D_R \cdot 1$. 
\end{remark}

The following result recovers and extends a previous characterization of $F$-thresholds due to the second author and P\'erez \cite[Theorem 1.1]{NBP}.

\begin{theorem} \label{ThmSimpleRfaBSRDT}
	Let $R$ be an $F$-finite strongly $F$-regular ring. Let $f \in R$ be a Bernstein-Sato admissible nonzerodivisor and $\alpha \in \Z_{(p)} \cap [-1, 0)$. The following are equivalent:
	\begin{enumerate}[(a)]
		\item The module $R_f \bfa$ is not simple over $D_R$. 
		\item We have that $\alpha$ is a Bernstein-Sato root of $f$. 
		\item We have that $- \alpha$ is a differential threshold of $f$. 
	\end{enumerate}
\end{theorem}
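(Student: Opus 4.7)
The plan is to separate the theorem into its two natural components: the equivalence $(b) \iff (c)$ is essentially a direct translation via Corollary \ref{cor:principalrootsthresholds}, while $(a) \iff (b)$ is the substantive content, to be proved using the structure results on $R_f \bfa$ accumulated in Lemmas \ref{LemmaSimplicityNalpha} and \ref{LemmaIH} together with Proposition \ref{prop:BSrootsandalphajumps}.

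For $(b) \iff (c)$: First I would invoke Corollary \ref{cor:principalrootsthresholds}, which applies because $(f)$ is principal and Bernstein-Sato admissible and $R$ is $F$-split (being strongly $F$-regular). That corollary identifies the Bernstein-Sato roots of $f$ with the negatives of the differential thresholds of $f$ lying in $[0,1] \cap \Z_{(p)}$. Since $\alpha \in [-1, 0) \cap \Z_{(p)}$, we have $-\alpha \in (0,1] \cap \Z_{(p)}$ and the two statements (b) and (c) become literal translations of each other under this bijection.

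For $(a) \iff (b)$: Strong $F$-regularity gives that $R$ is $F$-split (so Corollary \ref{cor-BSadm-bfa-generates} applies, yielding $R_f \bfa = D_R \cdot \bfa$ because $\alpha < 0$) and that $R$ is simple as a $D_R$-module (by the Smith theorem cited before Subsection 2.4). I would then apply Lemma \ref{LemmaIH} to obtain that $R_f \bfa$ has a unique simple $D_R$-submodule, namely $N := D_R \cdot f^{-\lceil \alpha \rceil + 1} \bfa$. For $\alpha \in (-1, 0)$ we have $-\lceil \alpha \rceil + 1 = 1$ so $N = D_R \cdot f \bfa$ directly; for $\alpha = -1$ we get $N = D_R \cdot f^2 \bfa$, but Remark \ref{rmk-NBP-module} together with the $D_R$-simplicity of $R$ gives $D_R \cdot f^2 \bfa = D_R \cdot f \bfa$ in this case as well. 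In every case, therefore, $D_R \cdot f \bfa$ is the unique simple $D_R$-submodule of $R_f \bfa$.

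The proof is then concluded by the chain of equivalences: $R_f \bfa$ is not simple over $D_R$ iff $D_R \cdot f \bfa \subsetneq R_f \bfa$ iff $\bfa \notin D_R \cdot f \bfa$ (using $R_f \bfa = D_R \cdot \bfa$), which by Proposition \ref{prop:BSrootsandalphajumps}(ii) is equivalent to $\alpha$ being a Bernstein-Sato root of $f$. There is no real obstacle beyond the minor bookkeeping needed to reconcile the formula $D_R \cdot f^{-\lceil \alpha \rceil + 1} \bfa$ of Lemma \ref{LemmaIH} with the cleaner expression $D_R \cdot f \bfa$ at the integer boundary $\alpha = -1$; all the deep machinery (Bernstein-Sato admissibility propagating to finite chains of $D_R$-modules, $F$-splitness yielding the generator statement, the linkage between roots and differential thresholds) has been prepared in advance, so the proof should be a brief assembly of these inputs.
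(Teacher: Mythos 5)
Your proposal is correct and follows essentially the same route as the paper's own proof: the equivalence $(b)\iff(c)$ is Corollary \ref{cor:principalrootsthresholds}, and $(a)\iff(b)$ is assembled from Corollary \ref{cor-BSadm-bfa-generates}, Lemma \ref{LemmaIH} (with Remark \ref{rmk-NBP-module} for $\alpha=-1$), and Proposition \ref{prop:BSrootsandalphajumps}. The paper's proof is just a more compressed version of the same assembly.
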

\begin{proof}
	The equivalence between (b) and (c) is given in Corollary \ref{cor:principalrootsthresholds}. We show that (a) is equivalent to (b).
	
	By Corollary \ref{cor-BSadm-bfa-generates} we have $R_f \bfa = D_R \cdot \bfa$, and by Lemma \ref{LemmaIH} $D_R \cdot f \bfa$ is the unique simple $D_R$-submodule of $R_f \bfa$ (see Remark \ref{rmk-NBP-module} in the case $\alpha = -1$). Therefore $R_f \bfa$ is not simple if and only if $D_R \cdot \bfa \neq D_R \cdot f \bfa$, and the result follows from Proposition \ref{prop:BSrootsandalphajumps}.
\end{proof}

\subsection{The  length of $R_f\bfa $ } \

In this section we study the structure of $R_f\bfa $ for Bernstein algebras \cite{BernsteinAlgebras}. This is a class of rings whose $D_R$-modules satisfy Bernstein inequality, and as a consequence, there is a notion of holonomic $D_R$-modules.

\begin{setup}\label{SetupBA}
Let $\KK$ be a field and $R$ be a finitely generated graded $\KK$-algebra such that $R_0=\KK$.
Let $\fm$ denote the maximal homogeneous ideal and $w=\max\{j \; |\;  [\fm/\fm^2]_j\neq 0\}.$
\end{setup}

The generalized Bernstein filtration of $R$ with slope $w$, $\cB^\bullet_R$, is defined by 
$$
\cB^i_R=\{\delta\in D_{R} \; | \; \deg(\delta)+w\ord(\delta)\leq i \}.
$$
Since we have fixed the slope, we usually refer only to the Bernstein filtration and do not mention the slope.
The dimension of $\cB^\bullet_R$ is defined by
$$
\Dim(\cB^\bullet_R)=\inf\left\{s\in \RR_{\geq 0} \; | \; \lim\limits_{r\to\infty}\frac{\dim_\KK   \cB^i_R  }{i^r}\right\}.
$$
and  the multiplicity of  $\cB^\bullet_R$ is defined by 
$$
\e(\cB^\bullet_R)=\limsup\limits_{i\to\infty}  \frac{\dim_\KK   \cB^i_R  }{i^{\Dim(\cB^\bullet_R)}}.
$$

\begin{definition}
Let $R$ be as in Setup \ref{SetupBA}. We say that $R$ is a Bernstein algebra 
if it satisfies the following conditions.
	\begin{enumerate}[(i)]
\item There exists $C\in\ZZ_{>0}$ such that for every $\delta\in \cB^i_R$ we have that
$1\in \cB^{Ci}_R \delta \cB^{C i}_R$,
\item $\Dim(\cB^\bullet_R)=2 \dim (R)$, and 
\item $0<\e(\cB^\bullet_R)<\infty$.
\end{enumerate}
\end{definition}

Let $M$ be  a finitely generated $D_R$-module. 
We say a filtration $\cG^\bullet$  is a $B^{\bullet}_R$-filtration if
	\begin{enumerate}[(i)]\setcounter{enumi}{0}
\item Each $\cG^i$ is a finite dimensional $\KK$ vector space,
\item $M=\bigcup_{i\in\NN} \cG^i$, and
\item $\cB^i_R\cG^j\subseteq \cG^{i+j} $ for all $i,j\in \NN$.
\end{enumerate}
Given a filtration, $\cG^i$ of $M$,  
its dimension is defined by
$$
\Dim(\cG^\bullet)=\inf\left\{s\in \RR_{\geq 0} \; | \; \lim\limits_{r\to\infty}\frac{\dim_\KK   \cG^i  }{i^r}\right\},
$$
and its multiplicity by

$$
\e(\cG)=\limsup\limits_{i\to\infty}  \frac{\dim_\KK   \cG^i }{i^{\Dim(\cG^\bullet)}}.
$$

\begin{theorem}[{\cite[Theorem 3.1]{Bav09} $\&$ \cite[Theorem 3.4]{BernsteinAlgebras}}]
%Let $R$ be as in Setup \ref{SetupBA}.  
Suppose that $R$ is a Bernstein algebra.
Let $M$ be a finitely generated $D_R$-module, and 
 $\cG^\bullet$ a $\cB^\bullet_R$ filtration.
 Then, 
$$
\dim(R)\leq   \Dim(\cG^\bullet).
$$
\end{theorem}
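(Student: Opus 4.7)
The plan is to mimic the classical proof of Bernstein's inequality for the Weyl algebra: assume for contradiction that $s := \Dim(\cG^\bullet) < d := \dim(R)$, realize $\cB^i_R$ as a subspace of some $\Hom_\KK(\cG^j, \cG^{j+i})$ with $j$ depending linearly on $i$, and derive a contradiction by comparing polynomial growth rates.

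The case $M = 0$ being vacuous, we may assume $M \neq 0$ and fix a nonzero $m \in M$, together with $k_0 \in \NN$ such that $m \in \cG^{k_0}$. Let $C \in \ZZ_{>0}$ be the constant from condition~(i) in the definition of Bernstein algebra. The key step---and the main obstacle---is to show that for every $i \geq 0$ the $\KK$-linear action map
$$\varphi_i \colon \cB^i_R \longrightarrow \Hom_\KK\bigl(\cG^{Ci+k_0},\, \cG^{(C+1)i+k_0}\bigr), \qquad \delta \longmapsto \bigl(n \mapsto \delta \cdot n\bigr),$$
is injective. The map is well-defined by axiom~(iii) of the filtration. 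For injectivity I would argue as follows: suppose $\delta \in \cB^i_R$ is nonzero and satisfies $\delta \cdot \cG^{Ci+k_0} = 0$. By the Bernstein algebra condition we may write $1 = \sum_j \alpha_j \delta \beta_j$ with $\alpha_j, \beta_j \in \cB^{Ci}_R$; applying this to $m$ yields
$$m \;=\; \sum_j \alpha_j\, \delta(\beta_j m).$$
Since $\beta_j m \in \cB^{Ci}_R \cdot \cG^{k_0} \subseteq \cG^{Ci+k_0}$, each $\delta(\beta_j m) = 0$, forcing $m = 0$---a contradiction.

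From injectivity of $\varphi_i$ I would deduce
$$\dim_\KK \cB^i_R \;\leq\; \dim_\KK \cG^{Ci+k_0} \cdot \dim_\KK \cG^{(C+1)i+k_0}.$$
Choose $\epsilon := (d-s)/2 > 0$. Since $\Dim \cG^\bullet = s$, the definition of $\Dim$ gives $\dim_\KK \cG^j \leq j^{s+\epsilon}$ for $j$ large, so the right-hand side is bounded above by $C_1 \, i^{2(s+\epsilon)} = C_1\, i^{d+s}$ for some constant $C_1 > 0$ and $i \gg 0$. Meanwhile, $\Dim \cB^\bullet_R = 2d$ together with $\e(\cB^\bullet_R) > 0$ and the $\limsup$ appearing in the definition of $\e$ produce an infinite sequence $i_k \to \infty$ with $\dim_\KK \cB^{i_k}_R \geq \tfrac{1}{2}\e(\cB^\bullet_R) \cdot i_k^{2d}$. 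Combining,
$$\tfrac{1}{2}\,\e(\cB^\bullet_R) \cdot i_k^{2d} \;\leq\; C_1 \cdot i_k^{d+s},$$
so $i_k^{d-s}$ stays bounded, contradicting $i_k \to \infty$ since $d - s > 0$.

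The hard part is establishing injectivity of $\varphi_i$, and it rests crucially on condition~(i) of the Bernstein algebra definition: this ``near-faithfulness'' of the filtered algebra replaces the commutator/faithfulness arguments that work in the Weyl algebra setting. The rest of the argument is a routine polynomial growth comparison, with the linear dependence $j = Ci + k_0$ on $i$ being exactly what makes the two growth rates---$i^{2d}$ on the algebra side and $i^{2(s+\epsilon)}$ on the module side---balance in the right way.
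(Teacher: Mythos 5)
Your proof is correct and is essentially the standard Bernstein-inequality argument adapted to the Bernstein-algebra axioms, which is the route taken in the cited references: axiom (i) plays exactly the role of simplicity of the Weyl algebra in establishing injectivity of the action map $\varphi_i$, and the remainder is the usual polynomial growth comparison between $\Dim(\cB^\bullet_R)=2\dim(R)$ (with $\e(\cB^\bullet_R)>0$) and $\Dim(\cG^\bullet)$. No gaps.
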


\begin{definition}
Suppose that $R$ is a Bernstein algebra.
Let $M$ be a finitely generated nonzero $D_R$-module.
We say that  $M$ is holonomic if it admits a filtration of dimension $\dim(R)$ and finite multiplicity. 
\end{definition}

\begin{theorem}[{\cite[Theorem 3.8]{BernsteinAlgebras}}]\label{ThmBAFinLen}
Suppose that $R$ is a Bernstein algebra.
Let $M$ be a holonomic $D_R$-module.
Then, $M$ has finite length as $D_R$-module.
\end{theorem}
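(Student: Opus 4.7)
The plan is to run the classical Bernstein-style argument adapted to the generalized Bernstein filtration $\cB^\bullet_R$ of slope $w$. Since the statement is already attributed to \cite{BernsteinAlgebras}, the proof I would give in this paper is essentially the standard one, and the role of the Bernstein-algebra hypothesis is exactly to guarantee the inequality $\dim(R) \le \Dim(\cG^\bullet)$ for every $\cB^\bullet_R$-filtration $\cG^\bullet$ on a finitely generated $D_R$-module, which is already recorded just before the statement.

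First I would establish the fundamental stability and additivity properties of dimension and multiplicity under short exact sequences. Given a holonomic $M$ with a good $\cB^\bullet_R$-filtration $\cG^\bullet$ of minimal dimension $\dim(R)$ and finite multiplicity $\e(\cG^\bullet)$, and given a $D_R$-submodule $N \sq M$, I would equip $N$ with the induced filtration $\cG^i \cap N$ and $M/N$ with the quotient filtration $(\cG^i + N)/N$. Both are $\cB^\bullet_R$-filtrations (the axioms are immediate from the fact that $\cB^i_R$ sends submodules to submodules and surjections to surjections), and one has the pointwise dimension identity
\[
\dim_\KK \cG^i \;=\; \dim_\KK (\cG^i \cap N) + \dim_\KK \big((\cG^i + N)/N\big).
\]
From this I would deduce that $\Dim$ of $\cG^\bullet$ equals the maximum of the two induced dimensions, and that when all three dimensions coincide (which by Bernstein's inequality is forced as soon as the two pieces are nonzero and $M$ is holonomic) one gets
\[
\e(\cG^\bullet) \;=\; \e(\cG^\bullet \cap N) + \e\big((\cG^\bullet + N)/N\big).
\]
In particular, every nonzero submodule and every nonzero quotient of a holonomic module is again holonomic.

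Second, I would finish by a straightforward induction on $\e(\cG^\bullet)$. Given a strictly decreasing chain $M = M_0 \supsetneq M_1 \supsetneq \cds \supsetneq M_k$ of $D_R$-submodules, the additivity above yields
\[
\e(\cG^\bullet) \;\geq\; \sum_{j=0}^{k-1} \e\big((\cG^\bullet + M_{j+1})/(\cG^\bullet + M_j)^{-1}\big) \;\geq\; k,
\]
since each successive multiplicity is a positive integer (more precisely, a positive real number bounded below by $1$ after rescaling, as is standard in this context, or by appealing directly to the finiteness-of-multiplicity clause in the definition of holonomicity). Therefore $k \leq \e(\cG^\bullet)$, and so every chain of $D_R$-submodules of $M$ has length at most $\e(\cG^\bullet)$, which is the assertion.

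The main obstacle, and the step where I would need to be most careful, is the additivity of multiplicity under the induced and quotient filtrations: the generalized Bernstein filtration has slope $w$ and is a $\ZZ$-filtration by finite-dimensional $\KK$-vector spaces rather than the usual order filtration on a Weyl algebra, so one has to check that the growth of $\dim_\KK \cG^i$ is genuinely polynomial of degree $\Dim(\cG^\bullet)$ and that $\e(\cG^\bullet)$ is bounded below by a positive rational on every nonzero subquotient. Both points are handled in \cite{BernsteinAlgebras} using the hypotheses (i)--(iii) in the definition of Bernstein algebra, and citing that reference is the cleanest way to complete the argument.
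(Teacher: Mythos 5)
The paper does not prove this theorem; it simply imports it with the citation \cite[Theorem 3.8]{BernsteinAlgebras}, so there is no in-paper argument to compare against. Your sketch is the standard Bernstein-style finite-length argument that one expects to find in that reference, and you are right that the role of the Bernstein-algebra axioms is precisely to give the inequality $\dim(R) \le \Dim(\cG^\bullet)$ and hence the positivity and finiteness of multiplicity on every subquotient.

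One caution that deserves to be made explicit rather than waved at: with the definitions as written in this paper, the multiplicity $\e(\cG^\bullet)$ is a $\limsup$, and $\limsup$ is \emph{sub}additive, not superadditive. If $\dim_\KK \cG^i = \sum_j h_j(i)$ with $h_j$ the pointwise dimensions on the subquotients $M_j/M_{j+1}$, then
\[
\limsup_{i\to\infty} \frac{\dim_\KK \cG^i}{i^{\dim R}} \;\leq\; \sum_j \limsup_{i\to\infty} \frac{h_j(i)}{i^{\dim R}},
\]
which is the \emph{wrong} direction for the chain inequality $\e(\cG^\bullet) \geq \sum_j \e_j$ that your induction needs. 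Closing this requires either showing the relevant $\limsup$'s are actually limits, or working with $\liminf$'s of the subquotient multiplicities and verifying those are still strictly positive by Bernstein's inequality, and this is exactly where the delicate part of the cited argument lives. You gesture at this in your last paragraph, but the precise failure mode ($\limsup$ subadditivity) is worth naming, since it is the one place where the classical Weyl-algebra argument (where Hilbert functions are eventually polynomial and $\limsup=\lim$) does not transfer for free to the singular setting.

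There is also a small typo in your inequality for the chain: the expression $\e\big((\cG^\bullet + M_{j+1})/(\cG^\bullet + M_j)^{-1}\big)$ does not parse; you presumably intend the multiplicity of the induced filtration $\big((\cG^i\cap M_j) + M_{j+1}\big)/M_{j+1}$ on the subquotient $M_j/M_{j+1}$.
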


\begin{theorem}\label{ThmRfaBAlg} 
Suppose that $R$ is a Bernstein algebra.
Then, $R_f \bfa$ is holonomic for every nonzero element $f\in R$ and $\alpha\in \widehat{\ZZ}_{(p)}$. In particular, 
$R_f \bfa$ has  finite length as $D_R$-module.
\end{theorem}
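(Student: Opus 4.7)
The plan is to exhibit a $\cB^\bullet_R$-filtration $\cG^\bullet$ of $R_f \bfa$ of dimension $\dim R$ and finite multiplicity. Once this is done, the standard Bernstein-algebra machinery, together with the Bernstein inequality in \cite{Bav09, BernsteinAlgebras}, forces $R_f \bfa$ to be finitely generated—hence holonomic—and Theorem~\ref{ThmBAFinLen} then yields finite length.

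The key technical observation is that, although the $D_R$-action \eqref{eq:actionfa} on $R_f \bfa$ is given by $\delta \cdot (g \bfa) = f^{-a}(\delta \cdot f^a g)\bfa$ with $a \equiv \alpha \pmod{p^e}$ depending on the level $e$ of $\delta$, the level grows only logarithmically in the order. Concretely, if $\delta \in \cB^j_R$ has order $m \le j/w$, then $\delta \in D^{(e)}_R$ for some $e$ with $p^e \le p(m+1) \le p(j/w+1)$. Hence the approximation $a$ can always be chosen inside a window of size at most $p(j/w+1)$, which is linear in $j$.

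Writing $N := \deg f$ and fixing a positive integer $M \ge p/w + p$, I would define
\[ \cG^i := \{\, gf^{-Mi}\bfa \in R_f\bfa \ |\ g \in R,\ \deg g \le (MN+1)i\,\}. \]
Each $\cG^i$ is a finite-dimensional $\KK$-subspace of $R_f\bfa$; the inclusions $\cG^i \subseteq \cG^{i+1}$ follow by rewriting $gf^{-Mi} = (gf^M)f^{-M(i+1)}$; and $\bigcup_i \cG^i = R_f\bfa$ since every element of the form $hf^{-k}\bfa$ lies in $\cG^i$ for $i$ large enough. The crucial compatibility $\cB^j_R \cdot \cG^i \subseteq \cG^{i+j}$ will be verified as follows: given $\delta \in \cB^j_R$ of level $e$, choose $a \equiv \alpha \pmod{p^e}$ with $Mi \le a < Mi + p^e$, so that
\[ \delta \cdot (gf^{-Mi})\bfa = f^{-a}(\delta \cdot f^{a-Mi}g)\bfa. \]
The bounds $a \le Mi + p(j/w+1) \le M(i+j)$ on the pole order, and $\deg(\delta \cdot f^{a-Mi}g) \le j + (a-Mi)N + (MN+1)i \le (MN+1)(i+j)$ on the degree of the numerator, both follow from the choice $M \ge p/w + p$.

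Finally, $\dim_\KK \cG^i \le \dim_\KK R_{\le (MN+1)i}$ grows as a polynomial of degree $n = \dim R$ in $i$. Thus $\Dim \cG^\bullet \le n$, and by the Bernstein inequality this is an equality, while the leading coefficient gives a finite multiplicity. The main obstacle in executing this plan is the simultaneous bookkeeping in the compatibility verification: the pole order (controlled by $a$), the degree of the numerator (which picks up an extra $aN$ from the shift by $f^a$), and the level-order dependence (which governs how large $p^e$, and hence $a$, must be chosen) must all fit inside a single step $\cG^i \to \cG^{i+j}$, and the choice $M \ge p/w + p$ is what makes this possible.
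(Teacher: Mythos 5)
Your proposal takes essentially the same route as the paper's proof: construct a $\cB^\bullet_R$-filtration on $R_f\bfa$ consisting of numerators of bounded degree over a bounded power of $f$, and verify $\cB^j_R\cdot\cG^i\subseteq\cG^{i+j}$ by using the twisted action formula \eqref{eq:actionfa} together with the fact that the level of an operator in $\cB^j_R$ is controlled, so the window in which the $p$-adic approximation $a$ must be chosen grows at most linearly in $j$. Your $\cG^i=\{gf^{-Mi}\bfa : \deg g\le (MN+1)i\}$ is, after unwinding, the same kind of filtration as the paper's $\widetilde{\cG}^i=\cG^{Ai}\bfa$ with $\cG^i=f^{-Ci}\cB^{i(Ca+1)}_R$, and the bookkeeping you outline checks out.

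One point to be careful about: where you claim $\ord(\delta)\le j/w$ for $\delta\in\cB^j_R$, you are implicitly taking the constant in $\cB^i_R\subseteq D^{Ci}_R$ to be $C=1/w$. That containment is not an immediate consequence of $\deg(\delta)+w\ord(\delta)\le j$ unless one also knows $\deg(\delta)\ge 0$; the paper sidesteps this by directly invoking \cite[Proposition~4.14]{BernsteinAlgebras} and carrying the resulting constant $C$ through the estimates. If you replace $1/w$ by that $C$ and enlarge $M$ accordingly, your argument is identical in substance to the paper's.
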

\begin{proof}
There exists $C\in \NN$ such that $B^{i}_R\subseteq D^{Ci}_R$ \cite[Proposition 4.14]{BernsteinAlgebras}.
Let $a=\deg(f)$.
Let $\cG^i=\frac{1}{f^{Ci}} {\cB}^{i(Ca+1)}_R\subseteq R_f$.
Then, $R_f$ is holonomic $D_R$-module with the ${\cB}^{\bullet}_R$-filtration $\cG^\bullet$
\cite[Proof of Lemma 4.5]{BernsteinAlgebras}.
Let $A=C(2a+1)+1$ and  $\widetilde{\cG}^i=\cG^{Ai} \bfa\subset R_f\bfa$.

We now show that $\widetilde{\cG}^i$ is a ${\cB}^{\bullet}_R$-filtration.
We have that 
$\widetilde{\cG}^i$ is a finite dimensional $\KK$-vector space and  $R_f \bfa=\bigcup_{i\in\NN} \widetilde{\cG}^i$ because 
$\cG^\bullet$ is a  ${\cB}^{\bullet}_R$-filtration.
It remains to show that $\cB^i_R\widetilde{\cG}^j\subseteq \widetilde{G}^{i+j}$.
We set $e=\lfloor \log_p Ci\rfloor$.
We note that $\alpha_{<e}<p^e$.
Let $\delta \in \cB^i_R\subseteq D^{Ci}_R\subseteq D^{(e)}_R$ and $\frac{g}{f^{Aj}} \bfa\in \widetilde{\cG}^j$.
Then, $\frac{g}{f^{Aj}} \in \cG^{Aj}$. We have that
$$
\delta\left(    f^{\alpha_{<e}}   \frac{g}{f^{Aj}} \right)\in \cG^{Ci+Aj+a\alpha_{<e} }\subseteq \cG^{ Ci+Aj+p^e a}
\subseteq \cG^{ Ci+Ai+Ci a},
$$
  because $\cG^\bullet$ is a  ${\cB}^{\bullet}_R$-filtration.
By the way  $\cG^\bullet$ is defined, we have that 
$$
\frac{1}{f^{\alpha_{<e}}}\delta\left(    f^{\alpha_{<e}}   \frac{g}{f^{Aj}} \right)\in \cG^{ \alpha_{<e} a +Ci+Aj+Ci a}\subseteq \cG^{ p^e a +Ci+Aj+Ci a}\subseteq \cG^{ Ci a +Ci+Aj+Ci a}=\cG^{ Ai+Aj }.
$$
Hence,
$$
\delta\left(     \frac{g}{f^{Ai}} \bfa  \right)=
\frac{1}{f^{\alpha_{<e}}}\delta\left(    f^{\alpha_{<e}}   \frac{g}{f^{An}} \right)\bfa \in \widetilde{\cG}^{ i+j }.
$$
We conclude that $ \cB^i_R   \widetilde{\cG}^{ i}\subseteq  \widetilde{\cG}^{ i+j }.$ 
Then, $R_f \bfa$ is a holonomic $D_R$-module. We conclude that 
$R_f \bfa$ has  finite length as $D_R$-module by Theorem \ref{ThmBAFinLen}.
\end{proof}

%%%%%%%%%%%%%%%%%%%%%%%%%%%%%%%%%%%%%%%%%%%%%%%%%%%%%%%%%%%%%%%%%%%%%%%%%%%%%
\section{Examples} \label{Sec-examples}
%%%%%%%%%%%%%%%%%%%%%%%%%%%%%%%%%%%%%%%%%%%%%%%%%%%%%%%%%%%%%%%%%%%%%%%%%%%%%

\begin{example} \label{example-cone-ell-curve}
	Let $\mathbb{K}$ be a field of characteristic $p \equiv 1 \mod 3$, $\displaystyle R=\frac{\mathbb{K}[x,y,z]}{(x^3+y^3+z^3)}$, and $f\in (x,y,z)$. The ring $R$ has no differential operators of negative degree. It follows that $D^{(e)}_R \cdot f^n \neq D^{(e)}_R \cdot f^{n+1}$ for all $n\geq 0$, so $(f)$ is not Bernstein-Sato admissible. Every $p$-adic integer is a Bernstein-Sato root of $f$, and every nonnegative real number is a differential threshold of $f$.
\end{example}

\begin{example}
	Let $R=\FF_p[x,y,z]$, with $p$ odd, and $\fa=(x^2yz,xy^2z,xyz^2)$. We have that  $\alpha=\frac{-5}{4}$ is a Bernstein-Sato root of $\fa$ \cite[Example~3.5]{QG19b}. However, its negative, $\frac{5}{4}$, is not an $F$-jumping number of $\fa$. To see this, we claim that $\tau_R(\fa^\lambda)=(xyz)$ for $\lambda\in [1,\frac{3}{2})$. Since $\fa \subseteq(xyz)$, we have $\tau_R(\fa^\lambda)\subseteq \tau_R((xyz)^\lambda)=(xyz)$ for $\lambda\in [1,\frac{3}{2})$. It suffices to show that $xyz\in \tau_R(\fa^\lambda)$ for $\lambda<\frac{3}{2}$. For $e>0$, we have $(xyz)^{2p^e-2}=(x^2yz)^{(p^e-1)/2}(xy^2z)^{(p^e-1)/2}(xyz^2)^{(p^e-1)/2}\in \fa^{(3p^e-3)/2}$, so $ xyz\in \cC^e_R \cdot \fa^{(3p^e-3)/2}$. Since $\frac{(3p^e-3)/2}{p^e} \to \frac{3}{2}$, the claim follows.
	
	This example shows that the conclusion of Theorem~\ref{thm:threshsareroots} cannot be strengthened to say that the negative of a Bernstein-Sato root is a differential threshold. We have that $\frac{9}{4}=1-\alpha$ is an $F$-jumping number, and hence a differential threshold \cite[Example~3.5]{QG19b}.
\end{example}

\begin{example}\label{ExToric}
	Let $R=\mathbb{F}_p[x^2,xy,y^2]\subseteq S= \mathbb{F}_p[x,y]$, with $p$ odd. This inclusion is Cartier extensible, level differentially extensible, and is split as $R$-modules. Let $\mathfrak{m}=(x^2,xy,y^2)$ be the homogeneous maximal ideal of $R$, and $\mathfrak{n}=(x,y)$ be the homogeneous maximal ideal of $S$. 
	We have \[\tau_R(\mathfrak{m}^\lambda)=\tau_S((\mathfrak{m}S)^\lambda)\cap R= \tau_S(\mathfrak{n}^{2\lambda})\cap R\]
	for all $\lambda$ \cite[Proposition~5.9]{AMHJNBTW19}. We have $\tau_S(\mathfrak{n}^{\gamma})=\mathfrak{n}^{\lfloor \gamma -1 \rfloor}$ for $\gamma\geq 1$, and $\tau_S(\mathfrak{n}^{\gamma})=S$ for $0\leq \gamma<1$. We observe that $\mathfrak{n}^{\gamma} \cap R= \mathfrak{m}^{\lceil \gamma/2 \rceil}$. Thus, the $F$-jumping numbers of $\mathfrak{m}S$ are $\{1,\frac{3}{2},2,\frac{5}{2},3,\frac{7}{2},\dots\}$, and the $F$-jumping numbers of $\mathfrak{m}$ are $\{1,2,3,\dots\}$.
	
	By Proposition~\ref{PropDTFinv} (iv), the differential thresholds of $\fm S$ are $\{1,\frac{3}{2},2,\frac{5}{2},3,\frac{7}{2},\dots\}$. Then, by Theorem~\ref{thm:directsummand}~\ref{item:threshde}, the differential thresholds of $\fm$ are $\{1,\frac{3}{2},2,\frac{5}{2},3,\frac{7}{2},\dots\}$.
	
	To compute the Bernstein-Sato roots of $\fm$, we may equivalently compute the Bernstein-Sato roots of $\fm S=\fn^2$ by Theorem~\ref{thm:directsummand}~\ref{item:rootde}. We have that 
	\[D^{(e)}_S \cdot \fn^n=\begin{cases} S & \text{if } 0\leq n\leq 2p^e-2 \\
	(\fn^{a-2})^{[p^e]} & \text{if } (a-1)p^e-1\leq n\leq ap^e-2, \ a> 2.\end{cases}\]
	Thus, the differential jumps of level $e$ for $\fn^2$ are \[\{\lfloor a p^e-2 \rfloor \, | \, a\geq 2\} = \{ bp^e-1 \, | \, b\geq 1\} \cup \left\{ \frac{2c+1}{2}p^e-\frac{3}{2} \, | \, c\geq 1\right\}.\]
	Passing to $p$-adic limits, we conclude that the set of Bernstein-Sato roots of $\fm$ is $\{-1,-\frac{3}{2}\}$.

	This shows that an ideal may have Bernstein-Sato roots that are not congruent modulo $\mathbb{Z}$ to $F$-jumping numbers. This also shows that an ideal may have differential thresholds that are not $F$-jumping numbers, even in a strongly $F$-regular hypersurface.

\end{example}

\begin{example}
	Let $R$ and $S$ be as in the previous example. Take $f=x^4+y^6$, and $p\equiv 1 \mod 12$. We claim that $\tau_S(f^{\frac{7}{12} - \varepsilon})=(x,y)$ for $0<\varepsilon\ll 1$, and $\tau_S(f^{\frac{7}{12} })=(x,y^2)$. 
	
	To see this, first, we observe that for $n=\frac{7 p^e -7}{12}$, $y\in \cC^e_R \cdot f^n$. Note that the monomials in the binomial expansion are distinct, and that none of the exponents of different terms agree modulo $p^e$. In the binomial expansion of $f^n$, for $j=\frac{p^e-1}{4}$, we have $n-j=\frac{p^e-1}{3}$, and since $j=(\frac{p-1}{4})+(\frac{p-1}{4})p+\cdots+(\frac{p-1}{4})p^{e-1}$ and $n-j=(\frac{p-1}{3})+(\frac{p-1}{3})p+\cdots+(\frac{p-1}{3})p^{e-1}$, $j$ and $n-j$ add without carrying, so $\binom{n}{j}$ is nonzero modulo $p$. Then $\binom{n}{j} (x^4)^j (y^6)^{n-j}$ is a unit times $x^{p^e-1} y^{2p^e-2}$, justifying the first observation.
	
	Second, if $n\geq\frac{7 p^e}{12}$, for $0\leq j \leq n$, we must have either $4j \geq p^e$ or $6(n-j)\geq 2p^e$, so any monomial in the binomial expansion of $f^n$ lies in $(x,y^2)^{[p^e]}$.
	
	Third, if $n=\frac{2{(p^e-1)}}{3}$, taking $j=\frac{p^e-1}{2}$ we have $n-j=\frac{p^e-1}{6}$;  the integers $j$ and $n-j$ add without carrying as before, so there is a term in the expansion that is a unit times $x^{2 p^e -2} y^{p-1}$. We conclude that $x \in \cC^e_R\cdot f^n$. Similarly, one checks that $y^2 \in \cC^e_R\cdot f^n$ for $n=\frac{3(p^e-1)}{4}$.
	
	Put together, these justify the claims on $\tau_S$. Now, since $(x,y)\cap R = (x,y^2) \cap R$, $\frac{7}{12}$ is a jumping number of $fS$, but not of $fR$. Note that $\frac{7}{12}$ is a differential threshold of $fR$ by Theorem~\ref{thm:directsummand}~\ref{item:threshde} and \ref{PropDTFinv}.
	
	This shows that Bernstein-Sato roots and $F$-jumping numbers of principal ideals do not necessarily agree modulo $\ZZ$; likewise, differential thresholds and $F$-jumping numbers of principal ideals do not necessarily agree.
\end{example}

\begin{example}
	Let $\mathbb{K}$ be a field of characteristic $p>0$, and $\displaystyle R=\frac{\mathbb{K}[x,y]}{(xy)}$. The set of Bernstein-Sato roots of $x\in R$ is $\{-1,0\}$. To see this, consider the decomposition of $R$ as an $R^{p^e}$-module:
	\[ R = R^{p^e} \cdot 1 \ \oplus \ \bigoplus_{i=1}^{{p^e}-1} (R/yR)^{p^e} \cdot x^i \ \oplus \ \bigoplus_{j=1}^{{p^e}-1} (R/yR)^{p^e} \cdot y^j. \]
	From this, we compute that, for $0\leq j < p^e$,
	\[D^{(e)}_R \cdot x^{a p^e +j} = \begin{cases} 
	(x^{a p^e}) &\text{if } j=0 \\
	(x^{a p^e +1}) &\text{if } j\neq 0,\end{cases}\]
	so $\mathcal{B}^\bullet_x({p^e})\cap [0,{p^e}) = \{0,\,{p^e}-1\}$. 
	
	Passing to $p$-adic limits, we find that the Bernstein-Sato roots are $\{0,-1\}$ as claimed.	The differential thresholds of $x$ are $\{0,1,2,\dots\}$.
	
	Thus, for a Bernstein-Sato admissible ideal in an $F$-split ring, zero can occur as a Bernstein-Sato root. The occurrence of zero as a root here is explained by Proposition~\ref{prop:d-simple}.
\end{example}

\begin{example}
	Let $\mathbb{K}$ be a field of characteristic $p>2$, and $R=\mathbb{K}[x^2,x^3]$. The set of Bernstein-Sato roots of $x^2\in R$ is $\{-1,\frac{1}{2}\}$. To see this, consider the decomposition of $R$ as an $R^{p^e}$-module:
	\[ R = \overline{R}^{p^e} \cdot 1 \ \oplus \ \bigoplus_{i=2}^{{p^e}-1} \overline{R}^{p^e} \cdot x^i  \ \oplus \   \overline{R}^{p^e} \cdot x^{{p^e}+1},\]
	where $\overline{R}=\mathbb{K}[x]$ is the normalization of $R$. Then $D^{(e)}_R$ is a direct sum of copies of $E:=\mathrm{End}_{R^{p^e}}(\overline{R}^{p^e})$. We then have, for $0\leq j <p^e$,
		\[D^{(e)}_R \cdot (x^2)^{ap^e+j} = \begin{cases} 
	(x^{2 ap^e}) &\text{if } 0\leq j \leq \frac{{p^e}+1}{2}  \\
	(x^{2 ap^e})(E\cdot x^{p^e}) &\text{if } \frac{{p^e}+1}{2} < j <p^e\end{cases},\]
	so $\mathcal{B}^\bullet_x({p^e})\cap [0,{p^e}) = \{\frac{{p^e}+1}{2},\,{p^e}-1\}$. 
	
	Passing to $p$-adic limits, we find that the Bernstein-Sato roots are $\{ -1 , \frac{1}{2}\}$, as claimed. The positive root here is explained by Proposition~\ref{prop:seminormal}.
	The differential thresholds of $x^2$ are $\{\frac{1}{2},1,\frac{3}{2},2,\dots\}$.
	
	This illustrates that the $F$-split hypothesis in Theorems \ref{thm:rational} and \ref{thm:threshsareroots} is necessary.
\end{example}

\begin{example} Let $\mathbb{K}$ be a field of characteristic $2$, and $R=\mathbb{K}[x^2,x^3]$. The set of Bernstein-Sato roots of $x^2\in R$ is $\{-1\}$. We have the decomposition of $R$ as an $R^{p^e}$-module as in the previous example, and then, for $0\leq j < p^e$,
	\[D^{(e)} \cdot (x^2)^{ap^e+j} = \begin{cases} 
		(x^{2 ap^e}) &\text{if } 0\leq j \leq \frac{{p^e}}{2}-1  \\
	(x^{2 ap^e})(E\cdot x^{p^e}) &\text{if } \frac{{p^e}}{2}-1 < j <p^e,\end{cases}\]
	so $\mathcal{B}^\bullet_x({p^e})\cap [0,{p^e}) = \{\frac{{p^e}}{2}-1,\,{p^e}-1\}$. The only Bernstein-Sato root of $x^2$ is $-1$, while the set of differential thresholds is $\{\frac{1}{2},1,\frac{3}{2},2,\dots\}$.
\end{example}

\begin{example}
	Let $R=K[x]/(x^{n+1})$. For $e$ such that $p^e>n$, we have $D^{(e)}_R=\End_K(R)$, and hence $D^{(e)}_R \cdot x^j = R$ for $j\leq n$, and $D^{(e)}_R \cdot x^j=0$ for $j>n$. Thus, $\cB_{x}^{\bullet}(p^e)=\{n\}$ for all $e\gg 0$. We then have that $n$ is the unique Bernstein-Sato root, and the only differential threshold is zero.
\end{example}

	\bibliographystyle{alpha}
	\bibliography{biblio.bib}

\end{document}